\newcommand{\inv}[0]{{-1}}
\newcommand{\oo}[0]{\otimes}
\newcommand{\exoo}[1]{^{\oo #1}}  
\newcommand{\extimes}[1]{^{\times #1}}
\newcommand{\NN}{\mathbb{N}}
\newcommand{\FF}{\mathbb{F}}
\newcommand{\cala}[0]{\mathcal{A}}
\newcommand{\st}[0]{{\bf s}}
\newcommand{\ta}[0]{{\bf t}}
\newcommand{\id}[0]{\mathrm{id}}
\newcommand{\rrhd}[0]{\rhd'}
\newcommand{\llhd}[0]{\blacktriangleleft}
\newcommand{\low}[2]{{#1}_{({#2})}}
\newcommand{\hol}[0]{\mathrm{Hol} }
\newtheorem{theorem}{Theorem}[section]
\newtheorem*{theorem*}{Theorem}
\newtheorem{example}[theorem]{Example}
\newtheorem{lemma}[theorem]{Lemma}
\newtheorem{proposition}[theorem]{Proposition}
\newtheorem{corollary}[theorem]{Corollary}
\newtheorem{definition}[theorem]{Definition}
\newtheorem{remark}[theorem]{Remark}
\def\mytitle{Hopf algebra gauge theory on a ribbon graph}
\def\myauthors{C.~Meusburger D.K.~Wise}
\begin{document}

\begin{center}
  {\huge\mytitle}

  \vspace{2em}

  {\large
   Catherine Meusburger\footnote{{\tt  catherine.meusburger@math.uni-erlangen.de}} \qquad\qquad
   Derek K.~Wise\footnote{{\tt derek.wise@fau.de}}}

  \vspace{1em}

  $\mbox{}^{1,2}$ Department Mathematik \\
  Friedrich-Alexander  Universit\"at Erlangen-N\"urnberg \\
  Cauerstra\ss e 11, 91058 Erlangen, Germany\\[+2ex]
  
  $\mbox{}^{2}$ Concordia University, Saint Paul\\
  1282 Concordia Avenue\\
  St.~Paul, Minnesota 55104, USA

  \vspace{1em}

  19 April 2016

  \vspace{2em}

  \begin{abstract}
\noindent We generalise gauge theory on a graph so that the gauge group becomes a finite-dimensional ribbon Hopf algebra, the graph becomes a ribbon graph, and gauge-theoretic concepts such as connections, gauge transformations and observables are replaced by linearised analogues.  Starting from physical considerations, we derive an axiomatic definition of Hopf-algebra gauge theory, including locality conditions under which the theory for a general ribbon graph can be assembled from local data in the neighbourhood of each vertex.  For a vertex neighbourhood with $n$ incoming edge ends, the algebra of non-commutative `functions' of connections is dual to a two-sided twist deformation of the $n$-fold tensor power of the gauge Hopf algebra.  We show these algebras assemble to give an algebra of functions and gauge-invariant subalgebra of `observables' that coincide with those obtained in the combinatorial quantisation of Chern-Simons theory, thus providing an axiomatic derivation of the latter.  We then discuss holonomy in a Hopf algebra gauge theory and show that for semisimple Hopf algebras this gives, for each path in the embedded graph, a map from connections into the gauge Hopf algebra, depending functorially on the path.  Curvatures---holonomies around the faces canonically associated to the ribbon graph---then correspond to central elements of the algebra of observables, and define a set of commuting projectors onto the subalgebra of observables on flat connections. 
The algebras of observables for all connections or for flat connections are topological invariants, depending only on the topology, respectively, of the punctured or closed surface canonically obtained by gluing annuli or discs along edges of the ribbon graph. 

  \end{abstract}
\end{center}

\section{Introduction}

The purpose of this article is to generalise lattice gauge theory from groups to Hopf algebras.  Our motivation stems from a variety of gauge theory-like models constructed from algebraic data assigned to discretisations of oriented surfaces, many of which are based on  Hopf algebras, their representation categories or higher categorical analogues. Examples of recent interest include models in condensed matter physics and quantum computation such as Kitaev \cite{Ki,Ki2,BMCA} and Lewin-Wen models \cite{LW1,LW2}, as well as models in non-commutative geometry \cite{MS}. There are also older models obtained from canonical quantisation of Chern-Simons theory  \cite{AGSI,AGSII,AS,BR,BR2,BFK} and much work on related models in 3d quantum gravity---for an overview see \cite{Carlipbook}.

These models strongly resemble lattice gauge theories, exhibiting gauge-like symmetries, topologically invariant subspaces of invariant states, and operators resembling Wilson loops. 
Some have been  related \cite{KKR, BK, BA,AS} to 3d topological field theories of  Turaev-Viro \cite{TVi} or of Reshetikhin-Turaev  type \cite{RT}, which in turn arise from quantisation of BF and Chern-Simons gauge theories. 

However, while their gauge theoretic features and origins suggest viewing these models as Hopf algebraic generalisations lattice gauge theory, no precise concept of lattice gauge theory valued in a Hopf algebra has so far been available.  Neither the physical requirements nor the appropriate mathematical structures for such a theory are obvious. Nor is it easy to conceptually unify the would-be examples of Hopf algebra gauge theory, which are often formulated ad hoc  for specific graphs, specific bases, generators or relations, and in language that obscures the general mathematical structures and their relation to classical gauge theoretic concepts such as gauge fields, connections, and observables.  With few exceptions, e.g.~\cite{FSV}, little work has sought to physically motivate the mathematical structures arising in such models.

The goal of this article is to address these issues. We provide a robust and physically-motivated general definition of local Hopf algebra gauge theory on a ribbon graph, construct such gauge theories, and relate them to established models arising from quantising Chern--Simons theory. We also clarify the gauge theoretic structure of these theories, including holonomy, curvature, and observables. 

To achieve this, we mimic group-based gauge theory and its physical interpretation as closely as possible, linearising standard gauge theory concepts and generalising to accommodate Hopf algebras not associated to a group.  For gauge theory on a ribbon graph with edge set $E$ and vertex set $V$, we can summarise the key ingredients of our definition of Hopf algebra gauge theory (Def.~\ref{def:gtheory}) in a table, which  shows the comparison to the group case:\footnote{Here, expressions such as $K^{\oo E}$ represent a tensor product $K \oo \cdots \oo K$ with one copy of $K$ for each element of $E$.} 
\begin{center}
\sf\small 
 \def\rowgap{.9em}
 \begin{tabular}{l | c | c}
  &{\bf group gauge theory} & {\bf Hopf algebra gauge theory}
  \\[\rowgap] 
  \hline
  & & \\
  gauge group/algebra & finite group $G$ & finite-dimensional Hopf algebra $K$ 
  \\[\rowgap]
  gauge transformations 
  & the group $\mathcal G = G^{\times V}$
  & the Hopf algebra $\mathcal G = K^{\oo V}$
  \\[\rowgap]
  connections &
  the set $\mathcal A =G^{\times E}$ &
  the vector space $\mathcal A = K^{\oo E}$ 
  \\[\rowgap]
  \multirow{2}{*}{algebra of functions} 
  & $\mathcal A^*={\rm Fun}(G^{\times E})$
  & 
  the vector space $\mathcal A^*\cong K^{\ast \oo E}$
 \\[0em]
  &with canonical algebra structure & 
  with {\em specified} algebra structure
  \\[\rowgap]
  \multirow{2}{1.5in}{gauge action on connections}
  & \multirow{2}{*}{a $\mathcal{G}$-set structure on $\mathcal A$}
  & \multirow{2}{*}{a $\mathcal G$-module structure on $\mathcal A$} 
  \\ && 
  \\[\rowgap]
  \multirow{2}{1.5in}{gauge action on functions}
  & \multirow{2}{*}{the dual $\mathcal{G}$-set structure on $\mathcal A^*$}
  & the dual $\mathcal G$-module structure, making $\mathcal A^*$ 
  \\
  && into a $\mathcal G$ module algebra. 
  \\[\rowgap]
  \multirow{2}{*}{observables} 
  & the gauge-invariant 
  & the gauge-invariant
  \\ &
  subalgebra of $\mathcal{A}^\ast$ & 
  subalgebra of $\mathcal{A}^\ast$
  \end{tabular}
\end{center}

This table shows a strong parallel but also some complications on the Hopf algebra side. Most importantly, for a general Hopf algebra, the algebra structure on $\mathcal{A}^\ast$ is {\em not} the canonical one induced by the multiplication in $K^*$, as one might naively expect from the group case, since this would not be compatible with `gauge symmetry' unless $K$ is cocommutative. Rather, we must specify an algebra structure on $\mathcal{A}^\ast$ that makes it a {\em module algebra} for the gauge action.  Physically speaking, this amounts to the requirement that gauge invariant functions, or observables, should form a {\em subalgebra}.

It is also worth mentioning why a Hopf algebra gauge theory requires {\em ribbon graphs} rather than ordinary graphs.  Even for groups, ribbon graphs are natural for two-dimensional gauge theory, since embedding a graph in an oriented surface gives it a ribbon structure.  However, for Hopf algebras, this structure seems essential: if the algebra is non-cocommutative, we know of no sensible way to get gauge transformations to act on connections without assuming a ribbon structure on our graphs.  We thus work with ribbon graphs throughout the article, reviewing their structure in Section \ref{sec:background}.

\vspace{-.5cm}
\subsection*{Main Results}

\vspace{-.2cm}
While our definition of Hopf algebra gauge theory is built up axiomatically, guided but the group case, our main results in support of the quality of this definition are that we can build up Hopf algebra gauge theories from local data, show topological invariance of their algebras of observables, relate these algebras to existing work, and thus clarify the gauge-theoretic foundations of other models. Here we outline these main results.  

\vspace{-.5cm}
\subsubsection*{1. For a finite-dimensional ribbon Hopf algebra, gauge theory on a `vertex neighbourhood' is essentially unique.}

\vspace{-.2cm}
By a {\em vertex neighbourhood}, we simply mean a cyclically-ordered directed rooted tree of height one: 
\[\includegraphics[height=1.2cm]{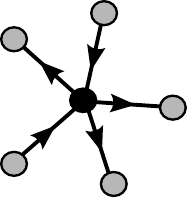}\]
but our terminology emphasises that we think of these as obtained from a larger graph by cutting all of the edges surrounding a single vertex, and gluing new vertices to the loose ends. Studying Hopf algebra gauge theory on such simple graphs (\S\ref{subsec:vertex_nb}) shows that allowing non-cocommutativity strongly constrains our choice of gauge Hopf algebra $K$.  However, we show that these constraints can be satisfied---and in an essentially unique way determined by the `locality conditions' in our definition---provided we take the Hopf algebra to be {\em quasitriangular} and require that the algebra for an $n$-valent vertex is built up from the one for a vertex with a single edge end via the braided tensor product of $K$-right module algebras.  Moreover, considering consistency with reversing the orientation of edges leads us to demand that our Hopf algebra has a {\em ribbon} element.

{\bf Theorem 1:} 
A Hopf algebra gauge theory on a vertex neighbourhood is essentially unique.
The relevant $K$-module algebra structure on $K\exoo{v}$ is related to the braided tensor product of $K$-module algebras and dual to a two-sided twist deformation of $K\exoo{v}$ with a cocycle involving multiple copies of its universal $R$-matrix.

\vspace{-.5cm}
\subsubsection*{2. Hopf algebra gauge theories on the vertex neighbourhoods of a graph induce a Hopf algebra gauge theory on the  graph.}

\vspace{-.2cm}
This result, found in \S\ref{subsec:gribbongraph}, means Hopf algebra gauge theory can be built up from local data, effectively reducing the construction of gauge theory for any ribbon graph to basic building blocks constructed on the vertex neighbourhoods.  

To show this, we split a ribbon graph into a disjoint union of vertex neighbourhoods and use comultiplication to embed the copy of $K^*$ associated with a directed edge $e$ in the original graph into the two copies of $K^*$ associated with the two {\em ends} of $e$, each associated with a vertex neighbourhood:

\[
\begin{tikzpicture}
\node {\includegraphics[height=1.5cm]{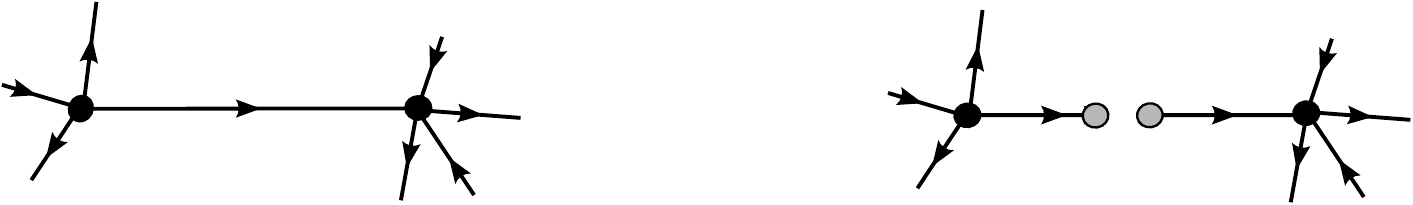}};
\node {$\to$};
\node at (-3.25,.25) {$K^*$};
\node at (2.6,.25) {$K^*$};
\node at (3.75,.25) {$K^*$};
\end{tikzpicture}
\]
Doing this simultaneously for all edges in the ribbon graph yields an injective linear map 
$$G^*\colon \cala^*\to \bigotimes_{v\in V} {\cala_v^*}$$
where $\cala_v^*$ is the algebra of functions for the vertex neighbourhood at $v$. 

{\bf Theorem 2:} The image of  $G^*$ is a subalgebra and a $K\exoo{V}$-submodule of  $\bigotimes_{v}{\cala_v^*}$.  Pulling back this structure makes $\cala^*:={K^*}\exoo{E}$ into the algebra of functions for a Hopf algebra gauge theory.

This algebra is analysed in \S\ref{subsec:algfunc} for concrete graphs; for a single loop it is related to $K^{op}$, and when $K=D(H)$ is a Drinfel'd double of a Hopf algebra $H$, the algebra for a single edge between distinct vertices is isomorphic to the {\em Heisenberg double} of $H$.   

\vspace{-.5cm}
\subsubsection*{3. Holonomy is functorial, and transforms nicely under graph operations.}

\vspace{-.3cm}
In analogy to group gauge theory, we view holonomy as a functor $$\mathrm{Hol}: \mathcal G(\Gamma)\to \mathrm{Hom}_{\FF}(K\exoo{E},K)$$ that assigns to each path in $\Gamma$---a morphism in the {\em path groupoid} $\mathcal G(\Gamma)$---a linear map $\mathrm{Hol}_p: K\exoo{E}\to K$ taking connections on $\Gamma$ to element of the Hopf algebra $K$. 

For this, the vector space $\mathrm{Hom}_{\FF}(K\exoo{E},K)$ is given the structure of a category with one object and with composition built from multiplication in $K$ and comultiplication in $K\exoo{E}$.
In principle, there are two choices of coalgebra structure on $K\exoo{E}$, coming either from the tensor product Hopf algebra structure or from the multiplication in the algebra of functions.   However, we show in \S\ref{subsec:genhols} 
that only the former gives rise to a holonomy functor in the strict sense, and only under the additional assumption that $K$ is {\em semisimple}. 

We also investigate the algebraic properties of the holonomies in \S\ref{subsec:algprop} and derive a general formula for the transformation of holonomies under graph operations and show that, for each path in $\Gamma$ that represents a simple curve on the associated surface, the holonomies form a subalgebra and a submodule of the algebra of functions of the Hopf algebra gauge theory.

\vspace{-.5cm}
\subsubsection*{4. The algebra of observables is a topological invariant of surfaces with boundaries.}

\vspace{-.2cm}
There is a well-known way of viewing any ribbon graph, defined by combinatorial data, as a surface with boundary, unique up to homeomorphism.  This is achieved by fattening out edges into `ribbons', as the name suggests, and vertices into discs, or equivalently by gluing annuli along edges according to the combinatorial data.   Moreover, ribbon graphs that induce the same surface with boundary are related by certain graph operations, reviewed in \S\ref{subsec:graphopsribbon}. In Section \ref{sec:graphops}, we show that these graph operations induce isomorphisms of Hopf algebra gauge theories---in particular, the algebras of functions and of observables depend on the choice of the ribbon graph only insofar as the ribbon graphs induce homeomorphic surfaces in this way.

{\bf Theorem 3:} If two ribbon graphs induce homeomorphic surfaces with boundary, then the algebras of observables of the associated $K$-valued Hopf algebra gauge theories are isomorphic.

\vspace{-.5cm}
\subsubsection*{5. The algebra of observables on {\em flat} connections is a topological invariant.}

\vspace{-.2cm}
Aside from viewing a ribbon graph as a surface with boundary, there is also a well-known way of viewing any ribbon graph as a {\em closed} surface, by attaching discs, rather than annuli.  By considering holonomies around these discs---faces in the resulting CW complex---we arrive at a notion of curvature in Hopf algebra gauge theory.  We show the curvature at a face defines a central element of the algebra of observables.  We show that if $K$ is semisimple, the Haar integral of $K^*$ gives rise to a set of commuting projectors associated with faces, whose image can be viewed as the algebra of observables on the set of {\em flat} connections. 

{\bf Theorem 4:}   
{If two ribbon graphs induce homeomorphic closed surfaces, then their associated algebras of observables on {flat} connections are isomorphic.}

\vspace{-.5cm}
\subsubsection*{6. Hopf algebra gauge theory provides an axiomatic derivation, and gauge-theoretic interpretation of algebras from the combinatorial quantisation of Chern-Simons theory.}

\vspace{-.2cm}
We prove that the algebra of functions obtained from our definition of Hopf algebra gauge theory and the associated subalgebras of all observables and of observables on flat connections coincide with the algebras obtained by Alekseev, Grosse and Schomerus \cite{AGSI,AGSII,AS}  and independently  by Buffenoir and Roche \cite{BR,BR2} in the canonical quantisation of Chern-Simons gauge theory. {The description of this algebra was refined further in \cite{BFK}, where the authors developed a  description that is suitable for concrete computations and naturally fits into the framework of quantum topology.  In particular, the algebra of observables,  the algebra of observables on flat connections and the results of Theorems 3 and 4 above constitute a generalisation and simplification of topological invariance results obtained earlier \cite{AGSI,AGSII, BR,BR2,BFK} in a different framework and by different methods. While the algebras themselves are thus not new, our approach adds several insights to the picture. 

First, the algebras were previously obtained by quantising the Poisson structures in \cite{FR,AM} by canonical quantisation, i.e.~replacing classical $r$-matrices by $R$-matrices and Poisson brackets by multiplication relations. While physically well-motivated, this leaves questions about uniqueness of the quantisation procedure and of the resulting algebra, especially since the underlying Poisson structure is non-canonical. Our approach addresses this by showing that the algebra can be {\em derived} from simple axioms for a local Hopf algebra gauge theory in an essentially unique way.

Second, our formulation exhibits more clearly the essential mathematical ingredients, namely, module algebras over Hopf algebras and their braided tensor products. These are physically motivated by the requirement that the gauge invariant observables form an algebra.  The resulting description is close to the viewpoint of non-commutative geometry, in which a commutative algebra of classical coordinate functions is deformed or replaced by a non-commutative analogue.

Moreover, these algebras are shown to be built up atomically from the algebras on vertex neighbourhoods, which are obtained from a two-sided twist-deformation of a Hopf algebra $K^{\oo n}$. This reflects the local nature of the Hopf algebra gauge theory and allows one to construct Hopf algebra gauge theory on a surface by gluing discs around the vertices of a graph. 
(A similar decomposition was also given in \cite{BFK}, but in a diagrammatic formulation. This has computational advantages, but  makes it  difficult to  interpret the results from a  gauge theory perspective.) 

This decomposition into vertex neighbourhoods leads to a direct and simple description of local Hopf algebra gauge theory. While the formulation in  \cite{AGSI,AGSII,AS,BR,BR2} is  highly involved and relies on specific choices of a basis of $K$, namely matrix elements in the irreducible representations, Clebsch-Gordan coefficients and intertwiners
and mainly considers a fixed ribbon graph,
the description in terms of vertex neighbourhoods allows one to give a coordinate free description for general ribbon graphs. 
In this description, the proof of topological invariance is much simpler and more direct. 
Moreover, the concept of holonomy arises in a more conceptual way, as a functor, and holonomies transform by simple and explicit rules under graph transformations.  This may also be helpful in generalising Hopf algebra gauge theory to include defects.

\vspace{-.5cm}
\subsection*{Organisation of the paper} 

\vspace{-.2cm}
Section \ref{sec:background} provides background material on graphs and their path categories (\S\ref{subsec:graphs}) and ribbon graphs (\S\ref{subsec:ribbongraph}) as well as describing the operations on ribbon graphs (\S\ref{subsec:graphopsribbon}) that  preserve the topology of the resulting surfaces. 
 
In Section \ref{sec:gtheory} we derive axioms for a Hopf algebra gauge theory on a ribbon graph. After establishing conventions (\S\ref{subsec:conventions}) and reviewing {\em group}-based gauge theory (\S\ref{subsec:groupgtheory}) we generalise it to finite-dimensional Hopf algebras (\S\ref{subsec:haxioms}) and give an axiomatic definition of a local Hopf algebra gauge theory.  We then construct such local Hopf algebra gauge theories, first on vertex neighbourhoods (\S\ref{subsec:vertex_nb}) and then on general ribbon graphs (\S\ref{subsec:gribbongraph}). For the latter
we give an explicit description of the algebra the algebra of functions (\S\ref{subsec:algfunc}), and relate to it the `lattice algebras' from the combinatorial quantisation of Chern--Simons theory \cite{AGSI,AGSII,BR,BR2}.

Section \ref{sec:graphops} investigates the dependence of the algebra of functions and its subalgebra of observables on the choice of the ribbon graph. It is shown that certain transformations of ribbon graphs induce homomorphisms of module algebras between the algebras of functions of the associated Hopf algebra gauge theories and algebra homomorphisms between their subalgebras of observables. This leads to a topological invariance result for the latter.

In Section \ref{sec:holsec} we develop other essential gauge-theoretic concepts for Hopf algebra gauge theory, namely holonomy and curvature.  Following their definition (\S\ref{subsec:genhols}), we investigate the algebraic properties of holonomies (\S\ref{subsec:algprop}) and  of curvatures and {\em flat} connections  (\S\ref{subsec:curve}). We then show that this defines  a topologically invariant algebra of observables on flat connections.}  

Appendices \ref{sec:hopfalg} and \ref{sec:modulealg} contain algebraic background material, including the  required foundational results  on Hopf algebras and module algebras over Hopf algebras.  

\section{Geometrical background: graphs and paths}
\label{sec:background}

\subsection{Graphs and paths}
\label{subsec:graphs}

In the following, we consider finite directed graphs. Unless specified otherwise,  we allow loops and  multiple edges, and vertices of  valence $\geq 1$. For a directed graph $\Gamma$, we denote by $V(\Gamma)$  and $E(\Gamma)$, respectively,  the sets of vertices and edges, and omit the argument $\Gamma$ whenever this is unambiguous. 
For an oriented edge $e\in E(\Gamma)$, we denote by $\st(e)$ the {\bf starting vertex} of $e$ and by $\ta(e)$ the {\bf target vertex} of $e$. 
An  edge $e\in E(\Gamma)$ is called a {\bf loop} if  $\st(e)=\ta(e)$.  The edge $e$ with the opposite orientation is denoted $e^\inv$, and one has $\st(e^\inv)=\ta(e)$, $\ta(e^\inv)=\st(e)$.

\begin{definition}\label{def:vertex_nb} Let $\Gamma$ be a directed graph.
\begin{compactenum}
\item  A {\bf subgraph} of $\Gamma$ is a graph $\Gamma'$ obtained from $\Gamma$ by removing edges of $\Gamma$ and any zero-valent vertices arising in the process. \\[-2ex]

\item The {\bf edge subdivision} of $\Gamma$ is the directed graph $\Gamma_{\circ}$ obtained by placing a vertex on the middle of each edge $e\in E(\Gamma)$ and equipping the resulting edges with the induced orientation, as shown in Figure \ref{fig:cutting} b).\\[-2ex]

\item For an edge $e\in E(\Gamma)$, the two corresponding edges of $\Gamma_\circ$ are called {\bf edge ends} of $e$. The edge end of $e$ that is connected to the starting  vertex  $\st(e)$ is called {\bf starting end} of $e$ and denoted $s(e)$.  The one connected to the target vertex $\ta(e)$ is called  {\bf target end} of $e$ and denoted  $t(e)$. \\[-2ex]

\item The {\bf vertex neighbourhood} $\Gamma_v$ of a vertex $v\in V(\Gamma)$ is the directed graph obtained by taking the vertex $v$, all incident edge ends at $v$ and placing a univalent vertex at the end of each edge end, as shown in Figure \ref{fig:cutting} c).  \end{compactenum}
\end{definition}

By definition, the edge subdivision  $\Gamma_\circ$ and any vertex neighbourhood $\Gamma_v$  for $v\in V(\Gamma)$  are directed graphs without loops. 
They are bipartite since  each edge of ${\Gamma}_\circ$ or $\Gamma_v$ connects a vertex  $v\in V(\Gamma)$ with a vertex 
of $\Gamma_\circ$ that is not contained in $V(\Gamma)$. 
Note also that the second edge subdivision $\Gamma_{\circ\circ}=(\Gamma_\circ)_\circ$ of a directed graph $\Gamma$ has neither  loops nor multiple edges.

\begin{figure}
\centering
\includegraphics[scale=0.35]{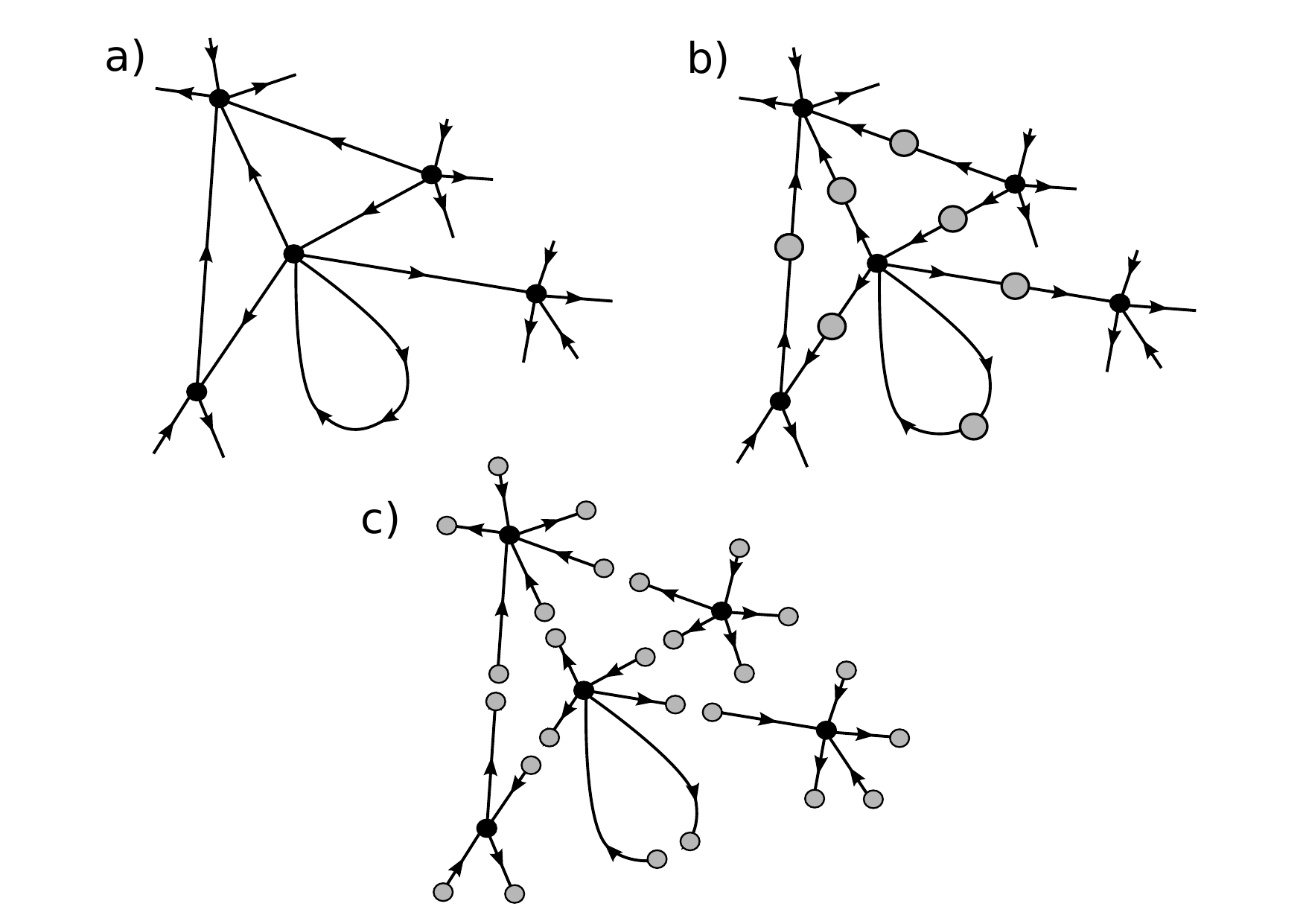}
\caption{a) Part of a directed graph $\Gamma$, b) its edge subdivision $\Gamma_\circ$,  c) the associated disjoint union of vertex neighbourhoods.}
\label{fig:cutting}
\end{figure}

Paths in a directed graph $\Gamma$ can be viewed as morphisms in the free groupoid generated by $\Gamma$.
They are described by {\bf words} in $E(\Gamma)$, which are either  finite sequences of the form  $w=((e_n, \epsilon_n),..., (e_1, \epsilon_1))$ with $n\in\NN$, $e_i\in E(\Gamma)$ and  $\epsilon_i\in\{\pm 1\}$  or empty words $\emptyset_v$ for each vertex $v\in V(\Gamma)$. 
In the following, we write  $w=e_n^{\epsilon_n}\circ \ldots\circ e_1^{\epsilon_1}$ for the former. 
A word $w$ in $E(\Gamma)$ is called {\bf composable} if  it is empty or if  $\ta(e_i^{\epsilon_i})=\st(e_{i+1}^{\epsilon_{i+1}})$ for all $i=1,...,n-1$. For a composable word $w$ we set $\st(w)=\st(e_1^{\epsilon_1})$ and $\ta(w)=\ta(e_n^{\epsilon_n})$ if $w$ is non-empty and $\st(w)=\ta(w)=v$ if $w=\emptyset_v$.
The number $n\in\NN$ is called the {\bf length} of $w$, and one sets $n=0$ for empty words $\emptyset_v$.
A word  is called {\bf reduced} if it is empty or if it is of the form $w=((e_n, \epsilon_n),..., (e_1, \epsilon_1))$ with $(e_i,\epsilon_i)\neq (e_{i+1}, -\epsilon_{i+1})$ for all $i\in\{1,...,n-1\}$. It is called {\bf cyclically reduced}
if it is reduced and $(e_1,\epsilon_1)\neq (e_n,-\epsilon_n)$ if $n\geq 1$.

 \begin{definition} \label{def:free_cat} 
 Let $\Gamma$ be a directed graph. 
 
 \begin{compactenum}
 \item  The
 {\bf path category} $\mathcal \mathcal C(\Gamma)$ is the free category generated by $E(\Gamma)\times\{\pm 1\}$. It has vertices of $\Gamma$ as objects. A morphism
 from  $u$ to $v$  is a  composable word $w$ with
 $\st(w)=u$ and  $\ta(w)=v$. Identity morphisms  are  the  trivial words $\emptyset_v$,  and the composition of morphisms is  concatenation. \\[-2ex]

\item  The
 {\bf path groupoid} $\mathcal G(\Gamma)$   is the free groupoid generated by $\Gamma$. Its objects are the  vertices of $\Gamma$.  A morphism
 from  $u$ to $v$  is an equivalence class of  composable words $w$ with
 $\st(w)=u$,  $\ta(w)=v$  with respect to the equivalence relation
  $e^{-1}\circ e\sim \emptyset_{\st(e)}$, $e\circ e^\inv=\emptyset_{\ta(e)}$ for all $e\in E(\Gamma)$. Identity morphisms  are  equivalence classes of  trivial words $\emptyset_v$,  and the composition of morphisms is induced by the concatenation. \\[-2ex]

\item  A {\bf path} $p$  in $\Gamma$ is a morphism in $\mathcal G(\Gamma)$. For a path $p$ given by a reduced word $w$, the vertex  $\st(p)=\st(w)$ the is called the {\bf starting vertex} and the vertex $\ta(p)=\ta(w)$ the {\bf target vertex}  of $p$. We  denote by 
 $p^\inv$ the reversed path given by  $\emptyset_v^\inv=\emptyset_v$ and $(e_n^{\epsilon_n}\circ \ldots\circ e_1^{\epsilon_1})^\inv=e_1^{-\epsilon_1}\circ \ldots\circ e_n^{-\epsilon_n}$. We call the path $p$ {\bf cyclically reduced} 
 if the associated reduced word $w$ is cyclically reduced.
 \end{compactenum}
  \end{definition}

\subsection{Ribbon graphs}
\label{subsec:ribbongraph}

In the article we consider directed graphs equipped with a cyclic ordering of edge ends at each vertex. These are called {\em ribbon graphs} or {\em fat graphs}, since cyclically ordering edge ends can be seen as fattening them out into ribbons; for an accessible introduction see \cite{LZ, EM}. They can be viewed as directed graphs {\em embedded} in oriented surfaces, and are thus sometimes called   {\em embedded graphs}.  

A graph embedded in an oriented surface inherits a {\em cyclic ordering} of the incident edge ends at each vertex from the orientation of the surface.    
This cyclic ordering of the edge ends equips the graph with the notion of a {\bf face}.  We say that a path $p=e_n^{\epsilon_n}\circ \ldots\circ  e_1^{\epsilon_1}$ in $\Gamma$ turns maximally right (left) at the vertex $v_i=\st(e_{i+1}^{\epsilon_{i+1}})=\ta(e_i^{\epsilon_i})$ if the starting end of  $e_{i+1}^{\epsilon_{i+1}}$ comes directly after (before) the target end of $e_i^{\epsilon_i}$ with respect to the cyclic ordering at $v_i$.
 If $p$ is closed, we say $p$ turns maximally right (left) at $v_n=\st(e_1^{\epsilon_1})=\ta(e_n^{\epsilon_n})$ if the starting end of $e_1^{\epsilon_1}$ comes directly after (before) the target end of $e_n^{\epsilon_n}$ with respect to the cyclic ordering at $v_n$. A face is then defined  as an equivalence class of closed paths  that  turn maximally left 
 at each vertex and pass any edge at most once in each direction.

\begin{definition} $\quad$
\begin{compactenum}

\item A {\bf ribbon graph}   is a directed graph  with a cyclic ordering of  the  edge ends at each vertex.  \\[-2ex]

\item  A {\bf face path} of a ribbon graph  $\Gamma$ is a closed path in $\Gamma$ which turns maximally left  
at each vertex, including the starting vertex,  and traverses an edge at most once in each direction. \\[-2ex]

\item Two face paths $f,f'$ are  {\bf equivalent} if their expressions as reduced words in  $E(\Gamma)$ are related by cyclic permutations. A {\bf face} is an equivalence class of face paths.\\[-2ex]

\item The  {\bf valence} $|v|$ {\bf of a vertex} $v$  is the number of incident edge ends at $v$, and the {\bf valence} $|f|$ {\bf of a face} $f$  is the length of its representatives as reduced words in $E(\Gamma)$.
\end{compactenum}
\end{definition}

Here and in the following we denote by $F(\Gamma)$ the set of faces of a ribbon graph $\Gamma$ and omit the argument whenever this is unambiguous. 
Given a directed graph  $\Gamma$, understood as a combinatorial graph, one obtains a graph in the topological sense, i.e.~a 1-dimensional CW-complex, by gluing intervals to the vertices according to the combinatorics specified by the edges. If additionally the graph has a ribbon graph structure, one obtains an oriented surface $\Sigma_\Gamma$
 by  gluing a disc to each face.
If $\Gamma$ is a directed graph embedded in an oriented surface $\Sigma$ and equipped with the induced ribbon graph structure, then the surface $\Sigma_\Gamma$  is homeomorphic to $\Sigma$ if and only if  each connected component of $\Sigma\setminus\Gamma$ is homeomorphic to a disc.

The gluing procedure extends to surfaces $\Sigma$ with a finite number of discs removed. In this case, one requires that each connected component of $\Sigma\setminus\Gamma$ is homeomorphic to either a disc or to an annulus, and one glues annuli instead of discs to some of the faces of $\Gamma$.
Gluing an annulus to each face of a ribbon graph $\Gamma$ yields a surface $\dot\Sigma_\Gamma$ and can be viewed as a thickening of its edges. This  motivates the term ribbon graph.

In fact, we will often need not just a cyclic ordering but a {\em linear} ordering of the incident edge ends at each vertex of a ribbon graph.  Given the cyclic order, a compatible linear order simply amounts to choosing one edge end to be the least.  
We write  $e<f$  if  $e,f\in E(\Gamma_\circ)$ are edge ends incident at a vertex $v$ and $e$ precedes $f$ with respect to the linear ordering at $v$.  
We indicate this
 linear ordering  in figures by a {\bf cilium} \cite{FR}: a short dotted line between the greatest and least edge ends and call the resulting graphs `ciliated':
\[
 \label{fig:vertex_edgeends}
 \includegraphics[scale=0.15]{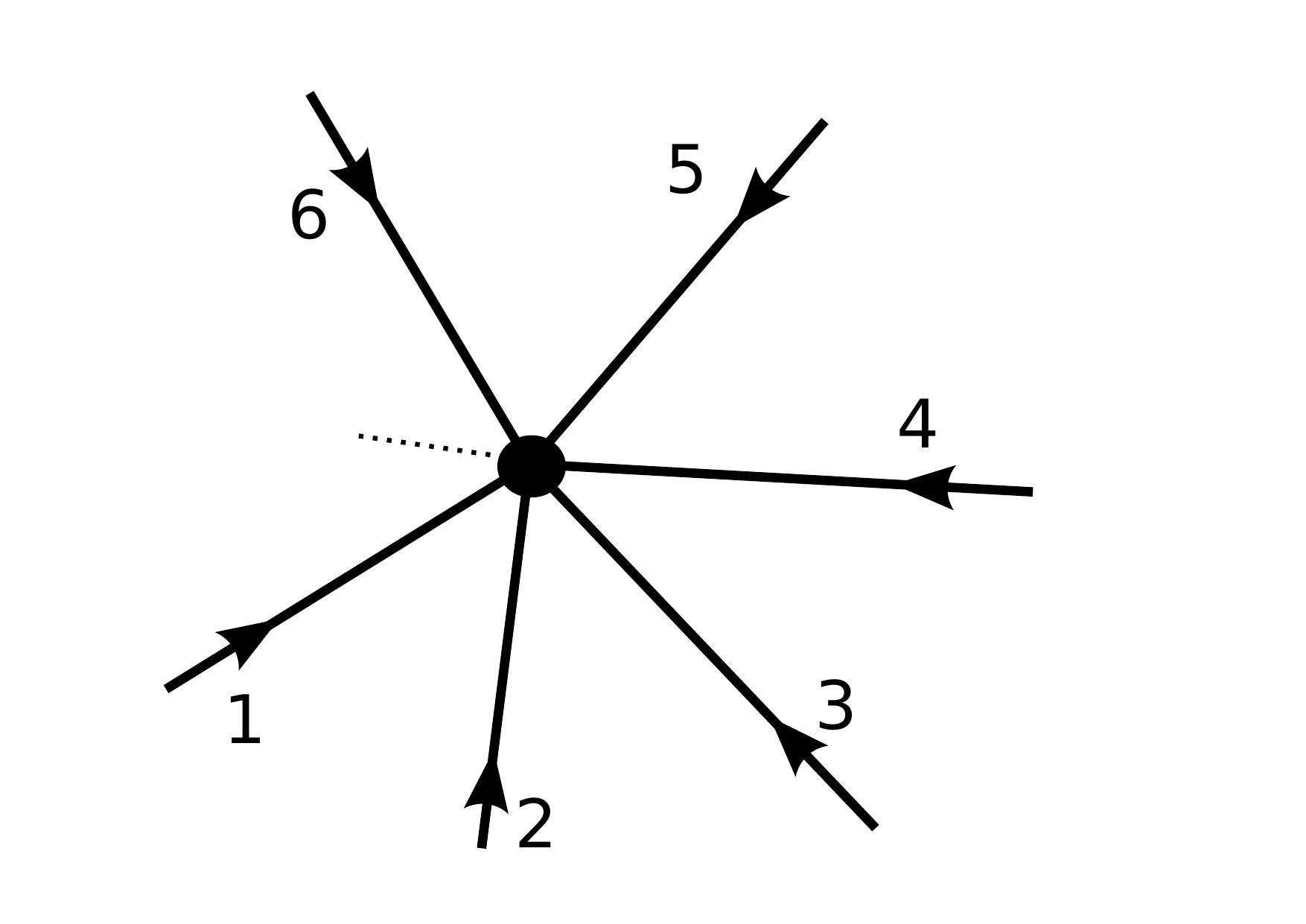}
\]

\begin{definition} \label{def:cilgraph}
A {\bf ciliated ribbon graph} $\Gamma$  is a directed graph together with a  linear ordering of the incident edge ends at each vertex.  Two edge ends  $e,f$  incident at a vertex $v\in V(\Gamma)$  are called {\bf adjacent}  if there is no edge end $g$ incident at $v$ with $e<g<f$  or $f<g<e$. 
\end{definition}

Note that  adjacency of edge ends depends on the ciliation and not just the ribbon graph structure. In particular,  the edge ends 1 and 6 in  the figure above are not adjacent. 
Given a path  in a ciliated ribbon graph $\Gamma$, we may ask if this path 
is well-behaved with respect to the ciliation, i.e.~if it is possible to thicken the graph $\Gamma$ and to draw this path in such a way on the boundary of the thickened graph that it avoids all  cilia.  If the path is a face path, it is sensible to impose that such a condition is also satisfied at the starting vertex and
that the  path on the thickened graph always remains to the left of the edges in $\Gamma$.
These compatibility conditions  can be characterised in terms of the linear ordering at each vertex of $\Gamma$.

\begin{definition} \label{def:cilpath} Let $\Gamma$ be a ciliated ribbon graph.\\[-3ex]
\begin{compactenum}
\item If $p=e_n^{\epsilon_n}\circ ...\circ e_1^{\epsilon_1}$ is a path in $\Gamma$, we say that $p$ {\bf does not traverse any cilia} if for all $i\in\{1,...,n-1\}$ the edge ends $s(e_{i+1}^{\epsilon_{i+1}})$ and $t(e_i^{\epsilon_i})$ are adjacent with respect to the linear ordering at the vertex $\st(e_{i+1}^{\epsilon_{i+1}})=\ta(e_i^{\epsilon_i})$. \\[-1ex]

\item If $f=e_n^{\epsilon_n}\circ \ldots\circ e_1^{\epsilon_i}\in \mathcal G(\Gamma)$ is a face path , we say that $f$ {\bf is compatible with the ciliation} if  $s(e_{i+1}^{\epsilon_{i+1}})<t(e_i^{\epsilon_i})$ for all $i\in\{1,...,n-1\}$ and  $s(e_1^{\epsilon_1})<t(e_n^{\epsilon_n})$
\end{compactenum}
\end{definition}
Note that a face path  that does not traverse any cilia is not necessarily compatible with the ciliation, even if $s(e_1^{\epsilon_1})< t(e_n^{\epsilon_n})$. If  a vertex $\st(e_{i+1}^{\epsilon_{i+1}})=\ta(e_i^{\epsilon_i})$  in $f=e_n^{\epsilon_n}\circ \ldots\circ e_1^{\epsilon_1}$ is bivalent, then the edge ends $s(e_{i+1}^{\epsilon_{i+1}})$ and $t(e_i^{\epsilon_i})$ are always adjacent, but the condition $s(e_{i+1}^{\epsilon_{i+1}})< t(e_i^{\epsilon_i})$ may be violated. At vertices of valence $\geq 3$ the condition that 
$s(e_{i+1}^{\epsilon_{i+1}})$ and $t(e_i^{\epsilon_i})$  are adjacent is equivalent to the $s(e_{i+1}^{\epsilon_{i+1}})< t(e_i^{\epsilon_i})$ for any face path  $f=e_n^{\epsilon_n}\circ \ldots\circ e_1^{\epsilon_1}$.

\subsection{Operations on ribbon graphs}
\label{subsec:graphopsribbon}

\begin{figure}
\centering
\includegraphics[scale=0.7]{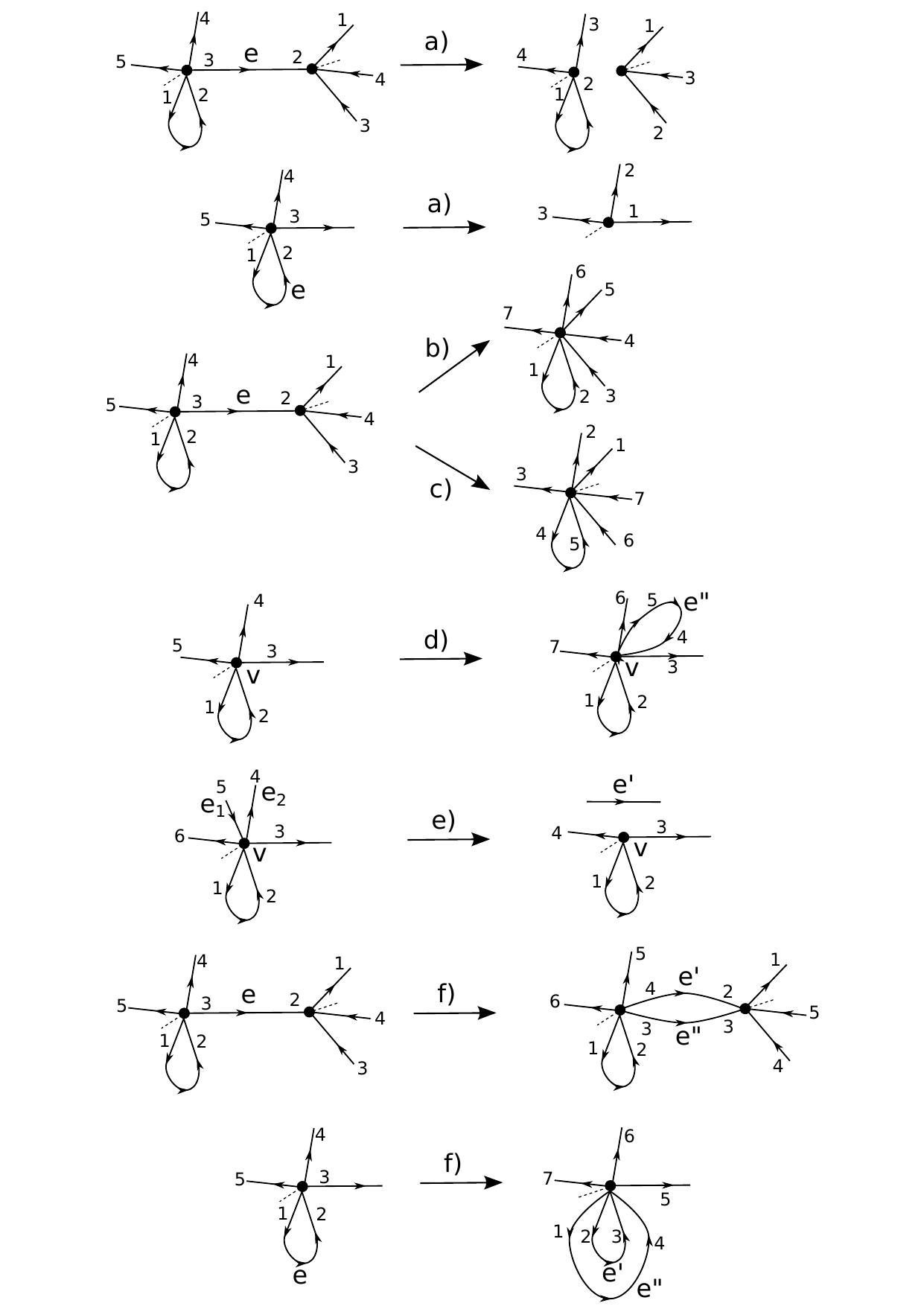}
\caption{Operations on ciliated ribbon graphs:
a) deleting an edge,
b), c) contracting an edge towards a vertex,
d) inserting a loop, e) detaching adjacent edge ends from a vertex, f) doubling an edge.
The dashed lines indicate the cilia and the numbers the linear ordering of the incident edge ends at the vertices.
}
\label{fig:graph_ops}
\end{figure}

There are a number of operations on ciliated ribbon graphs that are compatible with the ciliated ribbon graph structure.  If $\Gamma$ is a ciliated ribbon graph and $\Gamma'$ is obtained from $\Gamma$ by applying one of these operations, then $\Gamma'$ inherits a   ciliated ribbon graph structure from  $\Gamma$.  In the context of gauge theory these were  first considered in  \cite{FR} but they  are also  well-known in other contexts.

\begin{definition}\label{def:graphtrafos} Operations on ciliated ribbon graphs
\begin{compactenum}[a)]
\item {\bf Deleting an edge:}  The graph $\Gamma'$ is obtained from $\Gamma$ by deleting an edge $e\in E(\Gamma)$, as shown in Figure \ref{fig:graph_ops} a). If the starting or target vertex of $e$ is univalent or if $e$ is a loop based at a bivalent vertex, these vertices are also removed.
The orientation of all edges $e'\neq e$ and the ordering  at each vertex $v\notin\{ \st(e), \ta(e)\}$ is preserved. The ordering at the vertices $\st(e)$, $\ta(e)$ is modified as in Figure \ref{fig:graph_ops} a). 
\\[-1ex]

\item{\bf Contracting an edge towards the starting  vertex:}  Let $e\in E(\Gamma)$ be an edge of $\Gamma$ that is not a loop.  The graph $\Gamma'$ is obtained  from $\Gamma$ by  deleting the edge $e$ and its target vertex $\ta(e)$ and inserting the other edge ends incident at $\ta(e)$  between the edge ends at $\st(e)$, as shown in Figure \ref{fig:graph_ops} b). The orientation of all other edges  and the  ordering at all other vertices stays the same.  The ordering at $\st(e)$ is modified as in  Figure \ref{fig:graph_ops} b).
\\[-1ex]

\item{\bf Contracting an edge towards the target  vertex:}  Let $e\in E(\Gamma)$ be an edge of $\Gamma$ that is not a loop. 
 Then $\Gamma'$ is obtained  from $\Gamma$ by  deleting the edge $e$ and its starting vertex $\st(e)$ and inserting the other edge ends  at $\st(e)$  between the edge ends at $\ta(e)$, as shown in Figure \ref{fig:graph_ops} c). The orientation of all other edges  and the  ordering at all other vertices stays the same. The ordering at the vertex  $\ta(e)$ is modified as  in Figure \ref{fig:graph_ops} c).
\\[-1ex]

\item {\bf Inserting a loop:} The graph $\Gamma'$ is obtained from $\Gamma$ by inserting a loop $e''$ at a vertex $v\in V(\Gamma)$ in such a way that $s(e'')$ and $t(e'')$ are adjacent  with $t(e'')<s(e'')$, as shown in Figure \ref{fig:graph_ops} d). The orientation of all edges and the ordering at all vertices $w\neq v$ stays the same,  and the ordering at $v$ is modified as in Figure \ref{fig:graph_ops} d).
\\[-1ex]

 \item {\bf Detaching adjacent edge ends from a vertex:} Let $v$ be a vertex of $\Gamma$ of valence $|v|\geq 3$ and $e_1$, $e_2$ 
 two different edges of $\Gamma$  with $\st(e_2)=\ta(e_1)=v$  such that the edge ends $s(e_2)$ and $t(e_1)$ are adjacent at $v$.
  Then $\Gamma'$ is obtained from $\Gamma$ by disconnecting the edge ends $t(e_1)$ and $s(e_2)$  from $v$ and 
  combining the edges $e_2$ and $e_1$ into a single edge $e'$ with $\st(e')=\st(e_1)$ and $\ta(e')=\ta(e_2)$ with the same orientation, as shown in Figure \ref{fig:graph_ops} e). The orientation of all other edges and the ordering at all other vertices stays the same.  The  ordering at $v$ is modified as in Figure \ref{fig:graph_ops} e).\\[-1ex]

\item {\bf Doubling an edge:} Let $e$ be an edge of $\Gamma$. Then $\Gamma'$ is obtained from $\Gamma$ by replacing $e$ with a pair of edges $e',e''$ with the same orientation such that their edge ends are adjacent at the starting and target vertex with $t(e')<t(e'')$ and $s(e')>s(e'')$, as shown in Figure \ref{fig:graph_ops} f). The ordering of the edge ends at the starting and target vertex of $e$ is modified as in Figure \ref{fig:graph_ops} f). The orientation  of all other edges and the ordering at all other vertices stays the same. 
 \end{compactenum}
\end{definition}

These graph operations give rise to functors between the  path categories and path groupoids of the associated graphs. If  $\Gamma'$ is obtained from $\Gamma$ by  one of the graph operations from Definition \ref{def:graphtrafos}, then there is a canonical functor $\mathcal C(\Gamma')\to\mathcal C(\Gamma)$ associated with this transformation that induces a functor $\mathcal G(\Gamma')\to\mathcal G(\Gamma)$.

For this, note that by definition
a  functor $G:\mathcal C(\Gamma')\to\mathcal C(\Gamma)$  that induces a functor $G:\mathcal G(\Gamma')\to\mathcal G(\Gamma)$ is specified uniquely by a map $g_V: V(\Gamma')\to V(\Gamma)$ and an assignment $f'\mapsto f$ of a morphism $f\in \mathcal C(\Gamma)$ to each edge $f'\in E(\Gamma')$ such that  $\st(f)=g_V(\st(f'))$,  $\ta(f)=g_V(\ta(f'))$
and the edge $f'^\inv$ with the opposite orientation is assigned the reversed path $f^\inv$.
Conversely, any such data gives rise to a functor $G:\mathcal C(\Gamma')\to\mathcal C(\Gamma)$ that induces a functor $G:\mathcal G(\Gamma')\to\mathcal G(\Gamma)$. 

To construct these functors for the graph operations in Definition \ref{def:graphtrafos}, note   these graph operations induce  
canonical maps $g_V: V(\Gamma')\to V(\Gamma)$. The map $g_V: V(\Gamma')\to V(\Gamma)$ is a bijection or an inclusion map in case (a) (the latter if and only if vertices are removed with the edge), an inclusion map in cases (b), (c) and a bijection in cases (d)-(f). The associated functors $G:\mathcal C(\Gamma')\to\mathcal C(\Gamma)$  are then essentially  determined by the conditions  $\st(f)=g_V(\st(f'))$,  $\ta(f)=g_V(\ta(f'))$ 
 and  the condition that they map edges of $\Gamma$ that are not affected by a graph operation to the corresponding edges of $\Gamma'$. 
This leads to the following definition.

\begin{definition}\label{def:graph_functor} Suppose $\Gamma'$ is obtained from $\Gamma$ by one of the graph operations in Definition \ref{def:graphtrafos}. 
Denote for each 
edge $f\in E(\Gamma)$ that is not affected by the graph transformations  by  
$f'$ the  associated edge  
in $\Gamma'$  and label the remaining edges  as in Figure \ref{fig:graph_ops}. 
Then the associated functors $\mathcal C(\Gamma')\to\mathcal C(\Gamma)$ and $\mathcal G(\Gamma')\to\mathcal G(\Gamma)$  are given by  the following assignments of paths in $\Gamma$ to edges  $f'\in E(\Gamma')$: 
\begin{compactenum}[(a)]
\item {\bf Deleting an edge $e$:}  
\begin{align*}
D_e:\;f'\mapsto f\qquad \forall f'\in E(\Gamma').\qquad\qquad\qquad\qquad\qquad
\end{align*}  
\item {\bf Contracting an edge $e$ towards $\st(e)$:} 
\begin{flalign*}
C_{\st(e)}:\; f'\mapsto\begin{cases} f & \ta(e)\notin\{\st(f), \ta(f)\}\\
f\circ e & \st(f)=\ta(e)\neq \ta(f)\\
 e^\inv\circ f & \ta(f)=\ta(e)\neq \st(f)\\
 e^\inv\circ f\circ e & \st(f)=\ta(f)=\ta(e), f\neq e.
\end{cases} &
\end{flalign*}
\item {\bf Contracting an edge $e$ towards $\ta(e)$:}  
\begin{flalign*}
C_{\ta(e)}:\; f'\mapsto\begin{cases} f & \st(e)\notin\{\st(f), \ta(f)\}\\
f\circ e^\inv & \st(f)=\st(e)\neq \ta(f)\\
 e\circ f & \ta(f)=\st(e)\neq \st(f)\\
 e\circ f\circ e^\inv & \st(f)=\ta(f)=\st(e), f\neq e.
\end{cases}
&
\end{flalign*}
\item {\bf Adding a loop $e''$ at $v$:}  
\begin{flalign*}A_{v}:\; f'\mapsto\begin{cases} f  & f'\in E(\Gamma')\setminus\{e''\}\\
\emptyset_v & f''=e''\end{cases}\qquad\qquad\qquad
& \quad
\end{flalign*}

\item {\bf Detaching adjacent  edge ends  from $v$:}  
\begin{flalign*}  W_{e_1e_2}:\; f'\mapsto 
\begin{cases} f  & f'\in E(\Gamma')\setminus\{e'\}\\
e_2\circ e_1 & f'=e'\end{cases}\qquad\qquad\quad
\end{flalign*}

\item {\bf Doubling the edge $e$:}  
\begin{flalign*}Do_e:\;f'\mapsto\begin{cases} f & f'\in E(\Gamma')\setminus\{e',e''\}\\
e & f'\in\{e',e''\}
\end{cases}\qquad\qquad\qquad & \end{flalign*} 
\end{compactenum}
\end{definition}

A certain composite of the graph transformation functors in Definition \ref{def:graph_functor} will play a special role in the following. This  is the functor obtained by taking the edge subdivision $\Gamma_\circ$ of a ribbon graph $\Gamma$ and contracting for each edge $e\in E(\Gamma)$  one of the associated edge ends $s(e), t(e)\in E(\Gamma_\circ)$ towards a vertex in $\Gamma$. The result of this contraction procedure is the  graph $\Gamma$.  It  follows directly from  Definition \ref{def:graph_functor} that the resulting functor  depends neither on the order in which these edge contractions are performed nor on the choice of edge ends that are contracted. Hence, we obtain a unique functor $G_\Gamma:\mathcal C(\Gamma)\to\mathcal C(\Gamma_\circ)$ that  induces a functor $G_\Gamma:\mathcal G(\Gamma)\to\mathcal G(\Gamma_\circ)$.

\begin{definition}\label{def:functor_subdivision} Let $\Gamma$ be a ribbon graph, $\Gamma_
\circ$ its edge subdivision and denote for each edge $e\in E(\Gamma)$ by $s(e), t(e)\in E(\Gamma_\circ)$ the associated edge ends in $\Gamma_\circ$. Then the edge subdivision functor $G_\Gamma:\mathcal C(\Gamma)\to\mathcal C(\Gamma_\circ)$ is given by the inclusion map $\iota_V: V(\Gamma)\to V(\Gamma_\circ)$, $v\mapsto v$ and  the assignment $e\mapsto t(e)\circ s(e)$ for all $e\in E(\Gamma)$. It induces a functor $G_\Gamma:\mathcal G(\Gamma)\to\mathcal G(\Gamma_\circ)$.
\end{definition}

By making use of the functor $G_\Gamma:\mathcal C(\Gamma)\to\mathcal C(\Gamma_\circ)$, we can characterise the functors  $F:\mathcal C(\Gamma')\to\mathcal C(\Gamma)$ from Definition \ref{def:graph_functor} in terms of  functors $F_\circ: \mathcal C(\Gamma'_\circ)\to \mathcal C(\Gamma_\circ)$ between the path categories of the associated edge subdivisions.

\begin{proposition} \label{lem:graphtrafo_vertnb}For each of the functors $F:\mathcal C(\Gamma')\to \mathcal C(\Gamma)$  from Definition \ref{def:graph_functor} there is a  functor  $F_\circ:\mathcal C(\Gamma'_\circ)\to \mathcal C(\Gamma_\circ)$ such that the following diagram commutes
\begin{align*}
\xymatrix{ \mathcal C(\Gamma') \ar[r]^{F} \ar[d]^{G_{\Gamma'}} & \mathcal C(\Gamma)  \ar[d]^{G_\Gamma}\\
\mathcal C(\Gamma'_\circ) \ar[r]_{F_\circ}& \mathcal C(\Gamma_\circ).
}
\end{align*}
The functors $F_\circ$ are given by  canonical  maps  $g_{V\circ}: V(\Gamma'_\circ)\to V(\Gamma_\circ)$ and the following assignments of paths in $\Gamma_\circ$ to edge ends in $\Gamma'$:  
\begin{compactenum}[(a)]
\item {\bf Deleting an edge $e$:}  
\begin{align*}  D_{e\;\circ}:\;f'\mapsto f\qquad\forall f'\in E(\Gamma'_\circ).\qquad\qquad\qquad\qquad\qquad\qquad\qquad\qquad\quad\end{align*}  
\item {\bf Contracting an edge $e$ towards $\st(e)$:} 
$$C_{\st(e)\,\circ}:\;f'\mapsto
\begin{cases}
f & \ta(e)\notin\{\st(f),\ta(f)\}\\
f\circ t(e)\circ s(e) &f\in E(\Gamma_\circ)\setminus\{t(e), s(e)\}, \st(f)=\ta(e)\\
 s(e)^\inv\circ t(e)^\inv\circ f & f\in E(\Gamma_\circ)\setminus\{s(e), t(e)\}, \ta(f)=\ta(e)
\end{cases}
$$

\item {\bf Contracting an edge $e$ towards $\ta(e)$:}  
$$C_{\ta(e)\,\circ}:\;f'\mapsto
\begin{cases}
f & \st(e)\notin\{\st(f),\ta(f)\}\\
 f\circ s(e)^\inv\circ t(e)^\inv  &f\in E(\Gamma_\circ)\setminus\{t(e), s(e)\}, \st(f)=\st(e)\\
 t(e)\circ s(e)\circ f & f\in E(\Gamma_\circ)\setminus\{s(e), t(e)\}, \ta(f)=\st(e)
\end{cases}
$$

\item {\bf Adding a loop $e''$ at $v$:}  
$$A_{v\,\circ}:\;f'\mapsto
\begin{cases}
f & f'\in E(\Gamma'_\circ)\setminus\{s(e''), t(e'')\}\\
\emptyset_v & f'\in\{s(e''),t(e'')\}
\end{cases}\qquad\qquad\qquad\qquad\qquad\quad
$$

\item {\bf Detaching adjacent edge ends $e_1, e_2$ from $v$:} 
\begin{align*}
W_{e_1e_2\,\circ}:\;f'\mapsto 
\begin{cases}
f & f'\in E(\Gamma')\setminus \{s(e')\}\\
s(e_2)\circ t(e_1)\circ s(e_1) & f'=s(e')
\end{cases}\qquad\qquad\quad
\end{align*}

\item {\bf Doubling the edge $e$:}  
$$
Do_{e\,\circ}:\; f'\mapsto \begin{cases}
f & f'\in E(\Gamma'_\circ)\setminus\{s(e''), t(e''), s(e'), t(e')\}\\
t(e) & f'\in\{t(e''), t(e')\}\\
s(e) & f'\in\{s(e''), s(e')\}
\end{cases}\qquad\qquad\quad\;
$$
\end{compactenum}
\end{proposition}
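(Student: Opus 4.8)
The plan is to use the characterisation of functors on free path categories recalled just before Definition \ref{def:graph_functor}, now applied with $\Gamma'$ replaced by $\Gamma'_\circ$ and $\Gamma$ by $\Gamma_\circ$: a functor $F_\circ:\mathcal C(\Gamma'_\circ)\to\mathcal C(\Gamma_\circ)$ inducing a functor on the associated groupoids is determined uniquely by a vertex map $g_{V\circ}:V(\Gamma'_\circ)\to V(\Gamma_\circ)$ together with an assignment $f'\mapsto F_\circ(f')$ of a path in $\Gamma_\circ$ to each edge end $f'\in E(\Gamma'_\circ)$, subject to $\st(F_\circ(f'))=g_{V\circ}(\st(f'))$, $\ta(F_\circ(f'))=g_{V\circ}(\ta(f'))$, with $f'^\inv$ sent to $F_\circ(f')^\inv$; conversely any such data yields a functor. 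So the first step is to take the explicit vertex map and edge-end assignments displayed in (a)--(f) as the \emph{definition} of $F_\circ$ and to verify that this data is consistent, i.e.\ that each assigned path has the prescribed source and target. The reversal clause then fixes $F_\circ$ on oppositely oriented edge ends, and since $\mathcal C(\Gamma'_\circ)$ is free no further coherence relations remain. For the vertex map one sets $g_{V\circ}(v)=g_V(v)$ on the original vertices $v\in V(\Gamma')\subseteq V(\Gamma'_\circ)$, with $g_V$ the vertex map of $F$, and sends each midpoint vertex of $\Gamma'_\circ$ to the midpoint of $\Gamma_\circ$ dictated by the formulas; the only non-obvious assignments are the midpoints of the edges created by the operation, e.g.\ in (e) the midpoint of the combined edge $e'$ is sent to the midpoint of $e_2$.

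Once $F_\circ$ is a well-defined functor, the commutativity $G_\Gamma\circ F=F_\circ\circ G_{\Gamma'}$ only has to be checked on objects and on the generating morphisms of $\mathcal C(\Gamma')$, since both composites are functors out of a free category. On objects $v\in V(\Gamma')$ one has $G_\Gamma(F(v))=g_V(v)$ viewed in $V(\Gamma_\circ)$, while $F_\circ(G_{\Gamma'}(v))=g_{V\circ}(v)$, and these agree precisely because $g_{V\circ}$ was defined to extend $g_V$ on the original vertices. For a generating edge $e'\in E(\Gamma')$ the identity to exploit is $G_{\Gamma'}(e')=t(e')\circ s(e')$ from Definition \ref{def:functor_subdivision}, whence $F_\circ(G_{\Gamma'}(e'))=F_\circ(t(e'))\circ F_\circ(s(e'))$, to be compared with $G_\Gamma(F(e'))$. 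The latter is computed by taking the path $F(e')$ from Definition \ref{def:graph_functor} and replacing every edge $e$ of $\Gamma$ in it by $t(e)\circ s(e)$ and every $e^\inv$ by $s(e)^\inv\circ t(e)^\inv$. Thus the whole proof reduces to matching, case by case, the two resulting reduced words in $E(\Gamma_\circ)$.

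I would then run through (a)--(f) in turn. Deletion (a) and doubling (f) are immediate, since $F_\circ$ acts essentially as the identity on edge ends and $F$ as the identity on the surviving edges; adding a loop (d) is equally direct, the subdivided loop collapsing to $\emptyset_v$. The substantive cases are the two contractions (b), (c) and the detaching (e), which I would treat by the sub-cases of Definition \ref{def:graph_functor} according to how the unaffected edge $f$ meets the contracted vertex. For instance, in (b) when $f$ is a loop at $\ta(e)$ one has $F(f')=e^\inv\circ f\circ e$, so $G_\Gamma(F(f'))=s(e)^\inv\circ t(e)^\inv\circ t(f)\circ s(f)\circ t(e)\circ s(e)$, and this is reproduced by $F_\circ(t(f'))\circ F_\circ(s(f'))$ using the two non-trivial branches of $C_{\st(e)\,\circ}$; similarly in (e) one checks $F_\circ(t(e'))\circ F_\circ(s(e'))=t(e_2)\circ s(e_2)\circ t(e_1)\circ s(e_1)=G_\Gamma(e_2\circ e_1)$.

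The main obstacle is exactly this bookkeeping in the contraction and detaching cases, where $F(e')$ is a non-trivial composite and one must keep careful track of which edge end ($s$ or $t$) of each edge is incident to the contracted vertex, together with the orientations, so that the subdivided paths on the two sides of the square coincide as reduced words. Care is likewise needed to fix $g_{V\circ}$ correctly on the newly created midpoint vertices and to confirm that the prescribed paths have matching endpoints there; once these endpoint compatibilities hold, each case reduces to a short mechanical comparison of composites, and the induced statement for the path groupoids $\mathcal G$ follows automatically from the reversal compatibility of all four functors.
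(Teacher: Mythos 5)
Your proposal is correct and follows essentially the same route as the paper: define $g_{V\circ}$ by extending $g_V$ on $V(\Gamma')$ and sending each midpoint vertex to the appropriate midpoint in $\Gamma_\circ$, then verify $F_\circ(t(f')\circ s(f'))=G_\Gamma(F(f'))$ on each generating edge $f'\in E(\Gamma')$ by comparing reduced words case by case. The paper compresses this verification into "a short computation shows", whereas you spell out the representative sub-cases (the loop at the contracted vertex, the detached edge pair) explicitly; the checks you display are accurate.
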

\begin{proof} Let $\Gamma'$ be obtained from $\Gamma$ by  one of the graph operations in Definition \ref{def:graphtrafos} and
denote for each edge $f\in E(\Gamma)$ or  $f'\in E(\Gamma')$ by $m(f)$ and $m(f')$, respectively,   the bivalent vertex of  $V(\Gamma_\circ)$ or 
$V(\Gamma'_\circ)$ at the midpoint of $f$ or $f'$. Define  $g_{V_\circ}: V(\Gamma'_\circ)\to V(\Gamma_\circ)$ by $g_V(m(f'))=m(F(f'))$ for all $f'\in E(\Gamma'_\circ)$ and $g_{V_\circ}(v')=g_V(v')$ for all $v'\in V(\Gamma')$, where   $F: \mathcal C(\Gamma')\to\mathcal C(\Gamma)$ is the associated functor from Definition \ref{def:graph_functor}. Then one has $g_{V_\circ}\vert_{V(\Gamma')}=g_V$, and a  short computation shows 
 that  the expressions in Definition \ref{def:graph_functor} and Proposition \ref{lem:graphtrafo_vertnb} imply $F_\circ(t(f')\circ s(f'))=t(F(f'))\circ s(F(f'))$ for each edge $f'\in E(\Gamma')$.
\end{proof}

Note that some of the functors in Definition \ref{def:graph_functor} have (strict) right or left inverses. The contraction functors $C_{\st(e)}$ and $C_{\ta(e)}$ from Definition \ref{def:graph_functor} (b) and (c) have left inverses. For $C_{\st(e)}$, the left inverse is  given by $g: V(\Gamma)\to V(\Gamma')$, $\ta(e)\mapsto \st(e)$, $v\mapsto v$ for $v\in V(\Gamma)\setminus\{\ta(e)\}$ and the assignment $f\mapsto f'$ for all $f\in E(\Gamma)\setminus\{e\}$, $e\mapsto \emptyset_{\st(e)}$. For $C_{\ta(e)}$, it is  given by $g: V(\Gamma)\to V(\Gamma')$, $\st(e)\mapsto \ta(e)$, $v\mapsto v$ for $v\in V(\Gamma)\setminus\{\st(e)\}$  and the assignment $f\mapsto f'$ for all $f\in E(\Gamma)\setminus\{e\}$, $e\mapsto \emptyset_{\ta(e)}$.  The functors  $A_v$  and $Do_{e}$ from Definition \ref{def:graph_functor}  (d)  and (f) have a right inverses. The right inverse of  $A_v$ is the functor $D_{e''}$ that deletes the loop $e''$. The right inverses of the edge  doubling functor $Do_e$  are the functors $D_{e'}$ and $D_{e''}$ that delete the edges $e'$ or $e''$. The functor $D_e$ has a left inverse if and only if $e$ is either a loop or if $e$ is part of an edge pair as in Figure \ref{fig:graph_ops} (f), namely the functors $A_v$ or $Do_e$, respectively. 
Otherwise it has neither a left nor a right inverse.
The detaching functor $W_{e_1e_2}$  from Definition \ref{def:graph_functor} (e) has neither a right nor a left inverse.

The graph operations in Definition \ref{def:graphtrafos} and the associated functors in Definition \ref{def:graph_functor} are not independent but exhibit relations.  In addition to the relations involving their left or right inverses, these include the relation
 depicted in Figure \ref{fig:graphops_kombi} that relates the detaching of adjacent edge ends from a trivalent vertex,  a contraction towards the starting vertex   and the deleting of edges.

The graph operations in Definition \ref{def:graphtrafos} and the associated functors in Definition \ref{def:graph_functor} commute in the obvious way, whenever their composition is defined. For instance, by  contracting several  different edges of $\Gamma$ one obtains the same ribbon graph $\Gamma'$, independently of the order of the contractions.
 Similarly, edges can be removed  and loops can be added in any order, and these operations do not affect each other
as long as their composition is possible.
They  are distinguished from other possible operations insofar as they induce canonical functors between the path groupoids and that they are compatible with the ciliated ribbon graph structure: if  $\Gamma'$ is obtained from a ciliated ribbon graph $\Gamma$ by one of the graph operations in Definition \ref{def:graphtrafos},  then $\Gamma'$ inherits a ciliated ribbon graph structure from $\Gamma$, as indicated in Figure \ref{fig:graph_ops}.

In Section \ref{sec:graphops} we will show that these graph operations give rise to homomorphisms of module algebras between Hopf algebra gauge theories on $\Gamma'$ and $\Gamma$. This allows one to determine   how the Hopf algebra gauge theories depend on the choice of the  ciliated ribbon graph.
For this, it is important to note 
that graph operations in Definition \ref{def:graphtrafos}  are  complete in the sense that they allow one to relate any two ribbon graphs that can be embedded into a given surface, to describe connected sums of surfaces and to describe simple paths on the surfaces associated with a ribbon graph $\Gamma$. More specifically, the geometrical 
 applications of these graph operations are the following:
 
   \begin{figure}
\centering
\includegraphics[scale=0.4]{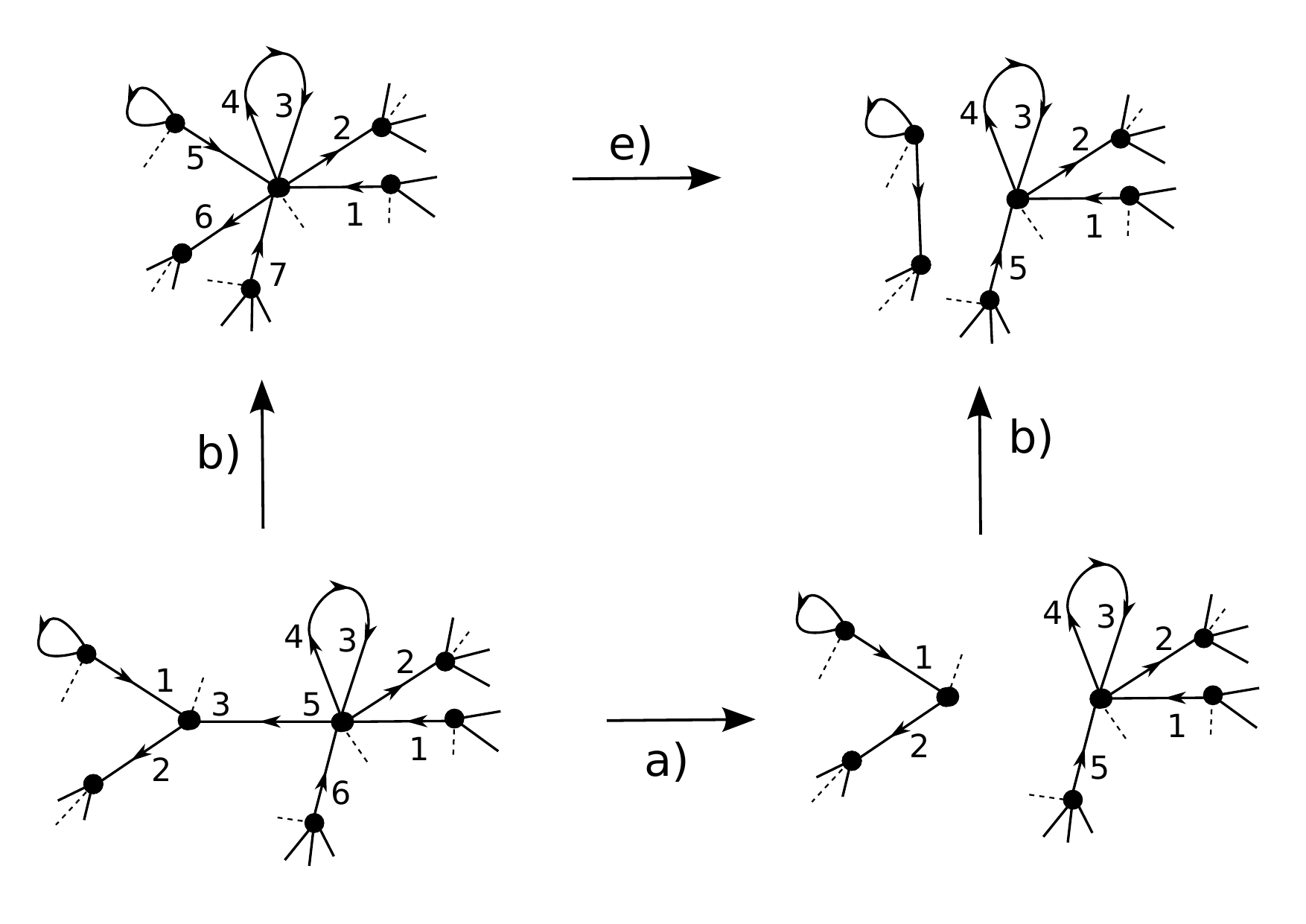}
\caption{A relation between graph operations.}
\label{fig:graphops_kombi}
\end{figure}

\begin{figure}
\centering
\includegraphics[scale=0.45]{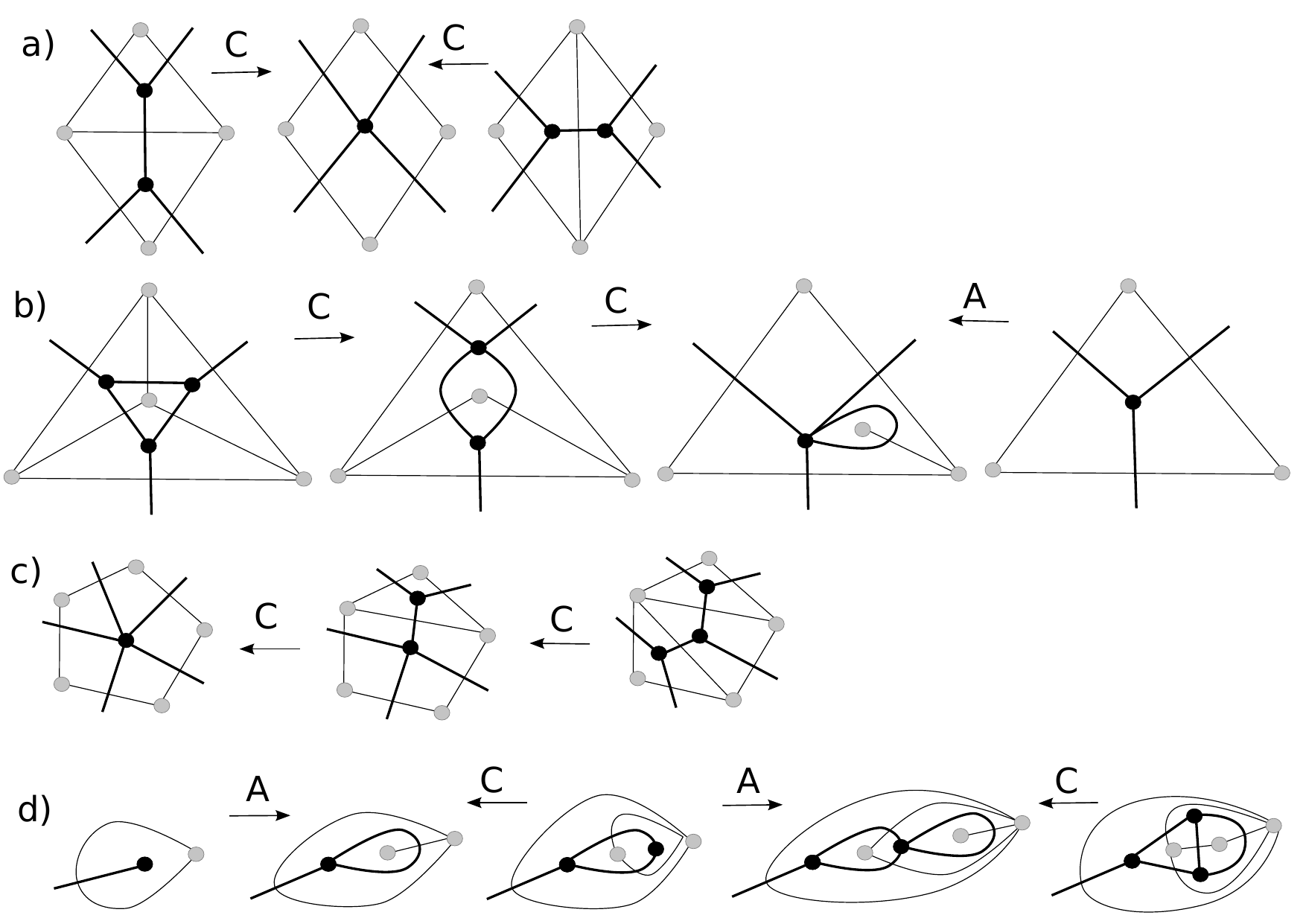}
\caption{Transforming a ribbon graph by graph transformations.\newline
a) The 2-2 Pachner move, b) the 3-1 Pachner move, c) Splitting a vertex into 3-valent vertices, d) Transforming an edge with a univalent vertex into a 3-valent graph without loops.}
\label{fig:pmoves}
\end{figure}
 
The operation of deleting edges allows one to construct subgraphs of $\Gamma$.  It is also  related to the connected sum of surfaces.
Suppose $\Gamma$ is a connected  (ciliated) ribbon graph such that erasing an edge $e\in E(\Gamma)$ yields a (ciliated) ribbon graph that is the topological sum $\Gamma'\coprod\Gamma''$ of two connected components. Then the surface $\Sigma_\Gamma$ obtained by gluing discs to the faces of $\Gamma$ is 
 the connected  sum $\Sigma_\Gamma=\Sigma_{\Gamma'}\#\Sigma_{\Gamma''}$ of the corresponding surfaces for $\Gamma'$ and $\Gamma''$.

The operation of contracting edges reduces the number of vertices in a (ciliated) ribbon graph $\Gamma$. Moreover,
if $\Gamma'$ is obtained from $\Gamma$ by an edge contraction, then the surfaces $\dot \Sigma_\Gamma$ and $\dot \Sigma_{\Gamma'}$ obtained by gluing annuli to all faces of $\Gamma$ and $\Gamma'$ are homeomorphic. 
 In particular, by selecting a rooted tree $T\subset\Gamma$ and contracting all edges of $T$, one can transform any connected (ciliated) ribbon graph $\Gamma$ into a bouquet, i.e.~a (ciliated) ribbon graph with a single vertex. 
The loops of this bouquet are a set of generators of the fundamental group $\pi_1(\dot \Sigma_\Gamma)$.
Moreover, by contracting for each edge $e\in E(\Gamma)$ one of the edge ends $s(e), t(e)\in E(\Gamma_\circ)$  in the edge subdivision $\Gamma_\circ$ towards a vertex in $\Gamma$, one obtains the graph $\Gamma$. Hence every (ciliated) ribbon graph $\Gamma$ is obtained from a (ciliated)  ribbon graph without loops or multiple edges by a finite number of edge contractions. 

\begin{figure}
\centering
\includegraphics[scale=0.4]{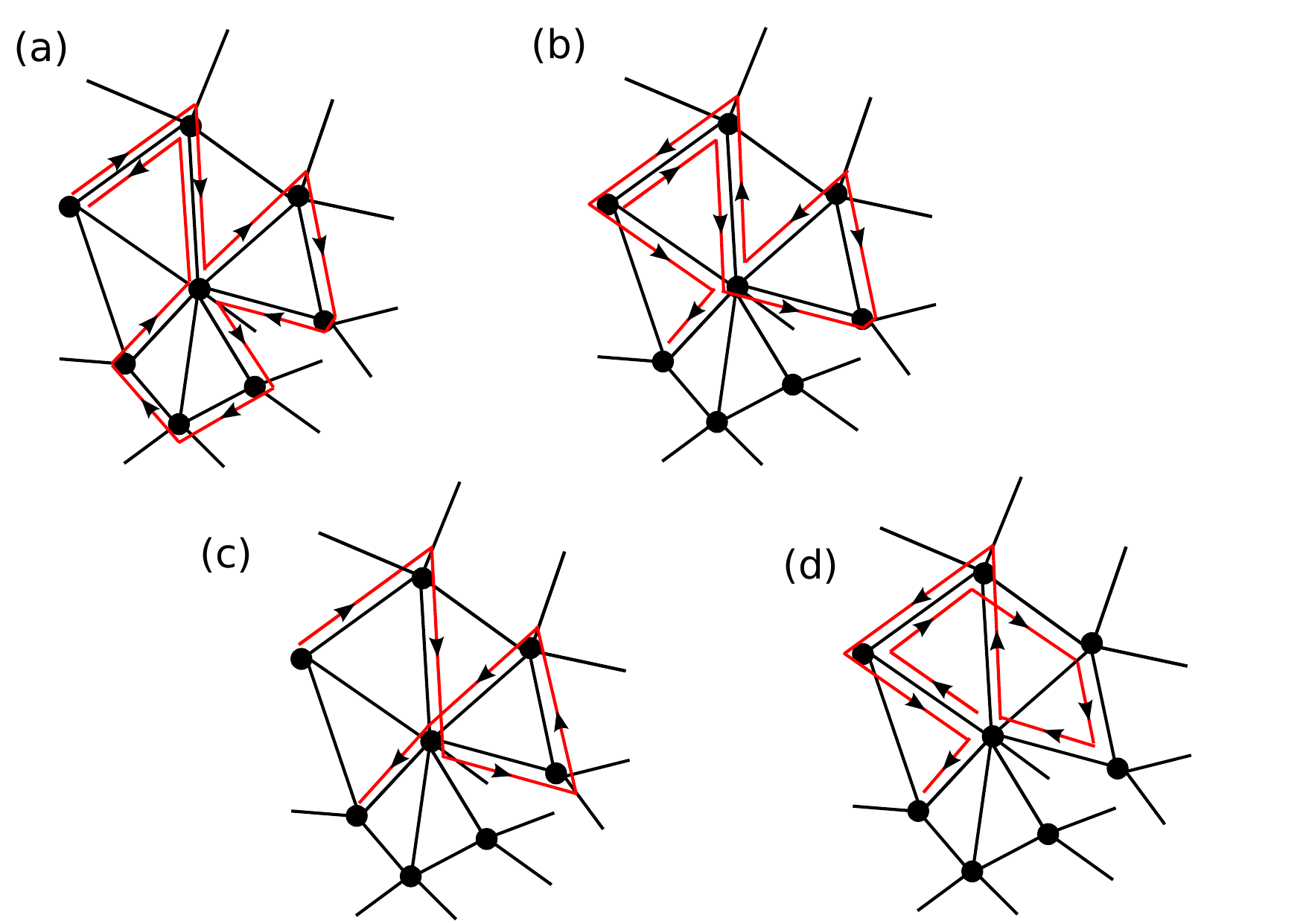}
\caption{(a),(b) regular paths, (c),(d) non-regular paths in a ribbon graph $\Gamma$. Edge orientation is omitted.}
\label{fig:nonreg_paths}
\end{figure}

\begin{figure}
\centering
\includegraphics[scale=0.4]{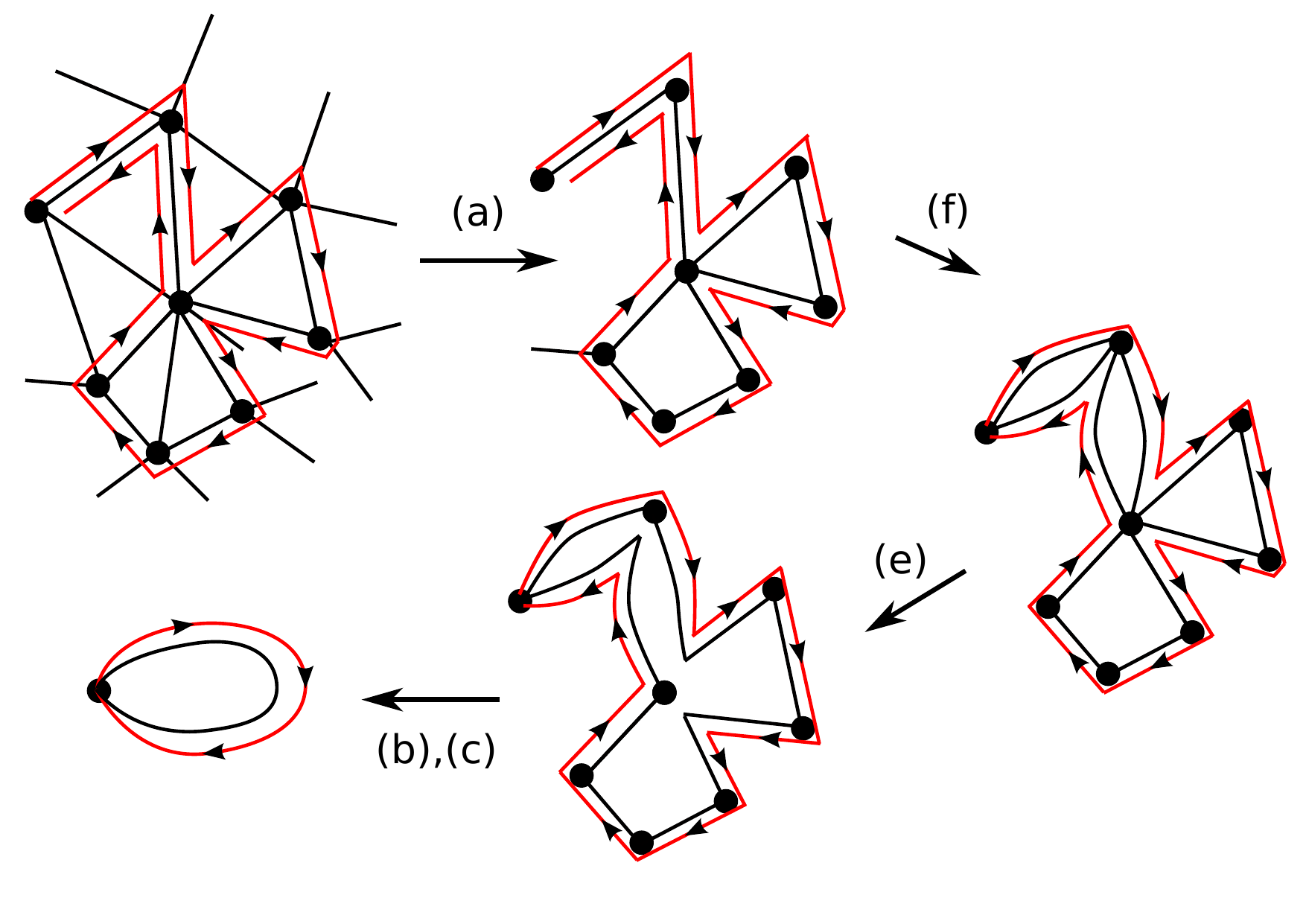}
\caption{Transforming a regular path into a path with only bivalent vertices in which each edge is traversed exactly once. Edge orientation is omitted.}
\label{fig:reg_paths}
\end{figure}

 Together, the operations of contracting edges and adding loops and their left and right inverses allow one to relate any two  ribbon graphs that can be embedded into the same compact oriented surface. 
 
 \begin{proposition}\label{prop:addcont}  $\quad$ 
 \begin{enumerate}
 \item Let $\Gamma,\Gamma'$ be  ribbon graphs such that attaching discs to all faces of $\Gamma$ and $\Gamma'$ yields homeomorphic surfaces $\Sigma_\Gamma$ and $\Sigma_{\Gamma'}$. Then $\Gamma$ and $\Gamma'$ are related by sequences of edge contractions and adding or deleting loops. 
 \item Let $\Gamma, \Gamma'$ ribbon graphs such that attaching annuli to all faces of $\Gamma$ and $\Gamma'$ yields homeomorphic surfaces $\dot \Sigma_\Gamma$ and $\dot \Sigma_{\Gamma'}$. Then $\Gamma$ and $\Gamma'$ are related by sequences of edge contractions.
 \end{enumerate}
 \end{proposition}

 \begin{proof} 
1.~By applying edge contractions as in Figure \ref{fig:pmoves} c) and contracting edges with bi- and univalent vertices,  it is possible to transform  every ribbon graph into a 3-valent   ribbon graph.  A 3-valent  ribbon graph  is dual to a (degenerate) oriented triangulation of  the associated compact surface $\Sigma_\Gamma$. It is shown in \cite{P} that any two triangulations are related by a finite sequence of the Pachner moves shown in Figure \ref{fig:pmoves} a), b). The 2-2 Pachner move in Figure \ref{fig:pmoves} a) acts on the dual ribbon graph by  contracting an edge between two 3-valent vertices and then expanding the resulting vertex. The 3-1 Pachner move  in Figure \ref{fig:pmoves} b)  acts on the dual ribbon graph  by  contracting edges and removing a loop. 
This shows that any two  ribbon graphs embedded into a given compact surface $\Sigma$ such that $\Sigma\setminus \Gamma$ and $\Sigma\setminus \Gamma'$ are disjoint unions of discs,  are related by a finite number of edge contractions and adding or removing loops. 
 
 2.~ If the surfaces $\dot\Sigma_\Gamma$ and $\dot\Sigma_{\Gamma'}$ are homeomorphic, then the associated triangulations are related by an action of the mapping class group and hence can be transformed into each other via the  Whitehead move or  2-2-Pachner move  in Figure \ref{fig:pmoves} a) (for a detailed discussion of  mapping class group actions on triangulated punctured surfaces, see for instance \cite[Chapters 1 and 5]{Pe}). As the 2-2 Pachner move involves only edge contractions, this shows that $\Gamma$ and $\Gamma'$ are related by edge contractions.
 \end{proof}

The operations of detaching adjacent edge ends from a vertex and doubling edges become important when we consider paths on the associated surfaces $\Sigma_\Gamma$ and $\dot\Sigma_\Gamma$. They allow one to construct paths in a (ciliated) ribbon graph $\Gamma$ that represent simple paths on $\Sigma_\Gamma$ and $\dot\Sigma_\Gamma$.
A closed path $p\in\mathcal G(\Gamma)$ represents the free homotopy class of a simple path on $\dot \Sigma_\Gamma$, i.e.~of an injective continuous map  $\gamma: S^1\to \dot \Sigma_\Gamma$,  if and only if it can be transformed into a path $p'\in\mathcal G(\Gamma')$ that traverses only bivalent vertices  of $\Gamma'$ and traverses each edge at most once by applying finitely many edge deletions,  edge doublings and detaching finitely many adjacent edge ends  from vertices.
This follows because any simple path $\gamma: S^1\to \dot \Sigma_\Gamma$ can be transformed into  a path that is homotopic to such a path $p'$  by enlarging the holes in the annuli of $\dot\Sigma_\Gamma$ and pushing  $\gamma$ towards $\Gamma$. Conversely, the procedures of detaching edge pairs from a vertex and doubling edges that are traversed several times by a path $p\in \mathcal G(\Gamma)$ associate to $p\in\mathcal G(\Gamma)$ a path $p'\in\mathcal G(\Gamma')$ that has the same homotopy class as $p$ in $\pi_1(\dot \Sigma_\Gamma)$.  It is also clear that a path in an embedded graph $\Gamma'$ that traverses only bivalent vertices and traverses each edge at most once cannot have any self-intersections and hence is simple.

\begin{definition} \label{def:regular}Let $\Gamma$ be a ribbon graph. A path  $p$  in $\Gamma$ is called {\bf regular} if 
there is a ribbon graph $\Gamma'$ such that each vertex of $\Gamma'$ is at most bivalent and a path $p'\in \mathcal G(\Gamma')$
with the following properties:
\begin{compactenum}[(i)]
\item $\Gamma'$ is  obtained from $\Gamma$ by deleting edges that do not occur in $p$, doubling edges in $p$ and detaching adjacent edge ends  in $p$. 
\item $F(p')=p$, where  $F:\mathcal G(\Gamma')\to\mathcal G(\Gamma)$ is the functor from Definition \ref{def:graph_functor} associated with these graph transformations and $p'$ traverses each edge of $\Gamma'$ exactly once.
\end{compactenum}
 \end{definition}

Examples of regular and non-regular paths are shown in Figure \ref{fig:nonreg_paths}. The transformation of a regular path $p\in \mathcal G(\Gamma)$ into the associated path $p'\in\mathcal G(\Gamma')$  is shown in Figure \ref{fig:reg_paths}.
Note that any face path $f=e_n^{\epsilon_n}\circ\ldots\circ e_1^{\epsilon_1}$ in a ciliated ribbon graph $\Gamma$ that is compatible with the ciliation  is regular. 
If one doubles each edge of $f$ that is traversed twice and then selects a path $f'$ in the resulting ribbon graph that always traverses the left of the two resulting edges, viewed in the direction of $f$, then the
 conditions $s(e_{i+1}^{\epsilon_{i+1}})< t(e_i^{\epsilon_i})$ and $s(e_1^{\epsilon_1})<t(e_n^{\epsilon_n})$ in Definition \ref{def:cilpath} ensure that 
two consecutive edge ends in $f'$ can always be detached from their common vertex.

\section{Hopf algebra gauge theory on a ciliated ribbon  graph}
\label{sec:gtheory}

In this section, we introduce  local Hopf algebra gauge theories on  (ciliated) ribbon graphs $\Gamma$ with values in a Hopf algebra $K$. 
We start by  characterising Hopf algebra gauge theories in terms of certain axioms. These axioms are sufficient  to obtain (i) a notion of connection or gauge field,  (ii) a notion of an algebra of functions on connections, (iii) a notion of gauge transformations acting on connections and by duality on functions, and (iv) an algebra of gauge invariant observables, all subject  to  certain 
 locality conditions. 
As  these axioms  generalise  the axioms for  lattice gauge theory with values in a group, we start with a summary of the latter in Section \ref{subsec:groupgtheory} and then generalise this description   to Hopf algebras in Sections  \ref{subsec:haxioms} to \ref{subsec:algfunc}.

\subsection{Notation and conventions}
\label{subsec:conventions}

In the following, we consider finite-dimensional Hopf algebras 
over a field $\FF$ of characteristic zero.  Some basic facts about Hopf algebras and about module algebras over Hopf algebras are collected in  appendices \ref{sec:hopfalg} and \ref{sec:modulealg}.  
Throughout the article, we  use Sweedler notation  without summation signs, writing
$\Delta(h)=\low h 1\oo\low h 2$
for the comultiplication $\Delta: H\to H\oo H$  of a Hopf algebra $H$.  We
also use this notation for other elements of $H\oo H$, e.g.~$R=\low R 1\oo\low R 2$ for an $R$-matrix. We denote by
$H^{op}$ and $H^{cop}$, respectively, the Hopf algebra  with the opposite multiplication and comultiplication and by $H^*$ the dual Hopf algebra.
Unless specified otherwise, we use Latin letters for elements of $H$ and Greek letters for  elements of $H^*$. 
 The pairing between $H$ and $H^*$ is denoted 
 $\langle\;,\;\rangle: H^*\oo H\to\FF$, $\alpha\oo h\mapsto \alpha(h)$, and 
the same notation is used for the induced pairing on tensor products.

For a  vector space $V$  and $n\in\NN$ we denote by  $V^{\oo n}$ the $n$-fold tensor product of $V$ with itself. If $V$ is also an algebra with unit $1$, then for $i_1,...,i_k\in\{1,...,n\}$ pairwise distinct and $v^1,...,v^k\in V$, we denote by
$(v^1\oo v^2\oo ... \oo v^k)_{i_1...i_k}$ the element of $V^{\oo n}$ that has the entry $v^j$ in $i_j$th  component for $i\in\{1,...,k\}$
and 1 in all other components.  Similarly, we denote by   $\iota_{i_1...i_k}$ the injective linear map 
 $\iota_{i_1...i_k}: V^{\oo k}\to V^{\oo n}$, $v^1\oo\ldots\oo v^k\mapsto (v^1\oo\ldots\oo  v^k)_{i_1...i_k}$.
Dually, if $V$ is a coalgebra with counit $\epsilon$, we denote by $\pi_{i_1...i_k}: V^{\oo n}\to V^{\oo k}$ the map that is the identity on the entries $i_1,...,i_k$ and  applies the counit  $\epsilon$ 
to all other entries.

In Section \ref{subsec:groupgtheory}, we  use analogous notation for  the $n$-fold direct product  $G^{\times n}$ of a group $G$ with itself, writing 
$(g^1, g^2, ... ,g^k)_{i_1...i_k}$
for an element of $G^{\times n}$ with the entry  $g^j\in G$ in the $i_j$th position  and the identity element in all other arguments.  We also consider the injective group homomorphisms
$\iota_{i_1...i_k}: G^{\times k}\to G^{\times n}$, $(g^1,...,g^k)\mapsto (g^1,...,g^k )_{i_1...i_k}$  and the surjective group homomorphisms \linebreak $\pi_{i_1...i_k}: G^{\times n}\to G^{\times k}$, $(g_1,...,g_n)\mapsto (g_{i_1},...,g_{i_k})$. 

As an extension of this notation, when $X$ is a set and $|X|$ its cardinality, rather than writing $V^{\oo |X|}$ we simply write $V\exoo{X}$ for the tensor power of $V$ with one factor for each element of $X$.  We do this especially when $X$ is the set $E$ of edges or the set $V$ of vertices in a ribbon graph.  When $v\in V(\Gamma)$ is a vertex in a ribbon graph, we also write $V\exoo{v}$ to denote the tensor power with factors indexed by the {\em edge ends} at $v$.  This notation is justified by identifying $v$ with its  set of edge ends.

\subsection{Graph gauge theory for a group}
\label{subsec:groupgtheory}

In its most basic version,  a   lattice gauge theory on a directed graph $\Gamma$ with values in a group $G$  involves (i) a set of  {\em connections} or {\em gauge fields}, (ii) an {\em algebra of functions} from set of connections into a field $\FF$ and (iii) a group of {\em gauge transformations}. 
Connections  are  assignments of a group element   $g_e\in G$ to each oriented edge $e\in E$ and hence can be identified with elements of the set $G\extimes{E}$. Functions on connections  are   maps
 $G\extimes{E}\to \FF$. They  form an algebra with respect to  pointwise multiplication.
Gauge transformations  are assignments 
of a group element $g_v\in G$ to each vertex $v\in V$. Their  composition  is given by the multiplication of $G$ at each vertex $v\in V$.  Hence the group of gauge transformations can be identified with the group $G\extimes{V}$.

The action of gauge transformations  on connections and functions is given by a left action  $\rhd: G\extimes{V}\times G\extimes{E}\to G\extimes{E}$ and  the associated right action $\lhd^*: \mathrm{Fun}(G\extimes{E})\times G\extimes{V} \to \mathrm{Fun}(G\extimes{E})$  defined by $( f\lhd^*h)(g)=f(h\rhd g)$ for all $h\in G\extimes{V}$, $g\in G\extimes{E}$ and $f\in \mathrm{Fun}(G\extimes{E})$.   
These group actions are required to be {\em local}
in the sense that a gauge transformation at a vertex $v\in V$ acts non-trivially only  on the group elements of  edges incident at $v$. It is given by left multiplication, right multiplication with the inverse and conjugation, respectively, for incoming edges, outgoing edges  and loops at $v$. 
Note that these requirements are consistent with a reversal of the edge orientation  if and only if  one assigns to the reversed edge  the inverse group element. Hence the reversal of edge orientation is implemented by taking the inverse in the group.

Their behaviour with respect to gauge transformations  distinguishes certain functions $f\in \mathrm{Fun}(G\extimes{E})$, namely  the
 {\em gauge invariant} functions  or {\em observables} of the theory. These are functions $f\in \mathrm{Fun}(G\extimes{E})$ with  $f\lhd^* h=f$ for all $h\in G\extimes{V}$. As one has by definition $(f\cdot f')\lhd^*h=(f\lhd^* h)\cdot (f'\lhd^* h)$, the gauge invariant functions form a subalgebra $\mathrm{Fun}_{inv}(G\extimes{E})\subset \mathrm{Fun}(G\extimes{E})$. Summarising these considerations and results, one obtains the following definition of a lattice gauge theory with values in a group $G$.

\begin{definition} \label{def:grgt}Let $\Gamma$ be a directed graph,  $G$ a group. A $G$-valued gauge theory on $\Gamma$  consists of:
\begin{compactenum}
\item The set $G\extimes{E}$ of  {\bf connections}.
\item The algebra  $\mathrm{Fun}(G\extimes{E})$ of {\bf functions} $f: G\extimes{E}\to\FF$  with the pointwise multiplication.
\item The group $G\extimes{V}$ of {\bf gauge transformations}.
\item A group action $\rhd: G\extimes{V}\times G\extimes{E}\to G\extimes{E}$ 
and the dual action $\lhd^*: \mathrm{Fun}(G\extimes{E})\times  G\extimes{V} \to \mathrm{Fun}(G\extimes{E})$ given by $(f\lhd^* h)(g)=f(h\rhd g)$ for all $g\in G\extimes{E}$ and $h\in G\extimes{V}$ such that:
\begin{align*}
&\pi_e((h)_v\rhd(g)_e)=g\quad v\notin\{\st(e),\ta(e)\}, & &\pi_e((h)_v\rhd (g)_e)=h\cdot g\cdot h^\inv \qquad \text{for } \st(e)=\ta(e)=v\\
&\pi_e((h)_{\ta(e)}\rhd (g)_e)=h\cdot g , & &\pi_e((h)_{\st(e)}\rhd (g)_e)=g\cdot h^\inv \qquad\; \text{for }\st(e)\neq \ta(e).
\end{align*}

\end{compactenum}
A function $f: G\extimes{E}\to\FF$ is called {\bf gauge invariant} if $f\lhd^* h=f$ for all $h\in G\extimes{V}$. The subalgebra $\mathrm{Fun}_{inv}(G\extimes{E})\subset \mathrm{Fun}(G\extimes{E})$ of gauge invariant functions is called {\bf algebra of observables}.
\end{definition}

In group-valued lattice  gauge theory, the holonomy assigns to each path $p\in \mathcal G(\Gamma)$ a map $\hol_p: G\extimes{E}\to G$ taking connections to group elements.
 If $p=e_n^{\epsilon_n}\circ ...\circ e_1^{\epsilon_1}$,  then  $\hol_p(g_1,...,g_{|E|})=g_{e_n}^{\epsilon_n}\circ ...\circ g_{e_1}^{\epsilon_1}$ and if  $p=\emptyset_v$ with $v\in V$, then  $\hol_{p}(g_1,...,g_{|E|})=1$. This assignment satisfies  $\hol_{q\circ p}=\hol_q\cdot\hol_p$, $\hol_{p\circ \emptyset_u}=\hol_{\emptyset_v\circ p}=\hol_p$ and $\hol_{p^\inv\circ p}=\hol_{p\circ p^\inv}=1$ for all paths $p$ from $u$ to $v$ and all paths $q$ from $v$ to $w$. In other words, if we equip the set $\mathrm{Fun}(G\extimes{E}, G)$ with a group structure by pointwise multiplication and interpret it as a groupoid with a single object, then holonomy defines a functor $F:\mathcal G(\Gamma)\to \mathrm{Fun}(G\extimes{E}, G)$.

While all structures so far  are defined for {\em directed graphs} $\Gamma$,  the notion of curvature  requires additional structure on  $\Gamma$, namely  the notion of a face.
As explained in Section \ref{subsec:graphs},  faces are defined for ribbon graphs as equivalence classes of closed paths that  turn maximally left  at all vertices, including its starting vertex, and traverse each edge at most once in each direction.  
In the associated oriented surface $\Sigma_\Gamma$, the faces of $\Gamma$ represent paths that border a disc. This  makes it natural to interpret the holonomy around a face as the curvature of the connection inside this disc.

\begin{definition}\label{def:groupflat} Let $G$ be a group,  $\Gamma$  a ribbon graph and consider a $G$-valued  gauge theory on $\Gamma$. Then for each face path $f\in \mathcal G(\Gamma)$ and connection $g\in G\extimes{E}$,  the holonomy $\hol_f(g)\in G$  is called the {\bf curvature} of $g$ at $f$. A connection $g\in G\extimes{E}$ is called {\bf flat at a face} if $\hol_f(g)=1$  for all face paths representing the face. It is called  {\bf flat} if it is flat at all faces of $\Gamma$.  
\end{definition}

Note that the flatness of a connection at a face does not depend on the choice of face path representing the face.  
Note also that  the action of a gauge transformation $h=(h_1,...,h_{|V|})\in G\extimes{V}$ on a holonomy  is given by
$\hol_p(h\rhd g)=h_{v}\cdot \hol_p(g)\cdot h_{u}^\inv$ for all paths $p$ from $u$ to $v$ and connections $g\in G\extimes{E}$. This implies in particular  that 
the set of connections that are flat at a given face of $\Gamma$ is invariant under gauge transformations.

Definitions  \ref{def:grgt} and \ref{def:groupflat}  can be modified to formulate lattice gauge theories for  topological groups or Lie groups. Connections are then identified, respectively,  with the topological space $G\extimes{E}$ or  the smooth manifold $G\extimes{E}$.
 Functions on  connections are required to be  continuous or smooth and identified with the algebras $C(G\extimes{E})$ or $C^\infty(G\extimes{E})$.  The action of gauge transformations on connections and functions must be an action of   topological groups or Lie groups. 
One can also impose that  the lattice gauge theory carries additional structures, such as a Poisson bracket in the Lie group case.  In this sense, Definitions \ref{def:grgt} and \ref{def:groupflat} contain the minimum requirements for a lattice gauge theory with values in a group.  They define a lattice gauge theory for the  category of groups, while the latter define lattice gauge theories in the categories of  topological groups or Lie groups.

\subsection{Graph gauge theory for a Hopf algebra - the axioms}
\label{subsec:haxioms}

In this section, we introduce the axioms for a Hopf algebra gauge theory on a ribbon graph $\Gamma$ in analogy to the ones for a gauge theory with values in a  group $G$. Let $\Gamma$ be a ribbon graph and $K$ a finite-dimensional Hopf algebra over $\FF$ with dual $K^*$ and
pairing $\langle\,,\rangle: K^*\oo K\to \FF$. For tensor powers of $K$ or $K^*$ we use the notation introduced in Section \ref{subsec:conventions}. 
Following the discussion in the preceding subsection, we then obtain the Hopf algebra  counterparts of connections, functions on connections and gauge transformations by linearising the corresponding structures  for groups.

\begin{compactenum}
\item {\bf Connections:} A connection with values in $K$ should replace the  assignment of a group element to each edge of the graph. Hence it  should be  viewed as an element of the vector space $K\exoo{E}$. The transformation of a connection under  orientation reversal for an edge $e\in E$ is implemented by applying an involution $T: K\to K$ to the copy of $K$ associated with $e$.\\[-1ex]

\item {\bf The algebra of functions:} The dual vector space ${K^*}\exoo{E}$ can be viewed as the Hopf algebra counterpart of the set  of functions $f: G\extimes{E}\to\FF$  in a group gauge theory, and the pairing $\langle\;,\;\rangle: {K^*}\exoo{E}\oo K\exoo{E}\to K\exoo{E}$ takes the place of the evaluation $\text{ev}: \mathrm{Fun}(G\extimes{E})\times G\extimes{E}\to\FF$, $(f,g)\mapsto f(g)$. As  the functions $f: G\extimes{E}\to \FF$ form not only a set but an {\em algebra} with respect to pointwise multiplication, we require that  the
 vector space ${K^*}\exoo{E}$ is also equipped with the structure of an associative unital algebra.  Its  unit  should  be viewed as the Hopf algebra counterpart of the constant function $f\equiv1$ on $G\extimes{E}$ and   be given by the element $1\exoo{E}\in {K^*}\exoo{E}$. \\[-1ex]

\item {\bf The Hopf algebra of gauge transformations:} A gauge transformation with values in $K$ should generalise the assignment of a group element to each vertex of $\Gamma$ and hence correspond to an element of the vector space $K\exoo{V}$. The linearised counterpart of the group structure on $G^{\times V}$ is a Hopf algebra structure on $K\exoo{V}$.
\\[-1ex]

\item {\bf The action of gauge transformations on connections and functions:} Just as in  group gauge theory, gauge transformations should act on connections and thus by duality on functions. Hence, the vector space $K\exoo{E}$ must be a  left module over the Hopf algebra $K\exoo{V}$. This  implies that ${K^*}\exoo{E}$ becomes a $K\exoo{V}$-right module
 with the dual $K\exoo{V}$-module structure (see Remark \ref{rem:dualstruct}). 
This is the Hopf algebra analogue of the identity $(f\lhd^* h^* )(g)=f(h\rhd  g)$ in the group case.\\[-1ex]

\item {\bf The subalgebra of observables:} The Hopf algebra analogue of a gauge invariant function on $G\extimes E$ is an invariant of the $K\exoo{V}$-module ${K^*}\exoo{E}$, i.~e.~an element $\alpha\in {K^*}\exoo{E}$  with
 $\alpha\lhd^* h=\epsilon(h)\,\alpha$ for all $h\in K\exoo{V}$. 
 In the group case, the gauge invariant functions form a subalgebra of the algebra $\mathrm{Fun}(G\extimes{E})$ with  the pointwise multiplication. 
 In the Hopf algebra case, this can only be achieved if the $K\exoo{V}$-module structure satisfies a certain compatibility condition with the algebra structure on ${K^*}\exoo{E}$. One must impose that  ${K^*}\exoo{E}$ is not only a $K\exoo{V}$-right module and an associative algebra, but a $K\exoo{V}$-right module {\em algebra}.  With this additional assumptions Lemma  \ref{lem:project}  ensures that the submodule of invariants is a subalgebra of ${K^*}\exoo{E}$.\\[-1ex]

\item {\bf Locality conditions:} The locality conditions for the gauge transformations in a group gauge theory  and their actions on connections and functions  have direct analogues for a Hopf algebra:
\\[-2ex]
\begin{compactenum}[(i)]
\item  The Hopf algebra structure on  $K\exoo{V}$  should be local in the sense that gauge transformations at different vertices commute.  In other words, as an algebra the Hopf algebra   $K\exoo{V}$ is the $|V|$-fold tensor product of the algebra  $K$. \\[-1ex]

\item The action of gauge transformations on functions and connections must be local in the sense that  functions 
of the form $(\alpha)_e$ with $e\in E$ span a submodule of ${K^*}\exoo{E}$  and are only affected by gauge transformations  at the starting and target vertex of $e$.  This amounts to  the conditions
 $  K\exoo{V}\rhd \iota_e(K^*)=\iota_e(K^*)$ and 
 $(k)_v\rhd (\alpha)_e=\epsilon(k)\, (\alpha)_e$ for all $\alpha\in K^*$, $k\in K$,  $v\in V\setminus\{\st(e), \ta(e)\}$. The corresponding conditions for connections  are obtained by duality.\\[-1ex]
\end{compactenum}
The  conditions on the algebra of functions are less obvious.  In a group gauge theory the algebra $\mathrm{Fun}(G\extimes{E})$ is local in the sense that the product of two functions $f,f': G\extimes{E}\to\FF$ that depend only on the copies of $G$ associated with edges $e,e'\in E$ depends only on the copies of $G$ associated with $e,e'$. Moreover, the algebra $\mathrm{Fun}(G\extimes{E})$ is commutative. 
While the first requirement can be formulated analogously for a Hopf algebra gauge theory, the second clearly is  too restrictive. In view of the locality conditions on gauge transformations it is natural to weaken it by imposing commutativity only for functions associated with edges that have no vertices in common. In other words:\\[-2ex]

\begin{compactenum}[(i)]
\setcounter{enumii}{2}
\item The algebra structure on the algebra  ${K^*}\exoo{E}$ of functions should be local in the sense that $(\alpha)_e\cdot (\beta)_e\in \iota_e(K^*)$,  $(\alpha)_e\cdot (\beta)_{e}\in \iota_{ee'}(K^*\oo K^*)$ for all $e,e'\in E$ and
$(\alpha)_e\cdot (\beta)_{e'}=(\beta)_{e'}\cdot (\alpha)_e$ for all edges $e,e'\in E$ that do not have a vertex in common.
\end{compactenum}
\end{compactenum}

These conditions impose  restrictions on the Hopf algebra structure on  $K\exoo{V}$, the  $K\exoo{V}$-module structures on 
 $K\exoo{E}$,  ${K^*}\exoo{E}$ and on  the algebra structure on $K\exoo{E}$. 
The  condition that $K\exoo{V}$ is a Hopf algebra and isomorphic to  $K\exoo{V}$ {\em as an algebra} restricts the possible coalgebra structures. In the absence of additional data or requirements, the only natural choices for the Hopf algebra structure  on $K\exoo{V}$ are the
$|V|$-fold tensor product of the Hopf algebra $K$ or $K^{cop}$.

Moreover, for each edge $e\in E$ with $\st(e)\neq \ta(e)$,  the locality conditions (i)  and (ii) imply that the action of gauge transformations at $\st(e)$ and $\ta(e)$ on connections   $(k)_e$  define a $(K,K)$-bimodule structure on $K$. In analogy to the group case it is then 
natural  to impose that the action of these gauge transformations at $\st(e)$ and $\ta(e)$ is given by the left and right regular action of $K$ on itself from  Example \ref{ex:regacts}. This  implies by duality  that the action of gauge transformations on functions $(\alpha)_e$ with $\alpha\in K^*$ is given by the  left and right regular action of $K$ on $K^*$.  Summarising these conditions and  conclusions, we obtain the following definition of  a Hopf algebra gauge theory.

\begin{definition}\label{def:gtheory} Let $\Gamma$ be a ribbon graph with edge set $E$ and vertex set $V$ and $K$ a Hopf algebra. \linebreak
A {\bf Hopf algebra gauge theory} on $\Gamma$ with values in $K$  consists of the following data:
\begin{compactenum}
\item The vector space   $K\exoo{E}$  and  the Hopf algebra $K\exoo{V}$.\\[-1ex]
\item   An algebra structure  on  the vector space ${K^*}\exoo{E}$ with unit $1\exoo{E}$ such that:\\[-2ex]
\begin{compactenum}[(i)] 
\item $(\alpha)_e\cdot (\beta)_e\in \iota_e(K^*)$, $(\alpha)_e\cdot(\beta)_f\in \iota_{ef}(K^*\oo K^*)$ for all $\alpha,\beta\in K^*$ and $e,f\in E$.\\[-2ex]
\item  For all $\alpha,\beta\in K^*$ and edges $e,f\in E$  with $\{\st(e),\ta(e)\}\cap\{\st(f),\ta(f)\}=\emptyset$:\\
$(\alpha)_e\cdot( \beta)_f=(\beta)_f\cdot(\alpha)_e=(\alpha\oo\beta)_{ef}$.\\[-1ex]
\end{compactenum}
\item 
A $K\exoo{V}$-left module structure $\rhd:  K\exoo{V}\oo K\exoo{E}\to K\exoo{E}$  and the dual  $K\exoo{V}$-right module structure
$\lhd^*: {K^*}\exoo{E}\oo K\exoo{V} \to {K^*}\exoo{E}$ 
 such that:\\[-2ex]
\begin{compactenum}[(i)]
\item $\lhd^*$ gives ${K^*}\exoo{E}$ the structure of a $K\exoo{V}$-right module algebra,\\[-2ex]
\item For any $e\in E$ with $\st(e)\neq \ta(e)$, $v\in V\setminus\{\st(e), \ta(e)\}$ and $h,k\in K$ 
\begin{align}\label{eq:gkact}
&\pi_e((h)_v\rhd (k)_e)=\epsilon(h) k & &\pi_e((h)_{\ta(e)}\rhd (k)_e)=hk & &\pi_e((h)_{\st(e)}\rhd (k)_e)=kS(h).
\end{align}

\end{compactenum}
\end{compactenum}
The vector space ${K^*}\exoo{E}$ with this algebra structure is denoted $\mathcal A^*_\Gamma$ or $\mathcal A^*$.
Elements of  $K\exoo{E}$  are called {\bf  connections} or {\bf gauge fields}, elements of the algebra  $\mathcal A^*_\Gamma$ are called {\bf  functions} and elements of the Hopf algebra  $K\exoo{V}$ are called  {\bf gauge transformations}. A  
 function  $\alpha\in\mathcal A^*$ is called {\bf gauge invariant} or {\bf observable} if  $\alpha\lhd^*h=\epsilon(h)\,\alpha$ for all $h\in K\exoo{V}$. 
\end{definition}

By applying Lemma \ref{lem:project} to the $K\exoo{V}$-module algebra $\mathcal A^*$ one finds that  the observables of a Hopf algebra gauge theory form a subalgebra   $\mathcal A^*_{inv}\subset \mathcal A^*$. Moreover, if 
$\ell\in K$ is a Haar integral for $K$, then  $\ell\exoo{V}$ is a Haar integral for  $K\exoo{V}$ and defines a projector on $\mathcal A^*_{inv}$.

\begin{corollary} \label{lem:ginvproj} In any  Hopf algebra gauge theory,  the linear subspace  $\mathcal A^*_{inv}\subset\mathcal A^*$ of gauge invariant functions is a subalgebra. If $K$ is equipped with a Haar integral $\ell\in K$, then  the projector on  $\mathcal A^*_{inv}$ is given by  $\Pi: \mathcal A^*\to\mathcal A^*$, $\alpha\mapsto \alpha \lhd^* \ell\exoo{V}$.
\end{corollary}

The locality conditions in the definition of  a Hopf algebra gauge theory---i.e.~that gauge transformations at different vertices commute, that gauge transformations at a vertex $v$ act only on the edges incident at $v$, and that functions of the form $(\alpha)_e$, $(\beta)_f$ for two edges $e,f\in E$ commute if $e$ and $f$ do not have a vertex in common---suggests that a Hopf algebra gauge theory could be built up from  Hopf algebra gauge theories on  the  vertex neighbourhoods $\Gamma_v$ from Definition \ref{def:vertex_nb}.  Let $v\in V$ be an $n$-valent vertex. 
If one does not associate gauge transformations to  the univalent vertices  in $\Gamma_v$, a 
 Hopf algebra gauge theory on  $\Gamma_v$ is given by the vector space $K^{\oo n}$ of connections,  the Hopf algebra $K$ of gauge transformations and a $K$-module algebra structure $\mathcal A^*_v$ on $K^{*\oo n}$ such that the axioms of Definition \ref{def:gtheory} are satisfied. 

From a collection of Hopf algebra gauge theories on the vertex neighbourhoods $\Gamma_v$  one obtains 
a Hopf algebra gauge theory on  $\coprod_{v\in V}\Gamma_v$ by  taking as the algebra of functions the 
 tensor product   $\oo_{v\in V}\mathcal A^*_v$  with the induced $K\exoo{V}$-module structure.   As the ribbon graph $\Gamma$ is obtained by gluing the vertex neighbourhoods $\Gamma_v$ at their univalent vertices,
 one expects  that connections on $\Gamma$ are obtained from connections 
 on $\coprod_{v\in V}\Gamma_v$ via a linear map $G: \oo_{v\in V} K\exoo{v}\to K\exoo{E}$. 
  This map should be a module homomorphism with respect to the $K\exoo{V}$-module structures on $\oo_{v\in V}K\exoo{v}$ and on $K\exoo{E}$. It should 
send a connection  supported on the edge ends $s(e)$, $t(e)$ to a connection  supported on $e$ and hence take the form $G=\oo_{e\in E}G_e$ with linear maps $G_e: K\oo K\to K$ that  satisfy  $G_e\circ \iota_{s(e)t(e)}(K\oo K)\subset\iota_e(K)$ and are module maps with respect to gauge transformations at the vertices $\st(e)$ and $\ta(e)$. From condition 3.  in  Definition \ref{def:gtheory}, applied to the edge ends $s(e)$, $t(e)$ in $E(\coprod_{v\in V}\Gamma_v)$ and to the edge $e\in E(\Gamma)$, one finds that the only natural  candidate is   $G_e: K\oo K\to K$, $(k\oo k')_{s(e)t(e)}\mapsto (k' k)_e$. This yields a pair of dual  linear maps
\begin{align}\label{eq:dualemb}
&G=\oo_{e\in E} G_e: \,\oo_{v\in V} K\exoo{v}\to 
K\exoo{E},\\
&(k^1\oo k'^1\oo\ldots\oo k^E\oo k'^E)_{t(e_1)s(e_1)\ldots t(e_E)s(e_E)}\mapsto (k^1k'^1\oo\ldots\oo k^Ek'^E )_{e_1\ldots e_E}\nonumber\\
&G^*=\oo_{e\in E}G^*_e: \,
{K^*}\exoo{E}\to\oo_{v\in V}{K^*}\exoo{v}\nonumber\\
&(\alpha^1\oo \ldots \oo\alpha^E)_{e_1\ldots e_E}\mapsto  (\alpha^1_{(1)}\oo \alpha^1_{(2)}\oo\ldots \oo \alpha^E_{(1)}\oo \alpha^E_{(2)})_{t(e_1)s(e_1)\ldots t(e_E)s(e_E)}.\nonumber
\end{align}
Given a $K$-valued  Hopf algebra gauge theory on $\Gamma$ together with $K$-valued local Hopf algebra gauge theory on each vertex neighbourhood $\Gamma_v$, it is then natural to demand  that the maps in \eqref{eq:dualemb}  are  module homomorphisms with respect to the $K\exoo{V}$-module structures and that 
the linear map   $G^*$  in \eqref{eq:dualemb} is an injective algebra homomorphism.

\begin{definition} \label{def:local_gt} Let $\Gamma$ be a ribbon graph and $K$ a finite-dimensional Hopf algebra. A $K$-valued Hopf algebra gauge theory on $\Gamma$  is called {\bf  local} if there are $K$-valued Hopf algebra gauge theories on each vertex neighbourhood $\Gamma_v$ such that  the map $G^*:{K^*}\exoo{E}\to\oo_{v\in V}{K^*}\exoo{v}$ from \eqref{eq:dualemb} is a homomorphism of $K\exoo{V}$-module algebras. 
\end{definition}

Definition \ref{def:local_gt} embeds the algebra $\mathcal A^*$ of  functions of  a Hopf algebra gauge theory on $\Gamma$ into the  tensor product $\otimes_{v\in V} \mathcal A^*_v$ of the algebras of  functions on the vertex neighbourhoods $\Gamma_v$.
 If  $K$ is semisimple, then its Haar integral defines a projector on the image of $G^*$.

\begin{lemma}\label{lem:1proj}  If  $K$ is semisimple with Haar integral $\ell\in K$ then a projector on the subalgebra $G^*(\mathcal A^*)\subset \oo_{v\in V} \mathcal A^*_v$ is given by
$$\Pi: \otimes_{v\in V} \mathcal A^*_v\to \oo_{v\in V} \mathcal A^*_v,\quad (\alpha\oo\beta)_{s(e)t(e)} \mapsto \langle S(\low\alpha 1)\low\beta 2,\ell\rangle \, (\low\alpha 2\oo\low\beta 1)_{s(e)t(e)}.
$$
\end{lemma}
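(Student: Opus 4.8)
The plan is first to observe that both $G^*$ and $\Pi$ factor over the edges of $\Gamma$. Writing $\cala^*=(K^*)\exoo{E}$ and regrouping the tensor factors of $\oo_{v\in V}\cala^*_v$ by edges rather than by vertices, there is a linear isomorphism $\oo_{v\in V}\cala^*_v\cong\oo_{e\in E}(K^*\oo K^*)$ pairing the two ends $s(e),t(e)$ of each edge (this is valid even for a loop, where both ends sit in the same $\cala^*_v$). Under it, \eqref{eq:dualemb} reads $G^*=\oo_{e}G^*_e$ with $G^*_e(\alpha)=\low\alpha2\oo\low\alpha1$, and the operator of the Lemma reads $\Pi=\oo_e\Pi_e$ with $\Pi_e(\alpha\oo\beta)=\langle S(\low\alpha1)\low\beta2,\ell\rangle\,\low\alpha2\oo\low\beta1$ (its formula only involves the coalgebra of $K^*$ and the pairing with $\ell$, so the factorisation is unaffected by loops). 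Since a tensor product of linear projectors onto subspaces $W_e$ is a projector onto $\oo_eW_e$, and $G^*(\cala^*)=\oo_e\mathrm{Im}(G^*_e)$, it suffices to prove that for each edge $\Pi_e$ is idempotent with image $\mathrm{Im}(G^*_e)=\{\low\alpha2\oo\low\alpha1\}$.

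\textbf{Integral identities.} Because $K$ is semisimple over a field of characteristic zero, its Haar integral $\ell$ is two-sided with $\epsilon(\ell)=1$, and moreover $S^2=\id$ and $S(\ell)=\ell$. Dualising $h\ell=\ell h=\epsilon(h)\ell$ gives, for all $\psi\in K^*$, the identities $\langle\low\psi1,\ell\rangle\low\psi2=\langle\low\psi2,\ell\rangle\low\psi1=\langle\psi,\ell\rangle\,1_{K^*}$, while the antipode axioms together with $S^2=\id$ yield $\low\ell1 S(\low\ell2)=\low\ell2 S(\low\ell1)=1$.

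\textbf{A retraction fixing the image.} I would next introduce $R:=(\epsilon\oo\id)\circ\Pi_e\colon K^*\oo K^*\to K^*$; a short Sweedler computation gives $R(\alpha\oo\beta)=\langle S(\alpha)\low\beta2,\ell\rangle\,\low\beta1$. Using $\low\ell2 S(\low\ell1)=1$ one checks directly that $\langle S(\low\phi2)\low\phi1,\ell\rangle=\epsilon(\phi)$, and with this both $R\circ G^*_e=\id_{K^*}$ and $\Pi_e\circ G^*_e=G^*_e$ follow by collapsing the resulting middle factor via the counit. In particular $\Pi_e$ restricts to the identity on $\mathrm{Im}(G^*_e)$, so $\mathrm{Im}(G^*_e)\subseteq\mathrm{Im}(\Pi_e)$.

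\textbf{The crux: the reverse inclusion.} What remains, and what I expect to be the main obstacle, is $\mathrm{Im}(\Pi_e)\subseteq\mathrm{Im}(G^*_e)$, equivalently $G^*_e\circ R=\Pi_e$. Evaluating both sides on an arbitrary $a\oo b\in K\oo K$ turns this into the single identity in $K\oo K$, namely $\sum S(\low\ell1)a\oo\low\ell2=\sum S(\low\ell1)\oo a\low\ell2$ for all $a$ (after absorbing $b$), i.e.\ the statement that $\sum S(\low\ell1)\oo\low\ell2$ is a separability element for $K$. This follows from the two-sided integral property, e.g.\ from the identity $\sum\low a1\low\ell1\oo\low a2\low\ell2=\epsilon(a)\,\low\ell1\oo\low\ell2$ obtained by applying $\Delta$ to $a\ell=\epsilon(a)\ell$. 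Granting it, $\Pi_e=G^*_e\circ R$, so $\mathrm{Im}(\Pi_e)\subseteq\mathrm{Im}(G^*_e)$ and $\Pi_e^2=G^*_eRG^*_eR=G^*_eR=\Pi_e$; combined with the previous step, $\Pi_e$ is a projector onto $\mathrm{Im}(G^*_e)$, and tensoring over $E$ proves the Lemma. As a conceptual cross-check, one can instead recognise $\Pi_e$ (using $S(\ell)=\ell$, $S^2=\id$ to match antipode placements) as the averaging operator $\lhd^*\ell$ for the gauge transformation at the midpoint vertex of $e$ in the subdivision $\Gamma_\circ$; by Lemma~\ref{lem:project} this is a projector onto the midpoint-invariants, and a dimension count---$K\oo K$ is $K$-free of rank $\dim K$ under the midpoint action, so the invariants have dimension $\dim K^*=\dim\mathrm{Im}(G^*_e)$, with $\mathrm{Im}(G^*_e)$ contained in the invariants by a direct check---identifies these invariants with $\mathrm{Im}(G^*_e)$, giving the same conclusion without the separability identity.
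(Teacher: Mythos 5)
Your proof is correct, and the decisive step is handled by a genuinely different mechanism than the paper's. The paper recognises $\Pi$ as the invariance projector $\lhd^*\,\ell\exoo{E}$ for the auxiliary $K\exoo{E}$-module structure \eqref{eq:kleft}, so idempotence and $G^*(\mathcal A^*)\subseteq\mathrm{Im}(\Pi)$ come from Corollary \ref{lem:ginvproj} together with the computation $\Pi\circ G^*=G^*$ that you also perform; the reverse inclusion is then settled by a dimension count, transporting the invariants via $S\oo\id$ to Example \ref{ex:moduledim} and invoking the fundamental theorem of Hopf modules to get $\dim\mathrm{Im}(\Pi_e)=\dim K$. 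You instead factor $\Pi_e$ explicitly as $G^*_e\circ R$ through $K^*$, which delivers idempotence, $\mathrm{Im}(\Pi_e)\subseteq\mathrm{Im}(G^*_e)$ and the identification of the image in one stroke; the identity $G^*_e\circ R=\Pi_e$ dualises to $S(\low\ell1)a\oo b\low\ell2=S(\low\ell1)\oo ba\low\ell2$, which is precisely the $S$-twisted form of the separability identity $(h\oo 1)\Delta(\ell)=(1\oo S(h))\Delta(\ell)$ recorded in Appendix \ref{sec:hopfalg}, so your key input is available verbatim in the paper. Your route is constructive and avoids both the module-theoretic framing and the Hopf-module dimension count, at the price of being tied to the explicit formula for $\Pi$; the paper's argument instead exhibits $\Pi$ conceptually as gauge averaging at the midpoint vertices of $\Gamma_\circ$ — exactly the content of your closing cross-check, which is essentially the paper's own proof.
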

\begin{proof}
Applying the  axioms in Definition \ref{def:gtheory} to each vertex neighbourhood $\Gamma_v$ shows  that the linear map  
$\rhd: K\exoo{E}\oo( \oo_{v\in V}\mathcal A^*_v)\to\oo_{v\in V}\mathcal A^*_v$ given by
\begin{align}\label{eq:kleft}
&(x)_e\rhd(\alpha\oo \beta)_{s(e)t(e)}=\langle S(\low\alpha 1)\low\beta 2, x\rangle\, (\low\alpha 2\oo\low\beta 1)_{s(e)t(e)}\qquad  
&(x)_e\rhd(\alpha)_{f}=\epsilon(x)\, (\alpha)_f\
\end{align}
for all  $e,f\in  E$, $e\neq f$
defines a $K\exoo{E}$-left module structure on ${K^*}\exoo{2E}$. Consequently, by Lemma \ref{lem:ginvproj},  the map $\Pi$ is a projector on 
 $
 {K^*}\exoo{2E}_{inv}
 $. 
That $G^*(\mathcal A^*)\subset \mathrm{Im}(\Pi)={K^*}\exoo{E}_{inv}$ follows  from the identity $\epsilon(\ell)=1$ since for all $e\in E$
\begin{align*}
\Pi( (\low\alpha 2\oo\low\alpha 1)_{s(e)t(e)})=\langle S(\alpha_{(2)(1)})\alpha_{(1)(2)},\ell\rangle\; (\alpha_{(2)(2)}\oo \alpha_{(1)(1)})_{s(e)t(e)}=\epsilon(\ell)\; (\low \alpha 2\oo \low\alpha 1)_{s(e)t(e)}.
\end{align*}
To show that $ \mathrm{Im}(\Pi)= G^*(\mathcal A^*)$,  it is sufficient  to consider a  ribbon graph with a single edge $e\in E(\Gamma)$ and to show that in this case $\dim(\mathrm{Im}(\Pi))=\dim K$. 
For this, note that the
 linear map $(S\oo\id): K^*\oo K^*\to K^*\oo K^*$ is a bijective $K$-left module homomorphism between the $K$-left module structure in \eqref{eq:kleft}  and 
the $K$-module structure from Example \ref{ex:moduledim}. The claim then follows from Example \ref{ex:moduledim}. 
\end{proof}

\subsection{Hopf algebra gauge theory on a vertex neighbourhood}
\label{subsec:vertex_nb}

We can now investigate the implication of the axioms in Definition \ref{def:gtheory} and \ref{def:local_gt} and construct local Hopf algebra gauge theories on a ribbon graph $\Gamma$.  By Definition \ref{def:local_gt}, such  Hopf algebra gauge theories  are determined uniquely by  a  Hopf  algebra
gauge theory  on each vertex neighbourhood $\Gamma_v$. Hence, the first step is a construction of a Hopf algebra gauge theory on  $\Gamma_v$. 
If  we associate gauge transformations only with the vertices of $\Gamma$,  a $K$-valued Hopf algebra gauge theory on the vertex neighbourhood $\Gamma_v$ of an
$n$-valent vertex $v\in V$  involves the vector space $K^{\oo n}$ of connections, the Hopf algebra $K$ of gauge transformations at $v$ and a $K$-module algebra structure on  $K^{*\oo n}$  that satisfies the locality axioms in Definition \ref{def:gtheory}. Note that the choice of a  $K$-module algebra structure on $K^{*\oo n}$ is equivalent to the choice of a $K$-module coalgebra structure on $K^{\oo n}$ and that it is 
 constrained by the locality requirements in  equation \eqref{eq:gkact}, which imply 
\begin{align}\label{eq:intgt}
(\alpha)_e\lhd^* h= \langle \low\alpha 1, h\rangle\, (\low\alpha 2)_e\;\text{if } v=\ta(e),\quad
(\alpha)_e\lhd^*h=  \langle \low\alpha 2, S(h)\rangle\, (\low\alpha 1)_e \;\text{if } v=\st(e).
\end{align}
If all edge ends at $v$ are incoming, \eqref{eq:intgt} coincides with the right regular action of $K$ on $K^*$ from Example \ref{ex:regacts}, 4. which defines a $K$-right module algebra  structure on $K^*$. The construction of a Hopf algebra gauge theory on the vertex neighbourhood  $\Gamma_v$ then amounts to construction of a $K$-module algebra structure on the $n$-fold tensor product $K^{*\oo n}$ that induces the $K$-right module algebra structure  from Example \ref{ex:regacts}, 4.~on each copy of $K^*$.

  It is then reasonable to impose one further locality requirement, namely that the $K$-right module algebra structure on the vector space $K^{*\oo n}$ for an $n$-valent vertex is determined uniquely by the $K$-right module algebra structure on $K^*$ for a vertex with a single edge end via a gluing procedure.  This amounts to a functor $\oo_{\text{Alg}}: \text{Alg}_{\text{Mod-}K}\times\text{Alg}_{\text{Mod-}K}\to \text{Alg}_{\text{Mod-}K}$, where $\text{Alg}_{\text{Mod-}K}$ is the category of $K$-right module algebras.  For uniqueness of this construction, this functor should satisfy an associativity condition  and the underlying field $\mathbb F$ with the trivial $K$-right module structure should act as a unit for $\oo_{\text{Alg}}$. Moreover, forgetting the algebra structure by applying the forgetful functor $F: \text{Alg}_{\text{Mod-}K}\to \text{Mod-}K$ to the category of $K$-right modules should yield the usual tensor product $\oo_{\text{Mod-}K}$ in $\text{Mod-}K$ defined by the comultiplication and counit: $F\circ \oo_{\text{Alg}}=\oo_{\text{Mod-}K}\circ (F\times F)$.
We therefore  impose one additional axiom.

{\bf Iterative construction of vertex algebras:}
The $K$-module algebra structure on $K^{*\oo n}$ for an $n$-valent vertex is obtained from a $K$-right module algebra structure on $K^*$  by taking an $n$-fold tensor product in the category $\text{Alg}_{\text{Mod-}K}$ of $K$-right module algebras. The $K$-right module structure in this tensor product is the usual tensor product in the category $\text{Mod-}K$ of $K$-right modules.

The problem of defining a tensor product in the category of $K$-right module algebras  is well-known\footnote{C.~M.~thanks  Simon Lentner,  Hamburg University, for helpful remarks and  discussions.}, and its solution is due to Majid  \cite{majidbraid1, majidbraid}.
If $K$ is a Hopf algebra  and $A$, $B$ are  $K$-right module algebras, then the tensor product  $A\oo B$  has a canonical  algebra structure and a canonical $K$-right module structure $\lhd:  A\oo B\oo K\to A\oo B$ with
$ (a\oo b)\lhd k=(a\lhd_A \low k 1)\oo (b\lhd_B \low k 2)$. However,  in general they {\em do  not} define a  $K$-right module algebra structure on $A\oo B$  since  the identity $a\oo b=(a\oo 1)(1\oo b)=(1\oo b)(a\oo 1)$  would  imply
$(a\oo b)\lhd k=(a\lhd_A \low k 1)\oo (b\lhd_B \low k 2) 
=(a\lhd_A \low k 2)\oo (b\lhd_B \low k 1)$
for all $k\in K$, $a\in A$, $b\in B$.  This cannot not  hold for  general $K$-right module algebras $A,B$ unless  $K$ is cocommutative. 

To  obtain a  $K$-right module algebra structure on  $A\oo B$ for general $K$-right module algebras $A$, $B$, one needs additional structure on $K$ that relates the comultiplication of $K$ to the opposite comultiplication. A natural candidate for this is the $R$-matrix of a quasitriangular Hopf algebra, which can be used to deform the multiplication relations of  $A\oo B$. 
That  this indeed yields a $K$-right module algebra structure on the vector space $A\oo B$  was first shown by Majid, who considered this `braided tensor product'  in 
 the context of braided Hopf algebras  \cite{majidbraid1, majidbraid}. 
Adapted to our setting and notation, this $K$-right module algebra structure on $A\oo B$ is given as follows.

\begin{lemma}   \label{lem:braided}  (\cite{majidbraid} Prop. 4.1.)
 Let $(K,R)$ be a  quasitriangular  Hopf algebra and $(A,\lhd_A)$,  $(B,\lhd_B)$ $K$-right module algebras. Equip the vector space $A\oo B$ with the multiplication
\begin{align*}
(a\oo b)\cdot (a'\oo b')=a(a'\lhd_A \low R 1)\oo  (b\lhd_B \low R 2)b'
\end{align*}
Then $(A\oo B,\cdot)$ is a $K$-module algebra with respect to the induced $K$-right module structure  on $A\oo B$.
\end{lemma}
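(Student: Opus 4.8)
The plan is to verify that $(A\oo B,\cdot)$ is an associative unital algebra and that the canonical $K$-right module structure $(a\oo b)\lhd k=(a\lhd_A\low k1)\oo(b\lhd_B\low k2)$ turns it into a $K$-right module algebra, i.e.\ that $((a\oo b)\cdot(a'\oo b'))\lhd k=((a\oo b)\lhd\low k1)\cdot((a'\oo b')\lhd\low k2)$ and $(1_A\oo 1_B)\lhd k=\ee(k)\,1_A\oo 1_B$. Throughout I would freely use that $A$ and $B$ are module algebras, so that $\lhd_A$ and $\lhd_B$ distribute over products via $\Delta$ and that $1_A,1_B$ are $\ee$-invariant, together with the standard consequences $(\id\oo\ee)(R)=(\ee\oo\id)(R)=1$ of quasitriangularity.

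The easy parts I would dispatch first. For the unit, $(1_A\oo 1_B)\cdot(a'\oo b')=\ee(\low R2)\,(a'\lhd_A\low R1)\oo b'$ collapses to $a'\oo b'$ by $(\id\oo\ee)(R)=1$, and symmetrically $(a\oo b)\cdot(1_A\oo 1_B)=a\oo b$ by $(\ee\oo\id)(R)=1$. The normalisation $(1_A\oo 1_B)\lhd k=\ee(k)\,1_A\oo 1_B$ is immediate from $1_A\lhd_A\low k1=\ee(\low k1)1_A$, $1_B\lhd_B\low k2=\ee(\low k2)1_B$ and $\ee(\low k1)\ee(\low k2)=\ee(k)$.

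The heart of the argument is associativity. I would expand the two bracketings $((a\oo b)\cdot(a'\oo b'))\cdot(a''\oo b'')$ and $(a\oo b)\cdot((a'\oo b')\cdot(a''\oo b''))$, using one copy of $R$ for each of the two products and then invoking the module-algebra property of $A$ and $B$ to distribute the resulting actions by $\low R1$ and $\low R2$ over the products that appear. Both bracketings should reduce to the common normal form
\[
a\,(a'\lhd_A x_1)(a''\lhd_A x_2)\oo(b\lhd_B x_3)(b'\lhd_B x_4)\,b'',
\]
so that associativity becomes a single identity $x_1\oo x_2\oo x_3\oo x_4$ in $K^{\oo 4}$. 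Tracing the two bracketings, in one the second leg of a copy of $R$ gets comultiplied and is rewritten using $(\id\oo\Delta)(R)=R_{13}R_{12}$, while in the other the first leg gets comultiplied and is rewritten using $(\Delta\oo\id)(R)=R_{13}R_{23}$; after renaming the two dummy copies of $R$ the two elements of $K^{\oo 4}$ coincide. I expect this slot-by-slot bookkeeping---keeping track of which copy of $R$ ends up in which tensor factor and matching them through the two hexagon relations---to be the main obstacle, even though it is entirely mechanical once the normal form is fixed. It is worth noting that associativity uses only the hexagon relations and not the intertwining relation.

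Finally, for the module-algebra compatibility I would expand both sides of $((a\oo b)\cdot(a'\oo b'))\lhd k=((a\oo b)\lhd\low k1)\cdot((a'\oo b')\lhd\low k2)$, using the module-algebra property of $A$ and $B$ and coassociativity, writing $\Delta^{(3)}(k)=\low k1\oo\low k2\oo\low k3\oo\low k4$. Both sides reduce to the form $(a\lhd_A\low k1)(a'\lhd_A y_1)\oo(b\lhd_B y_2)(b'\lhd_B\low k4)$, and the discrepancy between them collapses to the single requirement $\low R1\low k2\oo\low R2\low k3=\low k3\low R1\oo\low k2\low R2$ on the two middle legs. This is exactly the quasitriangular intertwining relation $R\,\Delta(x)=\Delta^{op}(x)\,R$, so it holds; here, in contrast to associativity, it is precisely this axiom that is needed. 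Assembling the three steps completes the proof.
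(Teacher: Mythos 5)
Your proposal is correct: the unit check via $(\epsilon\oo\id)(R)=(\id\oo\epsilon)(R)=1$, associativity via the two hexagon relations $(\Delta\oo\id)(R)=R_{13}R_{23}$ and $(\id\oo\Delta)(R)=R_{13}R_{12}$, and the module-algebra compatibility via $R\cdot\Delta(k)=\Delta^{op}(k)\cdot R$ are exactly the three ingredients needed, and your reduction of the last step to the identity on the two middle Sweedler legs is accurate. The paper itself cites Majid for this lemma rather than proving it, but the computations it carries out for the $n$-fold generalisation in Proposition \ref{th:algebra} are precisely the ones you describe, so your route coincides with the intended one.
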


\bigskip
\begin{theorem}\label{th:braided}(\cite{majidbraid1, majidbraid}) The tensor product of  $K$-right module algebras in Lemma \ref{lem:braided} is associative and unital and gives the category $\text{Alg}_{\text{Mod-}K}$ of $K$-right module algebras the structure of a tensor category.
\end{theorem}

 In fact, this is the only known canonical tensor category structure on the category of $K$-right module algebras over a quasitriangular Hopf algebra $(K,R)$.
Lemma \ref{lem:braided} and Theorem \ref{th:braided} allow one to define a $K$-module algebra structure on $K^{*\oo n}$ as the $n$-fold braided tensor product of the $K$-module algebra from Example  \ref{ex:regacts}, 4. and will give rise to a Hopf algebra gauge theory on $\Gamma_v$.  
 Clearly, this  algebra structure depends on the ordering of the factors in the tensor product, which must reflect an ordering of the incident  edge ends. As they are already equipped with a cyclic ordering from the ribbon graph structure, this amounts to choosing a cilium at $v$.  
We conclude that  the natural data for  a  Hopf algebra gauge theory on a vertex neighbourhood  of $\Gamma$ is a {\em quasitriangular} Hopf algebra $K$ and a {\em ciliated ribbon graph} $\Gamma$. 

For  a vertex $v\in V(\Gamma)$ with $n$ incoming edge ends---ordered counterclockwise starting at the cilium as shown on p.~\pageref{fig:vertex_edgeends}---we  identify the edge ends with the different factors in the tensor product $K^{*\oo n}$ according to their ordering. 
To define a $K$-module algebra structure on  $\Gamma_v$, we then apply Proposition \ref{lem:braided} to the $n$-fold braided tensor product  $K^{*\oo n}$. Note, however that in this case the algebra structure from Lemma \ref{lem:braided} is not  unique. The product in Lemma \ref{lem:braided} can be modified  by letting the  $R$-matrix   act  on $K^{*\oo n}$ via the left regular action from Example \ref{ex:regacts}, 3 and this yields another $K$-right module algebra for the same module structure.  It will turn out that this non-uniqueness disappears if one requires that the Hopf algebra gauge theories on the vertex neighbourhoods induce  a Hopf algebra gauge theory on $\Gamma$  (see Remark \ref{rem:sigg}).

\begin{proposition} \label{th:algebra}  Let  $K$ be a finite-dimensional Hopf algebra and $R$  an $R$-matrix for $K$. Then for  $n\in\NN$ and any map  $\sigma:\{1,...,n\}\to \{0,1\}$  the following defines an algebra structure on $K^{*\oo n}$:
\begin{align}\label{eq:alg0}
&(\alpha)_i\cdot (\beta)_i=\langle \low\beta 1\oo\low\alpha 1, R\rangle \,(\low\beta 2\low\alpha 2)_i & &\sigma(i)=0\\
&(\alpha)_i\cdot (\beta)_i= (\alpha\beta)_i & &\sigma(i)=1\nonumber\\
&(\alpha)_i\cdot (\beta)_j=(\alpha\oo\beta)_{ij}    & &i<j\nonumber\\
&(\alpha)_i\cdot (\beta)_j=\langle \low\beta 1\oo\low\alpha 1, R\rangle \, (\low\alpha 2\oo \low\beta 2)_{ij}& &i>j.\nonumber
\end{align}
The linear map 
\begin{align}\label{eq:gt}
&\lhd^*:  K^{*\oo n}\oo K\to K^{*\oo n},\quad (\alpha^1\oo\ldots\oo\alpha^n)\lhd^*h=\langle \alpha^1_{(1)}\cdots \alpha^n_{(1)}, h\rangle\; \alpha^1_{(2)}\oo\ldots\oo\alpha^{n}_{(2)}
\end{align}
equips this algebra with the structure of a $K$-right module algebra,  and the dual $K$-module structure on $K^{\oo n}$ satisfies the conditions in Definition \ref{def:vertex_nb}, 3.
\end{proposition}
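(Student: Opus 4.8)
The plan is to recognise the algebra \eqref{eq:alg0} as an iterated application of the braided tensor product of Lemma~\ref{lem:braided}, so that associativity, unitality and the module-algebra property come essentially for free, and then to match the abstractly obtained product with the explicit formulas by a bookkeeping argument. Concretely, I would set $A_i := (K^*,\cdot_{\sigma(i)})$ for the single-factor algebra whose product is the convolution product of $K^*$ when $\sigma(i)=1$ and the twisted product $\alpha\cdot_0\beta=\langle\low\beta 1\oo\low\alpha 1,R\rangle\,\low\beta 2\low\alpha 2$ when $\sigma(i)=0$, each equipped with the right regular action $\alpha\lhd h=\langle\low\alpha 1,h\rangle\low\alpha 2$ of Example~\ref{ex:regacts}.4, and claim that $K^{*\oo n}$ with \eqref{eq:alg0} is exactly the iterated braided tensor product $A_1\oo_R(A_2\oo_R(\cdots\oo_R A_n))$ with the diagonal module structure \eqref{eq:gt}.

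First I would dispatch the base case $n=1$. For $\sigma(1)=1$ this is precisely Example~\ref{ex:regacts}.4, so $A_1$ is a $K$-right module algebra. For $\sigma(1)=0$ I would observe that $\cdot_0=m_{K^*}\circ\Psi$, where $\Psi(\alpha\oo\beta)=(\beta\lhd\low R 1)\oo(\alpha\lhd\low R 2)$ is the braiding on right $K$-modules determined by $R$; hence $(K^*,\cdot_0)$ is the braided opposite of the convolution algebra and is again a $K$-right module algebra, its associativity and the compatibility $(\alpha\cdot_0\beta)\lhd h=(\alpha\lhd\low h 1)\cdot_0(\beta\lhd\low h 2)$ following from the quasitriangularity (hexagon) identities. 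That $\epsilon\in K^*$ is a two-sided unit in both cases uses the normalisations $(\id\oo\epsilon)(R)=(\epsilon\oo\id)(R)=1$. For the inductive step I would write $K^{*\oo n}=A_1\oo_R B$ with $B=K^{*\oo(n-1)}$ the algebra on the remaining factors, a $K$-module algebra by the induction hypothesis, and apply Lemma~\ref{lem:braided} to conclude that $A_1\oo_R B$ is a $K$-module algebra for the diagonal action \eqref{eq:gt}.

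It then remains to check that the product of Lemma~\ref{lem:braided} reproduces \eqref{eq:alg0}. The key simplification is that the unit of a module algebra is invariant, $1\lhd k=\epsilon(k)1$, so that whenever the braiding acts across a unit component it contributes only a counit; together with $(\id\oo\epsilon)(R)=(\epsilon\oo\id)(R)=1$ this collapses all but a single copy of $R$. Splitting into the cases $(\alpha)_1\cdot(\beta)_1$ (both in $A_1$, giving $\cdot_{\sigma(1)}$), $(\alpha)_1\cdot(\beta)_j$ and $(\beta)_j\cdot(\alpha)_1$ with $j>1$ (one factor in $A_1$, yielding respectively the $i<j$ and $i>j$ lines of \eqref{eq:alg0} after using $(\beta)_j^B\lhd\low R 2=(\beta\lhd\low R 2)_j^B$), and both factors in $B$ (reducing to the product of $B$ by the induction hypothesis, since $A_1\oo_R B$ restricts to $B$ on $1_{A_1}\oo B$), one recovers \eqref{eq:alg0} exactly. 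Finally, since \eqref{eq:gt} is the diagonal right regular action, it restricts on each factor to the regular action, so that $(\alpha)_e\lhd^* h$ equals \eqref{eq:intgt}; dualising gives the $\ta(e)=v$ locality conditions of Definition~\ref{def:gtheory}.3 for the incoming edge ends at the vertex, as required.

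The main obstacle I expect is the final matching: verifying that the iterated two-factor braided product collapses to the single-$R$-matrix formulas \eqref{eq:alg0}. This is where one must keep careful track of the orderings $i<j$ versus $i>j$, of which leg of $R$ pairs with which tensor factor, and must repeatedly exploit unit-invariance to discard the spurious braidings across the many unit components. Establishing that $(K^*,\cdot_0)$ is a module algebra when $\sigma(i)=0$ is a secondary point that hinges on the hexagon identities, but it is cleanly absorbed into the base case once $\cdot_0$ is identified as a braided-opposite product.
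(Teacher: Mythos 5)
Your proposal is correct, and it reorganizes the argument rather than changing its substance: both proofs rest on Lemma~\ref{lem:braided} and the quasitriangularity identities, but you make the phrase ``$n$-fold braided tensor product'' (which the paper uses only informally in the discussion preceding the proposition) carry the actual proof. The paper's written proof instead verifies associativity of \eqref{eq:alg0} directly, case by case on triple products $(\alpha)_i\cdot(\beta)_j\cdot(\gamma)_k$, using $(\Delta\oo\id)(R)=R_{13}R_{23}$, $(\id\oo\Delta)(R)=R_{13}R_{12}$ and the QYBE; it handles $\sigma(i)=0$ by exhibiting $\psi:\alpha\oo\beta\mapsto\langle \low\alpha 2\oo\low\beta 2, R\rangle\,\low\alpha 1\oo\low\beta 1$ as an algebra endomorphism of the $\sigma\equiv 0$ algebra; and it checks the module-algebra compatibility by hand from $R\cdot\Delta\cdot R^\inv=\Delta^{op}$. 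Your induction $K^{*\oo n}=A_1\oo_R K^{*\oo(n-1)}$ gets associativity, unitality and the module-algebra property for free from Lemma~\ref{lem:braided} once each single factor is a module algebra, and your identification of $\cdot_0$ as $m\circ\Psi$ with $\Psi$ the braiding of right $K$-modules is a clean substitute for the $\psi$-endomorphism step (note the module-map property of $\Psi$ is precisely $R\Delta(h)=\Delta^{op}(h)R$, so the same identity does the work there). What your route costs is the final matching computation --- collapsing the iterated two-factor products to the single-$R$-matrix formulas \eqref{eq:alg0} via unit-invariance and $(\epsilon\oo\id)(R)=(\id\oo\epsilon)(R)=1$ --- which you correctly flag and which is comparable in length to the paper's triple-product checks; what it buys is that associativity never has to be verified by hand for general $n$, and the case analysis over positions $i,j$ is replaced by a uniform induction. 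Both treatments of the last clause coincide: the diagonal action restricts to the right regular action on each tensor factor, and dualizing gives the conditions \eqref{eq:gkact}.
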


\begin{proof} This follows directly from Lemma \ref{lem:braided} and Theorem \ref{th:braided} applied to  $K^*$  with the $K$-right module structure $\lhd^*: K^*\oo K\to K^*$, $\alpha\lhd^* k=\langle k, \low\alpha 1\rangle \low\alpha 2$ and with either the usual algebra structure on $K^*$ (for $\sigma(i)=1$) or its braided  opposite with multiplication $\alpha\cdot\beta=\langle R,\low\beta 1\oo \low \alpha 1\rangle \low\beta 2\low\alpha 2$ (for $\sigma(i)=0$), see \cite{majidbraid1, majidbraid}. 
\end{proof}

There is another way to understand the  algebra structure on $K^{*\oo n}$ in Proposition \ref{th:algebra}, namely to note that  it is dual to a coalgebra structure on $K^{\oo n}$ obtained by twisting the comultiplication of the Hopf algebra $K^{\oo n}$.
Twists are known to give rise to module (co)algebra structures \cite[Section 4]{gelaki},  and hence it is  not surprising that the algebra associated to a vertex neighbourhood is of this type. For the definition of a twist and its properties, see Definition \ref{def:twist} and Lemma \ref{lem:twist}.

\begin{remark} \label{lem:twistlemma}The algebra structure in  \eqref{eq:alg0}  is dual to the comultiplication 
$\Delta^{F,G}=F\cdot \Delta\cdot G^\inv$ obtained by twisting  $K^{\oo n}$ with 
$G=\Pi_{\sigma^\inv(0)} R_{(n+i)i}^\inv$ and
$F=\Pi_{1\leq i<j\leq n} R_{(n+i) j}$, where the ordering of the factors in $F$ is  such that $R_{(n+i)j}$ is to the left of $R_{(n+k)l}$ if $i<k$, $j=l$ or $i=k$, $j>l$.
\end{remark}

\begin{corollary}\label{lem:semisimple} Let $K$ be a finite-dimensional semisimple  quasitriangular  Hopf algebra. Then for any ciliated ribbon graph  $\Gamma$ and any choice of the $R$-matrices,  the algebras $\mathcal A^*_v$ and $\oo_{v\in V}\mathcal A^*_{v}$ are semisimple.
\end{corollary}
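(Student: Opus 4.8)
The plan is to reduce the statement for $\oo_{v\in V}\mathcal A^*_v$ to the single-vertex case and then analyse $\mathcal A^*_v$ through the twist description of Lemma \ref{lem:twistlemma}. Since $\FF$ has characteristic zero it is perfect, and over a perfect field every finite-dimensional semisimple algebra is separable; hence a finite tensor product of semisimple $\FF$-algebras is again semisimple. It therefore suffices to show that each $\mathcal A^*_v$, for an $n$-valent vertex $v$, is semisimple.

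By Lemma \ref{lem:twistlemma} the algebra $\mathcal A^*_v$ on $K^{*\oo n}$ is dual to the coalgebra $C_v=(K\exoo{n},\Delta^{F,G})$ with $\Delta^{F,G}=F\cdot\Delta\cdot G^\inv$, and for a finite-dimensional coalgebra $C$ the dual algebra $C^*$ is semisimple exactly when $C$ is cosemisimple. I would first peel off a genuine Drinfeld twist: setting $u:=F G^\inv$ one has $\Delta^{F,G}=u\cdot\Delta^{G}$ with $\Delta^{G}:=G\Delta G^\inv$, and $\tilde K:=(K\exoo{n},m,\Delta^{G})$ is an honest Drinfeld twist of $K\exoo{n}$, hence a Hopf algebra whose \emph{underlying algebra} is the usual $K\exoo{n}$. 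The latter is semisimple (a tensor power of the semisimple algebra $K$ over a characteristic-zero field), so $\tilde K$ is semisimple; by the Larson--Radford theorem it is then cosemisimple, and therefore its dual $B:=\tilde K^*$ is a semisimple Hopf algebra.

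It remains to compare $\mathcal A^*_v$ with $B$. Dualising the left multiplication by $u$ in $\Delta^{F,G}=u\cdot\Delta^{G}$ shows that the product on $\mathcal A^*_v$ is the one-sided cochain deformation $\alpha\cdot\beta=\langle\low\alpha1\oo\low\beta1,u\rangle\,(\low\alpha2\,\cdot_B\,\low\beta2)$ of $B$, the Sweedler indices referring to the coproduct of $B$ (the standard coproduct on $K^{*\oo n}$). Coassociativity of $\Delta^{F,G}$, guaranteed by Proposition \ref{th:algebra}, translates into the normalised $2$-cocycle identity for $\lambda:=\langle\,\cdot\,,u\rangle$, so that $\mathcal A^*_v$ is the cocycle crossed product $\FF\#_\lambda B$ of the base field with the semisimple Hopf algebra $B$. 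A Maschke-type argument---averaging with the Haar integral of $B$, equivalently that of $K$---then shows that $\FF\#_\lambda B$ is semisimple, which is the desired conclusion; the twisted group algebra $\FF_\lambda G$ is the classical prototype of this statement.

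I expect this last step to be the real obstacle. Since $F\neq G$ in general, $C_v$ is \emph{not} a bialgebra and $\mathcal A^*_v$ is \emph{not} a Hopf algebra, so one cannot simply apply Larson--Radford to $\mathcal A^*_v$ directly; the whole content is that cosemisimplicity is stable under the residual one-sided twist by $u$. If one prefers to avoid citing the generalised Maschke theorem for crossed products, the same point can be settled self-containedly by exhibiting a separability idempotent for $\mathcal A^*_v$ built from the Haar integral $\ell$ of $K$, using the explicit product \eqref{eq:alg0} and the normalisation $\epsilon(\ell)=1$.
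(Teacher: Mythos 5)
Your proof is correct, and it follows the paper's skeleton up to the decisive step: like the paper, you reduce to a single vertex neighbourhood via separability of semisimple algebras over a characteristic-zero field, and you enter through Lemma \ref{lem:twistlemma}, which identifies $\mathcal A^*_v$ with the dual of the two-sided twist $(K^{\oo n}, F\cdot\Delta\cdot G^\inv)$. Where you diverge is in how that dual is shown to be semisimple. The paper cites \cite[Theorem 3.13]{gelaki} as a black box: any two-sided twist deformation of a cosemisimple unimodular Hopf algebra is cosemisimple, and $K^{\oo n}$ is cosemisimple and unimodular because it is semisimple in characteristic zero. You instead factor $\Delta^{F,G}=u\cdot\Delta^{G}$ with $u=FG^{\inv}$: the Drinfeld twist $\Delta^{G}$ leaves the underlying algebra $K^{\oo n}$ untouched, so Larson--Radford gives cosemisimplicity of $\tilde K$ and hence a semisimple Hopf algebra $B=\tilde K^*$, while the residual factor $u$ dualises to a convolution-invertible normalised $2$-cocycle $\lambda=\langle\,\cdot\,,u\rangle$ on $B$ (that $u$ is a twist for $\Delta^{G}$ follows formally from $F$ and $G$ both being twists for $\Delta$, which is slightly cleaner than deducing the cocycle identity from coassociativity as you do, though both work), so that $\mathcal A^*_v\cong\FF\#_\lambda B$ and the Blattner--Montgomery Maschke theorem for crossed products applies. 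Both routes are sound; yours trades the citation of \cite{gelaki} for a more classical Maschke-type result, isolates exactly which part of the two-sided twist is the genuine obstruction, and, via your closing remark about a separability idempotent built from the Haar integral, could be made entirely self-contained in the spirit of Lemma \ref{lem:project}, whereas the paper's version is shorter but opaque at precisely this step.
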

\begin{proof} As $\text{char}(\FF)=0$ and $K$ is finite-dimensional,  $K$ is semisimple if and only if  it is cosemisimple  if and only if $S^2=\id$ \cite{LR}, and semisimplicity implies unimodularity.  As $\text{char}(\FF)=0$ and tensor products of semisimple Hopf algebras are semisimple (see for instance \cite[Corollary 2.37]{Kn}),  it follows that the Hopf algebra  $K^{\oo n}$ is semisimple and cosemisimple  for all $n\in\NN$.
It is shown in \cite[Theorem 3.13]{gelaki}  that for a cosemisimple  unimodular Hopf algebra $H$
any two-sided twist deformation $H_{F,G}$ obtained by replacing $\Delta\mapsto F\cdot \Delta\cdot G^\inv$, $\epsilon \mapsto \epsilon$ is a cosemisimple coalgebra. 
By Remark \ref{lem:twistlemma} the algebra $\mathcal A^*_v$ for a vertex neighbourhood $\Gamma_v$ is dual to such a two-sided twist of $K^{\oo n}$ and hence it is semisimple. As $\text{char}(\FF)=0$,  the same holds for the  tensor product $\oo_{v\in V}\mathcal A^*_v$.
\end{proof}

As for any $R$-matrix $R=\low R 1\oo\low R 2$ the element  $R_{21}^\inv=\low R 2\oo S(\low R 1)$ is another $R$-matrix for $K$, it is natural to ask how the algebra structures from Proposition \ref{th:algebra} for these $R$-matrices are related. It turns out that replacing   $R\to R_{21}^\inv$  corresponds to reversing  the edge ordering at this vertex and hence to  reversing  the orientation of the vertex neighbourhood. In particular,  for {\em triangular} Hopf algebras  the algebra structure from Proposition \ref{th:algebra} is orientation independent.

\begin{remark} \label{rem:orrev}Replacing the $R$-matrix $R$ in \eqref{eq:alg0} by the opposite $R$-matrix $R_{21}^\inv$ 
yields an algebra isomorphism  to the algebra \eqref{eq:alg0} with the opposite edge ordering. 
This follows because the algebra in  \eqref{eq:alg0}  is characterised uniquely up to isomorphism by the multiplication relations
\begin{align*}
&(\beta)_j\cdot (\alpha)_i=\langle \low\alpha 1\oo\low\beta 1, R\rangle\,(\low\alpha 2)_i \cdot (\low\beta 2)_j &   &i<j\\
&(\beta)_i\cdot (\alpha)_i=\langle \low\alpha 1\oo\low\beta 1, R\rangle\langle \low\alpha 3\oo\low\beta 3, R_{21}^\inv\rangle\,  (\low\alpha 2)_i\cdot (\low\beta 2)_i
& &\sigma(i)=0\\
& (\beta)_i\cdot (\alpha)_i=\langle \low\alpha 1\oo\low\beta 1, R\rangle\langle \low\alpha 3\oo\low\beta 3, R^\inv\rangle\, (\low\alpha 2)_i\cdot  (\low\beta 2)_i
& &\sigma(i)=1.
\end{align*} 
Due to the identity  $\Delta=R\cdot\Delta\cdot R^\inv=R_{21}^\inv\cdot \Delta\cdot R_{21}$, 
the last two relations are invariant under the substitution $R\to R_{21}^\inv$. The first  is mapped to 
$(\beta)_j\cdot  (\alpha)_i=\langle \low\beta 1\oo\low\alpha 1, R^\inv\rangle\,  (\low\alpha 2)_i\cdot (\low\beta 2)_i$, which is equivalent to the multiplication relation 
$(\alpha)_i\cdot  (\beta)_j=\langle \low\beta 1\oo\low\alpha 1, R\rangle\,  (\low\beta 2)_j\cdot (\low\alpha 2)_i$ for $i<j$.
\end{remark}

Proposition \ref{th:algebra} defines  the algebra and module structure of a Hopf algebra gauge theory for a vertex neighbourhood in which all edges are incoming. Generalising  it to  vertex neighbourhoods with outgoing edges requires an involution  $T^*: K^*\to K^*$ and its dual $T:K\to K$.  If the antipode of $K$
satisfies  $S^2=\id$, it is  natural to choose $T^*=S$.  
More generally, for  a finite-dimensional {\em ribbon} Hopf algebra $K$ one can consider the pair of dual involutions
\begin{align}\label{eq:invol}
 T:K\to K,\; k\mapsto g\cdot S(k),\qquad  T^*:K^*\to K^*, \;\alpha\mapsto \langle \low\alpha 1, g\rangle\, S(\low\alpha 2)\qquad
 \end{align}
 where $g$ is the grouplike element of $K$---see Remark \ref{rem:grouplike}.  Equivalently, one could work with 
the pair of dual involutions  $T': K\to K$, $k\mapsto S(k)g^\inv$ and $T'^*:\alpha\mapsto \langle \low\alpha 2, g^\inv\rangle\, S(\low\alpha 1)$.
If $K$ is semisimple and hence $S^2=\id$, the grouplike element is given by $g=1$ (see Lemma \ref{lem:ssimplerib}) and the two involutions  $T^*,T'^*$ coincide with $S$.

To obtain the $K$-module algebra structure  on a vertex neighbourhood with $n$ incident edge ends of general  orientation, we define a map $\tau: \{1,...,n\}\to\{0,1\}$ by $\tau(i)=0$ if the $i$th edge end is incoming and $\tau(i)=1$ if it is outgoing. The algebra  and module structure on $K^{*\oo n}$ is then determined by the condition  that $T^{*\tau(1)}\oo\ldots\oo T^{*\tau(n)}: K^{*\oo n}\to K^{*\oo n}$  is a morphism of module algebras.
With the properties of the antipode, the identities $(S\oo S)(R)=(S^\inv\oo S^\inv)(R)=R$ and the properties of the grouplike element $g$, one then obtains  the following   $K$-module algebra  structure on $K^{*\oo n}$.

\begin{corollary} \label{rem:flipalg} Let $(K,R)$ be a finite-dimensional  ribbon  Hopf algebra and  $\tau,\sigma:\{1,...,n\}\to\{0,1\}$ arbitrary maps. Then 
the multiplication 
\begin{align}\label{eq:flipalg}
&(\alpha)_i\cdot (\beta)_i=\begin{cases} 
\langle  \beta_{(1)}\oo\alpha_{(1)}, R\rangle \,(\low\beta 2\low\alpha 2)_i  &\sigma(i)=\tau(i)=0\\
\langle \beta_{(2)}\oo  \alpha_{(2)}, R\rangle \,(\low\alpha 1\low\beta 1)_i  &\sigma(i)=0,\tau(i)=1\\
 (\alpha\beta)_i & \sigma(i)=1, \tau(i)=0\\
 (\beta\alpha)_i & \sigma(i)=\tau(i)=1
\end{cases}\\
&(\alpha)_i\cdot (\beta)_j=\begin{cases}
\langle \beta_{(1+\tau(j))}\oo \alpha_{(1+\tau(i))}, (S^{\tau(i)}\oo S^{\tau(j)})(R)\rangle \, (\alpha_{(2-\tau(i))}\oo \beta_{(2-\tau(j))})_{ij}  &i>j\\
(\alpha\oo\beta)_{ij} & i<j,
\end{cases}
\nonumber
\end{align}
and the linear map $\lhd^*:  K^{*\oo n}\oo K\to K^{*\oo n}$ with
\begin{align}\label{eq:act_v}
&(\alpha^1\oo...\oo\alpha^n)\lhd^*h=\langle S^{\tau(1)}(\alpha^1_{(1+\tau(1))})\cdots S^{\tau(n)}(\alpha^n_{(n+\tau(n))}), h\rangle \; \alpha^1_{(2-\tau(1))}\oo\ldots \alpha^n_{(2-\tau(n))}
\end{align}
 define a $K$-right module algebra structure on $K^{*\oo n}$. The involution $T^{*\tau(1)}\oo\ldots\oo T^{*\tau(n)}: K^{*\oo n}\to K^{*\oo n}$ is an algebra homomorphism from the  algebra structure \eqref{eq:flipalg} to the one in \eqref{eq:alg0} and a homomorphism of $K$-right modules from  the module structure \eqref{eq:act_v} to the one  in \eqref{eq:gt}.
The  dual $K$-module structure on $K^{\oo n}$ satisfies the conditions in Definition \ref{def:vertex_nb}, 3.
\end{corollary}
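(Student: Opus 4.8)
The plan is to obtain the structure in \eqref{eq:flipalg}--\eqref{eq:act_v} by transporting the already-established structure of Proposition \ref{th:algebra} along the linear map $\Phi := T^{*\tau(1)}\oo\cdots\oo T^{*\tau(n)}\colon K^{*\oo n}\to K^{*\oo n}$. Since $T^*$ is an involution, each tensor factor of $\Phi$ is either $\id$ (when $\tau(i)=0$) or $T^*$ (when $\tau(i)=1$), so $\Phi$ is itself an involution, in particular a linear bijection with $\Phi^\inv=\Phi$. Writing $\cdot_0$ and $\lhd^*_0$ for the product \eqref{eq:alg0} and action \eqref{eq:gt} of Proposition \ref{th:algebra}, I would \emph{define} a product and a right $K$-action on $K^{*\oo n}$ by pullback,
\begin{align*}
x\cdot y := \Phi\big(\Phi(x)\cdot_0\Phi(y)\big),\qquad x\lhd^* h := \Phi\big(\Phi(x)\lhd^*_0 h\big),
\end{align*}
so that $\Phi$ becomes, tautologically, an isomorphism of the new structure onto the one in \eqref{eq:alg0}--\eqref{eq:gt}. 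Because the target is a $K$-right module algebra by Proposition \ref{th:algebra} and $\Phi$ is a bijective intertwiner, the pulled-back data automatically satisfy \emph{all} the module-algebra axioms; no associativity, unit, or compatibility check has to be redone. The entire content of the corollary then reduces to the computational claim that this pullback equals \eqref{eq:flipalg} and \eqref{eq:act_v}, together with the locality statement for the dual action on the outgoing slots.

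The main algebraic input I would use is the factorisation $T^* = S_{K^*}\circ\phi_g$, where $\phi_g(\alpha)=\langle\low\alpha 1,g\rangle\,\low\alpha 2$. Since $g$ is grouplike, $\Delta(g)=g\oo g$ makes $\phi_g$ both an algebra and a coalgebra homomorphism of $K^*$, while the antipode $S_{K^*}$ is simultaneously an algebra and a coalgebra anti-homomorphism; hence $T^*$ is an algebra and coalgebra \emph{anti}-homomorphism. This single fact explains the structural features of \eqref{eq:flipalg}: on a diagonal slot with $\tau(i)=1$ it reverses the order of the $K^*$-product (producing $(\beta\alpha)_i$ rather than $(\alpha\beta)_i$), and its anti-coalgebra property shifts the Sweedler legs, turning $\low\alpha 1,\low\beta 1$ into $\low\alpha 2,\low\beta 2$ in the reversed cases. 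Pushing the pairings $\langle\,\cdot\,,R\rangle$ through the $S_{K^*}$-factors converts $R$ into $(S^{\tau(i)}\oo S^{\tau(j)})(R)$; here the identities $(S\oo S)(R)=(S^\inv\oo S^\inv)(R)=R$ from the quasitriangular structure are exactly what allow the antipodes landing on both legs to be reabsorbed, and the grouplike $g$ introduced by $\phi_g$ cancels using $S(g)=g^\inv$ together with the relation between $S^2$ and conjugation by $g$ from Remark \ref{rem:grouplike}.

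Concretely I would carry this out in three blocks. First, the four diagonal cases $(\sigma(i),\tau(i))\in\{0,1\}^2$: each is obtained from the corresponding $\sigma(i)$-line of \eqref{eq:alg0} by conjugating with $\id$ or $T^*$ on the single slot, and the anti-homomorphism property gives precisely the four lines of \eqref{eq:flipalg}. Second, the cross terms $(\alpha)_i\cdot(\beta)_j$: for $i<j$, conjugation returns $(\alpha\oo\beta)_{ij}$ unchanged, since $T^{*2}=\id$ on each affected slot; for $i>j$ one propagates the pairing through the $\tau$-dependent antipodes to arrive at $\langle \beta_{(1+\tau(j))}\oo\alpha_{(1+\tau(i))},(S^{\tau(i)}\oo S^{\tau(j)})(R)\rangle$ with the legs $\alpha_{(2-\tau(i))}\oo\beta_{(2-\tau(j))}$ surviving. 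Third, the module action: that $\Phi$ intertwines \eqref{eq:act_v} with \eqref{eq:gt} is a direct pairing computation, again using that $T^*$ moves $S$ and $g$ into the evaluation slot. For the final sentence I would dualise: the dual of $\Phi$ is $T^{\tau(1)}\oo\cdots\oo T^{\tau(n)}$ on $K^{\oo n}$, and a one-line computation $T\big(h\cdot T(k)\big)=k\,S(h)$, using $S(g)=g^\inv$ and the $S^2$--conjugation relation for $g$, shows that the transported action on each outgoing ($\tau(i)=1$) slot is $h\rhd k=(kS(h))$, while the incoming slots retain $h\rhd k=(hk)$ from \eqref{eq:gt}; together these are exactly the locality conditions \eqref{eq:gkact}.

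I expect the one genuinely delicate step to be the bookkeeping in the mixed cross-factor case $i>j$ with $\tau(i)\neq\tau(j)$: there one must simultaneously track which Sweedler leg each antipode acts on, verify that the powers $S^{\tau(i)},S^{\tau(j)}$ attach to the correct legs of $R$, and check that the stray $g$-insertions coming from the two $\phi_g$-factors cancel rather than accumulate. The conceptual work is minimal---everything is forced by transport of structure---but aligning indices so that the output is \emph{verbatim} \eqref{eq:flipalg}, with the $(1+\tau)$ and $(2-\tau)$ shifts in the right places, requires care.
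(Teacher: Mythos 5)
Your proposal matches the paper's own derivation: Corollary \ref{rem:flipalg} is stated there without a separate proof precisely because it is obtained by transporting the structure of Proposition \ref{th:algebra} along the involution $T^{*\tau(1)}\oo\cdots\oo T^{*\tau(n)}$ (so that this map is by construction a morphism of module algebras) and then computing the explicit formulas using the antipode properties, $(S\oo S)(R)=(S^\inv\oo S^\inv)(R)=R$ and the grouplike element $g$ --- exactly your strategy and your list of key identities. One small inaccuracy in your justification: $\phi_g(\alpha)=\langle \low\alpha 1,g\rangle\,\low\alpha 2$ is an algebra homomorphism of $K^*$ but \emph{not} a coalgebra homomorphism (it satisfies only $\Delta\circ\phi_g=(\phi_g\oo\id)\circ\Delta$), so $T^*$ is an algebra anti-homomorphism whose interaction with $\Delta_{K^*}$ carries a one-sided $g$-insertion rather than being a clean anti-coalgebra map; this is precisely the $g$-bookkeeping you correctly flag as the delicate step, and it does cancel in the final formulas via $S(g)=g^\inv$ and $gS(k)=S^\inv(k)g$.
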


\begin{theorem} \label{th:vertex_gt}Let  $(K,R)$ be  a  finite-dimensional ribbon Hopf algebra, $\Gamma$  a ciliated ribbon graph   and $v\in V$  a vertex with $n$ incident edge ends, ordered with respect to the cilium at $v$. Define $\tau: \{1,..., n\}\to\{0,1\}$ by $\tau(i)=0$ if the $i$th edge end is incoming,  $\tau(i)=1$ if it is outgoing and choose an arbitrary map $\sigma:\{1,...,n\}\to \{0,1\}$. 
Then the algebra structure  and  the $K$-right module structure from Corollary \ref{rem:flipalg}   define   a Hopf algebra gauge theory on $\Gamma_v$. 
\end{theorem}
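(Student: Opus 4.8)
The plan is to verify, clause by clause, that the stated data satisfy Definition~\ref{def:gtheory} for the graph $\Gamma_v$, whose edge set $E$ is the set of $n$ edge ends at $v$ and whose vertex set is $V = \{v, u_1, \ldots, u_n\}$, with $u_i$ the univalent vertex at the $i$th edge end; thus $K^{\oo V} = K^{\oo(n+1)}$. Corollary~\ref{rem:flipalg} already provides the associative unital algebra \eqref{eq:flipalg} on $K^{*\oo n}$ together with the action \eqref{eq:act_v} of the copy of $K$ at the central vertex $v$, and the statement that this makes $K^{*\oo n}$ a $K$-right module algebra. To obtain the full $K^{\oo(n+1)}$-module structure I would adjoin the actions of the copies of $K$ at the univalent vertices, which are forced by the locality clause \eqref{eq:gkact}: at $u_i$ one sets, on connections, $(h)_{u_i}\rhd (k)_i = (kS(h))_i$ if $\tau(i)=0$ and $(h)_{u_i}\rhd(k)_i = (hk)_i$ if $\tau(i)=1$, with $(h)_{u_j}$ acting by $\epsilon(h)$ on the $i$th factor for $j\neq i$, and passes to the dual action on $K^{*\oo n}$.

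Several clauses are then immediate. Clause~1 holds by construction, and in clause~2 the algebra \eqref{eq:flipalg} is associative and unital by Corollary~\ref{rem:flipalg}, condition~2(i) is read off directly from \eqref{eq:flipalg}, and condition~2(ii) is \emph{vacuous} because every edge end of $\Gamma_v$ is incident to $v$, so no two distinct edges have disjoint vertex sets. For clause~3(ii), each edge end has distinct endpoints $v$ and $u_i$; the central action reproduces the first case of \eqref{eq:gkact} (this is precisely \eqref{eq:intgt}), the univalent action reproduces the second by construction, and every $u_j$ with $j\neq i$ acts as $\epsilon$ on the $i$th factor, giving the triviality condition.

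The substance is clause~3(i), that the assembled $K^{\oo(n+1)}$-action is a module algebra. I would argue in two steps. First, the $n+1$ vertex actions pairwise commute, so they assemble into a genuine $K^{\oo(n+1)}$-module: distinct univalent actions act on distinct tensor factors, while the central action and $u_i$ both touch factor $i$ but through complementary legs of the coproduct---\eqref{eq:act_v} pairs the central parameter against $\low\alpha{1+\tau(i)}$ and returns $\low\alpha{2-\tau(i)}$, whereas $u_i$ uses the other leg---so they commute by coassociativity. Second, because the coproduct of $K^{\oo(n+1)}$ is factorwise, the module-algebra identity for the full action follows once it holds for each generating copy of $K$ separately; the central factor is handled by Corollary~\ref{rem:flipalg}, and for a univalent factor I would use that $T^{*\tau(1)}\oo\cdots\oo T^{*\tau(n)}$ intertwines \eqref{eq:flipalg} and \eqref{eq:act_v} with the all-incoming structures \eqref{eq:alg0}, \eqref{eq:gt}, reducing the check to $\tau\equiv 0$. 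There $u_i$ acts by a regular action, and $(\gamma\cdot\delta)\lhd^* h = (\gamma\lhd^*\low h 1)\cdot(\delta\lhd^*\low h 2)$ is verified against each relation of \eqref{eq:alg0} using multiplicativity of the coproduct of $K^*$, the identity $\Delta\circ S = (S\oo S)\circ\Delta^{op}$, and the $R$-matrix relations $R\cdot\Delta\cdot R^\inv=\Delta^{op}$ and $(\Delta\oo\id)(R)=R_{13}R_{23}$ already used in the proof of Proposition~\ref{th:algebra}. I expect this last verification to be the only real obstacle; it is routine once organised this way, and conceptually it reflects that the central and univalent actions endow each copy of $K^*$ with the regular $(K,K)$-bimodule structure, which is compatible with the braided tensor product of Lemma~\ref{lem:braided}.
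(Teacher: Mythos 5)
The paper gives no separate proof of Theorem~\ref{th:vertex_gt}: under the convention stated twice in \S\ref{subsec:haxioms} and \S\ref{subsec:vertex_nb} (``if one does not associate gauge transformations to the univalent vertices in $\Gamma_v$\dots''), the gauge Hopf algebra of $\Gamma_v$ is just the copy of $K$ at the central vertex, the univalent factors act by the counit, and the theorem is then literally Corollary~\ref{rem:flipalg} together with the easy observations you make about clauses~1 and~2 (in particular that 2(ii) is vacuous) and the identification of the central action with \eqref{eq:intgt}. That part of your proposal matches the paper.

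The gap is in your treatment of clause~3(i) under the literal $K^{\oo(n+1)}$ reading. The regular actions you adjoin at the univalent vertices are indeed forced by \eqref{eq:gkact}, and they do commute with the central action, but they are \emph{not} module algebra actions for the product \eqref{eq:alg0} whenever $\sigma(i)=1$. Concretely, for $\tau(i)=0$ the action of $u_i=\st(e_i)$ dualises to $\alpha\lhd^* h=\langle \low\alpha 2,S(h)\rangle\,\low\alpha 1$, and since $\Delta(S(h))=S(\low h 2)\oo S(\low h 1)$ one gets, for the undeformed product $(\alpha)_i\cdot(\beta)_i=(\alpha\beta)_i$ of the case $\sigma(i)=1$,
\begin{align*}
(\alpha\beta)\lhd^* h=\langle \low\alpha 2,S(\low h 2)\rangle\langle\low\beta 2,S(\low h 1)\rangle\,\low\alpha 1\low\beta 1=(\alpha\lhd^*\low h 2)\cdot(\beta\lhd^*\low h 1),
\end{align*}
i.e.\ a right module algebra over $K^{cop}$ rather than $K$; the legs of $\Delta(h)$ come out crossed, and the same happens for $\tau(i)=1$. (For $\sigma(i)=0$ the reversal of the factors in $\low\beta 2\low\alpha 2$ exactly compensates the antipode's anti-comultiplicativity, which is why that case works and why the failure is easy to miss.) So the ``routine verification'' you defer is precisely where the argument breaks, and your closing appeal to the regular $(K,K)$-bimodule algebra structure does not help: the left regular action of $K$ on $K^*$ is a \emph{left} module algebra action, and converting it to a right action with $S$ lands in $K^{cop}$-Mod. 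The theorem therefore only holds with the univalent vertices acting trivially; the correct transformation law at the far end of an edge is recovered in the paper only on the image of $G^*$ in the glued theory (Proposition~\ref{lem:gtrafolem}), where the two edge-end copies are coupled by the coproduct and the mismatch between $\sigma(s(e))$ and $\sigma(t(e))$ is what absorbs it (cf.\ Remark~\ref{rem:sigg}).
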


\subsection{Hopf algebra gauge theory on a ribbon graph $\Gamma$}
\label{subsec:gribbongraph}

We will now combine the  Hopf algebra gauge theories on the vertex neighbourhoods $\Gamma_v$ into a local Hopf algebra gauge theory on the ciliated ribbon graph $\Gamma$. We consider  a  finite-dimensional ribbon Hopf algebra $K$ and
 assign to each 
 vertex $v\in V$
 an
 $R$-matrix $R_v$  and a map  $\tau_v$  as in Corollary \ref{rem:flipalg} and Theorem \ref{th:vertex_gt}.  
We also introduce a  map  $\rho: E(\Gamma)\to E( \Gamma_\circ)$  that selects for each edge $e\in E(\Gamma)$ one of the associated edge ends in $E(\Gamma_\circ)$, i.e.~either $\rho(e)=s(e)$ or $\rho(e)=t(e)$ for each edge $e\in E(\Gamma)$.  
For  each vertex $v\in V(\Gamma)$, we define the map  $\sigma_v: \{1,...,|v|\}\to\{0,1\}$ 
from Corollary \ref{rem:flipalg} by the condition that $\sigma_v(i)=0$ if 
the $i$th edge end at $v$ is  in the image of $\rho$ and $\sigma_v(i)=1$ else. 
 This data assigns to each
 vertex $v\in V(\Gamma)$  a   module  algebra  $\mathcal A^*_v$  as in 
Corollary \ref{rem:flipalg}.

The action of gauge transformations at vertices $v\in V$ equips the tensor product
$\oo_{v\in V}\mathcal A^*_v$ with the structure of a $K\exoo{V}$-right module algebra. 
By Definition \ref{def:local_gt}, this data induces a local Hopf algebra gauge theory on $\Gamma$ via the
 map $G^*:{K^*}\exoo{E}\to\otimes_{v\in V} \mathcal A^*_v$ from \eqref{eq:dualemb} if and only if 
\begin{compactenum}[(i)]
\item  $G^*({K^*}\exoo{E})$  is a subalgebra  of  the algebra $\oo_{v\in V}\mathcal A^*_v$,
\item   $G^*({K^*}\exoo{E})$ is a $K\exoo{V}$- submodule of the $K\exoo{V}$-right module  $\oo_{v\in V}\mathcal A^*_v$,
\item the induced $K\exoo{V}$-module structure on $G^*({K^*}\exoo{E})$ satisfies  axiom 3. in Definition
\ref{def:gtheory}.  
\end{compactenum}
The following two propositions show that these conditions are satisfied. Moreover,   the resulting algebra structure on ${K^*}\exoo{E}$ does not depend on the choice of the map $\rho: E(\Gamma)\to E(\Gamma_\circ)$ if the same $R$-matrix is assigned to all vertices $v\in V$.

\begin{proposition} \label{lem:edge_algebra} Let $K$ be a finite-dimensional   
ribbon Hopf algebra and $\Gamma$ a ciliated ribbon graph equipped with the data  above. Then:\\[-3ex]
\begin{compactenum}
\item The linear subspace $G^*({K^*}\exoo{E})\subset \otimes_{v\in V} \mathcal A^*_v$  is a subalgebra of  $\otimes_{v\in V} \mathcal A^*_v$.
\item If $R_v=R$ for all $v\in V$, the induced  algebra structure on ${K^*}\exoo{E}$ does not depend on $\rho$.
\item  If   $\ell\in K$ is a  Haar integral for $K$ then the  projector on $G^*({K^*}\exoo{E})\subset \otimes_{v\in V}\mathcal A^*_v$ is given by
$$\Pi: \otimes_{v\in V}\mathcal A^*_v\to \otimes_{v\in V}\mathcal A^*_v,\qquad (\alpha\oo \beta)_{s(e)t(e)}\mapsto \langle \low\alpha 1S(\low\beta 2), \ell\rangle\; (\low\alpha 2\oo \low\beta 1)_{s(e)t(e)}.$$ 
  \end{compactenum}
\end{proposition}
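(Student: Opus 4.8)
The plan is to treat the three parts separately: parts~1 and 2 by a direct computation with the braided products of Corollary~\ref{rem:flipalg} and the quasitriangularity identities, and part~3 by generalising the module-theoretic argument of Lemma~\ref{lem:1proj}. For part~1, since the multiplication of $\oo_{v\in V}\mathcal A^*_v$ is the tensor product of the vertex multiplications and $G^*=\oo_{e\in E}G^*_e$ acts edge by edge, the locality conditions of Definition~\ref{def:gtheory} reduce the claim to the products $G^*((\alpha)_e)\cdot G^*((\beta)_f)$ of images of single-edge functions. If $e,f$ share no vertex the two images occupy disjoint tensor factors and the product is $G^*((\alpha\oo\beta)_{ef})$, so all the work is at a shared vertex. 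I would first treat a single edge $e$ with $\st(e)\neq\ta(e)$: from $G^*((\alpha)_e)=(\low\alpha 2\oo\low\alpha 1)_{s(e)t(e)}$ the product factorises into the $s(e)$- and $t(e)$-products of \eqref{eq:flipalg}, and recombining the two factors through $\Delta(\low\beta 2\low\alpha 2)=\low\beta 2\low\alpha 2\oo\low\beta 3\low\alpha 3$ one obtains
\begin{align*}
G^*((\alpha)_e)\cdot G^*((\beta)_e)=G^*\big((\langle \low\beta 1\oo\low\alpha 1,R\rangle\,\low\beta 2\low\alpha 2)_e\big),
\end{align*}
which lies in the image and realises the induced product as the $\sigma=0$ multiplication of \eqref{eq:alg0}.

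The general shared-vertex computation follows the same template: the braided factor in \eqref{eq:flipalg} pairs an $R$-matrix against the edge-end components, and the identities $(\Delta\oo\id)(R)=R_{13}R_{23}$, $(\id\oo\Delta)(R)=R_{13}R_{12}$ and $R\cdot\Delta\cdot R^\inv=\Delta^{op}$ --- the same relations used in the proof of Proposition~\ref{th:algebra} --- allow one to reabsorb this $R$-matrix into a comultiplication of a single edge function, so the product is again $G^*$ of an element of ${K^*}\exoo{E}$. I expect the main obstacle to be the bookkeeping here for the remaining configurations, namely two distinct edges sharing one or both vertices and a loop at one vertex, where the cilium ordering and the orientation maps $\tau_v$ select different branches of \eqref{eq:flipalg}; in each case one must verify that the $R$-matrix entanglement created at the shared vertex is exactly what the comultiplication in $G^*$ produces. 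Organising this through Lemma~\ref{lem:twistlemma}, so that $\oo_v\mathcal A^*_v$ is dual to a two-sided twist of $K\exoo{2E}$ and $G$ from \eqref{eq:dualemb} is the edgewise multiplication $K\oo K\to K$ --- a coalgebra map --- gives a conceptually cleaner route, but reduces to verifying the same quasitriangularity relations.

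For part~2, changing $\rho(e)$ from $t(e)$ to $s(e)$ interchanges which end of $e$ carries $\sigma=0$; repeating the single-edge computation with the roles swapped yields the induced product $(\langle \low\beta 2\oo\low\alpha 2,R\rangle\,\low\alpha 1\low\beta 1)_e$, and the relation $\langle \low\beta 1\oo\low\alpha 1,R\rangle\,\low\beta 2\low\alpha 2=\langle \low\beta 2\oo\low\alpha 2,R\rangle\,\low\alpha 1\low\beta 1$, a consequence of $R\cdot\Delta\cdot R^\inv=\Delta^{op}$, shows the two products agree when $R_v=R$ for all $v$. The same interchange at a shared vertex is controlled by the same relation, giving $\rho$-independence in general.

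For part~3 I would mimic Lemma~\ref{lem:1proj}. Equip $\oo_v\mathcal A^*_v\cong {K^*}\exoo{2E}$ with the $K\exoo{E}$-left module structure in which $x$ on edge $e$ acts only on the pair $(s(e),t(e))$ as the midpoint gauge transformation
\begin{align*}
(x)_e\rhd(\alpha\oo\beta)_{s(e)t(e)}=\langle \low\alpha 1 S(\low\beta 2),x\rangle\,(\low\alpha 2\oo\low\beta 1)_{s(e)t(e)},
\end{align*}
reflecting that at the midpoint of $e$ the end $s(e)$ is incoming and $t(e)$ outgoing; a short check with the comultiplication and antipode axioms shows this is a module structure. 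As $\ell$ is a Haar integral, $K$ is semisimple, so $S^2=\id$ and hence $\low\beta 2 S(\low\beta 1)=\epsilon(\beta)1$; applying the action to $G^*((\alpha)_e)$ and contracting the two inner factors then gives $(x)_e\rhd G^*((\alpha)_e)=\epsilon(x)\,G^*((\alpha)_e)$, so $G^*({K^*}\exoo{E})$ lies in the invariants. By Corollary~\ref{lem:ginvproj} the operator $\Pi=\big(\oo_e(\ell)_e\big)\rhd(-)$ is the projector onto these invariants, and its formula is precisely the stated $\Pi$. It then remains to show the reverse inclusion; exactly as in Lemma~\ref{lem:1proj} this reduces to a single edge and to the count $\dim\mathrm{Im}(\Pi)=\dim K$, which I would obtain by transporting the module structure through $S\oo\id$ to the module of Example~\ref{ex:moduledim}. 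This dimension count in the oriented setting is the delicate point, but the antipodal twist $S\oo\id$ reduces it to the incoming-edge case already treated there.
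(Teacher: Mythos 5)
Your proposal is correct and follows essentially the same route as the paper: part 1 by the explicit single-edge and shared-vertex computations from \eqref{eq:flipalg} and \eqref{eq:dualemb} (with the remaining configurations deferred to the same case-by-case analysis that the paper relegates to Proposition \ref{lem:algexplicit}), part 2 via $R\cdot\Delta\cdot R^\inv=\Delta^{op}$, and part 3 by the invariance-and-dimension-count argument of Lemma \ref{lem:1proj}, which you re-derive with the correctly oriented midpoint action instead of citing it directly.
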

\begin{proof} 
 By definition of the algebra $\oo_{v\in V}\mathcal A^*_v$, every element $G^*( \alpha^1\oo...\oo \alpha^E)$ can be expressed in Sweedler notation as a product  of elements $(\alpha^i_{(1)})_{t(e_i)}$ and $(\alpha^i_{(2)})_{s(e_i)}$, where factors of the form $(\gamma)_e$ and $(\delta)_f$  for edge ends $e,f$ commute if $e,f$ are edge ends at different vertices, and the contributions of edge ends for a given vertex are ordered according to the  ordering  at this vertex.  Multiplying two elements $G^*( \alpha^1\oo...\oo \alpha^E)$ and $G^*( \beta^1\oo...\oo \beta^E)$ in $\oo_{v\in V}\mathcal A^*_v$  involves (i) reordering the factors  $(\alpha^i_{(k)})_e$, $(\beta^j_{(l)})_f$ for the different edge ends $e,f$ until all factors are again ordered according to the  ordering at each vertex and the two factors for each edge end are next to each other in the product and then (ii) multiplying the two factors   for each edge end.
To show that permuting the factors associated with two edge ends  in step (i) yields again products of factors $(\alpha'_{(1)})_{t(e)}$, $(\alpha'_{(2)})_{s(e)}$ and $(\beta'_{(1)})_{t(e)}$, $(\beta'_{(2)})_{s(e)}$ it is sufficient to note that for all $\alpha,\beta\in K^*$ end edges $e,f$ of $\Gamma$ the second relation in \eqref{eq:flipalg}, Corollary \ref{rem:flipalg} implies
\begin{align*}
&\big((\low \alpha 1)_{t(e)}\cdot (\low \beta 1)_{t(f)}\big) \oo \low\alpha 2\oo\low\beta 2=\langle R, \low\beta 1\oo \low\alpha 1\rangle\; \big((\low\beta 2 )_{t(f)}\cdot (\low\alpha 2)_{t(e)}\big)\oo \low\alpha 3\oo\low\beta 3\\ 
&=\langle R, \low\beta 1\oo \low\alpha 1\rangle\; \big((\beta_{(2)(1)} )_{t(f)}\cdot (\alpha_{(2)(1)})_{t(e)}\big)\oo \alpha_{(2)(2)}\oo\beta_{(2)(2)}\qquad\quad\; t(f)<t(e)\\[+1ex]
&\big((\low \alpha 2)_{s(e)}\cdot (\low \beta 1)_{t(f)}\big)\oo \low\alpha 1\oo\low\beta 2=\langle R, \low\beta 1\oo S(\low\alpha 3)\rangle\; \big((\low\beta 2 )_{t(f)}\cdot (\low\alpha 2)_{s(e)}\big)\oo \low\alpha 1\oo\low\beta 3 \\ 
&=\langle R, \low\beta 1\oo S(\low\alpha 2)\rangle\; \big((\beta_{(2)(1)} )_{t(f)}\cdot (\alpha_{(1)(2)})_{s(e)}\big)\oo \alpha_{(1)(1)}\oo\beta_{(2)(2)}\qquad t(f)<s(e)\\[+1ex]
&\big((\low \alpha 1)_{t(e)}\cdot (\low \beta 2)_{s(f)}\big)\oo \low\alpha 2\oo\low\beta 1=\langle R, S(\low\beta 3)\oo \low\alpha 1\rangle\; \big((\low\beta 2 )_{s(f)}\cdot (\low\alpha 2)_{t(e)}\big)\oo \low\alpha 3\oo\low\beta 1 \\ 
&=\langle R, S(\low\beta 2)\oo\low\alpha 1\rangle\; \big((\beta_{(1)(2)} )_{s(f)}\cdot (\alpha_{(1)(1)})_{t(e)}\big)\oo \alpha_{(1)(2)}\oo\beta_{(1)(1)}\qquad s(f)<t(e)\\[+1ex]
&\big((\low \alpha 2)_{s(e)}\cdot (\low \beta 2)_{s(f)}\big)\oo \low\alpha 1\oo\low\beta 1=\langle R, \low\beta 3\oo \low\alpha 3\rangle\; \big((\low\beta 2 )_{s(f)}\cdot (\low\alpha 2)_{t(e)}\big)\oo \low\alpha 1\oo\low\beta 1\\
&=\langle R, \low\beta 2\oo \low\alpha 2\rangle\; \big((\beta_{(1)(2)} )_{s(f)}\cdot (\alpha_{(1)(2)})_{s(e)}\big)\oo \alpha_{(1)(1)}\oo\beta_{(1)(1)}\qquad\quad\; s(f)<s(e).
\end{align*}
Note  that  the second and the third identity are also valid for the case  $e=f$. 
This shows that after the reordering, the two factors for the edges $e,f$ of $\Gamma$ are again of the form $(\alpha'_{(1)})_{t(e)}$, $(\alpha'_{(2)})_{s(e)}$ and $(\beta'_{(1)})_{t(f)}$, $(\beta'_{(2)})_{s(f)}$, for some $\alpha',\beta'\in K^*$.  After completing the reordering in step (i), one obtains linear combinations of  products of factors $(\alpha'^{i}_{(1)})_{t(e_i)}\cdot (\beta'^{i}_{(1)})_{t(e_i)}$ and $(\alpha'^i_{(2)})_{s(e_i)}\cdot (\beta'^i_{(2)})_{s(e_i)}$ for each edge $e_i$ of $\Gamma$, which are ordered according to the  ordering of the edge ends at each vertex, and in which the factors for edge ends at different vertices commute. To multiply the contributions for each edge end, recall  that by construction we either have $\sigma(t(e))=1-\sigma(s(e))=1$ or $\sigma(t(e))=1-\sigma(s(e))=0$ for each edge $e$ of $\Gamma$. 
In the first case we obtain
\begin{align}\label{eq:sampcalc}
&\big( (\low \alpha 1)_{t(e)}\cdot (\low\beta 1)_{t(e)}\big)\oo \big((\low\alpha 2)_{s(e)}\cdot (\low\beta 2)_{s(e)}\big)
= \langle R,\beta_{(1)(1)}\oo\alpha_{(1)(1)}\rangle\; (\beta_{(1)(2)}\alpha_{(1)(2)})_{t(e)}\oo (\low\beta 2\low\alpha 2)_{s(e)}\nonumber\\
&= \langle R,\beta_{(1)}\oo\alpha_{(1)}\rangle\; (  (\beta_{(2)}\alpha_{(2)})_{(1)})_{t(e)}\oo ( (\low\beta 2\low\alpha 2)_{(2)})_{s(e)}
\end{align}
and in the second case
\begin{align}\label{eq:samcomp}
&\big( (\low \alpha 1)_{t(e)}\cdot (\low\beta 1)_{t(e)}\big)\oo \big((\low\alpha 2)_{s(e)}\cdot (\low\beta 2)_{s(e)}\big)
=\langle R, \beta_{(2)(2)}\oo\alpha_{(2)(2)}\rangle (\low \alpha 1\low\beta 1)_{t(e)}\oo (\alpha_{(2)(1)}\beta_{(2)(1)})_{s(e)}\nonumber\\
&=\langle R, \beta_{(3)}\oo\alpha_{(3)}\rangle (\low \alpha 1\low\beta 1)_{t(e)}\oo (\alpha_{(2)}\beta_{(2)})_{s(e)}
= \langle R,\low\alpha 1\oo\low\beta 1\rangle\; (\low\beta 2\low\alpha 2)_{t(e)}\oo (\low\beta 3\low\alpha 3)_{s(e)}\nonumber\\
&= \langle R,\low\alpha 1\oo\low\beta 1\rangle\; ( (\low\beta 2\low\alpha 2)_{(1)})_{t(e)}\oo ( (\low\beta 2\low\alpha 2)_{(2)})_{s(e)}.
\end{align}
This shows that after reordering the factors and multiplying the two factors for each edge end, the product $G^*( \alpha^1\oo...\oo \alpha^E)\cdot G^*( \beta^1\oo...\oo \beta^E)$ becomes a linear combination of   products  of  the factors $(\gamma^i_{(1)})_{t(e_i)}$ and  $(\gamma^i_{(2)})_{s(e_i)}$ for the different edges $e_i$, ordered according to the ordering  at each vertex. All such products are in the image of $G^*$, and hence the image of $G^*$ is a subalgebra of $\oo_{v\in V}\mathcal A^*_v$.

To prove 2, suppose  that  the same $R$-matrix is assigned to each vertex of $\Gamma$ and note
that the identity $\Delta^{op}=R\cdot \Delta\cdot R^\inv$ implies    $\langle \low\alpha 1\oo\low\beta 1,R\rangle\, \low\alpha 2\low\beta 2=\langle \low\alpha 2\oo\low\beta 2, R\rangle \, \low\beta 1\low\alpha 1$ for all $\alpha,\beta\in K^*$. This shows that the expressions for $\rho(e)=s(e)$ and $\rho(e)=t(e)$ in equations \eqref{eq:sampcalc} and \eqref{eq:samcomp} agree.
That $\Pi$ is a projector   on $G^*({K^*}\exoo{E})$ 
then follows from Lemma  \ref{lem:1proj}.
\end{proof}

\medskip
\begin{remark}\label{rem:sigg}  Proposition \ref{lem:edge_algebra}
motivates the introduction of the maps $\sigma:\{1,...,n\}\to\{0,1\}$ in Proposition \ref{th:algebra} and Corollary \ref{rem:flipalg}  and of the map $\rho: E(\Gamma)\to E(\Gamma_\circ)$ at the beginning of this subsection.  
It is clear from  equations  \eqref{eq:sampcalc}  and \eqref{eq:samcomp} that  without them,  the image of   $G^*$ would not be a subalgebra of $\oo_{v\in V}{K^*}\exoo{v}$. 
This is because edge orientation is reversed in Corollary \ref{rem:flipalg} by applying  
 the involution $T^*: K^*\to K^*$, $\alpha\mapsto \langle \low\alpha 1,g\rangle\, S(\low\alpha 2)$, which is an {\em algebra anti-homomorphism}. Hence, for each edge $e$ the  algebra structures  for the  starting  end $s(e)$ and the target end  $t(e)$ are opposite if  
 one sets
 $\sigma(s(e))=\sigma(t(e))$. This mismatch between the two opposite algebras  prevents the image of $G^*$ from being a subalgebra. 
 To make the  algebra structures at the edge ends compatible, it is necessary to 
 modify the algebra structure at exactly one of these edge ends by introducing an $R$-matrix. The identity $\langle \low\alpha 1\oo\low\beta 1, R\rangle\, \low\alpha 2\low\beta 2=\langle \low\alpha 2\oo\low\beta 2, R\rangle\, \low\beta 1\low\alpha 1$ for $\alpha,\beta\in K^*$  then ensures  the compatibility of the algebra structures at the edge ends $s(e)$ and $t(e)$. \end{remark}

\medskip
\begin{proposition} \label{lem:gtrafolem} Let $K$ be a finite-dimensional 
ribbon 
Hopf algebra  and $\Gamma$ a ciliated ribbon graph  equipped with the data  above. Then:\\[-4ex]
\begin{compactenum}
\item $G^*({K^*}\exoo{E})\subset \oo_{v\in V}\mathcal A^*_v$  is a $K\exoo{V}$-submodule. 
\item The induced  $K\exoo{V}$-left module structure on $K\exoo{E}$  is given by
\begin{align}\label{eq:gtrafok}
&\pi_e((h)_v\rhd(k)_e)=\epsilon(h)\, k &  &\text{for }v\notin\{ \st(e), \ta(e)\},\\
&\pi_e((h\oo h')_{\st(e)\ta(e)}\rhd(k)_e)=h' k S(h) & &\text{for }\st(e)\neq \ta(e),\nonumber\\
&\pi_e((h)_{\st(e)}\rhd(k)_e)=\low h 1 k S(\low h 2) & &\text{for } t(e)<s(e),\nonumber\\
&\pi_e((h)_{\st(e)}\rhd(k)_e)=\low h 2 k S(\low h 1) &  &\text{for }s(e)<t(e).\nonumber
\end{align}
\end{compactenum}
\end{proposition}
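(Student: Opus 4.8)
The plan is to combine the locality of the gauge action with the structural facts already available: by Proposition \ref{lem:edge_algebra} the map $G^*$ of \eqref{eq:dualemb} is an injective algebra homomorphism onto a subalgebra of $\oo_{v\in V}\mathcal A^*_v$, and a gauge transformation $(h)_v$ acts only through the factor $\mathcal A^*_v$. Since $\lhd^*$ is a module-algebra action, the identity $(xy)\lhd^* h=(x\lhd^*\low h 1)(y\lhd^*\low h 2)$ shows that a subalgebra is a submodule as soon as it is closed under the action on a generating set. As ${K^*}\exoo{E}$ is generated as an algebra by the single-edge functions $(\alpha)_e$, it therefore suffices to compute $G^*((\alpha)_e)\lhd^*(h)_v$ for each edge $e$, each vertex $v$, and each $\alpha\in K^*$. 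In each case I will find that the result is again of the form $G^*((\beta)_e)$; this simultaneously establishes the submodule property (part 1) and, by reading the duality $\langle (\alpha)_e\lhd^*(h)_v,\,(k)_e\rangle=\langle(\alpha)_e,\,(h)_v\rhd(k)_e\rangle$ backwards, identifies the induced left action \eqref{eq:gtrafok} (part 2).

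The routine cases come first. On the generator $G^*((\alpha)_e)=(\low\alpha 2)_{s(e)}\oo(\low\alpha 1)_{t(e)}$ every edge end other than $s(e),t(e)$ carries the unit $1\in K^*$, so the pairing in \eqref{eq:act_v} annihilates those slots via $\epsilon$. For $v\notin\{\st(e),\ta(e)\}$ this collapses all active slots and gives $G^*((\alpha)_e)\lhd^*(h)_v=\epsilon(h)\,G^*((\alpha)_e)$, the first line of \eqref{eq:gtrafok}. For $\st(e)\neq\ta(e)$ the two ends lie in the distinct factors $\mathcal A^*_{\st(e)},\mathcal A^*_{\ta(e)}$, so $(h)_{\st(e)}$ and $(h')_{\ta(e)}$ act independently; recalling that $s(e)$ is outgoing (so $\tau=1$ and the antipode branch $\langle S(\low\alpha 4),h\rangle\,\low\alpha 3$ of \eqref{eq:act_v} applies) while $t(e)$ is incoming ($\tau=0$, giving $\langle\low\alpha 1,h'\rangle\,\low\alpha 2$), a short coproduct computation yields $G^*((\beta)_e)$ with $\langle\beta,k\rangle=\langle\low\alpha 1,h'\rangle\langle S(\low\alpha 3),h\rangle\langle\low\alpha 2,k\rangle=\langle\alpha,\,h'\,k\,S(h)\rangle$, i.e.\ the second line of \eqref{eq:gtrafok}.

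The loop case $\st(e)=\ta(e)=v$ is the delicate one and I expect it to be the main obstacle. Now $s(e)$ and $t(e)$ are two distinct slots of the \emph{same} $\mathcal A^*_v$, so $(h)_v$ acts on both through a single coproduct $\Delta(h)$, and the order in which \eqref{eq:act_v} forms its defining product over the edge ends---that is, whether $t(e)<s(e)$ or $s(e)<t(e)$ in the ciliated linear order---fixes which leg of $\Delta(h)$ meets the incoming slot and which meets the outgoing, antipode-carrying slot. For $t(e)<s(e)$ the bookkeeping produces $\langle\low\alpha 1 S(\low\alpha 4),h\rangle\,(\low\alpha 3)_{s(e)}\oo(\low\alpha 2)_{t(e)}=G^*((\beta)_e)$, whence $\langle\beta,k\rangle=\langle\low\alpha 1 S(\low\alpha 3),h\rangle\langle\low\alpha 2,k\rangle=\langle\alpha,\,\low h 1\,k\,S(\low h 2)\rangle$; exchanging the order of the two slots exchanges the two legs of $\Delta(h)$ and gives $\langle\alpha,\,\low h 2\,k\,S(\low h 1)\rangle$ for $s(e)<t(e)$. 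These are precisely the last two lines of \eqref{eq:gtrafok}.

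With all generators handled, closure under the action holds on a generating set, so the subalgebra $G^*({K^*}\exoo{E})$ is a $K\exoo{V}$-submodule (part 1), and the $\beta$ computed above read off through the duality give exactly the four formulas of \eqref{eq:gtrafok} (part 2). The only genuinely error-prone points to watch are the consistent use of the counit to eliminate the idle edge ends, the correct placement of the antipode coming from each outgoing end ($\tau=1$), and---in the loop case---matching the product order of \eqref{eq:act_v} to the ciliation so that the right Sweedler leg of $h$ multiplies on the left versus the right of $k$.
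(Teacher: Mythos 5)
Your proposal is correct and follows essentially the same route as the paper: compute $G^*((\alpha)_e)\lhd^*(h)_v$ on the single-edge generators directly from \eqref{eq:dualemb} and \eqref{eq:act_v}, split into the cases $v\notin\{\st(e),\ta(e)\}$, $\st(e)\neq\ta(e)$, and the two loop orderings, and then pass to the left action on $K\exoo{E}$ by duality (the paper treats $t(e)<s(e)$ by applying the involution $T^*$ to the other loop case rather than redoing the bookkeeping, but the outcome is identical). Your identification of the loop case as the delicate point, and your formulas $\langle\alpha,h'kS(h)\rangle$, $\langle\alpha,\low h1 kS(\low h2)\rangle$, $\langle\alpha,\low h2 kS(\low h1)\rangle$, all agree with the paper's computation.
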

\begin{proof} 

That $G^*({K^*}\exoo{E})\subset \oo_{v\in V}\mathcal A^*_v$ is a $K\exoo{V}$-submodule  follows by a direct computation 
from formulas  \eqref{eq:dualemb} and \eqref{eq:act_v}.  For each edge $e\in E$, they imply 
$G^*((\alpha)_e)\lhd^*(h)_u=\epsilon(h)\; G^*((\alpha)_e)$ for all  $u\notin\{ \st(e), \ta(e)\}$, $h\in K$, $\alpha\in K^*$. If  $\st(e)\neq \ta(e)$ one obtains 
\begin{align*}
& G^*((\alpha)_e)\lhd^*(h\oo h')_{\st(e)\ta(e)}= (\low\alpha 2\oo \low\alpha 1)_{s(e)t(e)}\lhd^*(h\oo h')_{\st(e)\ta(e)}\\
&
=\langle \alpha_{(2)(2)},S(h)\rangle\langle \alpha_{(1)(1)}, h'\rangle\; (\alpha_{(2)(1)}\oo \alpha_{(1)(2)})_{s(e)t(e)}
= \langle \low\alpha 3, S(h)\rangle\langle \low\alpha 1, h'\rangle\; G^*((\low\alpha 2)_e).
\end{align*}
For  a loop $e$ with $t(e)>s(e)$ formulas \eqref{eq:dualemb} and \eqref{eq:act_v} yield 
\begin{align*}
&G^*((\alpha)_e)\lhd^*(h)_{v}=((\low\alpha 2)_{s(e)}\lhd^*(\low h 1)_{v})\cdot (  (\low\alpha 1)_{t(e)}\lhd^*(\low h 2)_{v})\\
&=\langle \alpha_{(2)(2)}, S(\low h 1)\rangle \langle \alpha_{(1)(1)}, \low h 2\rangle\; (\alpha_{(2)(1)})_{s(e)}\cdot (\alpha_{(1)(2)})_{t(e)}=\langle \low\alpha 3,  S(\low h 1)\rangle \langle \low\alpha 1, \low h 2\rangle\; G^*((\low\alpha 2)_e),
\end{align*}
and the corresponding expression for a loop  with $t(e)<s(e)$  follows by applying the involution $T^*$. These identities imply \eqref{eq:gtrafok}  by duality. 
\end{proof}

Propositions \ref{lem:edge_algebra} and  \ref{lem:gtrafolem} allow one to pull back the algebra structure and module structure on 
 $\otimes_{v\in V}\mathcal A^*_v$ to ${K^*}\exoo{E}$ with the
 embedding $G^*$ from \eqref{eq:dualemb}. Proposition \ref{lem:gtrafolem} also shows that the resulting structures on $K^{*\oo E}$ satisfy  the axioms in Definition \ref{def:gtheory}.
By combining these two lemmas one then obtains a  local $K$-valued local Hopf algebra gauge theory on  $\Gamma$.

\begin{theorem} \label{def:algstruc} Let $K$ be a finite-dimensional ribbon Hopf algebra and  $\Gamma$ a ciliated ribbon graph.
Assign to each vertex $v$ of $\Gamma$  an  $R$-matrix $R_v$, maps $\tau_v,\sigma_v:\{1,...,|v|\}\to\{0,1\}$ as defined at the beginning 
of this subsection 
and the associated algebra $\mathcal A^*_v$ from Corollary \ref{rem:flipalg}. 
Then the  $K\exoo{V}$-module  algebra   structure on $\oo_{v\in V}\mathcal A^*_v$ defines a local $K$-valued Hopf algebra 
gauge theory on $\Gamma$ via  \eqref{eq:dualemb}. This algebra structure on ${K^*}\exoo{E}$ is denoted $\mathcal A^*$ in the following.
\end{theorem}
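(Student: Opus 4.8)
The plan is to read this theorem as an assembly statement. Theorem \ref{th:vertex_gt} already provides a $K$-valued Hopf algebra gauge theory on each vertex neighbourhood $\Gamma_v$, and Definition \ref{def:local_gt} says that such local theories assemble into a local theory on $\Gamma$ precisely when the map $G^*$ of \eqref{eq:dualemb} is a homomorphism of $K\exoo{V}$-module algebras. Propositions \ref{lem:edge_algebra} and \ref{lem:gtrafolem} verify exactly the conditions needed for this. So the proof should invoke those two propositions to pull the structure back along $G^*$, and then check that what is obtained on ${K^*}\exoo{E}$ genuinely satisfies the axioms of Definition \ref{def:gtheory}.

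First I would recall that $G^*$ is injective, so that once its image is known to be both a subalgebra and a $K\exoo{V}$-submodule, the algebra and module structures on $\oo_{v\in V}\mathcal A^*_v$ pull back uniquely to ${K^*}\exoo{E}$, making $G^*$ an isomorphism of $K\exoo{V}$-module algebras onto its image. That the image is a subalgebra is the first part of Proposition \ref{lem:edge_algebra}, and that it is a $K\exoo{V}$-submodule is the first part of Proposition \ref{lem:gtrafolem}. Together these supply the data of Definition \ref{def:gtheory}.1 and endow ${K^*}\exoo{E}$ with a $K\exoo{V}$-module algebra structure, so that axiom 3(i) of Definition \ref{def:gtheory} holds automatically and $G^*$ is the required module-algebra homomorphism, making the theory local in the sense of Definition \ref{def:local_gt}.

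The substantive point is to verify the remaining axioms of Definition \ref{def:gtheory} for the pulled-back structure. For axiom 3(ii) I would read off the explicit module action \eqref{eq:gtrafok} from the second part of Proposition \ref{lem:gtrafolem}: in the line for $\st(e)\neq\ta(e)$, setting $h'=1$ gives $(h)_{\st(e)}\rhd(k)_e=(kS(h))_e$ and setting $h=1$ gives $(h')_{\ta(e)}\rhd(k)_e=(h'k)_e$, which with the first line are exactly the relations \eqref{eq:gkact} required there. For the locality conditions in axiom 2, the first part of 2(i)---that $(\alpha)_e\cdot(\beta)_e\in\iota_e(K^*)$---is visible in the computations \eqref{eq:sampcalc} and \eqref{eq:samcomp}, whose right-hand sides lie in the image $G^*(\iota_e(K^*))$; the remaining part, that $(\alpha)_e\cdot(\beta)_f\in\iota_{ef}(K^*\oo K^*)$, follows from the edge-by-edge computation recorded in Proposition \ref{lem:algexplicit}. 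For axiom 2(ii), when $e$ and $f$ share no vertex the four edge ends $s(e),t(e),s(f),t(f)$ lie in the distinct tensor factors $\mathcal A^*_{\st(e)},\mathcal A^*_{\ta(e)},\mathcal A^*_{\st(f)},\mathcal A^*_{\ta(f)}$, so by the tensor-product algebra structure on $\oo_{v\in V}\mathcal A^*_v$ their images commute and multiply as a plain tensor product, giving $(\alpha)_e\cdot(\beta)_f=(\beta)_f\cdot(\alpha)_e=(\alpha\oo\beta)_{ef}$.

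The main obstacle is not any single hard step but the bookkeeping behind axiom 2(i): checking that a product of functions supported on two \emph{adjacent} edges stays supported on those two edges requires running through all the local edge constellations at a shared vertex (the cases of Proposition \ref{lem:algexplicit}), keeping careful track of the ciliation, the maps $\sigma_v,\tau_v$, and which copy of the $R$-matrix is inserted for each edge orientation. Everything else is a direct consequence of Theorem \ref{th:vertex_gt} and the two preceding propositions together with the fact that $G^*$ is an injective module-algebra embedding; the theorem then follows by pulling the $K\exoo{V}$-module algebra structure on $\oo_{v\in V}\mathcal A^*_v$ back along $G^*$.
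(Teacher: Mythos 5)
Your proposal is correct and follows essentially the same route as the paper: the paper itself derives this theorem by listing the three conditions (subalgebra, submodule, axiom 3 of Definition \ref{def:gtheory}) that $G^*$ must satisfy, verifying them in Propositions \ref{lem:edge_algebra} and \ref{lem:gtrafolem}, and then pulling the $K\exoo{V}$-module algebra structure back along the injective map $G^*$. Your additional explicit checks of axioms 2(i), 2(ii) and 3(ii) against \eqref{eq:sampcalc}, \eqref{eq:samcomp}, \eqref{eq:gtrafok} and Proposition \ref{lem:algexplicit} are exactly the details the paper leaves implicit.
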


Clearly, the  algebra $\mathcal A^*$ from Theorem \ref{def:algstruc}  depends on the choice of  the cilium at each vertex of $\Gamma$. However,  one finds  that the
 algebra of observables $\mathcal A^*_{inv}\subset \mathcal A^*$ is largely independent of this choice and fully independent  of it
in the semisimple case.

\begin{proposition} \label{lem:invspace}  Let 
$\Gamma$ be a ciliated ribbon graph  and $\Gamma'$   obtained from  $\Gamma$ by moving the cilium at an $n$-valent vertex $v\in V(\Gamma)$ over the   $n$th edge end.  Then  the map 
 $$\phi_{\tau(n)}:K^{*\oo n}\to K^{*\oo n},\quad   
\alpha^1\oo ...\oo \alpha^n\mapsto   \langle g^{-1+2\tau(n)}, \alpha^n_{(1+\tau(n))}\rangle  \;   \alpha^1\oo...\oo\alpha^{n-1}\oo\alpha^n_{(2-\tau(n))}$$ 
induces an algebra isomorphism $\mathcal A^*_{inv}\to \mathcal A'^*_{inv}$. 
If  $K$ is semisimple, then  $\phi_{\tau(n)}$ reduces to a cyclic permutation of the tensor factors and $\mathcal A^*_{inv}\subset \mathcal A^*$ is independent of the choice of cilia. 
\end{proposition}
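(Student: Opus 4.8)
The plan is to localise the whole problem at the single vertex $v$ whose cilium is moved. Moving the cilium over the $n$th edge end changes only the vertex-neighbourhood algebra $\mathcal A^*_v$ of Corollary \ref{rem:flipalg}; the embedding $G^*$ of \eqref{eq:dualemb} is manifestly cilium-independent and all other factors $\mathcal A^*_w$ ($w\neq v$) are untouched. Writing $\cdot,\cdot'$ for the two products on $K^{*\oo n}$ from \eqref{eq:flipalg} attached to the orderings of edge ends in $\Gamma$ and $\Gamma'$, and $\lhd^*,\lhd'^*$ for the two module structures \eqref{eq:act_v}, I would first produce a bijective $K$-module map $\Phi$ carrying invariants of $(\mathcal A^*_v,\cdot)$ to invariants of $(\mathcal A'^*_v,\cdot')$ and then transport it through $G^*$ and $G'^*$ to the claimed isomorphism $\mathcal A^*_{inv}\to\mathcal A'^*_{inv}$. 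The natural candidate is $\Phi=\phi_{\tau(n)}\circ P$, where $P$ is the cyclic relabelling of tensor legs forced by the move (the $n$th edge end becoming the least) and $\phi_{\tau(n)}$ is the grouplike twist $\alpha^n\mapsto\langle g^{-1+2\tau(n)},\low{\alpha^n}1\rangle\,\low{\alpha^n}2$ on the moved leg. Bijectivity is immediate, since $g$ is invertible grouplike, so $\phi_{\tau(n)}$ is inverted by the same formula with $g^{\mp1}$, and $P$ is a permutation.

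The module-compatibility step is the easy one: I would observe that the action \eqref{eq:act_v} depends only on the orientations $\tau$ of the edge ends and not on their ordering, and is a symmetric product over the legs. Since each physical edge end keeps its orientation when the cilium moves and only its position shifts, $\tau$ is merely relabelled cyclically, so $P(\alpha\lhd^* h)=P(\alpha)\lhd'^* h$ for all $h\in K$. Hence $P$ alone matches the two module structures and carries the gauge-invariant subspace of $(\mathcal A^*_v,\cdot)$ bijectively onto that of $(\mathcal A'^*_v,\cdot')$.

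The heart of the argument, and the step I expect to be the main obstacle, is that $\Phi$ is an algebra homomorphism \emph{on invariants}, even though neither $P$ nor $\Phi$ is one on all of $\mathcal A^*_v$. By \eqref{eq:flipalg} the products $\cdot$ and $\cdot'$ differ only in the cross-relations coupling the moved leg to the other edge ends: in $\Gamma$ the $n$th leg is greatest and in $\Gamma'$ it is least, so the two products differ by universal $R$-matrices inserted between leg $n$ and each other leg, together with the $S^{2}$-type conjugation by the grouplike element $g$ that orientation reversal introduces through the involution \eqref{eq:invol}. My plan is to show these insertions can be slid off the moved leg at no cost once the arguments are invariant. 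Concretely I would collect all discrepancy $R$-matrices into a single factor acting on the moved leg through the first Sweedler legs of the others, using the hexagon identities $(\Delta\oo\id)(R)=R_{13}R_{23}$, $(\id\oo\Delta)(R)=R_{13}R_{12}$ and the relation $R\cdot\Delta\cdot R^\inv=\Delta^{op}$ exactly as in the proofs of Propositions \ref{th:algebra} and \ref{lem:edge_algebra}; the invariance condition $\alpha\lhd^* h=\epsilon(h)\,\alpha$ then forces this combined factor to collapse to the counit, while the grouplike prefactor of $\phi_{\tau(n)}$ absorbs the residual $g$-conjugation. A cleaner, more structural alternative would be to pass through Lemma \ref{lem:twistlemma}, recognise the change of cilium as a modification of the twisting cochains $F,G$ of $K^{\oo n}$ supported on the moved leg, and identify $\phi_{\tau(n)}$ as the restriction to invariants of the resulting isomorphism of twist-deformed coalgebras.

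Finally, in the semisimple case Lemma \ref{lem:ssimplerib} gives $g=1$, so $\langle1,\low{\alpha^n}1\rangle\,\low{\alpha^n}2=\alpha^n$ and $\phi_{\tau(n)}$ is the identity; the full isomorphism reduces to the bare relabelling $P$, i.e.\ a cyclic permutation of the tensor legs. At the level of the whole graph this permutation only reorders the legs of $\oo_{v}\mathcal A^*_v$ and does not touch the edge-indexed data of $\mathcal A^*=K^{*\oo E}$ transported by $G^*$, so it restricts to the identity on the subspace $\mathcal A^*_{inv}\subset K^{*\oo E}$. Since any two ciliations at $v$ are connected by finitely many single-edge-end moves, composing the isomorphisms shows that $\mathcal A^*_{inv}$, together with its algebra structure, is literally the same subalgebra of $K^{*\oo E}$ for every choice of cilia, as claimed.
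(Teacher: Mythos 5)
There is a genuine gap, and it sits exactly where you declare the problem ``easy''. The action \eqref{eq:act_v} is \emph{not} a symmetric product over the legs: the first Sweedler components are multiplied in $K^*$ in the order prescribed by the ciliation, $\langle S^{\tau(1)}(\alpha^1_{(\cdot)})\cdots S^{\tau(n)}(\alpha^n_{(\cdot)}),h\rangle$, and $K^*$ is non-commutative for non-cocommutative $K$. Moving the cilium replaces $\langle\alpha^1_{(1)}\cdots\alpha^n_{(1)},h\rangle$ by the cyclic shift $\langle\alpha^n_{(1)}\alpha^1_{(1)}\cdots\alpha^{n-1}_{(1)},h\rangle$, and these are not equal. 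Consequently your claim that the bare relabelling $P$ satisfies $P(\alpha\lhd^* h)=P(\alpha)\lhd'^* h$ and ``alone carries the gauge-invariant subspace bijectively onto'' the new one is false. If you rewrite $\alpha^n_{(1)}\alpha^1_{(1)}\cdots\alpha^{n-1}_{(1)}$ as $\alpha^n_{(3)}\bigl(\alpha^1_{(1)}\cdots\alpha^n_{(1)}\bigr)S(\alpha^n_{(2)})$ and use invariance to collapse the middle factor, the leftover acting on the moved leg is $S(h_{(2)})h_{(1)}$, which equals $\epsilon(h)1$ only when $S^2=\id$. The grouplike prefactor $\langle g^{-1+2\tau(n)},\alpha^n_{(1)}\rangle$ is there precisely to convert this into $g^{-1}S(h_{(2)})h_{(1)}=S^{-1}(h_{(2)})h_{(1)}\,g^{-1}=\epsilon(h)\,g^{-1}$ via $g^{-1}S(k)=S^{-1}(k)g^{-1}$. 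So $\phi_{\tau(n)}$ is doing the work in the \emph{module} step, not (as you have it) in the algebra step; assigning it only to absorbing an ``$S^2$-type conjugation'' in the product relations misallocates the content of the proof, and with your division of labour the non-semisimple case does not go through.

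The paper's own proof is exactly this local computation: it reduces to a vertex with all edge ends incoming via the involution $T^*$ (using $T^*\circ\phi_0\circ T^*=\phi_1$), writes the two actions $\lhd^*$ and $\lhd'^*$ explicitly, and verifies $\phi_0(\alpha)\lhd'^*(h)_v=\epsilon(h)\,\phi_0(\alpha)$ for invariant $\alpha$ by the manipulation sketched above. Your outline of the multiplicativity-on-invariants step via the hexagon identities and $R\Delta R^{-1}=\Delta^{op}$ is plausible (the paper leaves that part implicit), and your semisimple conclusion is fine once the invariance step is repaired. But as written, the argument for the key step rests on a false premise about \eqref{eq:act_v}.
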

\begin{proof} 
As $T^*\circ \phi_{0}\circ T^*=\phi_1$, 
it is sufficient to consider the vertex neighbourhood of an $n$-valent vertex $v\in V$ at which all edges are incoming and to show that $\phi_0$ maps  the algebra $\mathcal A^*_{v\;inv}$ associated with $\Gamma$ to  the algebra $\mathcal A'^*_{v\;inv}$ associated with $\Gamma'$. 
As
the action of  a gauge transformation at $v$ on $\mathcal A^*_v$ and $\mathcal A'^*_v$ is given by
\begin{align}\label{eq:hmodref}
&(\alpha^1\oo...\oo\alpha^n)\lhd^*(h)_v=\langle \alpha^1_{(1)}\alpha^2_{(1)}\cdots\alpha^n_{(1)}, h\rangle\; \alpha^1_{(2)}\oo ...\oo\alpha^n_{(2)}\\
&(\alpha^1\oo...\oo\alpha^n)\lhd'^*(h)_v=\langle \alpha^n_{(1)}\alpha^1_{(1)}\cdots\alpha^{n-1}_{(1)}, h\rangle\; \alpha^1_{(2)}\oo ...\oo\alpha^n_{(2)},
\end{align}
one obtains  for all $h\in K$
\begin{align*}
&\phi_0(\alpha^1\oo...\oo\alpha^n)\lhd'^* (h)_v=\langle \alpha^n_{(1)}, g^\inv\rangle\, \langle \alpha^n_{(2)}\alpha^1_{(1)}\cdots\alpha^{n-1}_{(1)},h\rangle\; \alpha^1_{(2)}\oo...\oo\alpha^n_{(3)}\\
&=\langle \alpha^n_{(3)}, g^\inv\rangle\, \langle \alpha^n_{(4)}(\alpha^1_{(1)}\cdots\alpha^{n}_{(1)})S(\alpha^n_{(2)}),h\rangle\; \alpha^1_{(2)}\oo...\oo\alpha^{n-1}_{(2)}\oo\alpha^n_{(3)}\\
&=\langle \alpha^1_{(1)}\cdots\alpha^n_{(1)}, \low h 2\rangle \langle \alpha^n_{(2)}, S(\low h 3)g^\inv \low h 1\rangle\; \alpha^1_{(2)}\oo...\oo\alpha^{n-1}_{(2)}\oo\alpha^n_{(3)}\\
&=\langle \alpha^1_{(1)}\cdots\alpha^n_{(1)}, \low h 2\rangle \langle \alpha^n_{(2)}, g^\inv\rangle \langle \alpha^n_{(3)}, S^\inv(\low h 3) \low h 1\rangle\; \alpha^1_{(2)}\oo...\oo\alpha^{n-1}_{(2)}\oo\alpha^n_{(4)}\\
&=\phi_0((\alpha^1\oo...\oo\alpha^n)\lhd^* (\low  h 2 )_v)\lhd_{n} (S^\inv(\low h 3)\low h 1),
\end{align*}
where $\lhd_n: K^{*\oo n}\oo K\to K^{*\oo n}$ is the right regular action of $K$ on  the $n$th copy of $K^*$ in $K^{*\oo n}$. For  $\alpha\in A^*_{v\; inv}$ this yields
$
\phi_0(\alpha)\lhd'^* (h)_v
=\epsilon(\low h 2) \phi_0(\alpha)\lhd_{n}(S^\inv(\low h 3)\low h 1)
=\epsilon(h)\;\phi_0(\alpha)
$
and hence $\phi_0(\alpha)\in\mathcal A'^*_{v\;inv}$.
 To show that $\phi_0$ induces an algebra morphism on the invariants, we use the expression for the algebra structure in Proposition \ref{th:algebra}  and the identities 
$(\id\oo\Delta)(R)=R_{13}R_{12}$ and $(\Delta\oo\id)(R)=R_{13}R_{23}$ 
to compute  the product of two general elements of $K^{*\oo n}$
\begin{align}\label{eq:prodref0}
&(\alpha^1\oo...\oo\alpha^n)\cdot (\beta^1\oo...\oo \beta^n)\nonumber\\
=&\langle R, \beta^1_{(1)}\oo \alpha^1_{(1)}\cdots \alpha^n_{(1)}\rangle\langle R, \beta^2_{(1)}\oo \alpha^2_{(2)}\cdots\alpha^n_{(2)}\rangle\ldots \langle R, \beta^{n-1}_{(1)}\oo \alpha^{n-1}_{(n-1)}\alpha^n_{(n-1)}\rangle\langle R,\beta^n_{(1)}\oo \alpha^n_{(n)}\rangle\nonumber\\
&\beta^1_{(2)}\alpha^1_{(2)}\oo \beta^2_{(2)}\alpha^2_{(3)}\oo ...\oo \beta^{n-1}_{(2)}\alpha^{n-1}_{(n)}\oo \beta^n_{(2)}\alpha^n_{(n+1)}
\end{align}
and note that if  $\alpha^1\oo...\oo\alpha^n$ is an invariant with respect to the $K$-module structure $\lhd^*$ at the vertex $v$, then the first line in \eqref{eq:hmodref} implies
\begin{align}\label{eq:invrefcond}
(\alpha^1_{(1)}\alpha^2_{(1)}\cdots \alpha^n_{(1)})\oo \alpha^1_{(2)}\oo \ldots\oo \alpha^n_{(2)}=1\oo \alpha^1\oo....\oo\alpha^n.
\end{align}
Denoting by $\cdot$ and $\cdot'$ the multiplication maps for $\Gamma$ and $\Gamma'$ we then obtain
after a lengthy but direct computation using the properties of a quasitriangular Hopf algebra and of the grouplike element $g$
\begin{align*}
&\phi_0(\alpha^1\oo...\oo\alpha^n)\cdot' \phi_0(\beta^1\oo...\oo \beta^n)
=\phi_0\big(  (\alpha^1\oo...\oo\alpha^n)\cdot (\beta^1\oo...\oo \beta^n) \big).
\end{align*}
It also follows directly from the definitions that the image $G^*(K^{*\oo E})$ is invariant under the map $\phi_{\tau(n)}$. If the $n$th edge end at the vertex $v$ that is shifted 
$\oo_{v\in V}\mathcal A^*_v$  is the target end of $k$th edge of $\Gamma$, then we have from 
 \eqref{eq:dualemb} and  the definition of $\phi_{\tau(n)}$
\begin{align*}
&\phi_{\tau(n)}\circ G^*(\alpha^1\oo...\oo \alpha^k\oo....\oo \alpha^E)=G^*(\langle g^\inv,\alpha^k_{(1)}\rangle \alpha^1\oo...\oo\alpha^k_{(2)}\oo...\oo\alpha^E)
\end{align*}
  and if it is the starting end of the $k$th edge of $\Gamma$, we have
\begin{align*}
&\phi_{\tau(n)}\circ G^*(\alpha^1\oo...\oo \alpha^k\oo....\oo \alpha^E)=G^*(\langle g,\alpha^k_{(2)}\rangle \alpha^1\oo...\oo\alpha^k_{(1)}\oo...\oo\alpha^E).
\end{align*}
This shows that $\phi_{\tau(n)}$ induces an isomorphism of
$K^{\oo V}$-modules  $\phi_{\tau(n)}: K^{*\oo E}\to K^{*\oo E}$ and an algebra isomorphism between the subalgebras of invariants. 
 If $K$ is semisimple then $g=1$ and $\phi_0=\id$.
\end{proof}

 There is an also alternative description of the $K^{\oo V}$-module algebra  $\mathcal A^*_\Gamma$ associated with a ribbon graph $\Gamma$  in terms of the {\em braided tensor product} of the algebras $\mathcal A^*_v$. In this case, the braided tensor product is taken with respect to a $K$-right module structure at a bivalent vertex in the middle of each edge $e\in E$, and one must take the braided opposite for the algebra structure at each edge end. In this description, the $K^{\oo V}$-module algebra is then recovered as the subalgebra of invariants of the {\em braided} tensor product of the algebras $\mathcal A^*_v$ with respect to the resulting $K^{\oo E}$-module structure.

\begin{lemma}  Let $\Gamma$ be a ribbon graph and denote for each vertex $v\in V$ of $\Gamma$ by $\mathcal A^*_v$ the $K$-right module algebra from Corollary \ref{rem:flipalg} with the choice $\sigma(i)=1$ for all edges incident at $v$. Equip each edge end with a univalent vertex, corresponding to the middle  of the associated edge $e\in E$, and the associated $K$-module structure. Then:
\begin{compactenum}
\item The algebra $\mathcal A^*_v$ is a $K\oo K^{\oo |v|}$-right module algebra with respect to the $K$-right module structure at $v$ and the $K^{\oo |v|}$-right module structure defined by the middle vertices.\\[-2ex]
\item The braided tensor product  $\oo_{v\in V}^{br} \mathcal A^*_v$ with respect to the $K^{\oo E}$-right module structure at the middle vertices and with the ordering $t(e)<s(e)$ for all edges $e\in E$ is a $K^{\oo V}\oo K^{\oo E}$-right module algebra. Its subalgebra of invariants with respect to the $K^{\oo E}$-right module structure is the $K^{\oo V}$-module algebra $\mathcal A^*_\Gamma$. 
\end{compactenum}
\end{lemma}

\begin{proof}
1.~If $\sigma(i)=1$ for all edge ends $i$ incident at $v$, we have the algebra structure of the braided opposite $(\alpha)_i\cdot (\beta)_i=\langle R,\low\beta 1\oo\low\alpha 1\rangle (\low\beta 2\low\alpha 2)$ for each edge end $i$ at $v$. The $K$-right-module structure for the univalent vertex at the edge end $i$ is given by $(\alpha)\lhd^* h=\langle S(h), \low\alpha 2\rangle (\low\alpha 1)_i$ if $i$ is incoming at $v$ and by $(\alpha)_i\lhd^* h=\langle h, \low\alpha 1\rangle (\low\alpha 2)_i$ for $i$ outgoing at $v$. A direct computation shows that in both cases this  defines  a $K$-right module algebra structure on $K^*$ and that, together with the $K$-right module structure in Corollary \ref{rem:flipalg}  a $K\oo K$-right module algebra structure. As each of the $K$-right module algebra structures  at the univalent vertices affect only a single edge end, it follows directly that this defines a $K\oo K^{\oo|v|}$-right module algebra structure on $\mathcal A^*_v$.

2.~That $\oo_{v\in V}^{br} \mathcal A^*_v$  is a $K^{\oo V}\oo K^{\oo E}$-right module algebra then follows directly from the properties of the braided tensor product and the fact that the $K$-right module structures at the vertices $v\in V$ and at the middle vertices commute. For the middle vertex of an edge $e\in E$, the associated $K$-right module structure acts trivially on all edge ends $s(f)$, $t(f)$ for $f\in E\setminus\{e\}$ and
\begin{align*}
(\alpha\oo \beta)_{t(e)s(e)}\lhd^*(h)_e=\langle h, S(\low\alpha 2)\low\beta 1\rangle \, (\low\alpha 1\oo \low\beta 2)_{s(e)t(e)}.
\end{align*}
From this expression, one finds that its invariants are given by
\begin{align*}
(K^*\oo K^*)_{inv\, e}=\Delta(K^*)\subset K^*\oo K^*.
\end{align*}
and hence the subalgebra of invariants with respect to the $K^{\oo E}$-right module structure is precisely the image  of the map $G^*$ from  \eqref{eq:dualemb} with the induced $K^{\oo V}$-right module structure.  As  $\oo_{v\in V}^{br} \mathcal A^*_v$ is a $K^{\oo V}\oo K^{\oo E}$-right module algebra, the braided tensor products at the vertices $v\in V$ are not affected by the braidings with the braided tensor products at the middle vertices. It remains to show that for each edge $e\in E$, the braided algebra structure of $\oo_{v\in V}^{br} \mathcal A^*_v$ for $\sigma(s(e))=\sigma(t(e))=0$ induces the same algebra structure as $\bigotimes_{v\in V} \mathcal A^*_v$ for $\sigma(t(e))\neq \sigma(s(e))$.
This follows by a direct computation. If $e$ is an edge with $\st(e)\neq \ta(e)$, one obtains
\begin{align*}
&G^*((\alpha)_e)\cdot_{br} G^*((\beta)_e)=(\low\alpha 1\oo \low\alpha 2)_{s(e)t(e)}\cdot(\low\beta 1\oo \low\beta 2)_{t(e)s(e)}\\
&=\langle R, S(\beta_{(1)(3)})\oo \alpha_{(2)(1)}\rangle \langle R, \beta_{(1)(1)}\oo \alpha_{(1)(1)}\rangle \langle R, \beta_{(2)(1)}\oo \alpha_{(2)(2)}\rangle (\beta_{(1)(2)}\alpha_{(1)(2)}\oo \beta_{(2)(2)}\alpha_{(2)(3)})_{t(e)s(e)}\\
&=\langle R, S(\beta_{(3)})\beta_{(4)}\oo \alpha_{(3)}\rangle \langle R, \beta_{(1)}\oo \alpha_{(1)}\rangle  (\beta_{(2)}\alpha_{(2)}\oo \beta_{(5)}\alpha_{(4)})_{t(e)s(e)}\\
&= \langle R, \beta_{(1)}\oo \alpha_{(1)}\rangle  (\beta_{(2)}\alpha_{(2)}\oo \beta_{(3)}\alpha_{(3)})_{t(e)s(e)}=\langle R,\low\beta 1\oo \low\alpha 1\rangle G^*((\low \beta 2\low\alpha 2)_e) .
\end{align*}
A comparison with Corollary \ref{rem:flipalg} shows that taking the tensor product $\oo_{v\in V}\mathcal A^*_v$ with $\sigma(t(e))=1-\sigma(s(e))$ yields the same result (see also Proposition \ref{lem:algexplicit} (a)). The computation for a loop $e$ with $\st(e)=\ta(e)$ are analogous (see also Proposition \ref{lem:algexplicit} (b)).
\end{proof}

It is instructive to consider the case of a {\em cocommutative}  
Hopf algebra $K$. In this case $K$ is trivially 
ribbon  with universal $R$-matrix $R=1\oo 1$, ribbon element $\nu=u=1$, grouplike element $g=1$ and satisfies $S^2=\id$. The algebra structure  on the vertex neighbourhood from  Proposition \ref{th:algebra} 
reduces to the $n$-fold tensor product  $K^{*\oo n}$ of the {\em commutative} algebra $K^*$ with itself. Consequently, the subalgebra $\mathcal A^*\subset \oo_{v\in V}\mathcal A^*_v\cong {K^*}\exoo{E}$ is isomorphic  as an algebra to ${K^*}\exoo{E}$.  As $K$ is cocommutative, the  $K$-module structure on $K^{*\oo n}$ does not depend on the cyclic ordering of the incident edges at 
$v$, and the same holds for the $K\exoo{V}$-module structures on $K\exoo{E}$ and ${K^*}\exoo{E}$.  

Another instructive example is the case where  $K$ is the group algebra $\FF[G]$ of a finite group $G$.
 The Hopf algebra structure of $\FF[G]$ and its dual $\FF [G]^*=\mathrm{Fun}(G)$ are given in Example \ref{ex:finite_group}.
By applying the results and definitions above, 
one then finds that the  $K$-valued Hopf algebra gauge theory on  $\Gamma$ reduces to group gauge theory for  $G$.

\begin{example}  Let $G$ be a finite group and $\Gamma$ a ribbon graph. Then the Hopf algebra gauge theory on $\Gamma$ with values in $\FF[G]$ is given by the following:
\begin{compactenum}
\item  The space of connections is the vector space  $\FF[G]\exoo{E}\cong \FF[G\extimes{E}]$.
\item The algebra
 $\mathcal A^*$  of functions  is  the algebra  $\mathrm{Fun}(G\extimes{E})$ with the pointwise multiplication. 
 \item 
 The Hopf algebra of gauge transformations is the Hopf algebra $\FF[G]\exoo{E}\cong \FF[G\extimes{E}]$.
 \item  If all edge ends at $v\in V$ are incoming, a  gauge transformation at $v$  is given by\newline
 $\lhd^*\!\!:  \mathrm{Fun}(G\extimes{v})\oo \FF[G]\to\mathrm{Fun}(G\extimes{v})$, $( f\lhd^*h)(g_1,...,g_{|v|})
 =f(h g_1,...,h g_{|v|})$.
 \item Its action  on  an outgoing edge end $i$ is obtained by replacing $h g_i\to g_i h^\inv$.
 \item  The projector  on the gauge invariant subalgebra $\mathcal A^*_{inv}\subset A^*$ is given by $\Pi(f)=\Sigma_{h\in G\extimes{V}}\;   f\lhd^* h.$

 \end{compactenum}
\end{example}

\subsection{Explicit description of the algebra of functions}
\label{subsec:algfunc}

In this section, we derive an explicit description of the algebra $\mathcal A^*$ for a Hopf algebra gauge theory on a ciliated ribbon graph $\Gamma$ in terms of multiplication relations. This  allows us to relate it to the so-called {\em lattice algebra}  obtained in \cite{AGSI,AGSII, BR} via the combinatorial quantisation of Chern-Simons theory.  From the discussion in the previous subsections, it is obvious that a presentation of the algebra $\mathcal A^*$ for any ribbon graph can be obtained from 
 the multiplication relations in  Corollary \ref{rem:flipalg} together with formula \eqref{eq:dualemb}. This requires a straightforward but  lengthy computation which takes into account the relative ordering and orientation of edge ends  involved. Up to edge orientation, which can be reversed with the involution $T^*$ from \eqref{eq:invol}, one has to distinguish twelve edge constellations. 
 For simplicity, we restrict attention to the case where each vertex neighbourhood is equipped with the same  $R$-matrix. The multiplication relations for the  are then given by the following proposition.

\begin{proposition}\label{lem:algexplicit} Let $K$ be a finite-dimensional ribbon Hopf algebra. Consider the $K$-valued local Hopf algebra gauge theory 
from Theorem \ref{def:algstruc} and suppose that
each vertex is assigned the same $R$-matrix $R$. Then
the algebra $\mathcal A^*$  is characterised by the following multiplication relations on ${K^*}\exoo{E}$: \\[-2ex]

\begin{compactenum}[(a)]
\item For  $e\in E$ with  $\st(e)\neq \ta(e)$: 
\hfill\includegraphics[scale=.35]{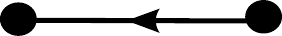}
\begin{flalign*}
&(\beta)_e\cdot(\alpha)_e=
\langle \low\alpha 1\oo\low\beta 1, R\rangle\, (\low\alpha 2\low\beta 2)_e &\\
\Rightarrow\; &(\beta)_e\cdot (\alpha)_e=\langle\low\alpha 1\oo\low\beta 1, R\rangle\langle \low\beta 3\oo\low\alpha 3, R^\inv\rangle\; (\low\alpha 2)_e\cdot (\low\beta 2)_e.
\end{flalign*}

\item  For a loop $e\in E$ with $t(e)<s(e)$: 
\hfill \begin{tikzpicture}
 \path[use as bounding box] (0,0) rectangle (2,0);
 \node [anchor=west] at (0,0)
 {\includegraphics[scale=.4]{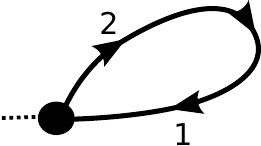}};
\end{tikzpicture}
\begin{flalign*}
& (\beta)_e\cdot (\alpha)_e=
\langle \low\alpha 1\oo S(\low\beta 3),  R\rangle\, \langle \low \alpha 2\oo\low\beta 1, R\rangle \; (\low\alpha 3\low\beta 2)_e.  &\\
\Rightarrow\;&(\beta)_e\cdot (\alpha)_e=\langle \low\alpha 1\oo S(\low\beta 5), R\rangle\langle \low\alpha 2\oo\low\beta 1, R\rangle \langle \low\beta 4\oo\low\alpha 5, R^\inv\rangle\langle \low\beta 2\oo \low\alpha 4, R\rangle \; (\low\alpha 3)_e\cdot  (\low\beta 3)_e.
\end{flalign*}

\item For $e,f\in E$ with no common vertex:
\hfill \begin{tikzpicture}
 \path[use as bounding box] (0,0) rectangle (2,0);
 \node [anchor=west] at (0,0)
 {\includegraphics[scale=.4]{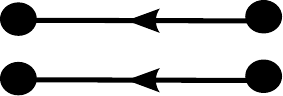}};
\end{tikzpicture}
\begin{flalign*}
&(\alpha)_e\cdot(\beta)_f=(\beta)_f\cdot(\alpha)_e=(\alpha\oo\beta)_{ef}. &
\end{flalign*}

\item  For  $e,f\in E$ with $\st(e)\neq \ta(e)$, $\st(f)\neq \ta(f)$, $\st(e)\neq \st(f)$ and 
$t(e)<t(f)$:
\hfill \begin{tikzpicture}
 \path[use as bounding box] (0,0) rectangle (2,0);
 \node [anchor=west] at (0,0)
 {\includegraphics[scale=.4]{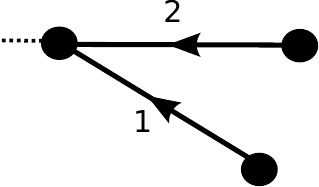}};
\end{tikzpicture}
\begin{flalign*}
&(\beta)_f\cdot (\alpha)_e=
\langle \low\alpha 1\oo\low\beta 1, R\rangle\; (\low\alpha 2\oo\low \beta 2)_{ef}\\
&(\alpha)_e\cdot (\beta)_f=(\alpha\oo \beta)_{ef} &\\
\Rightarrow\; &(\beta)_f\cdot (\alpha)_e=\langle\low\alpha 1\oo\low\beta 1,R\rangle\; (\low\alpha 2)_e\cdot (\low\beta 2)_f.
\end{flalign*}

\item For $e\in E$ with $\st(e)\neq \ta(e)$ and  a loop $f\in E$   with $t(e)<t(f)<s(f)$:
\hfill \begin{tikzpicture}
 \path[use as bounding box] (0,0) rectangle (2,0);
 \node [anchor=west] at (0,0)
 {\includegraphics[scale=.4]{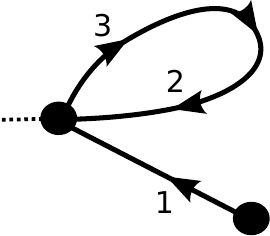}};
\end{tikzpicture}
\begin{flalign*}
& (\alpha)_f\cdot (\beta)_e=
\langle \low\beta 1\oo S(\low\alpha 3), R\rangle\, \langle \low\beta 2\oo \low\alpha 1, R\rangle\;   (\low\beta 3\oo\low\alpha 2)_{ef}\\
&(\beta)_e\cdot (\alpha)_f=(\beta\oo\alpha)_{ef} &\\
\Rightarrow\; &(\alpha)_f\cdot (\beta)_e=\langle \low\beta 1\oo S(\low\alpha 3),R\rangle\, \langle \low\beta 2\oo \low\alpha 1, R\rangle\;   (\low\beta 3)_e\cdot(\low\alpha 2)_f.
\end{flalign*}

\item For $e\in E$ with $\st(e)\neq \ta(e)$ and  a loop $f\in E$   with $t(f)<t(e)<s(f)$:
\hfill \begin{tikzpicture}
 \path[use as bounding box] (0,0) rectangle (2,0);
 \node [anchor=west] at (0,0)
 {\includegraphics[scale=.4]{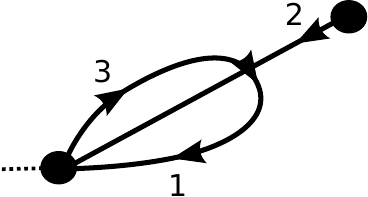}};
\end{tikzpicture}
\begin{flalign*}
& (\alpha)_f\cdot (\beta)_e
=\langle \low\beta 1\oo S( \low\alpha 2), R\rangle\; (\low \beta 2\oo\low \alpha 1)_{ef}\\
& (\beta)_e\cdot  (\alpha)_f
=\langle \low\alpha 1\oo \low\beta 1, R\rangle\;  (\low \beta 2\oo\low \alpha 2)_{ef} &\\
\Rightarrow\; &(\alpha)_f\cdot (\beta)_e=\langle\low\beta 1\oo S(\low\alpha 3), R\rangle\,\langle \low\alpha 1\oo\low\beta 2, R^\inv\rangle\;  (\low\beta 3)_e\cdot (\low\alpha 2)_f.
\end{flalign*}

\item For $e\in E$ with $\st(e)\neq \ta(e)$ and  a loop $f\in E$   with $t(f)<s(f)<t(e)$:
\hfill \begin{tikzpicture}
 \path[use as bounding box] (0,0) rectangle (2,0);
 \node [anchor=west] at (0,0)
 {\includegraphics[scale=.4]{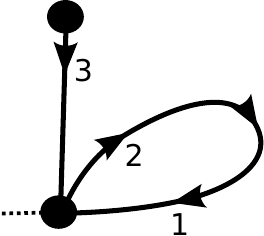}};
\end{tikzpicture}
\begin{flalign*}
& (\alpha)_f\cdot (\beta)_e=(\beta\oo\alpha)_{ef}\\
&  (\beta)_e\cdot (\alpha)_f
=\langle \low \alpha 1\oo\low \beta 1, R\rangle\langle S(\low \alpha 3)\oo  \low \beta 2, R\rangle\;   (\low \beta 3\oo\low \alpha 2)_{ef}. &\\
\Rightarrow\; &(\beta)_e\cdot(\alpha)_f= \langle \low\alpha 1\oo \low\beta 1 , R\rangle \langle S(\low\alpha 3) \oo\low\beta 2, R\rangle\; (\low\alpha 2)_f\cdot (\low\beta 3)_e.
\end{flalign*}

\item For two loops $e,f\in E$  with $t(e)<s(e)<t(f)<s(f)$:
\hfill \begin{tikzpicture}
 \path[use as bounding box] (0,-.2) rectangle (2,-.3);
 \node [anchor=west] at (0,0)
 {\includegraphics[scale=.4]{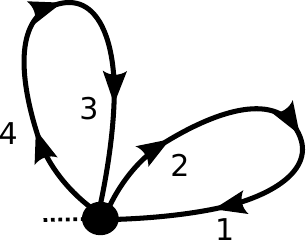}};
\end{tikzpicture}
\begin{flalign*}
& (\beta)_f\cdot (\alpha)_e
=\langle \low\alpha 1\oo S(\low\beta 5), R\rangle \langle\low\alpha 2\oo\low\beta 1, R\rangle 
\langle\low\alpha 5\oo\low \beta 4, R\rangle \langle \low\alpha 4\oo\low\beta 2, R^\inv\rangle\; (\low\alpha 3\oo\low\beta 3)_{ef}\\
& (\alpha)_e\cdot (\beta)_f=(\alpha\oo\beta)_{ef}\\
\Rightarrow\; &(\beta)_f\cdot (\alpha)_e=\langle \low\alpha 1\oo S(\low\beta 5), R\rangle \langle\low\alpha 2\oo\low\beta 1, R\rangle 
\langle\low\alpha 5\oo\low \beta 4, R\rangle \langle \low\alpha 4\oo\low\beta 2, R^\inv\rangle\; 
 (\low\alpha 3)_e \cdot  (\low\beta 3)_f. &
\end{flalign*}

\item  For two loops $e,f\in E$  with $t(e)<t(f)<s(e)<s(f)$:
\hfill \begin{tikzpicture}
 \path[use as bounding box] (0,.45) rectangle (1.3,.45);
 \node [anchor=west] at (0,0)
 {\includegraphics[scale=.4]{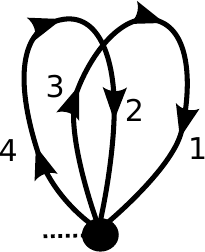}};
\end{tikzpicture}
\begin{flalign*}
&(\beta)_f\cdot (\alpha)_e
=\langle \low\alpha 1\oo S(\low\beta 4), R\rangle\langle \low\alpha 2\oo\low\beta 1, R\rangle \langle \low\alpha 4\oo\low\beta 3, R\rangle \;(\low\alpha 3\oo\low \beta 2)_{ef}\\
&(\alpha)_e\cdot (\beta)_f=\langle  S(\low \alpha 2)\oo \low\beta 1, R_{21}\rangle\; (\low\alpha 1\oo\low \beta 2)_{ef} &\\
\Rightarrow\; &(\beta)_f\cdot (\alpha)_e=\langle \low\alpha 1\oo S(\low\beta 5), R\rangle\langle \low\alpha 5\oo \low\beta 4, R\rangle \langle \low\alpha2\oo\low\beta 1, R\rangle \langle \low\alpha 4\oo\low\beta 2, R_{21}\rangle
\; (\low\alpha 3)_e\cdot (\low\beta 3)_f.
\end{flalign*}

\item  For two loops $e,f\in E$  with $t(e)<t(f)<s(f)<s(e)$:
\hfill \begin{tikzpicture}
 \path[use as bounding box] (0,.2) rectangle (2,.2);
 \node [anchor=west] at (0,0)
 {\includegraphics[scale=.4]{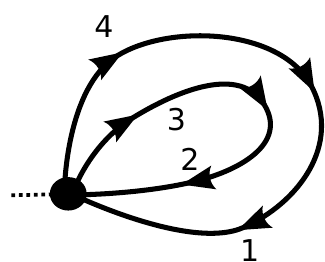}};
\end{tikzpicture}
\begin{flalign*}
& (\beta)_f\cdot (\alpha)_e
=\langle \low\alpha 1\oo S(\low\beta 3) , R\rangle \langle \low\alpha 2\oo \low\beta 1, R\rangle\; (\low\alpha 3\oo \low\beta 2)_{ef}\\
& (\alpha)_e\cdot (\beta)_f
=\langle S(\low\alpha 3)\oo\low\beta 1, R_{21}\rangle\; \langle \low\alpha 2\oo\low\beta 3, R_{21}\rangle\; (\low\alpha 1\oo\low \beta 2)_{ef} &\\
\Rightarrow\; &(\beta)_f\cdot (\alpha)_e=\langle \low\alpha 1\oo S(\low\beta 5),R\rangle\langle S(\low\alpha 5)\oo\low \beta 4, R_{21}\rangle\langle \low\alpha 2\oo\low\beta 1, R\rangle \langle \low\alpha 4\oo \low\beta 2, R_{21}\rangle
\; (\low\alpha 3)_e\cdot (\low\beta 3)_f.
\end{flalign*}

\item  For  $e,f\in E$ with $\st(e)\neq \ta(e)$, $\st(f)\neq \ta(f)$,  $t(e)<t(f)$ and $s(e)<s(f)$:
\hfill \begin{tikzpicture}
 \path[use as bounding box] (0,0) rectangle (2,0);
 \node [anchor=west] at (0,0)
 {\includegraphics[scale=.4]{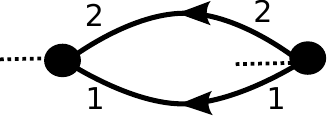}};
\end{tikzpicture}
\begin{flalign*}
&(\beta)_f\cdot (\alpha)_e
=\langle \low \alpha 1\oo\low\beta 1, R\rangle\langle \low\alpha 3\oo\low\beta 3, R\rangle\;  (\low\alpha 2\oo\low\beta 2)_{ef}\\
&(\alpha)_e\cdot (\beta)_f=(\alpha\oo\beta)_{ef} &\\
\Rightarrow\; &(\beta)_f\cdot (\alpha)_e=\langle \low \alpha 1\oo\low\beta 1, R\rangle\langle \low\alpha 3\oo\low\beta 3, R\rangle\;   (\low\alpha 2)_e\cdot  (\low\beta 2)_f.
\end{flalign*}

\item For $e,f\in E$ with $\st(e)\neq \ta(e)$, $\st(f)\neq \ta(f)$,  $t(e)<t(f)$ and $s(e)>s(f)$:
\hfill \begin{tikzpicture}
 \path[use as bounding box] (0,0) rectangle (2,0);
 \node [anchor=west] at (0,0)
 {\includegraphics[scale=.4]{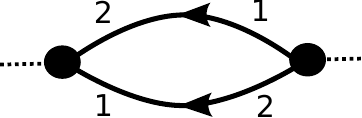}};
\end{tikzpicture}
\begin{flalign*}
&(\beta)_f\cdot (\alpha)_e 
=\langle \low\alpha 1\oo\low\beta 1, R\rangle\;  (\low \alpha 2\oo\low \beta 2)_{ef}\\
&(\alpha)_e\cdot(\beta)_f=
\,\langle \low\alpha 2\oo \low\beta 2, R_{21}\rangle\; (\low\alpha 1\oo\low\beta 1)_{ef}&\\
\Rightarrow\; &(\beta)_f\cdot (\alpha)_e= \langle\low\alpha 1\oo\low\beta 1, R\rangle\,\langle \low\alpha 3\oo\low\beta 3, R_{21}^\inv\rangle\; (\low\alpha 2)_e\cdot (\low\beta 2)_f.
\end{flalign*}
\end{compactenum}
The remaining  cases differ from the ones above only by edge orientation. They are obtained   from the ones above by applying the involution $T^*$ from \eqref{eq:invol}.\end{proposition}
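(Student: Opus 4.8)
The plan is to compute each product directly inside the ambient algebra $\otimes_{v\in V}\mathcal A^*_v$ and then transport it back to ${K^*}\exoo{E}$ along the map $G^*$ of \eqref{eq:dualemb}. By Theorem \ref{def:algstruc} and Proposition \ref{lem:edge_algebra}, $G^*$ is an \emph{injective algebra homomorphism} onto a subalgebra, and $\mathcal A^*$ is by definition the pullback of this subalgebra structure. Hence for any edges $e,f$ and $\alpha,\beta\in K^*$ the product $(\alpha)_e\cdot(\beta)_f$ in $\mathcal A^*$ is the unique element whose image under $G^*$ equals $G^*((\alpha)_e)\cdot G^*((\beta)_f)$, computed in $\otimes_{v\in V}\mathcal A^*_v$. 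The whole computation thus reduces to three mechanical steps: first, expand each factor via $(\alpha)_e\mapsto(\low\alpha 2\oo\low\alpha 1)_{s(e)t(e)}$ from \eqref{eq:dualemb}; second, multiply the resulting edge-end factors using the vertex-neighbourhood relations \eqref{eq:flipalg} (with $R_v=R$ throughout, so that by Proposition \ref{lem:edge_algebra} the outcome is independent of $\rho$); third, recognise the result as $G^*$ of an element of ${K^*}\exoo{E}$.

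The bookkeeping is governed entirely by the \emph{relative linear ordering of edge ends at each shared vertex}, which is precisely the data distinguishing the twelve constellations of Figure \ref{fig:edge_const}. Factors attached to edge ends at \emph{distinct} vertices commute in $\otimes_{v\in V}\mathcal A^*_v$, so the product factorises over vertices; at a shared vertex, the ordering of the two relevant ends selects the $i<j$ branch (no $R$-matrix) or the $i>j$ branch (one $R$-matrix) of \eqref{eq:flipalg}, while the orientation of each incident end fixes the value of $\tau$, hence which Sweedler legs are paired and whether $R$ or an antipode-twisted variant $(S^{\tau}\oo S^{\tau})(R)$ enters. Cases (a) and (b), where $e=f$, are exactly the sample computations \eqref{eq:sampcalc} and \eqref{eq:samcomp} already carried out in the proof of Proposition \ref{lem:edge_algebra}; the remaining cases (c)--(l) are the analogous computations for two distinct edges in each configuration of shared vertices and relative orderings.

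Once the raw product has been obtained in a form involving merged factors $G^*(\cdots)$ or terms $(\cdots\oo\cdots)_{ef}$, the final ``$\Rightarrow$'' relations, which re-express everything as a normal-ordered product $(\cdot)_e\cdot(\cdot)_f$, follow from a single application of the quasitriangularity identity
\begin{align*}
\langle\low\alpha 1\oo\low\beta 1,R\rangle\,\low\alpha 2\low\beta 2=\langle\low\alpha 2\oo\low\beta 2,R\rangle\,\low\beta 1\low\alpha 1,
\end{align*}
equivalently $\Delta^{op}=R\cdot\Delta\cdot R^\inv$, used to split a merged factor back into separate factors on $e$ and $f$. The loop cases (b) and (e)--(j) are the only genuinely intricate ones, since there both $s$ and $t$ of at least one edge sit at the same vertex: expanding the two coproducts and passing the corresponding $R$-matrices through requires $(\Delta\oo\id)(R)=R_{13}R_{23}$, $(\id\oo\Delta)(R)=R_{13}R_{12}$ and the quantum Yang--Baxter equation, exactly as in the associativity checks of Proposition \ref{th:algebra}.

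The main obstacle is therefore not conceptual but the disciplined combinatorial case analysis: for each of the twelve constellations one must correctly read off from Figure \ref{fig:edge_const} which ends lie at which vertex and in which order, assemble the corresponding instance of \eqref{eq:flipalg}, and keep the Sweedler indices consistent through several rewriting steps. The remaining constellations not listed differ only by reversing the orientation of one or both edges; these are obtained from the stated relations by applying the involution $T^*$ of \eqref{eq:invol} to the relevant tensor factors, using that $T^*$ is an algebra anti-homomorphism intertwining the incoming and outgoing module-algebra structures of Corollary \ref{rem:flipalg}.
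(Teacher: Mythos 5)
Your proposal is correct and follows essentially the same route as the paper: embed via the injective algebra homomorphism $G^*$ into $\otimes_{v\in V}\mathcal A^*_v$, compute with the vertex-neighbourhood relations \eqref{eq:flipalg} according to the relative ordering and orientation of edge ends at each shared vertex, recognise the result as $G^*$ of an element of ${K^*}\exoo{E}$, and handle the remaining orientations with the involution $T^*$. The paper likewise treats cases (a) and (b) by the sample computations \eqref{eq:sampcalc} and \eqref{eq:samcomp} and carries out case (d) explicitly as a template for (e)--(l), so no further comparison is needed.
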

\begin{proof} For case (c) this holds by definition. For the remaining cases
it  follows by a direct computation from
 \eqref{eq:flipalg} together with formula \eqref{eq:dualemb}.  Cases (a) and (b) were already  treated in equations \eqref{eq:sampcalc} and \eqref{eq:samcomp}. 
 We illustrate the other cases  by giving the computations for  case (d), since cases (e) to (l)
 are analogous.
Let $e,f\in E$  be edges with $\ta(e)=\ta(f)$, $\st(e)\neq \ta(e)$,  $\st(f)\notin\{\st(e), \ta(e)\}$ and  suppose that $t(e)<t(f)$.  Then one obtains from  \eqref{eq:flipalg}  and  \eqref{eq:dualemb}
\begin{align*}
& G^*((\beta)_f)\cdot G^*((\alpha)_e)=(\low\beta 2\oo\low\beta 1)_{s(f)t(f)}\cdot (\low\alpha 2\oo\low\alpha 1)_{s(e)t(e)}\\
&=(\low\alpha 2\oo\low\beta 2)_{s(e)s(f)}\cdot (\low\beta 2)_{t(f)}\cdot (\low\alpha 1)_{t(e)}\\
&=\langle \alpha_{(1)(1)}\oo\beta_{(1)(1)},R\rangle\;  (\low\alpha 2\oo\low\beta 2)_{s(e)s(f)}\cdot (\alpha_{(1)(2)})_{t(e)}\cdot  (\beta_{(1)(2)} )_{t(f)}\\
&=\langle\low\alpha 1\oo\low\beta 1, R\rangle\; (\low\alpha 3\oo \low\beta 3)_{s(e)s(f)}\cdot (\low \alpha 2)_{t(e)}\cdot  (\low\beta 2)_{t(f)}\\
&=\langle \low\alpha 1\oo \low\beta 1, R\rangle\, \cdot G^*((\low\alpha 2)_e) \cdot G^*((\low\beta 2)_f)=\langle R, \low \alpha 1\oo \low\beta 1\rangle\; G^*((\alpha\oo \beta)_{ef}). 
\end{align*}
\end{proof}

Proposition \ref{lem:algexplicit} gives an explicit description of the algebra structure that does not refer to the algebra structure on the vertex neighbourhoods. 
 In particular,  the multiplication relations in Proposition \ref{lem:algexplicit} (a), (b) show that  for each edge $e\in E$ the variables $(\alpha)_e$ with $\alpha\in K^*$,  form a subalgebra of $\mathcal A^*$. For a loop,  this algebra is related to $K^{op}$.

\begin{proposition}\label{lem:looplem} Let $K$ be a finite-dimensional ribbon Hopf algebra  and equip the vector space $K^*$ with the algebra structure from Proposition \ref{lem:algexplicit} (b)
$$
\beta \cdot' \alpha=\langle \low\alpha 1\oo S(\low\beta 3), R\rangle\langle \low\alpha 2\oo\low\beta 1, R\rangle \, \low\beta 2\low\alpha 3.
$$
Then the linear map $D: K^*\to K$, $D(\alpha)=\langle S^\inv(\alpha), \low Q 1\rangle \low Q 2$ with $Q=R_{21}R$ is an algebra homomorphism  from $(K^*,\cdot')$ to $K^{op}$. It is an isomorphism if and only if $K$ is factorisable.
\end{proposition}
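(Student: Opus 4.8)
The plan is to recognise $D$ as a variant of the \emph{Drinfel'd map} and to split the statement into its two independent halves: bijectivity and the homomorphism property. First I would rewrite $D$ transparently. Writing $Q=\low Q 1\oo\low Q 2=R_{21}R$ and using that the antipode of $K^*$ is dual to that of $K$, one has
\begin{align*}
D=f_Q\circ S^\inv,\qquad f_Q\colon K^*\to K,\quad \gamma\mapsto\langle\gamma,\low Q 1\rangle\,\low Q 2=(\gamma\oo\id)(Q),
\end{align*}
where $S^\inv$ denotes the (bijective) inverse antipode of $K^*$. Bijectivity is then immediate: since $S^\inv$ is a linear isomorphism, $D$ is bijective if and only if $f_Q$ is, and by definition $(K,R)$ is factorisable precisely when the Drinfel'd map $f_Q$ is a linear isomorphism. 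This settles the last sentence of the Proposition and reduces everything to the homomorphism property.

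For the homomorphism property I must verify
\begin{align*}
D(\beta\cdot'\alpha)=D(\alpha)\,D(\beta),
\end{align*}
the product on the right being taken in $K$, so that $D$ is a morphism into $K^{op}$. The structural reason this works is that $f_Q$ is a map of $K$-modules intertwining the coadjoint action on $K^*$ with the adjoint action on $K$; this rests on the relation $Q\,\Delta(a)=\Delta(a)\,Q$, which follows by applying the flip to $R\,\Delta(a)=\Delta^{op}(a)\,R$ to get $R_{21}\Delta^{op}(a)=\Delta(a)R_{21}$ and combining the two. I would prove the homomorphism identity by a direct Sweedler computation: expand $\beta\cdot'\alpha$ using the defining formula of Proposition \ref{lem:algexplicit}~(b), apply $D$ to the convolution product $\low\beta 2\low\alpha 3$, and transport the various copies of $R$ past the comultiplications using the quasitriangularity axioms $(\Delta\oo\id)(R)=R_{13}R_{23}$, $(\id\oo\Delta)(R)=R_{13}R_{12}$, the intertwining relation above, and $(S\oo S)(R)=R$, $(S\oo\id)(R)=R^\inv$.

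Rather than attempt the whole identity in one pass, I expect the cleanest route is to first establish an intermediate product formula $f_Q(\gamma)\,f_Q(\delta)=f_Q(\gamma\star\delta)$ for an explicit $R$-twisted product $\star$ on $K^*$, where only the two copies of $Q$ and the standard hexagon identities are involved. One then checks that conjugating $\star$ by $S^\inv$, together with the order reversal induced by the antipode (an algebra anti-automorphism of $K^*$), reproduces exactly the product $\cdot'$; this reversal is precisely what accounts for the target being $K^{op}$ rather than $K$. The main obstacle is this bookkeeping: controlling the four $R$-factors hidden in the two occurrences of $Q=R_{21}R$, and the antipodes introduced by $S^\inv$, while collapsing the resulting expression down to the four pairings $\langle\low\alpha 1\oo S(\low\beta 3),R\rangle\langle\low\alpha 2\oo\low\beta 1,R\rangle$ appearing in $\beta\cdot'\alpha$. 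Isolating the antipode twist and the orientation reversal as a separate, purely formal step after the clean product formula for $f_Q$ should keep the calculation manageable.
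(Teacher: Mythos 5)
Your outline matches the paper's proof in both of its halves: bijectivity is reduced to factorisability through the linear isomorphism $S^\inv$, and the homomorphism property is derived from the centrality relation $Q\cdot\Delta=\Delta\cdot Q$ together with the hexagon identities and the anti-multiplicativity of the antipode, so the only difference is organisational. The paper executes the bookkeeping you defer by first establishing the factorisation $D(\alpha)=D_{R_{21}^\inv}(\low\alpha 2)\cdot D_R(S^\inv(\low\alpha 1))$, whence $D(\alpha\beta)=D_{R_{21}^\inv}(\low\alpha 2)\cdot D(\beta)\cdot D_R(S^\inv(\low\alpha 1))$, so the four $R$-factors hidden in the two copies of $Q$ are absorbed into the two algebra homomorphisms $D_R$, $D_{R_{21}^\inv}$ of Lemma \ref{lem:quasprop} instead of requiring a separate intermediate twisted product $\star$.
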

\begin{proof} Note first that for all $\alpha,\beta\in K^*$, one has by definition
$$\langle D(\alpha),\beta\rangle
=\langle \low\alpha 2\oo\low\beta 1, R_{21}^\inv\rangle\langle S^\inv(\low\alpha 1)\oo\low\beta 2, R\rangle
=\langle D_{R_{21}^\inv}(\low\alpha 2) D_R(S^\inv(\low \alpha 1)), \beta\rangle,$$
where $D_R: K^*\to K$, $\alpha\mapsto \langle\alpha, \low R 1\rangle\,\low R 2$ and $D_{R_{21}^\inv}: K^*\to K$, $\alpha\mapsto \langle \alpha, S^\inv(\low R 2)\rangle\,\low R 1$ are the maps  from Lemma \ref{lem:quasprop}. 
 This implies 
$
D(\alpha\cdot \beta)=D_{R_{21}^\inv}(\low\alpha 2)\cdot D(\beta)\cdot D_R(S^\inv(\low\alpha 1))
$  
for all $\alpha,\beta\in K^*$.
As $\Delta^{op}=R\cdot \Delta\cdot R^\inv=R_{21}^\inv\cdot\Delta\cdot R_{21}$, one has 
$\Delta\cdot Q=Q\cdot \Delta$ and therefore for all $k\in K$, $\alpha\in K^*$
\begin{align*}
&(S^\inv(\low k 1)\oo 1)\cdot (S^\inv\oo\id)(Q)\cdot (1\oo\low k 2)=(1\oo\low k 2)\cdot (S^\inv \oo\id)(Q)\cdot (S^\inv(\low k 1)\oo 1),\\
&\langle \low \alpha 1, \low k 2\rangle\; D(\low \alpha 2)\cdot S(\low k 1)=\langle \low\alpha 2, \low k 2\rangle\; S(\low k 1)\cdot D(\low\alpha 1),
\end{align*}
where the second identity follows from the first by duality. This yields
\begin{align*}
&\langle \low\alpha 1\oo\low\beta 1, R\rangle\; D(\low \alpha 2\cdot \low \beta 2)=\langle \low\alpha 1\oo\low\beta 1, R\rangle \; D_{R_{21}^\inv}(\low\alpha 3) \cdot D(\low\beta 2)\cdot D_R(S^\inv(\low\alpha 2))\\
&=\langle \low\alpha 1\oo\low \beta 2, R\rangle\; D_{R_{21}^\inv}(\low\alpha 3)\cdot D_R(S^\inv(\low\alpha 2))\cdot D(\low\beta 1)=\langle\low\alpha 1\oo\low\beta 2, R\rangle\; D(\low\alpha 2)\cdot D(\low\beta 1),
\end{align*}
and it follows that
$D(\alpha)\cdot D(\beta)=\langle\low\alpha 1\oo S(\low\beta 3),R\rangle\langle\low\alpha 2\oo\low\beta 1, R\rangle\, D(\low\alpha 3\low\beta 2)$.
A comparison with Proposition \ref{lem:algexplicit} (b) proves that $D: (K^*,\cdot')\to K$ is an algebra homomorphism. By definition, the Hopf algebra $K$ is factorisable if and only if $(S\oo \id)\circ D:K^*\to K$   is a linear isomorphism, which is the case if and only if this holds for $D:K^*\to K$.
\end{proof}

For an edge $e$ with $\st(e)\neq \ta(e)$,  the characterisation of the algebra structure from Proposition \ref{lem:algexplicit} (a) is less immediate. If the 
 Hopf algebra $K$ is the {\em Drinfel'd double} $D(H)$ of a finite-dimensional  Hopf algebra $H$, one can show that this algebra is related to the Heisenberg double $\mathcal H_L(H)$  from Definition \ref{def:hdouble}. Note that in this case, the algebra in Proposition  \ref{lem:algexplicit} (b) is isomorphic to  $D(H)^{op}$ by Proposition \ref{lem:looplem} since the Drinfel'd double $D(H)$ of a
 finite-dimensional Hopf algebra $H$  is always factorisable.

\begin{proposition}\label{lem:edgelem} Let $K=D(H)$ be the Drinfel'd double of a finite-dimensional Hopf algebra $H$ and equip the vector space $K^*$ with the 
algebra  structure from Proposition \ref{lem:algexplicit} (a)
\begin{align}\label{eq:edgemult}
\beta\cdot'\alpha=\langle \low\alpha 1\oo\low\beta 1, R\rangle\;  \low\alpha 2\low\beta 2
\end{align}
where $\cdot$ denotes the multiplication of $K^*=H^{op}\oo H$ and $R=\Sigma_i1\oo x_i \oo \alpha^i\oo 1$  is the $R$-matrix of $D(H)$
from Theorem \ref{lem:ddouble2}.
Then the algebra  $(K,\cdot')$ is isomorphic to  $\mathcal H_L(H)^{op}$.
\end{proposition}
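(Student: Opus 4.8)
The plan is to verify the isomorphism by a direct computation: first make the deformed product $\cdot'$ of \eqref{eq:edgemult} fully explicit in terms of the two tensor factors of $K^\ast=H^{op}\oo H$, and then recognise the result as the (opposite of the) smash-product multiplication defining $\mathcal H_L(H)$. First I would recall from Definition \ref{def:hdouble} the multiplication of $\mathcal H_L(H)$ and write out the corresponding relations for $\mathcal H_L(H)^{op}$, so that I have an explicit target to match.

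Next I would substitute the $R$-matrix $R=\Sigma_i\,1\oo x_i\oo \alpha^i\oo 1$ of $D(H)$ from Theorem \ref{lem:ddouble2} into \eqref{eq:edgemult}. The key point is that the first leg $1\oo x_i$ of $R$ lies in the $H$-factor of $D(H)$ while the second leg $\alpha^i\oo 1$ lies in the $H^\ast$-factor; consequently, when the pairing $\langle \low\alpha 1\oo\low\beta 1,R\rangle$ is evaluated leg by leg, it only sees one component of $\low\alpha 1$ and the complementary component of $\low\beta 1$, and the sum over $i$ against the canonical element $\Sigma_i x_i\oo\alpha^i$ collapses to a single duality pairing between $H$ and $H^\ast$. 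This is what turns the two \emph{commuting} tensor factors of the undeformed product $\cdot$ into a single cross-relation.

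To carry this out I would use the comultiplication of $K^\ast=D(H)^\ast$, which is dual to the multiplication of $D(H)$ in Theorem \ref{lem:ddouble2}, to express $\low\alpha 1,\low\alpha 2$ and $\low\beta 1,\low\beta 2$ through their $H^{op}$- and $H$-components and to resolve the collapsed pairing. Simplifying with the antipode axioms should yield precisely a smash-product cross-relation of the form appearing in $\mathcal H_L(H)$, up to the order of multiplication. I would then exhibit the comparison map $\Phi\colon (K^\ast,\cdot')\to \mathcal H_L(H)^{op}$ on the underlying vector space --- essentially a relabelling of the two tensor factors, with whatever antipode twist is forced by matching the explicit relations and absorbing the two $^{op}$'s (one already present in $H^{op}\oo H$, one in $\mathcal H_L(H)^{op}$) --- and verify that $\Phi$ intertwines $\cdot'$ with the opposite smash product by comparing the two multiplication rules. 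Bijectivity is then immediate, since $\Phi$ is a linear isomorphism of the underlying spaces; note that, unlike the loop case, no factorisability hypothesis is needed. It is worth recording the parallel with Proposition \ref{lem:looplem}: there the loop relation involves the full monodromy $Q=R_{21}R$ and reproduces the quantum double $K^{op}=D(H)^{op}$, whereas here the edge relation involves a single copy of $R$ and reproduces only its ``square root'', the Heisenberg double.

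The main obstacle will be purely one of bookkeeping: pinning down the comultiplication of $D(H)^\ast$ and the exact placement of antipodes, opposites and normalisation data so that the collapsed cross-relation matches the Heisenberg-double relation on the nose. The conceptual content is slight --- the $R$-deformation manifestly introduces a smash-product relation --- but keeping the Sweedler indices, the two legs of the pairing, and the two $^{op}$'s mutually consistent is exactly where sign and convention errors tend to arise, and is the step I would check most carefully.
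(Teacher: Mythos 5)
Your proposal matches the paper's proof: the paper likewise computes the comultiplication of $K^*=D(H)^*=H^{op}\oo H^*$ dual to the multiplication in Theorem \ref{lem:ddouble2}, inserts it together with $R=\Sigma_i 1\oo x_i\oo\alpha^i\oo 1$ into \eqref{eq:edgemult}, collapses the pairing to $\langle\low\alpha 1,\low y 2\rangle\, x\low y 1\oo\alpha\low\beta 2$, and identifies the flip map $H^*\oo H\to H\oo H^*$ as an anti-algebra isomorphism from $\mathcal H_L(H)$ (no extra antipode twist turns out to be needed). This is essentially the same direct computation you describe, so the approach is correct.
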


\begin{proof}  This follows by a direct computation.
From \eqref{eq:ddouble}, one finds that the comultiplication of $K^*=D(H)^*=H^{op}\oo H^*$ is given by
$ \Delta(x\oo\alpha)=\Sigma_{i,j}\;\low x 1\oo\alpha^i\low\alpha 1\alpha^j\oo S(x_j)\low x 2 x_i\oo \low\alpha 2$
 for all $x\in H$, $\alpha\in H^*$.  
 Inserting this expression into \eqref{eq:edgemult} together with the expression for $R$, one obtains
 for all $x,y\in H$ and $\alpha,\beta\in H^*$
\begin{align*}
& (y\oo\beta)\cdot' (x\oo\alpha)\\
&=\Sigma_{i,j,k,l,u}\; \langle \alpha^u, \low y 1\rangle\langle  \alpha^k \low\alpha 1\alpha^l, x_u\rangle\epsilon(\alpha^i\low\beta 1\alpha^j)\epsilon(\low x 1)\, (S(x_l)\low x 2 x_k\oo \low\alpha 2)\cdot (S^\inv(x_j)\low y 2 x_i\oo \low\beta 2)\\
&=\Sigma_{k,l}\langle  \alpha^k\low\alpha 1\alpha^l, \low y 1\rangle\, ( S^\inv(x_l)xx_k \oo\low\alpha 2)\cdot (\low y 2\oo\beta)\\
&=\langle \low\alpha 1,\low y 2\rangle\, ( S^\inv(\low y 3)x \low y 1\oo\low \alpha 2)\cdot (\low y 4\oo \low\beta 2)= \langle \low\alpha 1, \low y 2\rangle\, x\low y 1\oo\alpha\low\beta 2.
\end{align*}
Comparing the last expression with the first formula in Definition \ref{def:hdouble}, one finds that the flip map $H^*\oo H\to H\oo H^*$, $\alpha\oo x\mapsto x\oo\alpha$
defines an anti-algebra isomorphism from the left Heisenberg double $\mathcal H_L(H)$ in to the algebra structure in Proposition \ref{lem:algexplicit} (a).
\end{proof}

Finally, we can use Proposition \ref{lem:algexplicit} to show that a Hopf algebra gauge theory for a ribbon graph $\Gamma$ 
is related to the algebra
obtained from the combinatorial quantisation formalism for Chern-Simons gauge theory in  \cite{AGSI,AGSII,BR}.

 \begin{proposition} \label{lem:agslemma}Let $K$ be a finite-dimensional  ribbon Hopf algebra. Consider a $K$-valued Hopf algebra gauge theory on a ciliated ribbon graph $\Gamma$ in which  each vertex $v\in V$ is assigned the same $R$-matrix. Then the $K\exoo{V}$-right module  algebra structure from  Theorem \ref{def:algstruc} and Proposition \ref{lem:algexplicit} coincides with the one derived in \cite{AGSI,AGSII,BR}. 
 \end{proposition}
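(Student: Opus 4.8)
The statement compares two explicitly presented algebras on the same vector space ${K^*}\exoo{E}$, so the plan is to recall the lattice algebra of \cite{AGSI,AGSII,BR} in coordinate-free form and match its defining relations against those of Proposition \ref{lem:algexplicit} case by case. First I would recall the AGS/BR presentation: to each directed edge $e$ and each finite-dimensional $K$-representation $V$ one assigns a holonomy matrix $M^{(e)}_V\in\mathrm{End}(V)\oo\mathcal A^*$ whose entries generate the algebra, and the defining relations are exchange relations built from the $R$-matrix evaluated on tensor products of representations, together with a reflection-type relation for each single edge. The first step is to transcribe these matrix relations into the pairing language used here: an entry of $M^{(e)}_V$ is the image in $K^*$ of a matrix coefficient of $V$, and a factor of $R$ acting between two representation spaces becomes, upon taking matrix coefficients, a pairing $\langle \low\alpha 1\oo\low\beta 1, R\rangle$ against the coproducts of the corresponding $\alpha,\beta\in K^*$. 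Under this translation an AGS/BR matrix relation takes exactly the form of a relation in Proposition \ref{lem:algexplicit}.

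Second, I would fix the dictionary between the combinatorial data. The exchange relations of \cite{AGSI,AGSII,BR} depend on the relative position and orientation of edge ends meeting at a vertex, encoded there by a linear ordering at each vertex; this is precisely the ciliated ribbon graph structure of Definition \ref{def:cilgraph}, with orientation reversal implemented on both sides by the antipode, here through the involution $T^*$ of \eqref{eq:invol}. After aligning the ordering and orientation conventions, the AGS/BR case distinctions correspond bijectively to the twelve edge constellations (a)--(l) of Proposition \ref{lem:algexplicit}. One then verifies case by case that each transcribed AGS/BR relation coincides with the corresponding multiplication relation: the single-loop reflection equation matches (b), the distinct-vertex braidings match (d)--(g), and the two-loop and parallel-edge relations match (h)--(l). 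Since both structures are presented on the same space ${K^*}\exoo{E}$ and agree on generators and relations, the identity map is an algebra isomorphism; the matching of the $K\exoo{V}$-module structure with the AGS/BR gauge action then follows from the action \eqref{eq:gtrafok}, which reproduces the left and right (co)regular action on each edge.

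The hard part is not any single computation but the bookkeeping needed to reconcile the two conventions. The references use matrix notation with numbered tensor legs, a fixed basis of $K$ given by matrix elements of irreducible representations, Clebsch--Gordan data, and their own placement rules for $R$, $R_{21}$ and $R^\inv$ according to edge ordering and orientation. The principal obstacle is therefore to produce a faithful translation of these conventions so that in each of the twelve cases exactly the right variant of the $R$-matrix appears in the right slot; once the dictionary is fixed and consistency is checked on the basic constellations (a) and (b), the remaining cases follow by the same mechanical transcription and by the orientation-reversal rule via $T^*$ already used in Proposition \ref{lem:algexplicit}.
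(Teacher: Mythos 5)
Your overall strategy is the same as the paper's: fix a dictionary between the matrix-element presentation of the references and the Sweedler/pairing presentation here (the paper translates in the opposite direction, rewriting Proposition \ref{lem:algexplicit} in terms of matrix coefficients $M_I[e]$, but that is immaterial), then compare relations constellation by constellation, with orientation reversal handled by $T^*$ and the gauge action matched via \eqref{eq:gkact} and Proposition \ref{lem:gtrafolem}.

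There is, however, one concrete gap in your plan: the claimed bijection between the AGS/BR case distinctions and the twelve constellations (a)--(l) does not exist, so the case-by-case matching cannot be completed literally. The references only write down a subset of the relations --- \cite{BR} restricts to 3-valent graphs without loops (so has no counterpart of (h) and (i)), and several of the remaining constellations are not described explicitly in \cite{AGSI,AGSII,BR} at all. You therefore need an additional argument that matching the constellations which \emph{do} appear in the references suffices to identify the two algebras. The paper supplies this via Corollary \ref{cor:edgesubdiv}: topological invariance under edge subdivision and contraction lets one reduce any ciliated ribbon graph to a bouquet or to a graph without loops or multiple edges, on which only the basic constellations occur, so agreement there propagates to the general case. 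Without some such reduction (or an independent derivation of the missing relations on the AGS/BR side), your verification stalls on the constellations that have nothing to be matched against.
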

 \begin{proof}

As  the algebra structure in \cite{AGSI,AGSII,BR} is given  in terms of matrix elements of $K$ in its irreducible representations, we reformulate  Proposition \ref{lem:algexplicit} in terms of matrix elements. For an irreducible representation $\rho_I: K\to\mathrm{End}(V_I)$ of $K$ on a finite-dimensional $\FF$-vector space $V_I$, the matrix elements in terms of a  basis $\{v^I_a\}$ of $V_I$ are given by
 $\rho_I(k)v^I_a=\Sigma_{b} M_I(k)^b_{\;\;a}\,v^I_b$.  The associated elements of $K^*$  are defined by
  $\langle M_{I\;a}^b, k\rangle=M_I(k)^a_{\;\;b}$ for all $k\in K$, which implies $\Delta( M_{I\;a}^b)=\Sigma_c\,  M_{I\; a}^c\oo M_{I\;\;c}^b$.   Similarly,  the action of the $R$-matrix
on the tensor product of  two irreducible modules $V_I$, $V_J$ is characterised in terms of  matrix elements by
    $(\rho_I\oo \rho_J)(R)(v_a^I\oo v^J_b)=\Sigma_{c,d} R_{IJ\;ab}^{cd} v^I_c\oo v^J_c$. Using the notation    
 $M_I[e]^b_{\;\;a}, M_J[f]^b_{\;\;a},...$ for the elements $(M_{I\;a}^b)_e, (M_{J\; a}^b)_f,...\in {K^*}\exoo{E}$ and combining the matrix elements into matrices, we can then  rewrite the formulas from Proposition \ref{lem:algexplicit} in matrix notation  and obtain:
\begin{compactitem}
\item For an edge $e$ with $\st(e)\neq \ta(e)$, the expression in Proposition \ref{lem:algexplicit} (a)  takes the form
$M_J[e]M_{I}[e]=R_{IJ}M_I[e]M_J[e]R^\inv_{JI}$,
which agrees with  formula (2.46) in \cite{AGSI}, formula (43) in \cite{BR}.\\[-1ex]

\item For a loop $e$ with $ t(e)<s(e)$ we derive from Proposition \ref{lem:algexplicit} (b)
the relation 
$M_J[e]R_{IJ}M_{I}[e]=R_{IJ}M_I[e]R_{JI}M_{J}[e]R_{JI}^\inv$. This is the formula obtained by combining (2.6),  (2.7) and (2.17) in \cite{AGSII}, see also the first three formulas in Definition 12 in \cite{AGSII} and formula (46) in \cite{BR}. \\[-1ex]

\item For two edges $e$, $f$ which have nor vertex in common, Proposition \ref{lem:algexplicit} (c) reads in matrix notation
$M_I[e]M_J[f]=M_J[f]M_I[e]$, which coincides with formula (2.45) in \cite{AGSI},  formula (2.19) \cite{AGSII} and formula (45) in \cite{BR}.\\[-1ex]

\item For case (d) in Proposition \ref{lem:algexplicit}, we obtain in matrix notation
$M_J[f]M_I[e]=R_{IJ}M_I[e]M_J[f]$, which coincides with formula (2.47), (2.51) in \cite{AGSI}, formula (2.20) in \cite{AGSII}   and formulas (40) to (42) in \cite{BR} if the choice of orientation there is reversed.\\[-1ex]

\item For two loops $e,f$ with $t(e)<s(e)<t(f)<s(f)$ we obtain from Proposition  \ref{lem:algexplicit}  (h) the relation 
$R^\inv_{IJ}M_J[f]R_{IJ}M_I[e]=M_I[e]R_{IJ}^\inv M_J[f]R_{IJ}$. This coincides with the 5th to 11th equations of Definition 12 in \cite{AGSII}  if the first and second argument there are replaced by $e$ and $f$ and the choice of edge orientation there is taken into account.\\[-1ex]

\item For two loops $e,f$ with $t(e)<t(f)<s(e)<s(f)$ Proposition \ref{lem:algexplicit}), (i) yields the relation
$R^\inv_{IJ}M_J[f]R_{IJ}M_I[e]= M_I[e] R_{JI} M_J[f] R_{IJ}$. This coincides with the 4th formula in Definition 12 \cite{AGSII} if the arguments $a_i$, $b_i$ are replaced by $e$ and $f$ and the choice of edge orientation and ordering there is taken into account.\\[-1ex]

\item The formulas for the action of gauge transformations on the edge variables in equation  \eqref{eq:gkact}  and Proposition \ref{lem:gtrafolem} agree with the corresponding formulas  (2.49) in \cite{AGSI} and (2) in \cite{BR}.
\end{compactitem}
The multiplication relations  in Proposition  \ref{lem:algexplicit} (h) and (i) are not described in \cite{BR} because that paper restricts attention to 3-valent graphs without loops. The remaining edge constellations in Proposition \ref{lem:algexplicit} are not described explicitly in \cite{AGSI, AGSII,BR}, but the relations above are sufficient to establish equivalence. This follows in particular from Corollary \ref{cor:edgesubdiv} which allows one to restrict attention to bouquets or to ribbon graphs without loops or multiple edges. 
\end{proof}

Proposition \ref{lem:agslemma} shows that the $K\exoo{V}$-module algebra $\mathcal A^*_\Gamma$, the algebra of functions  in a local Hopf algebra gauge theory on $\Gamma$, agrees with the one obtained from the combinatorial quantisation procedure in  \cite{AGSI, AGSII,BR}.  The representation theory of the resulting algebra was investigated further in \cite{AS} and \cite{BR2}, which also relate  this algebra to Reshetikhin-Turaev invariants \cite{RT}.

However,  the two approaches that lead to this  module algebra structure are very different. 
 While  \cite{AGSI, AGSII,BR}  obtain it  by  {\em  canonically quantising}  the Poisson structure in \cite{FR} and \cite{AM} via the correspondence between quasitriangular Hopf algebras and quasitriangular Poisson-Lie groups, in the present article this algebra structure is {\em derived} from a number of simple axioms. 
 This addresses the question about the uniqueness of this algebra structure and quantisation approach. 
 
 It also exhibits clearly the mathematical structures associated with the notion of a Hopf algebra gauge theory, namely {\em module algebras} over a Hopf algebra and their {\em braided tensor products}.
 Moreover, it allows one to obtain the algebra structure in \cite{AGSI,AGSII,BR} from a  basic building block---the Hopf algebra gauge theory on a vertex neighbourhood $\Gamma_v$---which arises from a simple twist deformation of the Hopf algebra $K\exoo{v}$.

\section{Graph transformations and topological invariance}
\label{sec:graphops}

In this section, we prove that the operations on ciliated ribbon graphs from Section \ref{subsec:graphs} give rise to algebra and module homomorphisms between  the associated Hopf algebra gauge theories. We also show how  these algebra and module homomorphisms   can be described in terms of maps between the Hopf algebra gauge theories on the vertex neighbourhoods. 

In the following  let $K$ be a finite-dimensional  ribbon algebra.
Consider  ciliated  ribbon graphs $\Gamma$, $\Gamma'$  such that  $\Gamma'$ is obtained from $\Gamma$ by one of the graph transformations in Definition \ref{def:graphtrafos} and denote by $E,F,V$ and $E',F',V'$, respectively, their sets of edges, faces and vertices. Recall  from Definition \ref{def:graph_functor} and Proposition \ref{lem:graphtrafo_vertnb} that each of the graph transformations in Definition \ref{def:graphtrafos}
is associated with a  map $g_V: V'\to V$, which are
are  inclusion maps or identity maps. We use the notation $v=g_V(v')$ for  $v'\in V'$.
Similarly,  $\mathcal A^*_\Gamma$ and $\mathcal A^*_{\Gamma'}$ denote the  the  $K\exoo{V}$- and $K\exoo{V'}$-module algebra
structures on ${K^*}\exoo{E}$ and ${K^*}\exoo{E'}$  from Theorem \ref{def:algstruc}, $\mathcal A^*_v$ and $\mathcal A'^*_{v'}$ the algebras for the vertex neighbourhoods $\Gamma_v$  and $\Gamma_{v'}$  from Corollary \ref{rem:flipalg} 
and  $\cdot$, $\cdot'$ their multiplication maps. 
We suppose that  all vertices $v\in V$  and $v'\in V'$  are assigned the same $R$-matrices and the maps $\sigma_v$   from Corollary \ref{rem:flipalg} coincide for all vertices of  $\Gamma$ and $\Gamma'$   that are unaffected by the graph transformation. 

We construct linear maps $F^*:\mathcal A^*_{\Gamma'}\to \mathcal A^*_\Gamma$ associated with each graph transformation in Definition \ref{def:graphtrafos} in three steps:
\begin{enumerate}
\item To each graph operation  in Definition \ref{def:graphtrafos} we 
 assign a family of linear maps  $f^*_{v'}:  \mathcal A'^*_{v'}\to\oo_{v\in V} \mathcal A_v^*$, indexed by the vertices $v'\in V'$.

\item We show that the linear maps $f^*_{v'}$ induce   linear maps $f^*: \bigotimes_{v'\in V'}\mathcal A'^*_{v'}\to\oo_{v\in V} \mathcal A_v^*$.

\item We show that the maps $f^*$ induce unique  linear maps $F^*:\mathcal A^*_{\Gamma'}\to\mathcal A^*_\Gamma$ with $f^*\circ G^*_{\Gamma'}=G^*_\Gamma\circ F^*$
 for the maps $G^*_{\Gamma'}: \mathcal A^*_{\Gamma'}\to\bigotimes_{v'\in V'}\mathcal A'^*_{v'}$ and  $G^*_{\Gamma}: \mathcal A^*_{\Gamma}\to\bigotimes_{v\in V}\mathcal A^*_{v}$ from  \eqref{eq:dualemb} and that the maps $F^*$ are module and algebra homomorphisms. 
 \end{enumerate}

For  step 1, recall that the algebra $\mathcal A^*_v$ from Corollary \ref{rem:flipalg} for a vertex with $n$ incident edge ends and the ordering $1<2<...<n$ is the vector space $K^{*\oo n}$ with the multiplicative relation
 $$
\alpha^1\oo...\oo\alpha^n=(\alpha^1)_1\cdot (\alpha^2)_2\cdots (\alpha^n)_n
$$
 for  $\alpha^1,...,\alpha^n\in K^*$, and with further  multiplicative relations from  Corollary \ref{rem:flipalg}. It follows that any algebra homomorphism 
 $f_v: \mathcal A^*_v\to B$ into an associative unital algebra $B$  satisfies
\begin{align}\label{eq:gdef}
f_v: \mathcal A^*_v\to B, \quad \alpha^1\oo...\oo \alpha^n\mapsto g((\alpha^1)_1)\cdot g((\alpha^2)_2)\cdots g((\alpha^n)_n).
\end{align}
and hence is determined uniquely by its values on the elements $(\alpha)_e$ for edge ends $e$ incident at $v$. 
Conversely, the  linear map $f_v:\mathcal A^*_v\to B$ defined by \eqref{eq:gdef} is an algebra morphism if and only if  
 the remaining relations in equation
 Corollary  \ref{rem:flipalg} are satisfied.

\bigskip
\begin{definition} \label{def:edgeopmaps} Let $\Gamma'$ be obtained from $\Gamma$ by one of the graph operations from Definition \ref{def:graphtrafos}. 
Denote for each edge end $f\in E(\Gamma_\circ)$ that is not affected by this operation by $f'\in E(\Gamma'_\circ)$ the associated edge  end of $\Gamma'$ and label the remaining edges  as in Figure \ref{fig:graph_ops}. 
 Then the linear maps  $f^*_{v'}:\mathcal A'^*_{v'}\to \oo_{v\in V} \mathcal A^*_v$ induced by these graph transformations are defined by 
\eqref{eq:gdef} and their value on the variables $(\alpha)_{h'}$, $h'\in E(\Gamma'_\circ)$, as follows
\begin{compactenum}[(a)] 
\item {\bf Deleting  an edge $e$:} 
\begin{align}\label{eq:deleting}d^*_e: (\alpha)_{h'}\mapsto (\alpha)_h\quad\forall h'\in E(\Gamma'_\circ).\qquad\qquad\qquad\qquad\qquad\qquad\qquad\qquad\qquad\qquad\quad\;
\end{align}

\item {\bf Contracting an edge $e$ towards $\st(e)$:}  
\begin{align}\label{eq:contractstarting}
c^*_{\st(e)}: (\alpha)_{h'}\mapsto \begin{cases}  (\alpha)_h &  \ta(e)\notin\{\st(h), \ta(h)\}\\
 (\low\alpha 3\oo\low\alpha 2\oo\low\alpha 1)_{s(e)t(e)h} & 
\st(h)=\ta(e), h<t(e)\\
\langle g, \low\alpha 2\rangle (\low\alpha 4\oo\low\alpha 3\oo\low\alpha 1)_{s(e) t(e)h} & 
\st(h)=\ta(e), h>t(e)\\
(S^\inv(\low\alpha 1)\oo S^\inv(\low\alpha 2)\oo\low\alpha 3)_{s(e)t(e)h} & 
\ta(h)=\ta(e), h<t(e)\\
\langle g,\low\alpha 3\rangle  (S^\inv(\low\alpha 1)\oo S^\inv(\low\alpha 2)\oo \low\alpha 4)_{s(e)t(e)h} & 
\ta(h)=\ta(e), h>t(e).
\end{cases}
\end{align}
\item {\bf Contracting an edge $e$ towards $\ta(e)$:}  
\begin{align}\label{eq:contracttarget}
c^*_{\ta(e)}: (\alpha)_{h'}\mapsto \begin{cases}  (\alpha)_h & 
\st(e)\notin\{\st(h), \ta(h)\}\\
\langle \low\alpha 2, g^\inv\rangle\, (S(\low\alpha 4)\oo S(\low\alpha 3)\oo\low\alpha 1)_{s(e)t(e)h} & 
\st(h)=\st(e), h<s(e)\\
(S(\low\alpha 2)\oo S(\low\alpha 3)\oo\low\alpha 1)_{s(e)t(e)h} & 
\st(h)=\st(e), h>s(e)\\
\langle \low\alpha 3, g^\inv\rangle\, (\low\alpha 2\oo\low\alpha 1\oo\low\alpha 4)_{s(e)t(e)h} & 
\ta(h)=\st(e), h<s(e)\\
(\low\alpha 2\oo\low\alpha 1\oo\low\alpha 3)_{s(e)t(e)h} & 
\ta(h)=\st(e), h>s(e).
\end{cases}
\end{align}
\item {\bf Adding a loop $e''$ at $v$:} 
\begin{align}\label{eq:adding}
a^*_v: (\alpha)_{h'}\mapsto \begin{cases}
(\alpha)_h &h'\notin\{s(e''),t(e'')\}\\ 
\epsilon(\alpha) \, 1\exoo{2E} & h'\in\{s(e''),t(e'')\}
\end{cases}\qquad\qquad\qquad\qquad\qquad\qquad\qquad\qquad\;
\end{align}

\item {\bf Detaching adjacent edge ends  $e_1, e_2$ from $v$:}  
\begin{align}\label{eq:removing}
&w^*_{e_1e_2}: (\alpha)_{h'}\mapsto \begin{cases} (\alpha)_h & h'\neq s(e')\\
(\low\alpha 3\oo\low\alpha 2\oo\low\alpha 1)_{s(e_1)t(e_1)s(e_2)} & h'=s(e'), s(e_2)<t(e_1)\\
\langle \low\alpha 2, g^\inv\rangle\; (\low\alpha 4\oo\low\alpha 3\oo\low\alpha 1)_{s(e_1)t(e_1)s(e_2)} & h'=s(e'), s(e_2)>t(e_1)
\end{cases}
\end{align}

\item {\bf Doubling the edge $e$:}  
\begin{align}\label{eq:doubling}
do^*_{e}: (\alpha)_{h'}\mapsto\begin{cases} 
(\alpha)_h & h'
\notin \{s(e'),t(e'), s(e''), t(e'')\}\\
(\alpha)_{t(e)} & h'\in\{t(e'), t(e'')\}\\
(\alpha)_{s(e)} & h'\in\{s(e'), s(e'')\}.
\end{cases}\qquad\qquad\qquad\qquad\qquad\qquad
\end{align}
\end{compactenum}
\end{definition}

\bigskip
\begin{lemma} \label{lem:alghomord}$\quad$
\begin{compactenum}
\item The  linear maps $f^*_{v'}:\mathcal A'^*_{v'}\to \bigotimes_{v\in V}\mathcal A^*_v$ from Definition \ref{def:edgeopmaps} (a), (b), (c), (e), (f) are  morphisms of $K$-modules with respect to the $K$-module structures at $v'$ and $v$ and satisfy
$$
f^*_{v'}(\alpha)\lhd(k)_{g_V(w')}=\epsilon(k)\, \alpha\qquad\forall \alpha\in\mathcal A'^*_{v'} , \quad w'\in V'\setminus\{v'\}.
$$
\item The  linear maps $f^*_{v'}:\mathcal A'^*_{v'}\to \bigotimes_{v\in V}\mathcal A^*_v$ in cases (a), (b), (c), (e) in Definition \ref{def:edgeopmaps}  are algebra homomorphisms.
\end{compactenum}
\end{lemma}

\begin{proof}  1.~By definition of the linear maps $f^*_{v'}$ in \eqref{eq:gdef} and because $\mathcal A'^*_{v'}$ and $\mathcal A^*_v$ are $K$-module algebras, it is sufficient to show that 
 for all  distinct $v',w'\in V'$, edge ends $e'$ incident at $v'$, $\alpha\in K^*$ and $k\in K$
\begin{align}\label{eq:modulehelppp}
f^*_{v'}((\alpha)_{e'}\lhd (k)_{v'})=f^*_{v'}((\alpha)_{e'})\lhd (k)_{g_V(v')}
\qquad f^*_{v'}((\alpha)_{e'})\lhd (k)_{g_V(w')}=\epsilon(k)\, f^*_{v'}((\alpha)_{e'}).
\end{align}
These identities are obvious in case (a).
As the map in case (b) in \eqref{eq:contractstarting} is obtained from the one for case (c) in \eqref{eq:contracttarget} by applying the involution from \eqref{eq:invol}, case (b) follows from case (c). In case (c), the identities \eqref{eq:modulehelppp} are obvious from  \eqref{eq:contracttarget} for all vertices $v', w'\in V'$ except  for  $v'=\st(e)'$.
 If  $g\in E(\Gamma_\circ)$ is incoming at the vertex $\st(e)$, we obtain with \eqref{eq:contracttarget}
\begin{align*}
&f^*_{\st(e)'}((\alpha)_{g'})\lhd^* (h)_{\st(e)}\\
&=\begin{cases} 
\langle \low \alpha 3, g^\inv\rangle\, (\low\alpha 4\oo \low\alpha 2\oo\low\alpha 1)_{gs(e)t(e)}\lhd^*(h)_{s(e)} & g<s(e)\\
(\low\alpha 3\oo \low\alpha 2\oo\low\alpha 1)_{g s(e)t(e)}\lhd^*(h)_{s(e)} & g>s(e)
\end{cases}\\
&=\begin{cases} 
\langle \low \alpha 3, g^\inv\rangle \langle h, \alpha_{(4)(1)} S(\alpha_{(2)(2)})\rangle\; (\alpha_{(4)(2)}\oo \alpha_{(2)(1)}\oo\low\alpha 1)_{gs(e)t(e)} & g<s(e)\\
 \langle h, S(\alpha_{(2)(2)})\alpha_{(3)(1)}\rangle\; (\alpha_{(3)(1)}\oo \alpha_{(2)(1)}\oo\low\alpha 1)_{gs(e)t(e)} & g>s(e)\\
\end{cases}\\
&=\begin{cases} 
\langle \low\alpha 3, S(\low h 2)g^\inv \low h 1\rangle\; (\alpha_{(4)}\oo \alpha_{(2)}\oo\low\alpha 1)_{gs(e)t(e)} & g<s(e)\\
\epsilon( h)\; (\alpha_{(3)}\oo \alpha_{(2)}\oo\low\alpha 1)_{gs(e)t(e)} & g>s(e)\\
\end{cases}\\
&=\begin{cases} 
\langle \low\alpha 3, g^\inv S^\inv(\low h 2) \low h 1\rangle\; (\alpha_{(4)}\oo \alpha_{(2)}\oo\low\alpha 1)_{gs(e)t(e)} & g<s(e)\\
\epsilon( h)\; (\alpha_{(3)}\oo \alpha_{(2)}\oo\low\alpha 1)_{gs(e)t(e)} & g>s(e)\\
\end{cases}\\
&=\epsilon(h)\, f^*_{\st(e)'}((\alpha)_{f'}),
\end{align*}
where we used the identity $g^\inv S(h)=S^\inv(h) g^\inv$ for all $h\in K$.  An analogous computation shows that this identity also holds for edge ends $g'\in E(\Gamma'_\circ)$ for which the associated edge end $g\in E(\Gamma)$ is outgoing at $\st(e)$. 
 In case (e) the identities \eqref{eq:modulehelppp} are obvious from \eqref{eq:removing}  for all vertices  and edge ends of $\Gamma'_\circ$ except for 
$v'=\st(e')$, $w'=g_V^\inv(\st(e_2))=g_V^\inv(\ta(e_1))$ and  $e'$. In this case, one has 
 \begin{align*}
&f^*_{\st(e')}((\alpha)_{s(e')})\lhd^*(h)_{\st(e_2)}   \\
&=\begin{cases} (\low\alpha 3\oo\low\alpha 2\oo\low\alpha 1)_{s(e_1)t(e_1)s(e_2)}\lhd^*(h)_{v} & s(e_2)<t(e_1)\\
\langle \low\alpha 2, g^\inv\rangle\, (\low\alpha 4\oo \low\alpha 3\oo\low\alpha 1)_{s(e_1)t(e_1)s(e_2)}\lhd^*(h)_{v} & s(e_2)>t(e_1)
\end {cases}\\
&=\begin{cases} \langle S(\low\alpha 2) \low\alpha 3,h\rangle\, (\low\alpha 5\oo\low\alpha 4\oo\low\alpha 1)_{s(e_1)t(e_1)s(e_2)} & s(e_2)<t(e_1)\\
\langle \low\alpha 3, g^\inv\rangle \langle  \low\alpha 4 S(\low\alpha 2),h\rangle\, (\low\alpha 6\oo \low\alpha 5\oo\low\alpha 1)_{s(e_1)t(e_1)s(e_2)} & s(e_2)>t(e_1)
\end {cases}\\
&=\begin{cases} \epsilon(h) (\low\alpha 4\oo\low\alpha 3\oo\low\alpha 2\oo\low\alpha 1)_{s(e_1)t(e_1)s(e_2)t(e_2)} & s(e_2)<t(e_1)\\
\langle \low\alpha 3, S(\low h 2)g^\inv \low h 1\rangle \, (\low\alpha 5\oo\low\alpha 4\oo \low\alpha 2\oo\low\alpha 1)_{s(e_1)t(e_1)s(e_2)t(e_2)} & s(e_2)>t(e_1)
\end {cases}\\
&=\begin{cases} \epsilon(h) (\low\alpha 3\oo\low\alpha 2\oo\low\alpha 1)_{s(e_1)t(e_1)s(e_2)} & s(e_2)<t(e_1)\\
\langle \low\alpha 2, g^\inv S^\inv(\low h 2)\low h 1\rangle \, (\low\alpha 4\oo \low\alpha 3\oo\low\alpha 1)_{s(e_1)t(e_1)s(e_2)} & s(e_2)>t(e_1)
\end {cases}\\
&=\epsilon(h)\, f^*_{\st(e')}((\alpha)_{s(e')}),
 \end{align*}
where we used again the identity $g^\inv S(h)=S^\inv(h) g^\inv$ for all $h\in K$. Finally, in case (f) the identities \eqref{eq:modulehelppp} are obvious from \eqref{eq:doubling} for all $v',w'\in V'$ and edge ends $e'\in E(\Gamma'_\circ)$.

2.~To check that the map $f^*_{v'}:\mathcal A'^*_\Gamma\to \bigotimes_{v\in V} \mathcal A^*_v$ is an algebra morphism, it is sufficient to show that it satisfies the relations in 
Corollary  \ref{rem:flipalg}, which each involve only two generators $(\alpha)_{e'}$, $(\alpha)_{f'}$ for edge ends $e',f'$ at $v'$. 

(a) In case (a), this follows directly from \eqref{eq:deleting} and the identities $(\epsilon\oo \id)(R)=(\id\oo \epsilon)(R)=1=S(1)$.

(b) As the map in case (b) in \eqref{eq:contractstarting} is obtained from the one for case (c) in \eqref{eq:contracttarget} by applying the involution from \eqref{eq:invol}, which is an algebra homomorphism, case (b) follows from case (c).

(c) In case (c), it is obvious that the relations are satisfied except for the case $f',g'\in E(\Gamma'_\circ)$ that are both incident at the vertex $v'=\st(e)'=g_V^\inv(\st(e))=g_V^\inv(\ta(e))$. In this case, we have   the  cases
 \begin{compactenum}[ (i)]
\item $f'=g'\in E(\Gamma'_\circ)$ with  $f>s(e)$ 
\item $f'=g'\in E(\Gamma'_\circ)$ with  $f<s(e)$ 
\item $f',g'\in E(\Gamma'_\circ)$ with $g<t(e)$ and $f>s(e)$
\item $f',g'\in E(\Gamma'_\circ)$ with $g>t(e)$ and $f>s(e)$
\item $f',g'\in E(\Gamma'_\circ)$ with $g<t(e)$ and $f<s(e)$
\item $f',g'\in E(\Gamma'_\circ)$ with $g>t(e)$ and $f<s(e)$
\item $f',g'\in E(\Gamma'_\circ)$ with $s(e)<g<f$
\item $f',g'\in E(\Gamma'_\circ)$ with $g<f<s(e)$
\item $f',g'\in E(\Gamma'_\circ)$ with $g<s(e)<f$
\end{compactenum}
For this, we assume without loss of generality that $\sigma_v(f)=0$ if $f\in E(\Gamma_\circ)\cup E(\Gamma'_\circ)$ is incoming at  $v$ and that $\sigma_v(f)=1$ if $f\in E(\Gamma_\circ)\cup E(\Gamma'_\circ)$ is outgoing at $v$.   
Moreover, we can suppose without loss of generality  that all edge ends in $\Gamma$ that are
 incident at  $\st(e)$ or $\ta(e)$ except the edge end $s(e)$ are incoming since the corresponding expressions for outgoing edge ends are obtained by reversing the orientation  with the involution from \eqref{eq:invol}. 

(i) If $f\in E(\Gamma_\circ)$ is incoming at $\st(e)$ with $f>s(e)$, then  $(\alpha)_{t(e)}$ commutes with $(\beta)_{s(e)}$ and $(\gamma)_{f}$  for all $\alpha,\beta,\gamma\in K^*$. Using the fact that $t(e)$,  and $f$ are incoming while $s(e)$ is outgoing
  one obtains  from \eqref{eq:flipalg} and   \eqref{eq:contracttarget}
 \begin{align*}
 &f^*_{\st(e)'}((\beta)_{f'})\cdot f^*_{\st(e)'}((\alpha)_{f'})=(\low\beta 1\oo\low\beta 2\oo\low\beta 3)_{t(e)s(e)f}\cdot  (\low\alpha 1\oo\low\alpha 2\oo\low\alpha 3)_{t(e)s(e)f}\\
 &=(\low\beta 1)_{t(e)}\cdot(\low\alpha 1)_{t(e)}\cdot (\low\beta 2)_{s(e)}\cdot(\low\beta 3)_{f}\cdot (\low\alpha 2)_{s(e)}\cdot(\low\alpha 3)_{f}\nonumber\\
 &=\langle S(\low \alpha 3)\oo\low\beta 3, R\rangle\;  (\low\beta 1\low\alpha 1)_{t(e)}\cdot (\low\beta 2)_{s(e)}\cdot  (\low\alpha 2)_{s(e)}\cdot (\low\beta 4)_{f}\cdot (\low\alpha 4)_{f}\nonumber\\
&=\langle S(\low \alpha 4)\oo\low\beta 4, R\rangle\langle \low\alpha 3\oo\low\beta 3, R\rangle\; (\low\beta 1\low\alpha 1)_{t(e)}\cdot (\low\beta 2\low\alpha 2)_{s(e)}\cdot (\low\beta 5)_{f}\cdot (\low\alpha 5)_{f}\\
&=(\low\beta 1\low\alpha 1)_{t(e)}\cdot (\low\beta 2\low\alpha 2)_{s(e)}\cdot (\low\beta 3)_{f}\cdot (\low\alpha 3)_{f}= f^*_{\st(e)'}((\beta)_{f'}\cdot'(\alpha)_{f'})
 \end{align*}
(ii) If $f\in E(\Gamma_\circ)$ is incoming at $\st(e)$ with $f<s(e)$, then by a similar computation one obtains  from \eqref{eq:flipalg} and   \eqref{eq:contracttarget}
 \begin{align*}
 &f^*_{\st(e)'}((\beta)_{f'})\cdot f^*_{\st(e)'}((\alpha)_{f'})\\
 &=\langle  \low\beta 3, g^\inv\rangle\langle \low\alpha 3, g^\inv\rangle\;
 (\low\beta 1\oo\low\beta 2\oo\low\beta 4)_{t(e)s(e)f}\cdot  (\low\alpha 1\oo\low\alpha 2\oo\low\alpha 4)_{t(e)s(e)f}\\
 &=\langle  \low\beta 3, g^\inv\rangle\langle \low\alpha 3, g^\inv\rangle\; 
  (\low\beta 1)_{t(e)}\cdot(\low\alpha 1)_{t(e)}\cdot (\low\beta 4)_{f}\cdot(\low\beta 2)_{s(e)}\cdot (\low\alpha 4)_{f}\cdot(\low\alpha 2)_{s(e)}\\
   &=\langle  \low\beta 4, g^\inv\rangle\langle \low\alpha 3, g^\inv\rangle\langle \low\alpha 4\oo S(\low\beta 3), R\rangle\; 
  (\low\beta 1\low\alpha 1)_{t(e)}\cdot (\low\beta 5)_{f}\cdot (\low\alpha 5)_{f}\cdot(\low\beta 2)_{s(e)}\cdot(\low\alpha 2)_{s(e)}\\
     &=\langle  \low\beta 4, g^\inv\rangle\langle \low\alpha 4, g^\inv\rangle\langle S(\low\alpha 3)\oo \low\beta 3, R\rangle\; 
  (\low\beta 1\low\alpha 1)_{t(e)}\cdot (\low\beta 5)_{f}\cdot (\low\alpha 5)_{f}\cdot(\low\beta 2)_{s(e)}\cdot(\low\alpha 2)_{s(e)}\\
       &=\langle  \low\beta 5 \low\alpha 5, g^\inv\rangle\langle S(\low\alpha 4)\oo \low\beta 4, R\rangle\langle \low\alpha 3\oo\low\beta 3, R\rangle\;
  (\low\beta 1\low\alpha 1)_{t(e)}\cdot (\low\beta 6)_{f}\cdot (\low\alpha 6)_{f}\cdot(\low\beta 2\low\alpha 2)_{s(e)}\\
         &=\langle  \low\beta 3 \low\alpha 3, g^\inv\rangle\;
  (\low\beta 1\low\alpha 1)_{t(e)}\cdot (\low\beta 4)_{f}\cdot (\low\alpha 4)_{f}\cdot(\low\beta 2\low\alpha 2)_{s(e)}\\
  &=f^*_{\st(e)'}((\beta)_{f'}\cdot'(\alpha)_{f'}),
 \end{align*} 
 where we used the identities $R(g^\inv\oo g^\inv)=(g^\inv\oo g^\inv)R$, $R\cdot\Delta(k)=\Delta^{op}(k)\cdot R$ and $g \cdot S(k)g^\inv=S^\inv(k)\cdot g$ for all $k\in K$.

\medskip
(iii) If $f,g\in E(\Gamma_\circ)$ satisfy $g<t(e)$ and $f>s(e)$, then $g'<f'$ in $\Gamma'_\circ$, and  $(\alpha)_{g}$  and $(\beta)_{t(e)}$ commute  with $(\gamma)_{s(e)}$ and $(\delta)_{f}$ for all $\alpha,\beta,\gamma,\delta\in K^*$. 
By definition of $f^*_{\st(e)'}$, we then have
\begin{align*}
 f^*_{\st(e)'}((\alpha\oo\beta)_{g'f'})=f^*_{\st(e)'}((\alpha)_{g'}\cdot'(\beta)_{f'})=f^*_{\st(e)'}((\alpha)_{g'}) \cdot f^*_{\st(e)'}((\beta)_{f'})=(\alpha)_{g}\cdot (\low\beta 1)_{t(e)} \cdot (\low\beta 2\oo\low\beta 3)_{s(e)f}.
\end{align*}
As $s(e)$ is outgoing and  $g$, $f$ and $t(e)$ are incoming,  we obtain for the opposite product
 \begin{align*}
 & f^*_{\st(e)'}((\beta)_{f'})\cdot f^*_{\st(e)'}((\alpha)_{g'})=(\low\beta 1\oo \low\beta 2\oo\low\beta 3)_{t(e)s(e)f}\cdot (\alpha)_{g}\\
 &=(\low\beta 1)_{t(e)}\cdot (\alpha)_{g} \cdot (\low\beta 2\oo\low\beta 3)_{s(e)f}
 =\langle \low\alpha 1\oo \low\beta 1, R\rangle\, (\low\alpha 1)_g\cdot (\low\beta 2)_{t(e)}\cdot (\low\beta 3\oo\low\beta 4)_{s(e)f}\\
  &=\langle \low\alpha 1\oo \low\beta 1, R\rangle\, (\low\alpha 2\oo \low\beta 2\oo\low\beta 3\oo\low\beta 4)_{gt(e)s(e)f}=\langle\low\alpha 1\oo\low\beta 1, R\rangle\; f^*_{\st(e)'}((\low\alpha 2\oo\low\beta 2)_{g'f'})\\
  &= f^*_{\st(e)'}((\beta)_{f'}\cdot'(\alpha)_{g'}).
\end{align*}
(iv)-(vi): The proofs for the cases (iv)-(vi) are analogous to the one for  (iii).

\medskip
 (vii)  For  $f,g\in E(\Gamma_\circ)$ with $s(e)<g<f$
one has $g'<f'$ in $\Gamma'_\circ$,
and $(\alpha)_{t(e)}$ commutes with $(\beta)_{g}$ and $(\gamma)_{f}$ for all $\alpha,\beta,\gamma\in K^*$. 
By definition of $f^*_{\st(e)'}$ one obtains
\begin{align*}
 & f^*_{\st(e)'}((\alpha\oo\beta)_{g'f'})=f^*_{\st(e)'}((\alpha)_{g'}\cdot'(\beta)_{f'})=f^*_{\st(e)'}((\alpha)_{g'})\cdot   f^*_{\st(e)'}((\beta)_{f'})\\
 &= (\low\alpha 1)_{t(e)}\cdot (\low\beta 1)_{t(e)}\cdot (\low\alpha 2)_{s(e)}\cdot (\low\alpha 3)_{g}\cdot (\low\beta 2)_{s(e)}\cdot (\low\beta 3)_{f}\\
&=(\low\alpha 1\low\beta 1)_{t(e)}  \cdot (\low\alpha 2\low\beta 2)_{s(e)}\cdot (\low\alpha 3)_{g}\cdot (\low\beta 3)_{f}.\qquad\qquad\qquad\qquad\qquad\qquad\qquad\qquad\qquad\qquad\qquad\qquad
\end{align*}
As $s(e)$ is outgoing and  $g$, $f$ and $t(e)$ are incoming, we obtain for the opposite product
\begin{align*}
 & f^*_{\st(e)'}((\beta)_{f'})\cdot f^*_{\st(e)'}((\alpha)_{g'})=(\low\beta 1\oo \low\beta 2\oo\low\beta 3)_{t(e)s(e)f}\cdot (\low\alpha 1\oo \low\alpha 2\oo\low\alpha 3)_{t(e)s(e)g}\\
 &=(\low\beta 1)_{t(e)}\cdot (\low\alpha 1)_{t(e)}\cdot (\low\beta 2)_{s(e)}\cdot (\low\beta 3)_{f}\cdot (\low\alpha 2)_{s(e)}\cdot (\low\alpha 3)_{g}\\
 &=\langle S(\low\alpha 3)\oo\low\beta 3, R\rangle\; (\low\beta 1\low\alpha 1)_{t(e)}  \cdot (\low\beta 2)_{s(e)} \cdot (\low\alpha 2)_{s(e)}\cdot (\low\beta 4)_{f}\cdot (\low\alpha 4)_{g}\\
  &=\langle S(\low\alpha 4)\oo\low\beta 4, R\rangle\langle \low\alpha 3\oo\low\beta 3,R\rangle\; (\low\beta 1\low\alpha 1)_{t(e)}  \cdot (\low\beta 2\low\alpha 2)_{s(e)}\cdot (\low\beta 5)_{f}\cdot (\low\alpha 5)_{g}\\
    &=(\low\beta 1\low\alpha 1)_{t(e)}  \cdot (\low\beta 2\low\alpha 2)_{s(e)}\cdot (\low\beta 3)_{f}\cdot (\low\alpha 3)_{g}\\
    &=\langle \low\alpha 3\oo\low\beta 3, R\rangle\; (\low\beta 1\low\alpha 1)_{t(e)}  \cdot (\low\beta 2\low\alpha 2)_{s(e)}\cdot (\low\alpha 4)_{g}\cdot (\low\beta 4)_{f}\\
        &=\langle \low\alpha 1\oo\low\beta 1, R\rangle\; (\low\alpha 2\low\beta 2)_{t(e)}  \cdot (\low\alpha 3\low\beta 3)_{s(e)}\cdot (\low\alpha 4)_{g}\cdot (\low\beta 4)_{f}\\
        &=\langle \low\alpha 1\oo\low\beta 1, R\rangle\; f^*_{\st(e)'}((\low\alpha 2\oo\low\beta 2)_{g'f'})=f^*_{\st(e)'}((\beta)_{f'}\cdot'(\alpha)_{g'}).\qquad\qquad\qquad\qquad\qquad\qquad\qquad\qquad
\end{align*}

\medskip
(viii) For  $f,g\in E(\Gamma_\circ)$ with $g<f<s(e)$
one has again $g'<f'$ in $\Gamma'_\circ$,
and $(\alpha)_{t(e)}$ commutes with $(\beta)_{g}$, $(\gamma)_{f}$ for all $\alpha,\beta,\gamma\in K^*$. This implies by definition of $f^*_{\st(e)'}$
\begin{align*}
 &f^*_{\st(e)'}((\alpha\oo\beta)_{g'f'})=f^*_{\st(e)'}((\alpha)_{g'}\cdot'(\beta)_{f'})=f^*_{\st(e)'}((\alpha)_{g'})\cdot  f^*_{\st(e)'}((\beta)_{f'})\\
 &=\langle \low\beta 3\low\alpha 3, g^\inv\rangle  (\low\beta 1\low\alpha 1)_{t(e)} \cdot(\low\alpha 4)_{g} \cdot(\low\beta 4)_{f}  \cdot (\low\beta 2\low\alpha 2)_{s(e)}.\qquad\qquad\qquad\qquad\qquad\qquad\qquad\qquad\qquad\qquad
\end{align*}
As $s(e)$ is outgoing, while  $g$, $f$ and $t(e)$ are incoming, the opposite product satisfies
\begin{align*}
 & f^*_{\st(e)'}((\beta)_{f'})\cdot f^*_{\st(e)'}((\alpha)_{g'})\\
 &=\langle \low\beta3,g^\inv\rangle\langle \low\alpha 3, g^\inv\rangle(\low\beta 1)_{t(e)}\cdot (\low\alpha 1)_{t(e)}\cdot  (\low\beta 4)_{f}\cdot (\low\beta 2)_{s(e)}\cdot (\low\alpha 4)_{g}\cdot  (\low\alpha 2)_{s(e)}\\
 &=\langle \low\beta3\low\alpha 4, g^\inv\rangle\langle \low\alpha 3\oo S(\low\beta 4), R\rangle\; (\low\beta 1\low\alpha 1)_{t(e)}  \cdot (\low\beta 5)_{f} \cdot (\low\alpha 5)_{g}\cdot (\low\beta 2)_{s(e)}\cdot (\low\alpha 2)_{s(e)}\\
 &=\langle \low\beta4\low\alpha 4, g^\inv\rangle\langle S(\low\alpha 3)\oo \low\beta 3, R\rangle\; (\low\beta 1\low\alpha 1)_{t(e)}  \cdot (\low\beta 5)_{f} \cdot (\low\alpha 5)_{g}\cdot (\low\beta 2)_{s(e)}\cdot (\low\alpha 2)_{s(e)}\\
  &=\langle \low\beta 5\low\alpha 5, g^\inv\rangle\langle S(\low\alpha 4)\oo \low\beta 4, R\rangle\langle \low\alpha 3\oo \low\beta 3, R\rangle \; (\low\beta 1\low\alpha 1)_{t(e)}  \cdot (\low\beta 6)_{f} \cdot (\low\alpha 6)_{g}\cdot (\low\beta 2\low\alpha 2)_{s(e)}\\
    &=\langle \low\beta 3\low\alpha 3, g^\inv\rangle (\low\beta 1\low\alpha 1)_{t(e)}  \cdot (\low\beta 4)_{f} \cdot (\low\alpha 4)_{g}\cdot (\low\beta 2\low\alpha 2)_{s(e)}\\
        &=\langle \low\beta 3\low\alpha 3, g^\inv\rangle\langle \low \alpha 4\oo\low\beta 4, R\rangle  (\low\beta 1\low\alpha 1)_{t(e)}  \cdot(\low\alpha 5)_{g} \cdot(\low\beta 5)_{f}  \cdot (\low\beta 2\low\alpha 2)_{s(e)}\\
                &=\langle \low\beta 4\low\alpha 4, g^\inv\rangle\langle \low \alpha 1\oo\low\beta 1, R\rangle  (\low\beta 2\low\alpha 2)_{t(e)}  \cdot(\low\alpha 5)_{g} \cdot(\low\beta 5)_{f}  \cdot (\low\beta 3\low\alpha 3)_{s(e)}\\
                &=\langle \low\alpha1\oo\low\beta 1, R\rangle\;  f^*_{\st(e)'}((\low\alpha 2\oo\low\beta 2)_{g'f'})=f^*_{\st(e)'}((\beta)_{f'}\cdot'(\alpha)_{g'}).\qquad\qquad\qquad\qquad\qquad\qquad\qquad\qquad\qquad\qquad\qquad\qquad\qquad\qquad
\end{align*}

(ix) For $g<s(e)<f$ one has $f'<g'$ in $\Gamma'_\circ$,
and $(\alpha)_{t(e)}$ commutes with $(\beta)_{g}$, $(\gamma)_{f}$ for all $\alpha,\beta,\gamma\in K^*$. This yields 
\begin{align*}
 &f^*_{\st(e)'}((\alpha\oo\beta)_{g'f'})=f^*_{\st(e)'}((\beta)_{f'}\cdot (\alpha)_{g'})= f^*_{\st(e)'}((\beta)_{f'})\cdot  f^*_{\st(e)'}((\alpha)_{g'})\\
 &=\langle \low\alpha 3, g^\inv\rangle(\low\beta 1)_{t(e)}\cdot (\low\alpha 1)_{t(e)}\cdot (\low\beta 2)_{s(e)}\cdot  (\low\beta 3)_{f} \cdot (\low\alpha 4)_{g}\cdot  (\low\alpha 2)_{s(e)}\\
  &=\langle \low\alpha 3, g^\inv\rangle \langle \low\alpha 1\oo\low\beta 1, R\rangle\;  (\low\alpha 2\low\beta 2)_{t(e)} \cdot (\low\alpha 4)_{g} \cdot (\low\beta 3)_{s(e)}\cdot (\low\beta 4)_{f}\cdot   (\low\alpha 3)_{s(e)}\\
  &=\langle \low\alpha 3, g^\inv\rangle \langle \low\alpha 1\oo\low\beta 1, R\rangle\langle S(\low\alpha 4)\oo \low\beta 4, R\rangle\;  (\low\alpha 2\low\beta 2)_{t(e)} \cdot (\low\alpha 5)_{g} \cdot (\low\alpha 3\low\beta 3)_{s(e)}\cdot (\low\beta 5)_{f}\\
    &=\langle \low\alpha 3, g^\inv\rangle \langle \low\alpha 1\oo\low\beta 1, R\rangle\langle S(\low\alpha 2)\oo \low\beta 2, R\rangle\;  (\low\beta 3\low\alpha 3)_{t(e)} \cdot (\low\alpha 5)_{g} \cdot (\low\beta 4\low\alpha 4)_{s(e)}\cdot (\low\beta 5)_{f}\\
        &=\langle \low\alpha 3, g^\inv\rangle  (\low\beta 1\low\alpha 1)_{t(e)} \cdot (\low\alpha 5)_{g} \cdot (\low\beta 2\low\alpha 2)_{s(e)}\cdot (\low\beta 5)_{f}.\qquad\qquad\qquad\qquad\qquad\qquad\qquad\qquad\qquad\qquad\qquad\qquad\qquad\qquad
\end{align*}
The opposite product is given by
\begin{align*}
 &f^*_{\st(e)}((\alpha)_{g'})\cdot  f^*_{\st(e)'}((\beta)_{f'})=\langle  \low\alpha 3, g^\inv\rangle (\low\alpha 1)_{t(e)}\cdot  (\low\beta 1)_{t(e)} \cdot (\low\alpha 4)_{g}\cdot  (\low\alpha 2)_{s(e)}\cdot (\low\beta 2)_{s(e)}\cdot  (\low\beta 3)_{f}\\
&= \langle  \low\alpha 3, g^\inv\rangle \langle \low\beta 1\oo\low\alpha 1, R\rangle\;(\low\beta 2\low\alpha 2)_{t(e)} \cdot (\low\alpha 4)_{g}\cdot  (\low\beta 3\low\alpha 3)_{s(e)}\cdot  (\low\beta 4)_{f}\\
  &= \langle \low\beta 1\oo\low\alpha 1, R\rangle\; f^*_{\st(e)'}((\low \alpha 2\oo\low \beta 2)_{g'f'})=f^*_{\st(e)'}((\alpha)_{g'}\cdot'(\beta)_{f'}).
\end{align*}
This proves the claim for case (v). By combining   (i)-(ix) we obtain 
 that $f^*_{\st(e)'}$  is an algebra homomorphism.

(e) In case (e)  it is clear from \eqref{eq:removing},  that $f^*_{v'}:\mathcal A^*_{v'}\to \bigotimes_{v\in V} \mathcal A^*_v$ is an algebra homomorphism if $v'\neq \st(e')$.
For $v'=\st(e')$ it is also clear that $f^*_{v'}((\alpha)_{f'}\cdot' (\beta)_{g'})=f^*_{v'}((\alpha)_{f'})\cdot f^*_{v'}((\beta)_{g'})$ if $f'\neq s(e')$ or $g'\neq s(e')$.  
It remains to consider the case $f'=g'=s(e')$. As $(\alpha)_{s(e_1)}$ comutes with $(\beta)_{t(e_1)}$ and $(\gamma)_{s(e_2)}$ in $\bigotimes_{v\in V}\mathcal A^*_v$, we obtain
if   $s(e_2)<t(e_1)$ 
\begin{align*}
&f^*_{\st(e')}((\alpha)_{\st(e')})\cdot f^*_{\st(e')}((\beta)_{s(e')})
=(\low\alpha 3\oo\low\alpha 2\oo\low\alpha 1)_{s(e_1)t(e_1)s(e_2)}\cdot (\low\beta 3\oo\low\beta 2\oo\low\alpha 1)_{s(e_1)t(e_1)s(e_2)}\\
&=(\low\alpha 3)_{s(e_1)}\cdot (\low\beta 3)_{s(e_1)}\cdot (\low\alpha 1)_{s(e_2)}\cdot (\low\alpha 2)_{t(e_1)}\cdot (\low\beta 1)_{s(e_2)}\cdot (\low\beta 2)_{t(e_1)}\\
&=\langle S(\low\beta 2)\oo\low\alpha 2, R\rangle\langle\low\beta 3\oo\low\alpha 3, R\rangle\; (\low\beta 3\low\alpha 3)_{s(e_1)}\cdot (\low\beta 1\low\alpha 1)_{s(e_2)}\cdot (\low\beta 4\low\alpha 4)_{t(e_1)}\\
&= (\low\beta 2\low\alpha 2)_{s(e_1)}\cdot (\low\beta 1\low\alpha 1)_{s(e_2)}\cdot (\low\beta 3\low\alpha 3)_{t(e_1)}
=f^*_{\st(e')}((\beta\alpha)_{s(e')})
\end{align*}
and for $s(e_2)>t(e_1)$
\begin{align*}
&f^*_{\st(e')}((\alpha)_{s(e')})\cdot f^*_{\st(e')}((\beta)_{s(e')})\\
&=\langle \low\alpha 2, g^\inv\rangle\langle\low\beta 2, g^\inv\rangle\; (\low\alpha 4\oo\low\alpha 3\oo\low\alpha 1)_{s(e_1)t(e_1)s(e_2)}\cdot (\low\beta 4\oo\low\beta 3\oo\low\alpha 1)_{s(e_1)t(e_1)s(e_2)}\\
&=\langle \low\alpha 2,g^\inv\rangle\langle\low\beta 2, g^\inv\rangle (\low\alpha 4)_{s(e_1)}\cdot (\low\beta 4)_{s(e_1)}\cdot (\low\alpha 3)_{t(e_1)}\cdot (\low\alpha 1)_{s(e_2)}\cdot (\low\beta 3)_{t(e_1)} \cdot (\low\beta 1)_{s(e_2)}\\
&=\langle \low\alpha 3, g^\inv\rangle\langle \low\beta 2, g^\inv\rangle\;   \langle \low\beta 3\oo S(\low\alpha 2), R\rangle\langle\low\beta 4\oo\low\alpha 4, R\rangle\; (\low\beta 6\low\alpha 6)_{s(e_1)}\cdot (\low\beta 5\low\alpha 5)_{t(e_1)}\cdot (\low\beta 1\low\alpha 1)_{s(e_2)}\\
&=\langle \low\alpha 2, g^\inv\rangle\langle \low\beta 2, g^\inv\rangle\;   \langle \low\beta 3\oo \low\alpha 3, R^\inv\rangle\langle\low\beta 4\oo\low\alpha 4, R\rangle\; (\low\beta 6\low\alpha 6)_{s(e_1)}\cdot (\low\beta 5\low\alpha 5)_{t(e_1)}\cdot (\low\beta 1\low\alpha 1)_{s(e_2)}\\
&=\langle \low\beta 2\low\alpha 2, g^\inv\rangle\;   (\low\beta 4\low\alpha 4)_{s(e_1)}\cdot (\low\beta 3\low\alpha 3)_{t(e_1)}\cdot (\low\beta 1\low\alpha 1)_{s(e_2)}=f^*_{\st(e')}((\beta\alpha)_{s(e')}).
\end{align*}
This proves that $f^*_{\st(e')}$ is an algebra homomorphism as well.
\end{proof}

We now proceed to step 2. 
To extend the linear maps $f^*_{v'}:  \mathcal A'^*_{v'}\to \bigotimes_{v\in V}\mathcal A^*_v$ from Definition \ref{def:edgeopmaps} to linear maps
$f^*: \bigotimes_{v'\in V'}\mathcal A'^*_{v'}\to \bigotimes_{v\in V}\mathcal A^*_v$, it is sufficient to show that elements in the images of the maps $f^*_{v'}$ and $f^*_{w'}$ for different vertices $v'\neq w'\in V'$ commute and to impose that $f^*$ agrees with $f^*_{v'}$ on the subalgebra $\mathcal A'^*_{v'}\subset \bigotimes_{v'\in V'}\mathcal A'^*_{v'}$.

\bigskip
\begin{lemma} \label{lem:commute}For distinct $v',w'\in V'$ the maps $f^*_{v'}:\mathcal A'^*_{v'}\to \bigotimes_{v\in V}\mathcal A^*_v$ from Definition \ref{def:edgeopmaps} satisfy 
\begin{align}
f^*_{v'}(\alpha)\cdot f^*_{w'}(\beta)=f^*_{w'}(\beta)\cdot f^*_{v'}(\alpha)\qquad\forall \alpha\in \mathcal A'^*_{v'},\beta\in \mathcal A'^*_{w'}.
\end{align}
\end{lemma}

\begin{proof}
By definition of the maps $f^*_{v'}$ in \eqref{eq:gdef}, it is sufficient to check that for all distinct $v',w'\in V'$, edge ends $f'$ at $v'$ and $g'$ at $w'$ and $\alpha,\beta\in K^*$
\begin{align}\label{eq:helpproddifv}
f^*_{v'}( (\alpha)_{f'})\cdot f^*_{w'}( (\beta)_{g'})=f^*_{w'}( (\beta)_{g'})\cdot f^*_{v'}( (\alpha){f'}).
\end{align}
For the graph transformations (a), (b), (c), (d) and (f) in Definition  \ref{def:edgeopmaps} this is obvious from equations \eqref{eq:deleting}, \eqref{eq:contractstarting}, \eqref{eq:contracttarget}, \eqref{eq:adding} and \eqref{eq:doubling}. It remains to check this in case (e), where it is obvious except for $v'=\st(e')$, $w'=g_V^\inv(\st(e_2))=g_V^\inv(\ta(e_1))$, $g'=s(e')$ and $f'$ incident at $g_V^\inv(\st(e_2))$.
Without loss of generality, we can suppose that $f'$ is incoming at $\st(e_2)'=g_V^\inv(\st(e_2))=g_V^\inv(\ta(e_1))$ and that $\sigma(s(e'))=\sigma(s(e_2))=1$, $\sigma(t(e_1))=0$.
As $t(e_1)$ and $s(e_2)$ are adjacent at $\st(e_2)$,  any  edge end  $f\in E(\Gamma_\circ)\setminus \{s(e_2), t(e_1)\}$ that is incident at $\st(e_2)$ satisfies either $f<s(e_2), t(e_1)$ or $f>s(e_2), t(e_1)$.
If $s(e_2)<t(e_1)$ one obtains for an incoming edge end $f\in E(\Gamma_\circ)$ at $\st(e_2)$ with
$f<s(e_2)<t(e_1)$
\begin{align*}
&
f^*_{\st(e')}((\alpha)_{s(e')})\cdot f^*_{\st(e_2)'}((\beta)_{f'})
=(\low\alpha 3\oo\low\alpha 2\oo\low\alpha 1)_{s(e_1)t(e_1)s(e_2)}\cdot (\beta)_f\\
&=(\low\alpha 3)_{s(e_1)}\cdot (\low\alpha 1)_{s(e_2)}\cdot (\low\alpha 2)_{t(e_1)}\cdot (\beta)_f\\
&=\langle \low\beta 1\oo \low\alpha 3, R\rangle\langle \low\beta 2\oo S(\low\alpha 2), R\rangle (\low\beta 3)_f\cdot (\low\alpha 5\oo\low\alpha 4\oo\low\alpha 1)_{s(e_1)t(e_1)s(e_2)}\\
&=(\beta)_f\cdot (\low\alpha 3\oo\low\alpha 2\oo\low\alpha 1)_{s(e_1)t(e_1)s(e_2)}=f^*_{\st(e_2)'}((\beta)_{f'})\cdot f^*_{\st(e')}((\alpha)_{s(e')})\qquad\qquad\qquad\qquad\qquad\qquad\qquad\qquad\qquad\qquad
\end{align*}
and for an incoming edge end $f$ at $\st(e_2)$ with
$s(e_2)<t(e_1)<f$
\begin{align*}
&f^*_{\st(e_2)'}((\beta)_{f'})\cdot f^*_{\st(e')}((\alpha)_{s(e')})\\
&= (\beta)_f\cdot (\low\alpha 3\oo\low\alpha 2\oo\low\alpha 1)_{s(e_1)t(e_1)s(e_2)}=(\beta)_f\cdot (\low\alpha 3)_{s(e_1)}\cdot (\low\alpha 1)_{s(e_2)}\cdot (\low\alpha 2)_{t(e_1)}\\
&=\langle  S(\low\alpha 2)\oo \low\beta 1, R\rangle\langle  \low\alpha 3\oo \low\beta 2, R\rangle  (\low\alpha 5\oo\low\alpha 4\oo\low\alpha 1)_{s(e_1)t(e_1)s(e_2)}\cdot (\low\beta 3)_f\\
&=(\low\alpha 3\oo\low\alpha 2\oo\low\alpha 1)_{s(e_1)t(e_1)s(e_2)}\cdot (\beta)_f= f^*_{\st(e')}((\alpha)_{s(e')})\cdot f^*_{\st(e')}((\beta)_{f'}).\qquad\qquad\qquad\qquad\qquad\qquad\qquad\qquad\qquad\qquad
\end{align*}
If $s(e_2)>t(e_1)$ one obtains for an incoming edge end $f$ at $\st(e_2)$ with
$f<t(e_1)<s(e_2)$
\begin{align*}
&f^*_{\st(e')}((\alpha)_{s(e')})\cdot f^*_{\st(e_2)'}((\beta)_{f'})\\
&=\langle \low\alpha 2, g^\inv\rangle\; (\low\alpha 4\oo\low\alpha 3\oo\low\alpha 1)_{s(e_1)t(e_1)s(e_2)}\cdot (\beta)_f\\
&=\langle \low\alpha 3, g^\inv\rangle\;  \langle \low\beta 1\oo S(\low\alpha 2), R\rangle\langle \low\beta 2\oo \low\alpha 4, R\rangle (\low\beta 3)_f\cdot (\low\alpha 6\oo\low\alpha 5\oo\low\alpha 1)_{s(e_1)t(e_1)s(e_2)}\\
&=\langle \low\alpha 2, g^\inv\rangle\;  (\beta)_f\cdot (\low\alpha 4\oo\low\alpha 3\oo\low\alpha 1)_{s(e_1)t(e_1)s(e_2)}=f^*_{\st(e_2)'}((\beta)_{f'})\cdot  f^*_{\st(e')}((\alpha)_{s(e')})\qquad\qquad\qquad\qquad\qquad\qquad\qquad\qquad\qquad\qquad\qquad\qquad
\end{align*}
and for an incoming edge end $f$ at $\st(e_2)$ with
$t(e_1)<s(e_2)<f$
\begin{align*}
&f^*_{\st(e_2)'}((\beta)_{f'})\cdot f^*_{\st(e')}((\alpha)_{s(e')})\\
&=\langle \low\alpha 2, g^\inv\rangle\;  (\beta)_f\cdot (\low\alpha 4\oo\low\alpha 3\oo\low\alpha 1)_{s(e_1)t(e_1)s(e_2)}\\
&=\langle \low\alpha 3, g^\inv\rangle\;  \langle \low\alpha 4\oo\low\beta 1, R\rangle\langle S(\low\alpha 2)\oo\low\beta 2, R\rangle (\low\alpha 6\oo\low\alpha 5\oo\low\alpha 1)_{s(e_1)t(e_1)s(e_2)}\cdot (\low\beta 3)_f\\
&=\langle \low\alpha 2, g^\inv\rangle\;  (\low\alpha 4\oo\low\alpha 3\oo\low\alpha 1)_{s(e_1)t(e_1)s(e_2)}\cdot (\beta)_f=f^*_{\st(e')}((\alpha)_{s(e')})\cdot f^*_{\st(e_2)'}((\beta)_{f'}).\qquad\qquad\qquad\qquad\qquad\qquad\qquad\qquad\qquad\qquad\qquad\qquad\qquad\qquad
\end{align*}
\end{proof}

With Lemma \ref{lem:commute} we can combine the linear maps $f^*_{v'}:  \mathcal A'^*_{v'}\to \bigotimes_{v\in V}\mathcal A^*_v$ from Definition \ref{def:edgeopmaps} into  a linear map
$f^*: \bigotimes_{v'\in V'}\mathcal A'^*_{v'}\to \bigotimes_{v\in V}\mathcal A^*_v$. It then follows directly that $f^*$ is an algebra homomorphism whenever all maps $f^*_{v'}$ are algebra morphisms. An analogous statement holds for the compatibility with the gauge transformations at the vertices. 
For this
recall  from Definition \ref{def:graph_functor} and Proposition \ref{lem:graphtrafo_vertnb} that each of the graph transformations in Definition \ref{def:graphtrafos}
is associated with a  map $g_V: V'\to V$, which are
are  inclusion maps or identity maps. Consequently, they induce injective Hopf algebra homomorphisms 
 $K\exoo{V'}\to K\exoo{V}$ and hence
a  $K\exoo{V'}$-module algebra structure on the $K\exoo{V}$-module algebra $\mathcal A^*_{\Gamma}$.  
Conversely, one obtains a $K\exoo{V}$-module structure  on $\mathcal A^*_{\Gamma'}$  by setting $\alpha\lhd'^*(h)_v=\epsilon(h)\, \alpha$ for all $h\in K$, $\alpha\in \mathcal A^*_{\Gamma'}$ and $v\in V'\setminus V$.   Combining Lemma \ref{lem:alghomord} and  \ref{lem:commute} then yields

\bigskip
\begin{corollary} \label{cor:inducedmaps}$\quad$
\begin{compactenum}
\item The maps $f^*_{v'}:\mathcal A'^*_{v'}\to \bigotimes_{v\in V} \mathcal A^*_v$ from Definition \ref{def:edgeopmaps} induce a linear map 
\begin{align*}
&f^*: \bigotimes_{v'\in V'} \mathcal A'^*_{v'} \to \bigotimes_{v\in V} \mathcal A^*_v,\quad \alpha^1\oo...\oo \alpha^{V'}\mapsto f^*_{v_1}(\alpha^1)\cdots f^*_{v'_{V'}}(\alpha^{V'})\quad\text{for}\quad \alpha^i\in \mathcal A'^*_{v_i}.
\end{align*} 

\item In cases (a), (b), (c), (e), (f) the map $f^*$ is a  homomorphism of $K^{\oo V}$-modules. 
\item In cases (a), (b), (c), (e) the map $f^*$ is an algebra homomorphism.
\end{compactenum}
\end{corollary}

We now proceed to step 3 and to show that the linear maps $f^*$ from Corollary \ref{cor:inducedmaps} induce linear maps 
$F^*:\mathcal A^*_{\Gamma'}\to \mathcal A^*_\Gamma$ via the injective homomorphisms of $K^{\oo V}$-module algebras $G^*_\Gamma$ from \eqref{eq:dualemb}.

\bigskip
\begin{theorem} \label{th:alghomfinal}Let $\Gamma,\Gamma'$ be ribbon graphs such that $\Gamma'$ is obtained from $\Gamma$ by one of the graph operations in Definition \ref{def:graphtrafos}. Then the associated  linear map 
$f^*: \oo_{v'\in V'}\mathcal A'^*_{v'}\to\oo_{v\in V}\mathcal A^*_v$
from Corollary \ref{cor:inducedmaps}  induces a unique linear map $F^*:\mathcal A^*_{\Gamma}\to\mathcal A^*_{\Gamma'}$  such that the following 
diagram commutes
\begin{align}\label{eq:diagcomm}
\xymatrix{
\mathcal A^*_{\Gamma'} \ar[d]_{G^*_{\Gamma'}} \ar[r]^{ F^*} &  \mathcal A^*_{\Gamma} \ar[d]^{G^*_{\Gamma}}\\
\oo_{v'\in V'}\mathcal A'^*_{v'}  \ar[r]_{f^*} & \oo_{v\in V}\mathcal A^*_v.}
\end{align}
The map $F^*$ is a homomorphism of $K^{\oo V}$-modules and an algebra homomorphism.
\end{theorem}

\begin{proof} 1.~To obtain a unique linear map  $F^*$ that makes the diagram \eqref{eq:diagcomm} commute, it is sufficient  to show that $f^*\circ G^*_{\Gamma'}(\mathcal A^*_{\Gamma'})\subset G^*_{\Gamma}(\mathcal A^*_{\Gamma})$, because the maps
$G^*_\Gamma, G^*_{\Gamma'}$ from \eqref{eq:dualemb} are injective.
For cases (a), (d) and (f)  this is obvious from formulas \eqref{eq:deleting}, \eqref{eq:adding},   and \eqref{eq:doubling}. 
 The claim for case (b) follows from case (c) by applying the involution   \eqref{eq:invol}. 

To prove the claim for case (c), recall that an element of $\bigotimes_{v'\in V'}\mathcal A'^*_{v'}$ is contained in image of $G^*_{\Gamma'}$ if and only if it is a product of elements of the form $(\low\alpha 1)_{t(e')}$ and $(\low\alpha 2)_{s(e')}$ for  $\alpha\in K^*$ and edges $e'\in E_{\Gamma'}$ in Sweedler notation, where the factors for each vertex $v'$ are ordered according to the cyclic ordering at $v'$. By formula \eqref{eq:contracttarget} and Corollary \ref{cor:inducedmaps},  applying the map $f^*$ to such an element leaves the contributions of the edges $e'\in E_{\Gamma'}$ for which the associated edge $e\in E_\Gamma$ is not incident at $\st(e)$ unchanged, and by Lemma \ref{lem:commute} the contributions of edge ends at different vertices of $\Gamma'$ commute. 
By Lemma \ref{lem:alghomord} - see in particular the expressions  in the proof of cases (i) to (ix) -  the map $f^*$  respects the ordering of the contributions at each vertex. It is therefore sufficient to show  that $f^*\circ G^*_{\Gamma} ((\alpha)_{h'})\in \mathrm{im}(G^*_\Gamma)$ for edges $h'\in E_{\Gamma'}$ with $h\in E_\Gamma$  incident at $\st(e)$.
For $h'\in E'$ with $\st(e)=\ta(h)\neq \st(h)$, one obtains
\begin{align*}
&f^*\circ G^*_{\Gamma'} ((\alpha)_{h'})=f^*((\low\alpha 2\oo\low\alpha 1)_{s(h')t(h')})  \\
&=\begin{cases}(\low\alpha 1\oo\low\alpha 2\oo\low\alpha 3\oo\low\alpha 4)_{t(e)s(e)t(h)s(h)} & t(h)>s(e) \\
\langle\low\alpha 3, g^\inv\rangle  (\low\alpha 1\oo\low\alpha 2\oo\low\alpha 4\oo\low\alpha 5)_{t(e)s(e)t(h)s(h)} & t(h)<s(e) \\
\end{cases}\\
&=\begin{cases}
G^*_\Gamma((\low\alpha 1\oo\low\alpha 2)_{eh}) & t(h)>s(e) \\
G^*_\Gamma(\langle \low\alpha 2, g^\inv\rangle\, (\low\alpha 1\oo \low\alpha 3)_{eh}) & t(h)<s(e),
\end{cases}\qquad\qquad\quad\qquad\qquad\qquad
\end{align*}
and for $h'\in E'$ with $\st(e)=\st(h)\neq \ta(h)$ 
\begin{align*}
&f^*\circ G^*_{\Gamma'} ((\alpha)_{h'})=f^*((\low\alpha 2\oo\low\alpha 1)_{s(h')t(h')})\\
&=\begin{cases}(S(\low\alpha 4)\oo S(\low\alpha 3)\oo\low\alpha 2\oo\low\alpha 1)_{t(e)s(e)s(h)t(h)} & s(h)>s(e) \\
\langle\low\alpha 3, g^\inv\rangle  ( S(\low\alpha 5)\oo S(\low\alpha 4)\oo\low\alpha 2\oo\low\alpha 1)_{t(e)s(e)s(h)t(h)} & s(h)<s(e) 
\end{cases}\\
&=\begin{cases}
G^*_\Gamma( (S(\low\alpha 2)\oo\low\alpha 1)_{he}) & s(h)>s(e) \\
G^*_\Gamma(\langle \low\alpha 2, g^\inv\rangle\, (S(\low\alpha 3)\oo\low\alpha 1)_{he}) & s(h)<s(e).
\end{cases}
\end{align*}
The proofs  for $h'\in E'$ with $\ta(h)=\st(h)=\st(e)$  are analogous.

In case (e), an analogous argument shows that it is sufficient to prove that 
$f^*\circ G^*_{\Gamma'}((\alpha)_{e'})\in \mathrm{im}(G^*_\Gamma)$ for all $\alpha\in K^*$. In this case,
 formula \eqref{eq:removing} and Corollary \ref{cor:inducedmaps} imply that $f^*$ does not affect the contributions of the edges $e'\in E_{\Gamma'}$  for which the associated edge $e\in E_\Gamma$ is not incident at $\st(e_2)=\ta(e_1)$.
By Lemma \ref{lem:commute} the contributions of edges at different vertices commute, which implies in particular that $f^*((\alpha)_{e'})$ commutes with $(\beta)_{h}$ for all edge ends $h$ at $\st(e_2)=\ta(e_1)$. Hence, the factors for edge ends at $\st(e_2)=\ta(e_1)$ in elements of $\mathrm{im}(f^*\circ G^*_{\Gamma'})$ are ordered according to the cyclic ordering, and it is sufficient to show that   $f^*\circ G^*_{\Gamma'}((\alpha)_{e'})\in \mathrm{im}(G^*_\Gamma)$. Equation  \eqref{eq:removing} yields:
\begin{align*}
&f^*\circ G^*_{\Gamma'} ((\alpha)_{e'})=f^*((\low\alpha 2\oo\low\alpha 1)_{s(e')t(e')})\\
&=\begin{cases}
(\low\alpha 4\oo\low\alpha 3\oo\low\alpha 2\oo\low\alpha 1)_{s(e_1)t(e_1)s(e_2)t(e_2)} & s(e_2)<t(e_1)\\
\langle \low\alpha 3, g^\inv\rangle\, (\low\alpha 5\oo\low\alpha 4\oo\low\alpha 2\oo\low\alpha 1)_{s(e_1)t(e_1)s(e_2)t(e_2)} & s(e_2)>t(e_1)
\end{cases}\\
&=\begin{cases}
G^*_\Gamma( (\low\alpha 2\oo\low\alpha 1)_{e_1e_2})   & s(e_2)<t(e_1)\\
G^*_\Gamma (\langle \low\alpha 2, g^\inv\rangle\, (\low\alpha 3\oo\low\alpha 1)_{e_1e_2}) & s(e_2)>t(e_1).
\end{cases}
\end{align*}
This proves that in all cases $\mathrm{im}(f^*\circ G^*_{\Gamma'})\subset\mathrm{im}(G^*_\Gamma)$, and by the injectivity of the maps $G^*_\Gamma$, we obtain a unique linear map $F^*:\mathcal A^*_{\Gamma'}\to \mathcal A^*_\Gamma$ that makes the diagram \eqref{eq:diagcomm} commute.

2.~We prove that $F^*$ is an algebra homomorphism and a morphism of $K^{\oo V}$ modules.
In cases (a), (b), (c), (e), the map $f^*$ is an algebra and a module morphism by Corollary \ref{cor:inducedmaps}, and the claim follows directly from
 the fact that $G^*_\Gamma$ and $G^*_{\Gamma'}$ are algebra and module morphisms. 
 
  Cases (d) and (f) have to be considered separately. 
In case (d), note that  the map $F^*$ affects only the contribution of the edge $e''\in E_{\Gamma'}$ and acts on the contribution of this edge via the counit.
That $F^*$ is an algebra homomorphism follows directly  by applying the counit of $K^*$ to the multiplication relations in Proposition  \ref{lem:algexplicit}  (b), (e), (g), (h) and (j) and using the identity $\langle\low\alpha 1\oo S(\low\beta 2), R\rangle\langle \low\alpha 2\oo\low \beta 1, R\rangle=\epsilon(\alpha)\epsilon(\beta)$.
 To show that $F^*$ is a morphism of $K^{\oo V}$-modules,  it is  sufficient to consider the contribution of the edge $e''$, for which one has
 from \eqref{eq:gtrafok} for $t(e'')<s(e'')$
\begin{align*}
&G^*_\Gamma\circ F^*((\alpha)_{e''}\lhd'^*(h)_v)=f^*\circ G^*_{\Gamma'}((\alpha)_{e''}\lhd'^*(h)_v)=f^*( G^*_{\Gamma'}((\alpha)_{e''})\lhd'^*(h)_v)\\
&=f^*((\low\alpha 2\oo\low\alpha 1)_{s(e'')t(e'')}\lhd'^*(h)_v)
=\langle \alpha_{(1)} S(\alpha_{(4)}) , h\rangle\, f^*((\alpha_{(3)}\oo\alpha_{(2)})_{s(e'')t(e'')})\\
&=\epsilon(\alpha_{(2)})\epsilon(\alpha_{(3)}) \langle \alpha_{(1)} S(\alpha_{(4)}) , h\rangle\,1\exoo{2E}=
\epsilon(h)\epsilon(\alpha)\,1^{\oo 2 E}=\epsilon(h)\,G^*_\Gamma\circ F^*((\alpha)_{e''})).
\end{align*}
The claim then follows by injectivity of $G^*_\Gamma$.

In case (f), the map $F^*$ is a morphism of $K^{\oo V}$-modules because  $f^*$ is a module morphism by Corollary \ref{cor:inducedmaps}. To show that $F^*$ is an algebra homomorphism,  note that by equation \eqref{eq:doubling} the associated map $f^*$  affects only the contributions of the edges $e'$ and $e''$ in $E_{\Gamma'}$ and satisfies
$$
f^*((\alpha)_{f'}\cdot' (\beta)_{g'})=f^*((\alpha)_{f'})\cdot f^*((\beta)_{g'})
$$
if at least one of the edge ends $f',g'\in E({\Gamma_\circ '})$ is not contained in $\{t(e'), t(e''), s(e'), s(e'')\}$. 
As $G^*_{\Gamma}$, $G^*_{\Gamma'}$  are injective algebra homomorphisms, it is therefore sufficient to show that 
$$f^*\circ G^*_{\Gamma'}((\alpha\oo\beta)_{e' e''})\cdot f^*\circ G^*_{\Gamma'}((\gamma\oo\delta)_{e'e''})=f^*\circ G^*_{\Gamma'}((\alpha\oo\beta)_{e'e''}\cdot' (\gamma\oo\delta)_{e'e''})$$
for all $\alpha,\beta,\gamma,\delta\in K^*$. If $e$ is an edge with 
 $\st(e)\neq \ta(e)$, we have $t(e')<t(e'')$ and $s(e')>s(e'')$. From equation \eqref{eq:doubling} (f)  and Figure \ref{fig:graph_ops} we then obtain
\begin{align*}
&f^*\circ G^*_{\Gamma'}((\alpha\oo\beta)_{e'e''}\cdot ' (\gamma\oo\delta)_{e'e''})\\
&= f^*((\low\alpha 2\oo\low\alpha 1\oo\low\beta 2\oo\low\beta 1)_{s(e')t(e')s(e'')t(e'')}\cdot' (\low\gamma 2\oo\low\gamma 1\oo\low\delta 2\oo\low\delta 1)_{s(e')t(e')s(e'')t(e'')}))\\
\intertext{}\\[-10ex]
&= f^*((\low\beta 2)_{s(e'')}\cdot '(\low\alpha 2)_{s(e')}\cdot' (\low\delta 2)_{s(e'')}\cdot' (\low\gamma 2)_{s(e')}\cdot' (\low\alpha 1)_{t(e')}\cdot' (\low\beta 1)_{t(e'')}\cdot' (\low\gamma 1)_{t(e')}\cdot' (\low\delta 1)_{t(e'')})\\
\intertext{}\\[-10ex]
&=\langle \low\delta 4\oo\low\alpha 4, R\rangle\langle\low\gamma 1\oo\low\beta 1, R\rangle\langle \low\gamma 2\oo\low\alpha 1, R\rangle\langle\low\delta 1\oo\low\beta 2,R\rangle
\\
&\quad  f^*((\low\delta 3\low\beta 4)_{s(e'')}\cdot '(\low\gamma 4\low\alpha 3)_{s(e')}\cdot' (\low\gamma 3\low\alpha 2)_{t(e')}\cdot'  (\low\delta 2\low\beta 3)_{t(e'')})\\
\intertext{}\\[-10ex]
&=\langle \low\delta 4\oo\low\alpha 4, R\rangle\langle\low\gamma 1\oo\low\beta 1, R\rangle\langle \low\gamma 2\oo\low\alpha 1, R\rangle\langle\low\delta 1\oo\low\beta 2,R\rangle\;
 (\low\gamma 4\low\alpha 3\low\delta 3\low\beta 4)_{s(e)}\cdot (\low\gamma 3\low\alpha 2\low\delta 2\low\beta 3)_{t(e)}\\
\intertext{}\\[-10ex]
 &=\langle \low\delta 2\oo\low\alpha 2, R\rangle\langle\low\gamma 1\oo\low\beta 1, R\rangle\langle \low\gamma 2\oo\low\alpha 1, R\rangle\langle\low\delta 1\oo\low\beta 2,R\rangle\;
 (\low\gamma 4\low\delta 4\low\alpha 4\low\beta 4)_{s(e)}\cdot (\low\gamma 3\low\delta 3\low\alpha 3\low\beta 3)_{t(e)}\\
   &=\langle\low\gamma 1\low \delta 1\oo\low\alpha 1\low\beta 1, R\rangle\;
 (\low\gamma 3\low\delta 3\low\alpha 3\low\beta 3)_{s(e)}\cdot (\low\gamma 2\low\delta 2\low\alpha 2\low\beta 2)_{t(e)}\\
 &=(\low\alpha 2\low\beta 2)_{s(e)}\cdot (\low\gamma 2\low\delta 2)_{s(e)}\cdot 
(\low\alpha 1\low\beta 1)_{t(e)}\cdot (\low\gamma 1\low\delta 1)_{t(e)}\\
&=(\low\alpha 2\low\beta 2\oo\low\alpha 1\low\beta 1)_{s(e)t(e)}\cdot (\low\gamma 2\low\delta 2\oo\low\gamma 1\low\delta 1)_{s(e)t(e)}\\
&= f^*((\low\alpha 2\oo\low\alpha 1\oo\low\beta 2\oo\low\beta 1)_{s(e')t(e')s(e'')t(e'')})\cdot  f^*((\low\gamma 2\oo\low\gamma 1\oo\low\delta 2\oo\low\delta 1)_{s(e')t(e')s(e'')t(e'')})\\
&= f^*\circ G^*_{\Gamma'}((\alpha\oo\beta)_{e'e''})\cdot  f^*\circ G^*_{\Gamma'}((\gamma\oo\delta)_{e'e''})
\end{align*}
where we used the identities $(\id\oo\Delta)(R)=R_{13}R_{12}$ und $(\Delta\oo\id)(R)=R_{13}R_{23}$. Similarly, if
 $e$ is a loop with $s(e)<t(e)$, we have $s(e'')<s(e')<t(e')<t(e'')$ and obtain 
\begin{align*}
& f^*\circ G^*_{\Gamma'}((\alpha\oo\beta)_{e'e''}\cdot ' (\gamma\oo\delta)_{e'e''})\\
&= f^*((\low\alpha 2\oo\low\alpha 1\oo\low\beta 2\oo\low\beta 1)_{s(e')t(e')s(e'')t(e'')}\cdot' (\low\gamma 2\oo\low\gamma 1\oo\low\delta 2\oo\low\delta 1)_{s(e')t(e')s(e'')t(e'')}))\\
&= f^*((\low\beta 2)_{s(e'')}\cdot '(\low\alpha 2)_{s(e')}\cdot' (\low\alpha 1)_{t(e')}\cdot' (\low\beta 1)_{t(e'')}\cdot' (\low\delta 2)_{s(e'')}\cdot' (\low\gamma 2)_{s(e')}\cdot' (\low\gamma 1)_{t(e')}\cdot' (\low\delta 1)_{t(e'')})\\
&=\langle \low\delta3\low\gamma 3\oo\low\alpha 1\low\beta 1, R\rangle\\
&\quad  f^*((\low\beta 3)_{s(e'')}\cdot '(\low\alpha 3)_{s(e')}\cdot' (\low\delta 2)_{s(e'')}\cdot' (\low\gamma 2)_{s(e')}\cdot' (\low\alpha 2)_{t(e')}\cdot' (\low\beta 2)_{t(e'')}\cdot' (\low\gamma 1)_{t(e')}\cdot' (\low\delta 1)_{t(e'')})\\
&=\langle \low\delta5\low\gamma 5\oo\low\alpha 1\low\beta 1, R\rangle \langle \low\delta 4\oo\low\alpha 5, R\rangle\langle\low\gamma 1\oo\low\beta 2, R\rangle\langle \low\gamma 2\oo\low\alpha 2, R\rangle\langle\low\delta 1\oo\low\beta 3,R\rangle
\\
&\quad f^*((\low\delta 3\low\beta 5)_{s(e'')}\cdot '(\low\gamma 4\low\alpha 4)_{s(e')}\cdot' (\low\gamma 3\low\alpha 3)_{t(e')}\cdot'  (\low\delta 2\low\beta 4)_{t(e'')})\\
&=\langle \low\delta5\low\gamma 5\oo\low\alpha 1\low\beta 1, R\rangle \langle \low\delta 4\oo\low\alpha 5, R\rangle\langle\low\gamma 1\oo\low\beta 2, R\rangle\langle \low\gamma 2\oo\low\alpha 2, R\rangle\langle\low\delta 1\oo\low\beta 3,R\rangle
\\
&\quad (\low\gamma 4\low\alpha 4\low\delta 3\low\beta 5)_{s(e)}\cdot (\low\gamma 3\low\alpha 3\low\delta 2\low\beta 4)_{t(e)}\\
&=\langle \low\delta5\low\gamma 5\oo\low\alpha 1\low\beta 1, R\rangle \langle \low\delta 2\oo\low\alpha 3, R\rangle\langle\low\gamma 1\oo\low\beta 2, R\rangle\langle \low\gamma 2\oo\low\alpha 2, R\rangle\langle\low\delta 1\oo\low\beta 3,R\rangle
\\
&\quad (\low\gamma 4\low\delta 4\low\alpha 5\low\beta 5)_{s(e)}\cdot (\low\gamma 3\low\delta 3\low\alpha 4\low\beta 4)_{t(e)}\\
&=\langle \low\delta4\low\gamma 4\oo\low\alpha 1\low\beta 1, R\rangle 
\langle \low\gamma 1\low\delta 1\oo\low\alpha 2\low\beta 2, R\rangle\; (\low\gamma 3\low\delta 3\low\alpha 4\low\beta 4)_{s(e)}\cdot (\low\gamma 2\low\delta 2\low\alpha 3\low\beta 3)_{t(e)}\\
&=\langle \low\delta3\low\gamma 3\oo\low\alpha 1\low\beta 1, R\rangle 
\; (\low\alpha 3\low\beta 3)_{s(e)}\cdot (\low\gamma 2\low\delta 2)_{s(e)}\cdot  (\low\alpha 2\low\beta 2)_{t(e)}\cdot (\low\gamma 1\low\delta 1)_{t(e)}\\
&= (\low\alpha 2\low\beta 2)_{s(e)}\cdot  (\low\alpha 1\low\beta 1)_{t(e)}\cdot (\low\gamma 2\low\delta 2)_{s(e)} \cdot (\low\gamma 1\low\delta 1)_{t(e)}\\
&= f^*((\low\alpha 2\oo\low\alpha 1\oo\low\beta 2\oo\low\beta 1)_{s(e')t(e')s(e'')t(e'')})\cdot  f^*((\low\gamma 2\oo\low\gamma 1\oo\low\delta 2\oo\low\delta 1)_{s(e')t(e')s(e'')t(e'')})\\
&= f^*\circ G^*_{\Gamma'}((\alpha\oo\beta)_{e'e''})\cdot  f^*\circ G^*_{\Gamma'}((\gamma\oo\delta)_{e'e''}).
\end{align*}
The computations for a loop with $t(e)<s(e)$ are analogous. This proves the claim for case (f).
\end{proof}

\bigskip
\begin{remark}\label{rem:subdivgrtrafo} As  $F^*:\mathcal A^*_{\Gamma'}\to \mathcal A^*_\Gamma$ is defined uniquely by  $f^*: \oo_{v\in V'}\mathcal A'^*_v\to\oo_{v\in V}\mathcal A^*_v$ via \eqref{eq:diagcomm}, it follows directly that the assignment $f^*\to F^*$ is functorial, i.e.~the following diagrams commute 
\begin{align*}
\xymatrix{
\mathcal A^*_{\Gamma''}  \ar[r]^{F'^*} \ar[d]_{G^*_{\Gamma''}} &\mathcal A^*_{\Gamma'} \ar[d]_{G^*_{\Gamma'}} \ar[r]^{ F^*} &  \mathcal A^*_{\Gamma} \ar[d]^{G^*_{\Gamma}}\\
\oo_{v\in V''}\mathcal A''^*_v \ar[r]_{f'^*} & \oo_{v\in V'}\mathcal A'^*_v  \ar[r]_{f^*} & \oo_{v\in V}\mathcal A^*_v.}\qquad
\xymatrix{
\mathcal A^*_{\Gamma}  \ar[r]^{\id} \ar[d]_{G^*_{\Gamma}} &\mathcal A^*_{\Gamma} \ar[d]_{G^*_{\Gamma}} \\
\oo_{v\in V}\mathcal A^*_v \ar[r]_{\id} & \oo_{v\in V}\mathcal A^*_v. 
}
\end{align*}
This is explored in more depth in Section \ref{subsec:algprop}.
\end{remark}

\smallskip
\begin{remark}\label{rem:graphopprops}  From  equations \eqref{eq:deleting} to \eqref{eq:doubling} in Definition \ref{def:edgeopmaps} and   Theorem \ref{th:alghomfinal} together with the fact that the maps $G^*_\Gamma, G^*_{\Gamma'}$ are injective, we also obtain:
\begin{compactenum}
\item  The maps $d^*_e, c^*_{\ta(e)}, c^*_{\st(e)}, w^*_{e_1e_2}:\oo_{v\in V'}\mathcal A'^*_v\to\oo_{v\in V}\mathcal A^*_v$
and the  maps $D^*_e, C^*_{\ta(e)}, C^*_{\st(e)}, W^*_{e_1,e_2}:\mathcal A^*_{\Gamma'}\to\mathcal A^*_\Gamma$ are injective.\\[-2ex]

\item The maps $a^*_v, do^*_e: \oo_{v\in V'}\mathcal A'^*_v\to\oo_{v\in V}\mathcal A^*_v$ and the  maps 
$A^*_v, Do^*_e: \mathcal A^*_{\Gamma'}\to\mathcal A^*_\Gamma$ are surjective.
\end{compactenum}
\end{remark}

\smallskip

\medskip
\begin{remark} The linear maps $f^*:\oo_{v\in V'}\mathcal A'^*_v\to \oo_{v\in V}\mathcal A^*_v$ from Corollary \ref{cor:inducedmaps}
  and the associated  $K\exoo{V}$-module algebra homomorphisms  $F^*:\mathcal A^*_{\Gamma'}\to \mathcal A^*_\Gamma$  induce linear maps  $f:\oo_{v\in V}K\exoo{v}\to\oo_{v\in V'} K\exoo{v}$ 
with $\langle f^*(\alpha), k\rangle=\langle \alpha, f(k)\rangle$ for all $\alpha\in {K^*}\exoo{2E'}$ and $k\in K\exoo{E}$ 
and linear maps $F: K\exoo{E}\to K\exoo{E'}$ with $\langle F^*(\alpha), k\rangle=\langle \alpha, F(k)\rangle$ for all $\alpha\in \mathcal A^*_{\Gamma'}$, $k\in K\exoo{E}$. As the 
 $K\exoo{V}$-left module structures on $K\exoo{E}$ and $K\exoo{E'}$ are dual to the $K\exoo{V}$-right module structures on $\mathcal A^*_\Gamma$ and $\mathcal A^*_{\Gamma'}$, it follows that the latter
 are module homomorphisms with respect to the $K\exoo{V}$-left module structure of $K\exoo{E}$ and $K\exoo{E'}$.
\end{remark}

Explicit expressions for the algebra homomorphisms $F^*:\mathcal A^*_{\Gamma'}\to\mathcal A^*_\Gamma$ from Theorem \ref{th:alghomfinal} can be derived  from 
Definition \ref{def:edgeopmaps},
 but these expressions depend on the edge orientations, and one has to distinguish the cases  $\st(g)=\ta(g)$ and $\st(g)\neq \ta(g)$ for all 
edges $g$ at the relevant vertices.  This requires a   case by case analysis and will not be considered here.
 However, it is instructive to consider two examples.

\bigskip
\begin{example}[Contracting a bivalent vertex]  \label{ex:bivalent_cont}
 Let $e_1,e_2\in E$ be edges that share a single bivalent vertex  $v=\st(e_2)=\ta(e_1)$. Let $\Gamma'$ be the ciliated ribbon graph obtained by contracting $e_2$ 
 towards $\ta(e_2)$ or $e_1$ towards $\st(e_1)$ and denote  by  $e'$ the edge of $\Gamma'$ corresponding to the edge $e_1$ or $e_2$ that is not contracted.
If the cilium at $v$  points to the right, viewed in the direction of  $e_1$ and $e_2$, then  we have $s(e_2)>t(e_1)$ and
  the contraction of $e_2$ towards $t(e_2)$
is given  by
  \begin{align}
&c_{\ta(e_2)}: (k)_f\to (k)_{f}, & & (k)_{s(e_1)}\mapsto (k)_{s(e')}, & &(k\oo k'\oo k'')_{t(e_2)s(e_2)t(e_1)}\mapsto (kk'k'')_{t(e')}\\
&c^*_{\ta(e_2)}:  (\alpha)_{g}\to (\alpha)_{g},   & &  (\alpha)_{s(e')}\mapsto (\alpha)_{s(e_1)}, & & (\alpha)_{t(e')}\mapsto (\low\alpha 1\oo\low\alpha 2\oo\low\alpha 3)_{t(e_2)s(e_2)t(e_1)}.\nonumber
 \end{align}
If the cilium at $v$ points to the left, viewed in the direction of  $e_1$ and $e_2$,  then we have $s(e_2)<t(e_1)$ and 
 the contraction of  $e_1$ towards $\st(e_1)$ acts  on  the vertex neighbourhoods of $\Gamma$ and $\Gamma'$ by 
 \begin{align}
&c_{\st(e_1)}: (k)_f\to (k)_{f}, & & (k)_{t(e_2)}\mapsto (k)_{t(e')}, & &(k\oo k'\oo k'')_{s(e_1)t(e_1)s(e_2)}\mapsto (kk'k'')_{s(e')}\\
&c^*_{\st(e_1)}:  (\alpha)_{g}\to (\alpha)_{g}, & & (\alpha)_{t(e')}\mapsto (\alpha)_{t(e_2)} &  &(\alpha)_{s(e')}\mapsto (\low\alpha 1\oo\low\alpha 2\oo\low\alpha 3)_{s(e_2)t(e_1)s(e_1)}.\nonumber
 \end{align}
  for all 
  $f\in E(\Gamma_\circ)\setminus\{s(e_1), t(e_1), s(e_2), t(e_2)\}$ and $g\in E(\Gamma'_\circ)\setminus\{s(e'), t(e')\}$.
In both cases, the associated linear map $\mathcal A^*_{\Gamma'}\to\mathcal A^*_{\Gamma}$ and its dual are  given by 
\begin{align}\label{eq:bivcont}
& C^*_{\ta(e_2)}=C^*_{\st(e_1)}: (\alpha)_{e'}\mapsto (\low\alpha 2\oo \low\alpha 1)_{e_1e_2} & &(\alpha)_{g'}\mapsto (\alpha)_g\;\forall g'\in E'\setminus\{e'\}.\\
&C_{\ta(e_2)}=C_{\st(e_1)}: (k\oo k')_{e_1e_2}\mapsto (k'k)_{e'} & &(k)_g\mapsto (k)_{g'} \;\forall g\in E'\setminus\{e_1,e_2\}.\nonumber
\end{align}
 \end{example}

 If we consider the edge subdivision $\Gamma_\circ$ of $\Gamma$,  we can perform an edge contraction as in Example \ref{ex:bivalent_cont} for each bivalent vertex $v\in V(\Gamma_\circ)\setminus V(\Gamma)$. If the cilium at the bivalent vertex $v$ points to the right, we contract the outgoing edge at $v$ towards its target vertex in $V(\Gamma)$,  otherwise the incoming edge at $v$ towards its starting vertex in $V(\Gamma)$.   It is then clear from Definition \ref{def:edgeopmaps}, Theorem \ref{th:alghomfinal}  and expression \eqref{eq:bivcont} that the resulting linear map $C^*:\mathcal A^*_{\Gamma_\circ}\to\mathcal A^*_\Gamma$ does not depend on the order in which these edge contractions  are performed and coincides with the map $G^*$  in \eqref{eq:dualemb}.

\bigskip
\begin{corollary}  \label{cor:gstar} Let $\Gamma$ be a ciliated ribbon graph and $\Gamma_\circ$ its edge subdivision.  Then  map $C^*:\mathcal A^*_\Gamma\to \mathcal A^*_{\Gamma_\circ}$ obtained by contracting  at each bivalent vertex $v\in V(\Gamma_\circ)\setminus V(\Gamma)$  the outgoing (incoming) end towards its target (starting) vertex if the cilium at $v$ points to the right (left) is the  map $G^*$ in \eqref{eq:dualemb}.
\end{corollary}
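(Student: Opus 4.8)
The plan is to realise $C^*$ as a composite of the single bivalent-vertex contraction maps already computed in Example \ref{ex:bivalent_cont}, one for each midpoint of $\Gamma_\circ$, and then simply to read off that this composite is the factorwise map $G^*=\oo_{e\in E}G^*_e$ of \eqref{eq:dualemb}. First I would observe that the prescription in the statement is precisely an instance-by-instance application of Example \ref{ex:bivalent_cont}. Each midpoint $m(e)\in V(\Gamma_\circ)\setminus V(\Gamma)$ is a bivalent vertex whose two incident edge ends belong to the halves $s(e),t(e)\in E(\Gamma_\circ)$ of the original edge $e$; contracting the outgoing end towards its target when the cilium points right, or the incoming end towards its starting vertex when it points left, is exactly the choice made in the two cases of Example \ref{ex:bivalent_cont} with $e_1=s(e)$, $e_2=t(e)$ and $e'=e$. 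Contracting all the $m(e)$ returns $\Gamma$, and by functoriality of the assignment $f^*\mapsto F^*$ (the remark following Proposition \ref{lem:algextend0}) the induced map $C^*\colon\mathcal A^*_\Gamma\to\mathcal A^*_{\Gamma_\circ}$ is the composite of the individual contraction maps.

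The key computation is then already contained in \eqref{eq:bivcont}: in both cilium cases the single contraction at $m(e)$ acts on functions by
\[
(\alpha)_e\mapsto (\low\alpha 2\oo\low\alpha 1)_{s(e)t(e)},\qquad (\alpha)_{g}\mapsto(\alpha)_{g}\ \ (g\neq e),
\]
so the formula is independent of the direction of the cilium at $m(e)$ and it touches only the pair of tensor factors indexed by $s(e),t(e)$. Consequently the contractions at distinct midpoints $m(e),m(f)$ have disjoint support; by the order-independence of iterated edge contractions recorded after Definition \ref{def:functor_subdivision} they may be performed in any order, and Theorem \ref{lem:algextend} guarantees that each is an algebra and module homomorphism, so their composite is well defined and order-independent. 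Composing over all $e\in E$ therefore sends every $(\alpha)_e$ to $(\low\alpha 2\oo\low\alpha 1)_{s(e)t(e)}$, which is exactly $\oo_{e\in E}G^*_e=G^*$.

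The only point requiring care is the bookkeeping that the separate single-edge factor maps assemble into the tensor product $\oo_{e\in E}G^*_e$ rather than interfering with one another; this is where the disjointness of supports and the order-independence do all the work, and it is the single place where one must genuinely invoke that the contractions at different midpoints commute. Everything else is a direct transcription of Example \ref{ex:bivalent_cont} together with \eqref{eq:bivcont}, so the corollary follows at once.
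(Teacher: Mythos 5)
Your proposal is correct and follows essentially the same route as the paper: the paper's own justification (the paragraph preceding the corollary) likewise applies Example \ref{ex:bivalent_cont} at each midpoint, invokes Theorem \ref{lem:algextend} and the order-independence of the contractions, and reads off from \eqref{eq:bivcont} that each $(\alpha)_e$ is sent to $(\low\alpha 2\oo\low\alpha 1)_{s(e)t(e)}$, i.e.~to $G^*_e(\alpha)$. Your added remark that the disjointness of the supports at distinct midpoints is what makes the factorwise assembly into $\oo_{e\in E}G^*_e$ unambiguous is a useful explicit articulation of a point the paper leaves implicit, but it is not a different argument.
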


\begin{figure}
\centering
\includegraphics[scale=0.35]{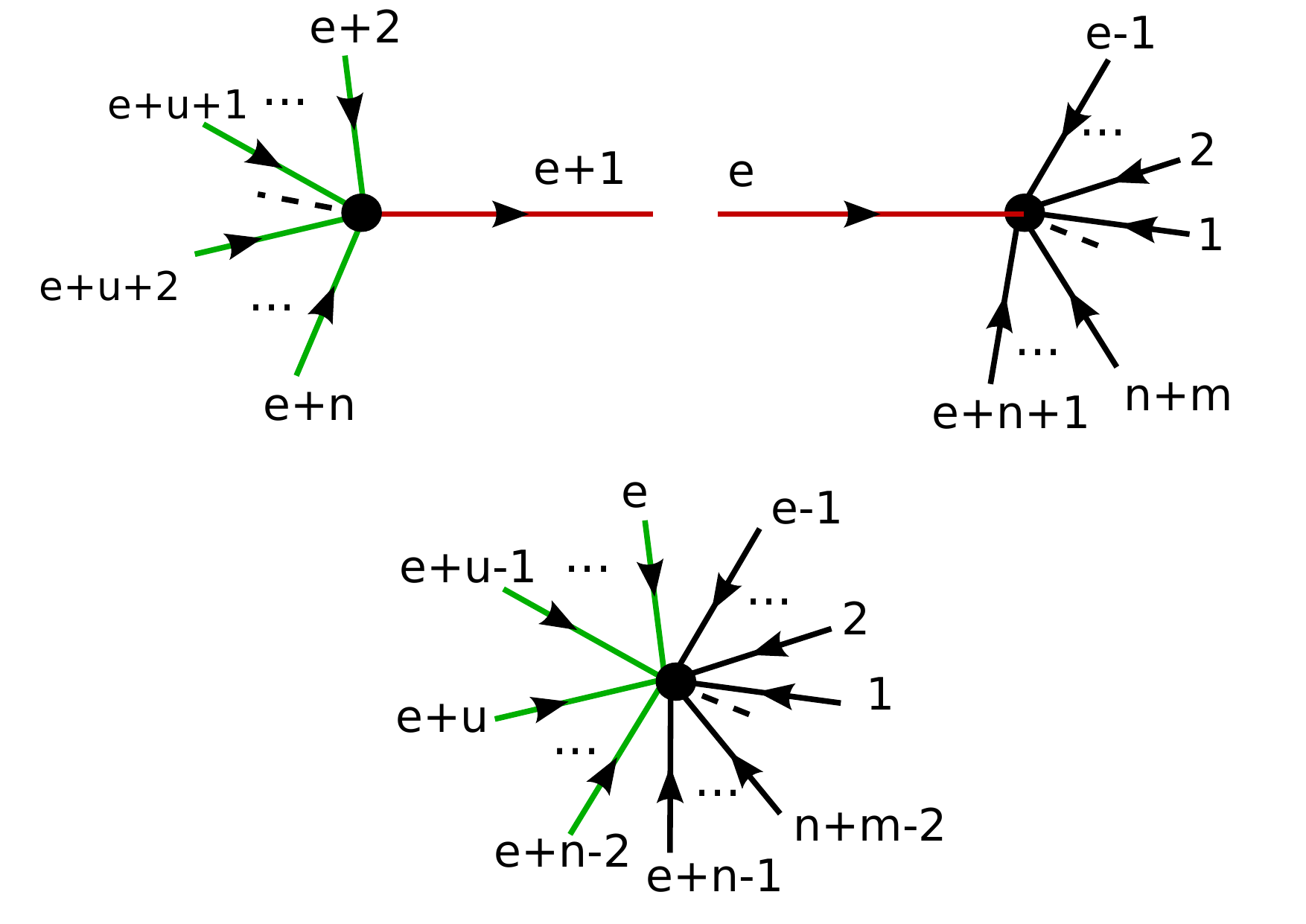}
\caption{Contracting an edge $e$ towards the target  vertex. Action of the graph operation on the vertex neighbourhoods $\Gamma_{\st(e)},\Gamma_{\ta(e)}$.}
\label{fig:edge_contracting2}
\end{figure}

\begin{example}[Contracting an edge towards the target vertex] \label{ex:contracting}

Let $e\in E$ be an edge with $\st(e)\neq \ta(e)$.  Suppose that $|\st(e)|=n$, $|\ta(e)|=m$, that all other edge ends  at $\st(e)$ and $\ta(e)$  are  incoming  and that  they  are  ordered as in Figure \ref{fig:edge_contracting2}, where the numbers  indicate the copy of $K^*$ in the tensor products $K^{*\oo (n+m)}$ and $K^{*\oo(n+m-2)}$. 
Then the restriction to $\mathcal A'^*_{\ta(e)}$ of the algebra homomorphism  $c^*_{\ta(e)}$  from Definition \ref{def:edgeopmaps}  (c) is the linear map $c^*_{\ta(e)}: K^{*\oo(n+m-2)}\to K^{*\oo(n+m)}$ given by
\begin{align}
c_{\ta(e)}^*:\;&\alpha^1\oo... \oo\alpha^{n+m-2}\mapsto \langle g^\inv, \alpha^{e+u}_{(3)}\cdots\alpha^{e+n-2}_{(3)}\rangle\;\alpha^1\oo ... \oo\alpha^{e-1}\oo
\alpha^{e}_{(1)}\cdots  \alpha^{e+n-2}_{(1)}\oo \nonumber\\
& \oo\alpha^{e}_{(2)}\cdots  \alpha^{e+n-2}_{(2)}\oo\alpha^e_{(3)}\oo...\oo\alpha^{e+u-1}_{(3)}\oo\alpha^{e+u}_{(4)}\oo... \oo \alpha^{e+n-2}_{(4)}\oo \alpha^{e+n-1}\oo...\oo \alpha^{n+m-2}.\nonumber
\end{align}
The dual map $c_{\ta(e)}: K^{\oo (n+m)}\to K^{\oo(n+m-2)}$ is given by 
\begin{align}
c_{\ta(e)}:\,&k^1\oo...\oo k^{n+m}\mapsto k^1\oo...\oo k^{e-1}\oo k^e_{(1)}k^{e+1}_{(1)}  k^{e+2}\oo...\oo k^e_{(u)}k^{e+1}_{(u)} k^{e+u+1}\oo\nonumber\\
&\oo  k^e_{(u+1)}k^{e+1}_{(u+1)} g^\inv k^{e+u+2} \oo ... \oo k^e_{(n)}k^{e+1}_{(n)}g^\inv k^{e+n}\oo k^{e+n+1} ...\oo k^{n+m}.\nonumber\qquad\qquad\qquad\quad
\end{align}
\end{example}

With  the explicit expression for the contraction map in Example \ref{ex:contracting} we can show that the injective algebra homomorphisms $C^*_{\ta(e)}:\mathcal A^*_{\Gamma'}
\to\mathcal A^*_\Gamma$, $C^*_{\ta(e)}:\mathcal A^*_{\Gamma'}
\to\mathcal A^*_\Gamma$ from Theorem \ref{th:alghomfinal} restrict to algebra isomorphisms between the algebras of gauge invariant functions for the ribbon graphs $\Gamma'$ and $\Gamma$.

\begin{theorem} \label{th:graphops} Let $\Gamma'$ be obtained from $\Gamma$ by contracting an edge towards its starting or target vertex. Then the associated $K\exoo{V}$-module algebra homomorphism $ C^*_{\st(e)}$ or $C^*_{\ta(e)}$ from Definition \ref{def:edgeopmaps} and Theorem \ref{th:alghomfinal} induces an isomorphism 
 $\mathcal A^*_{\Gamma'\;inv}\xrightarrow{\sim} \mathcal A^*_{\Gamma\; inv}$.  
\end{theorem}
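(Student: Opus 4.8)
The plan is to reduce the theorem to a single statement about the contraction map and the gauge transformation at the vertex that disappears under the contraction. Write $w=\st(e)$ and suppose first that $\Gamma'$ arises by contracting $e$ towards $\ta(e)$, so that $w$ is deleted and $V'=V\setminus\{w\}$; the case of contraction towards $\st(e)$ will follow at the end, since $C^*_{\st(e)}$ and $C^*_{\ta(e)}$ differ only by an orientation reversal of $e$ (implemented by the involution $T^*$ of \eqref{eq:invol}, an isomorphism of module algebras by Corollary \ref{rem:flipalg}, which therefore carries observables to observables). By Theorem \ref{lem:algextend} and Remark \ref{rem:graphopprops} I may take as given that $C^*_{\ta(e)}\colon\mathcal A^*_{\Gamma'}\to\mathcal A^*_\Gamma$ is an injective algebra homomorphism and a homomorphism of $K\exoo{V}$-modules, where $K\exoo{V'}$ acts on $\mathcal A^*_{\Gamma'}$ in the evident way while the remaining tensor factor $K_w$ acts through the counit.

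The reformulation I would aim for is the claim that $C^*_{\ta(e)}$ is a linear \emph{bijection} onto the subspace $M:=\{\alpha\in\mathcal A^*_\Gamma:\alpha\lhd^*(h)_w=\ee(h)\,\alpha\ \text{for all }h\in K\}$ of functions invariant under the gauge transformation at $w$. Granting the claim, the theorem drops out: the module-homomorphism property at $w$ already yields $\mathrm{Im}(C^*_{\ta(e)})\subseteq M$, and since $K\exoo{V}=K\exoo{V'}\oo K_w$ as a Hopf algebra, $K\exoo{V}$-invariance is exactly simultaneous invariance under $K\exoo{V'}$ and under $K_w$. As $C^*_{\ta(e)}$ intertwines the $K\exoo{V'}$-actions, restricting the bijection $C^*_{\ta(e)}\colon\mathcal A^*_{\Gamma'}\xrightarrow{\sim}M$ to $K\exoo{V'}$-invariants and using $\mathcal A^*_{\Gamma\,inv}=M\cap(\text{$K\exoo{V'}$-invariants of }\mathcal A^*_\Gamma)$ identifies $C^*_{\ta(e)}(\mathcal A^*_{\Gamma'\,inv})$ with $\mathcal A^*_{\Gamma\,inv}$. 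Because $C^*_{\ta(e)}$ is an injective algebra homomorphism and the invariant subspaces are subalgebras (Corollary \ref{lem:ginvproj}), this restriction is the desired algebra isomorphism.

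It remains to prove the claim, and I expect this to be the \textbf{main obstacle}. One inclusion, $\mathrm{Im}(C^*_{\ta(e)})\subseteq M$, is already secured above. For the reverse inclusion together with injectivity I would exploit the explicit description of the contraction map in Example \ref{ex:contracting}: on connections $C_{\ta(e)}$ absorbs the holonomy along $e$ into the neighbouring edge ends, and the natural section that assigns the unit $1\in K$ to $e$—i.e.\ gauge-fixes the holonomy of $e$ to be trivial—dualises to a linear retraction $P\colon\mathcal A^*_\Gamma\to\mathcal A^*_{\Gamma'}$ with $P\circ C^*_{\ta(e)}=\id$. The crux is then to verify $C^*_{\ta(e)}\circ P=\id$ on $M$, i.e.\ that a $w$-invariant function is fully recovered from its restriction to the slice where the holonomy of $e$ is trivial. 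This rests on the \emph{freeness} of the $w$-action along the edge $e$: by \eqref{eq:gkact} the gauge transformation at $w$ acts on the copy $K^*_e$ attached to $e$ through a regular action of $K$ on $K^*$, whose augmentation-invariants form a one-dimensional space (spanned by the counit), so that on $M$ the $K^*_e$-coordinate is rigidly determined by the remaining data. Making this precise requires disentangling the $w$-action—which the comultiplication spreads over all edge ends at $w$—into a pure regular action on $K^*_e$ after a change of variables, a finite computation in Sweedler notation using the formulas of Example \ref{ex:contracting}. I note that this route relies only on the structure of the regular module and so needs no semisimplicity assumption on $K$; when $K$ happens to be semisimple one may argue more cheaply by comparing dimensions, using the projector of Corollary \ref{lem:ginvproj} together with the module-dimension count underlying Lemma \ref{lem:1proj} and Example \ref{ex:moduledim}.
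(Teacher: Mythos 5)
Your proposal is correct and follows essentially the same route as the paper: both reduce to $C^*_{\ta(e)}$ via the involution $T^*$, identify the image of $C^*_{\ta(e)}$ with the subspace of functions invariant under the gauge transformation at the vertex that disappears, and establish the hard inclusion by showing that such an invariant element can be reconstructed from its counit slice along $e$ --- which is precisely the Sweedler computation in the paper's proof (apply $\epsilon$ to the invariance identity at $\st(e)$ and solve for the $\beta^i$). The only wrinkle your section--retraction packaging hides is that the naive counit retraction satisfies $P\circ C^*_{\ta(e)}=\id$ only up to a pairing with the grouplike element $g$ (visible in Example \ref{ex:contracting}), so the section must carry a $g$-correction in the non-semisimple case; this is exactly the ``finite computation'' you defer, and it is the computation the paper carries out.
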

\begin{proof} 
As $C^*_{\st(e)}$ is obtained from $C^*_{\ta(e)}$ by reversing the orientation of $e$ with the involution \eqref{eq:invol},  it   is  sufficient to prove this for $C^*_{\ta(e)}$. 
As all outgoing edge ends except $s(e)$ at the starting vertex  $\st(e)$  can also be reversed by applying the involution \eqref{eq:invol} it is sufficient to consider the case where all  other edge ends at $\st(e)$ are incoming.  This allows us to restrict attention to  the edge configuration from Example \ref{ex:contracting} and Figure \ref{fig:edge_contracting2}.  
We show that every element  $\theta \in K^{*\oo(n+m)}\cap G^*_\Gamma(\mathcal A^*_\Gamma)$ that is invariant under gauge transformations at $\st(e)$ is in the image of $c^*_{\st(e)}$. For the vertex neighbourhood in Figure \ref{fig:edge_contracting2}, one finds that any such element
 $\theta$ is of the form 
$$\theta=\Sigma_i \; \gamma^i\oo \beta^{i}_{(1)}\oo \beta^i_{(2)}\oo \alpha^{e+2,i}\oo ...\oo\alpha^{e+u+1,i}\oo\alpha^{e+u+2,i}\oo...\oo\alpha^{e+n,i}\oo \delta^i,$$ with $\beta^i,\alpha^{e+2,i},...,\alpha^{e+n,i}\in K^*$, $\gamma^i\in K^{*\oo (e-1)}$ and $\delta^i\in K^{*\oo (m-e)}$. 
Gauge invariance of $\theta$ under gauge transformations at $\st(e)$ implies for all $h\in K$
\begin{align*}
&\epsilon(h)\,\theta=\epsilon(h)\;\Sigma_i \; \gamma^i\oo \beta^{i}_{(1)}\oo \beta^i_{(2)}\oo \alpha^{e+2,i}\oo ....\oo\alpha^{e+n,i}\oo \delta^i=\theta\lhd^*(h)_{\st(e)}\\
&=\Sigma_i\; \langle \alpha^{e+u+2,i}_{(1)}\cdots \alpha^{e+n,i}_{(1)}S(\beta^i_{(3)})\alpha^{e+2,i}_{(1)}\cdots\alpha^{e+u,i}_{(1)}, h\rangle\; \gamma^i\oo \beta^{i}_{(1)}\oo \beta^i_{(2)}\oo \alpha^{e+2,i}_{(2)}\oo ...\oo\alpha^{e+n,i}_{(2)}\oo \delta^i.
\end{align*}
 This implies by duality 
\begin{align*}
&1\oo\theta=\Sigma_i\; 1\oo\gamma^i\oo \beta^{i}_{(1)}\oo \beta^i_{(2)}\oo \alpha^{e+2,i}\oo ...\oo\alpha^{e+u+1,i}\oo\alpha^{e+u+2,i}\oo...\oo\alpha^{e+n,i}\oo \delta^i\\
&=\Sigma_i\;  \alpha^{e+u+2,i}_{(1)}\cdots \alpha^{e+n,i}_{(1)}S(\beta^i_{(3)})\alpha^{e+2,i}_{(1)}\cdots\alpha^{e+u,i}_{(1)}\oo \gamma^i\oo \beta^{i}_{(1)}\oo \beta^i_{(2)}\oo \alpha^{e+2,i}_{(2)}\oo ...\oo\alpha^{e+n,i}_{(2)}\oo \delta^i.
\end{align*}
By applying the comultiplication to the tensor factors between $\beta^i_{(2)}$ and $\delta^i$
 in the last equation, then applying the antipode or its inverse to them and multiplying the resulting factors from the left and  right to the first factor in the tensor product, one then has
 \begin{align*}
&\Sigma_i\; S^\inv(\alpha^{e+n,i}_{(1)})\cdots S^\inv(\alpha^{e+u+2,i}_{(1)}) S(\alpha^{e+u,i}_{(1)})\cdots S(\alpha^{e+2,i}_{(1)})\oo\gamma^i\oo \beta^{i}_{(1)}\oo \beta^i_{(2)}\oo \alpha^{e+2,i}_{(2)}\oo ...\oo\alpha^{e+n,i}_{(2)}\oo \delta^i\\
&=\Sigma_i\; S(\beta^i_{(3)})\oo \gamma^i\oo \beta^{i}_{(1)}\oo \beta^i_{(2)}\oo \alpha^{e+2,i}\oo ...\oo\alpha^{e+n,i}\oo \delta^i.
\end{align*}
Eliminating the tensor factors $\beta^i_{(1)}$ and $\beta^i_{(2)}$ with the counit
and applying  the inverse of the antipode to the first factor in the tensor product  on both sides yields
 \begin{align*}
 &\Sigma_i\; \beta^i\oo \gamma^i\oo  \alpha^{e+2,i}\oo ...\oo\alpha^{e+n,i}\oo \delta^i\\
=&\Sigma_i\; \epsilon(\beta^i)  \alpha^{e+2,i}_{(1)}\cdots \alpha^{e+u,i}_{(1)} S^{-2}(\alpha^{e+u+2,i}_{(1)})\cdots S^{-2}(\alpha^{e+n,i}_{(1)})
\oo\gamma^i\oo  \alpha^{e+2,i}_{(2)}\oo ...\oo\alpha^{e+n,i}_{(2)}\oo \delta^i\\
=& \Sigma_i\; \epsilon(\beta^i)  \langle g, \alpha^{e+u+2,i}_{(1)}\cdots \alpha^{e+n,i}_{(1)}\rangle\langle g^\inv, \alpha^{e+u+2,i}_{(3)}\cdots \alpha^{e+n,i}_{(3)}\rangle\\
&\quad\alpha^{e+2,i}_{(1)}\cdots \alpha^{e+u,i}_{(1)} \alpha^{e+u+2,i}_{(2)}\cdots \alpha^{e+n,i}_{(2)}\oo \gamma^i\oo  \alpha^{e+2,i}_{(2)}\oo ...\oo \alpha^{e+u,i}_{(2)}\oo\alpha^{e+u+2,i}_{(4)}...\oo\alpha^{e+n,i}_{(4)}\oo \delta^i
\end{align*}
Applying the comultiplication to the first factor in the tensor product and permuting the factors in the tensor product then yields
\begin{align*}
\theta=&\Sigma_i\; \gamma^i\oo\beta^i_{(1)}\oo\beta^i_{(2)}\oo \alpha^{e+2,i}\oo\ldots\oo \alpha^{e+n,i}\oo\delta^i\\
=&\Sigma_i\;\epsilon(\beta^i)\;  \langle g, \alpha^{e+u+2,i}_{(1)}\cdots \alpha^{e+n,i}_{(1)}\rangle\langle g^\inv, \alpha^{e+u+2,i}_{(4)}\cdots \alpha^{e+n,i}_{(4)}\rangle\;\gamma^i\oo \alpha^{e+2,i}_{(1)}\cdots \alpha^{e+u,i}_{(1)} \alpha^{e+u+2,i}_{(2)}\cdots \alpha^{e+n,i}_{(2)}\\
&\oo  \alpha^{e+2,i}_{(2)}\cdots \alpha^{e+u,i}_{(2)} \alpha^{e+u+2,i}_{(3)}\cdots \alpha^{e+n,i}_{(3)}
\oo  \alpha^{e+2,i}_{(3)}\oo ...\oo \alpha^{e+u,i}_{(3)}\oo \alpha^{e+u+2,i}_{(5)}\oo...\oo\alpha^{e+n,i}_{(5)}\oo \delta^i\\
=&c^*_{\ta(e)}(\Sigma_i \; \epsilon(\beta^i)\langle \alpha^{e+u+2,i}_{(1)}\cdots \alpha^{e+n,i}_{(1)}, g \rangle\; \gamma^i\oo \alpha^{e+2,i}\oo\cdots\alpha^{e+u,i} \alpha^{e+u+2,i}_{(2)}\cdots \alpha^{e+n,i}_{(2)}\oo\delta^i)\\[-8ex]
\end{align*}
\end{proof}

Theorem \ref{th:graphops} has important implications for  topological invariance, i.e.~how the algebra of observables $\mathcal A^*_{\Gamma\, inv}$ of a Hopf algebra gauge theory depends on the choice of the ciliated ribbon graph $\Gamma$.

\begin{corollary}\label{cor:edgesubdiv} Let $K$ be a  finite-dimensional ribbon Hopf algebra. Then the  associated $K$-valued local Hopf algebra gauge theories have the following properties:
\begin{compactenum}
\item  For each ribbon graph $\Gamma$, the
 algebras $\mathcal A^*_{\Gamma\, inv}$ and $\mathcal A^*_{\Gamma_\circ\, inv}$ are isomorphic.  
\item   For each ribbon graph $\Gamma$, there is a  ribbon graph $\Gamma'$ without loops or multiple edges such that $\mathcal A^*_{\Gamma'\,inv}\cong\mathcal A^*_{\Gamma\,inv}$.

\item For a connected  ribbon graph $\Gamma$,  there is  ribbon graph $\Gamma'$ with a single vertex such that  $\mathcal A^*_{\Gamma'\,inv}\cong\mathcal A^*_{\Gamma\,inv}$.
\item Let   $\Gamma$, $\Gamma'$  be ribbon graphs  and $\Sigma$, $\Sigma'$  the surfaces obtained by gluing annuli to each face of $\Gamma$ or $\Gamma'$. If $\Sigma'$ and $\Sigma'$ are homeomorphic, then $\mathcal A^*_{\Gamma\,inv}\cong\mathcal A^*_{\Gamma'\,inv}$.
\end{compactenum}
\end{corollary}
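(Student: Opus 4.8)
The plan is to deduce all four statements from Theorem~\ref{th:graphops}, which asserts that contracting a non-loop edge induces an isomorphism $\mathcal A^*_{\Gamma'\,inv}\xrightarrow{\sim}\mathcal A^*_{\Gamma\,inv}$, together with the purely combinatorial facts about edge contractions assembled in §\ref{subsec:graphopsribbon}. Since each contraction isomorphism is invertible, any finite zig-zag of edge contractions and their inverses (the vertex splittings that are their left inverses, cf.\ Definition~\ref{def:graphtrafos}) connecting two ciliated ribbon graphs composes to an isomorphism of their algebras of observables. Thus every statement reduces to exhibiting a suitable sequence of contractions, after which the algebraic content is already supplied by Theorem~\ref{th:graphops}.

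For (1) I would invoke Corollary~\ref{cor:gstar}: the map $G^*\colon\mathcal A^*_\Gamma\to\mathcal A^*_{\Gamma_\circ}$ of \eqref{eq:dualemb} is precisely the composite of the contraction maps collapsing, at each bivalent midpoint of $\Gamma_\circ$, one of the two ends of the corresponding edge of $\Gamma$. Applying Theorem~\ref{th:graphops} to each of these contractions and composing gives $\mathcal A^*_{\Gamma\,inv}\cong\mathcal A^*_{\Gamma_\circ\,inv}$. For (2) recall that the second subdivision $\Gamma_{\circ\circ}$ has neither loops nor multiple edges and that $\Gamma$ is recovered from $\Gamma_{\circ\circ}$ by finitely many edge contractions (§\ref{subsec:graphopsribbon}); taking $\Gamma'=\Gamma_{\circ\circ}$ and applying Theorem~\ref{th:graphops} along this sequence yields the claim, all contracted edges being non-loops. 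For (3), choosing a spanning tree $T\subset\Gamma$ of a connected $\Gamma$ and contracting all its (necessarily non-loop) edges produces a single-vertex bouquet $\Gamma'$, as noted in §\ref{subsec:graphopsribbon}, and Theorem~\ref{th:graphops} applied edge by edge gives $\mathcal A^*_{\Gamma'\,inv}\cong\mathcal A^*_{\Gamma\,inv}$.

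Statement (4) is where the genuine work lies. I would first pass to connected components: a disjoint union of ribbon graphs carries the tensor product of the corresponding module algebras, so its observable subalgebra is the tensor product of the observable subalgebras, and $\Sigma\cong\Sigma'$ forces a bijection of components with pairwise homeomorphic bordered surfaces $\dot\Sigma$. For connected $\Gamma,\Gamma'$ I would use that an edge contraction leaves $\dot\Sigma_\Gamma$ unchanged up to homeomorphism (§\ref{subsec:graphopsribbon}), so that reducing $\Gamma$ and $\Gamma'$ to bouquets $B,B'$ as in (3) gives $\dot\Sigma_B\cong\dot\Sigma_\Gamma\cong\dot\Sigma_{\Gamma'}\cong\dot\Sigma_{B'}$. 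The statement then follows once one knows that any two ribbon graphs inducing homeomorphic bordered surfaces are joined by a finite zig-zag of edge contractions and their inverses, i.e.\ that they are spines of the same surface-with-boundary connected by collapses and expansions.

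This converse connectivity statement is the main obstacle. Parts (1)--(3) are essentially bookkeeping on top of Theorem~\ref{th:graphops}, but (4) needs the converse of the assertion that contraction preserves $\dot\Sigma$: that homeomorphism of $\dot\Sigma$ is \emph{sufficient}, and not merely necessary, for connectedness under the contraction/expansion moves. I expect to prove this by reducing to a normal form---contracting a spanning tree to a bouquet of $n$ loops whose cyclic order at the vertex encodes the genus $g$ and number of boundary components $b$ through the Euler-characteristic relation $n=2g+b-1$---and then showing that any two bouquets with the same pair $(g,b)$ can be interconverted by expansion--contraction moves. Verifying that these moves suffice, and that on the algebra side they act as the (inverse) contraction isomorphisms of Theorem~\ref{th:graphops}, is the technical heart of the argument.
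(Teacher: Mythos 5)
Your proposal follows essentially the same route as the paper: parts (1)--(3) are handled exactly as in the paper's proof (Corollary~\ref{cor:gstar} plus Theorem~\ref{th:graphops} for the subdivision, the double subdivision $\Gamma_{\circ\circ}$ for loop/multi-edge freeness, and spanning-tree contraction for the bouquet), and part (4) likewise reduces to ``homeomorphic $\dot\Sigma$ implies the graphs are related by edge contractions and expansions'' followed by Theorem~\ref{th:graphops}. The combinatorial connectivity statement you single out as the ``technical heart'' is precisely what the paper also invokes, but it treats it as established background from \S\ref{subsec:graphopsribbon} (via the reduction to trivalent graphs and Pachner moves) rather than reproving it inside the corollary, so your only real divergence is that you propose to re-derive that fact via a bouquet normal form where the paper simply cites it.
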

\begin{proof} 
The ribbon graph $\Gamma$ is obtained from $\Gamma_\circ$ by contracting for each edge $e\in E(\Gamma)$ either the edge end $s(e)\in E(\Gamma_\circ)$ towards its starting vertex or the edge end $t(e)\in E(\Gamma_\circ)$ towards its target vertex. The associated linear map
$C^*:\mathcal A^*_{\Gamma}\to\mathcal A^*_{\Gamma_\circ}$ is given in Example \ref{ex:bivalent_cont} and induces 
an algebra isomorphism  $\mathcal A^*_{\Gamma}\xrightarrow{\sim} \mathcal A^*_{\Gamma_\circ\, inv}$ by
Theorem \ref{th:graphops}.  The second claim follows since for each ribbon graph $\Gamma$ is double edge subdivision $\Gamma_{\circ\circ}$ is a ribbon graph without loops or multiple edges.  The third claim holds because every connected ciliated ribbon graph $\Gamma$ can be transformed into a  ribbon graph with a single vertex by contracting the edges of a  maximal rooted tree $T\subset\Gamma$. By Corollary \ref{cor:gstar}, this algebra isomorphism does not depend on the order in which the edge contractions are performed, and by Theorem \ref{th:graphops} it induces an
algebra isomorphism $\mathcal A^*_{\Gamma\, inv}\xrightarrow{\sim}\mathcal A_{\Gamma'\,inv}$. For the fourth claim, note that if $\Gamma$, $\Gamma'$ are ribbon graphs such that gluing annuli  to their faces yields homeomorphic surfaces  $\Sigma$ and $\Sigma'$, then by Proposition \ref{prop:addcont}  $\Gamma$ and $\Gamma'$ are related by a sequence of edge contractions.  The claim then follows from Theorem \ref{th:graphops}.
\end{proof}

The graph transformations of    contracting a bivalent vertex, contracting an edge towards its starting or target vertex   and erasing an edge  were also considered in   \cite{AGSII}. 
In Propositions 8 to 10  in \cite{AGSII}
it is shown that they give rise  rise to algebra (iso)morphisms between the  counterparts of the  algebras $\mathcal A^*_\Gamma$ and $\mathcal A^*_{\Gamma'}$ considered there.   The Poisson-Lie analogues of these graph transformations were first considered in \cite{FR}, where it was shown that they give rise to Poisson maps between certain Poisson algebras associated with the underlying  ciliated ribbon graphs. However, 
both publications consider  only    the counterparts of the  maps $D^*_e,A^*_{v,i}, C^*_{\ta(e)}$, $C^*_{\st(e)}:\mathcal A^*_{\Gamma'}\to\mathcal A^*_\Gamma$.
In contrast,  this article, the  maps $D^*_e,A^*_{v,i}, C^*_{\ta(e)}$, $C^*_{\st(e)}:\mathcal A^*_{\Gamma'}\to\mathcal A^*_\Gamma$   are induced by  the  {\em elementary} operations on {\em vertex neighbourhoods} in Definition \ref{def:edgeopmaps}.  This
simplifies their description considerably.  Note that some of these graph operations  on vertex neighbourhoods were were also considered in \cite{BFK}, where they were used to prove that the graphical calculus defined there gives rise to topological invariants.

\section{Holonomies and curvature}
\label{sec:holsec}

\subsection{Holonomies and curvatures in a Hopf algebra gauge theory}
\label{subsec:genhols}

In this section, we introduce a notion of holonomy for a Hopf algebra gauge theory with values in a finite-dimensional semisimple quasitriangular  Hopf algebra $K$.  In analogy to lattice gauge theory for a group, a holonomy in a Hopf algebra gauge theory should assign to each path $p$ in $\Gamma$ a linear map $K\exoo{E}\to K$ in such a way that this assignment is  compatible with
  the composition of paths,  trivial paths,   reversal of edge orientation and  the  defining relations of the path groupoid $\mathcal G(\Gamma)$.
 In other words, 
 one has to equip the vector space $\mathrm{Hom}_{\FF}(K\exoo{E}, K)$ with the structure of an $\FF$-linear category with a single object and to construct a functor $\mathrm{Hol}: \mathcal C(\Gamma)\to \mathrm{Hom}_{\FF}(K\exoo{E}, K)$ that induces a functor $\mathrm{Hol}: \mathcal G(\Gamma)\to \mathrm{Hom}_{\FF}(K\exoo{E}, K)$, where $\mathcal C(\Gamma)$ and $\mathcal G(\Gamma)$
are the path category and path groupoid of $\Gamma$  from Definition \ref{def:free_cat}.

Giving  $\mathrm{Hom}_{\FF}(K\exoo{E}, K)$ the structure of an $\FF$-linear category with a single object  amounts to choosing an associative, unital  algebra structure   on $\mathrm{Hom}_{\FF}(K\exoo{E}, K)$, where morphisms  are the elements of $\mathrm{Hom}_{\FF}(K\exoo{E}, K)$, the identity morphism is the unit  and the composition of morphisms is given by the multiplication.
In  a Hopf algebra gauge theory, it is  natural to construct the
multiplication of $\mathrm{Hom}_{\FF}(K\exoo{E}, K)$ from two ingredients, namely  an associative multiplication map $m: K\oo K\to K$ that allows one to compose the 
contributions of different edges $e,f\in E$ in a path and  a coassociative comultiplication $\Delta_\oo: K\exoo{E}\to K\exoo{E}\oo K\exoo{E}$ that allows one to distribute the variable $(k)_e$ 
for an edge $e\in E$ over the different occurrences of $e$ in a path.  The resulting algebra structure on $\mathrm{Hom}_{\FF}(K\exoo{E}, K)$ is the well-known convolution product, which is defined for any algebra $A$ and coalgebra $C$. Its associativity and unitality are immediate.

\begin{lemma}  \label{lem:catlem}Let $(A, m, 1)$ be an algebra and  $(C,\Delta,\epsilon)$ a coalgebra. Then  the convolution product
 \begin{align}\label{eq:convol}
 \phi\bullet \psi=m\circ (\phi\oo\psi)\circ \Delta 
 \end{align}
 gives $\mathrm{Hom}_{\FF}(C,A)$ the structure of an associative algebra with unit $\eta=\epsilon\, 1$.
\end{lemma}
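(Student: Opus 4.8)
The statement is the standard fact that the convolution product on $\mathrm{Hom}_{\FF}(C,A)$ is associative and unital whenever $A$ is an algebra and $C$ a coalgebra. My plan is to verify directly the two algebra axioms from the defining formula \eqref{eq:convol}.

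First I would check associativity. Given $\phi,\psi,\chi\in\mathrm{Hom}_{\FF}(C,A)$, I would compute both $(\phi\bullet\psi)\bullet\chi$ and $\phi\bullet(\psi\bullet\chi)$ by expanding \eqref{eq:convol}. Using coassociativity of $\Delta$ and associativity of $m$, both sides equal $m\circ(m\oo\id)\circ(\phi\oo\psi\oo\chi)\circ(\Delta\oo\id)\circ\Delta = m\circ(\id\oo m)\circ(\phi\oo\psi\oo\chi)\circ(\id\oo\Delta)\circ\Delta$. In Sweedler notation this is the statement that $(\phi\bullet\psi\bullet\chi)(c)=\phi(\low c 1)\psi(\low c 2)\chi(\low c 3)$ is well-defined independently of bracketing, which follows at once from $(\Delta\oo\id)\circ\Delta=(\id\oo\Delta)\circ\Delta$ and the associativity of $m$.

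Next I would verify that $\eta=\epsilon\,1$, the map $c\mapsto\epsilon(c)1$, is a two-sided unit. For the left unit law, $(\eta\bullet\phi)(c)=\eta(\low c 1)\,\phi(\low c 2)=\epsilon(\low c 1)1\cdot\phi(\low c 2)=\phi(\epsilon(\low c 1)\low c 2)=\phi(c)$, using the counit axiom $\epsilon(\low c 1)\low c 2=c$ together with the fact that $1$ is the unit of $m$. The right unit law $\phi\bullet\eta=\phi$ follows symmetrically from $\low c 1\,\epsilon(\low c 2)=c$. Linearity of $\phi\bullet\psi$ in each argument and bilinearity of $\bullet$ are immediate since $m$, the tensor product, and $\Delta$ are all linear.

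There is no real obstacle here; the only point requiring mild care is bookkeeping the Sweedler indices correctly so that the threefold coproduct is applied consistently on both sides of the associativity identity. I would present the argument compactly in Sweedler notation, remarking that the computations are the classical ones showing that $\mathrm{Hom}_{\FF}(C,A)$ is the convolution algebra of the coalgebra $C$ and the algebra $A$.
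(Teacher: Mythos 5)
Your proposal is correct and follows essentially the same route as the paper: expand both sides of the associativity identity, reduce to associativity of $m$ together with coassociativity of $\Delta$, and verify the unit laws via the counit axiom and the unit of $m$. The only difference is cosmetic — you work in Sweedler notation where the paper writes compositions of maps — so there is nothing further to add.
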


In the case at hand we have $A=K$ and $C=K\exoo{E}$. We choose  for
 $\Delta$ and $\epsilon$ in Lemma \ref{lem:catlem} the opposite comultiplication
$\Delta^{op}_\oo: K\exoo{E}\to K\exoo{E}\oo K\exoo{E}$ and the counit $\epsilon\exoo{E}$ of the Hopf algebra $K\exoo{E}$ and for $m: K\oo K\to K$ the multiplication of $K$.
Let now be $\mathrm{Hom}_{\FF}(K\exoo{E}, K)$  equipped with the algebra structure from Lemma \ref{lem:catlem} for these choices
and viewed as a category with a single object.
Then a functor $\mathrm{Hol}: \mathcal G(\Gamma)\to \mathrm{Hom}_{\FF}(K\exoo{E}, K)$ satisfies $F(\emptyset_v)=\eta$ for all $v\in V$ and
 is determined  uniquely by the maps $\mathrm{Hol}({e^{\pm 1}}): K\exoo{E}\to K$ for $e\in E$, since $\mathcal G(\Gamma)$ is the free groupoid generated by $E(\Gamma)$. 
The maps $\mathrm{Hol}({e^{\pm 1}}): K\exoo{E}\to K$
should be local and compatible with the  reversal of the edge orientation  via the involution $T: K\to K$, $k\mapsto gS(k)$ from \eqref{eq:invol}. Hence we impose
\begin{align}\label{eq:holdef}  
\mathrm{Hol}(e)(k^1\oo...\oo k^{E})=\Pi_{f\in E\setminus\{e\} }\epsilon(k^f)\; k^e
, \qquad \mathrm{Hol}(e^\inv)=T\circ \mathrm{Hol}(e).
\end{align} 
The linear maps $\mathrm{Hol}(e^{\pm 1}): K\exoo{E}\to K$ then induce a functor $\mathrm{Hol}:\mathcal G(\Gamma)\to \mathrm{Hom}_{\FF}(K\exoo{E}, K)$ if and only if they respect the defining relations of the path groupoid: for all $e\in E$
 \begin{align}\label{eq:grpoidcond}
m\circ (\id\oo T)\circ (\mathrm{Hol}(e)\oo \mathrm{Hol}(e))\circ \Delta_\oo^{op}=m\circ (T\oo\id)\circ (\mathrm{Hol}(e)\circ \mathrm{Hol}(e))\circ\Delta_\oo^{op}=\epsilon\exoo{E}\, 1.
 \end{align}
In  a Hopf algebra gauge theory, the only canonical choice for  the map $m: K\oo K\to K$ in Lemma \ref{lem:catlem} is the multiplication of the algebra $K$. However, there are several candidates for the map $\Delta: K\exoo{E}\to K\exoo{E}\oo K\exoo{E}$ in Lemma \ref{lem:catlem}, namely  the comultiplication of the Hopf algebra $K\exoo{E}$ and the comultiplication dual to the algebra structure on ${K^*}\exoo{E}$. However,   the choice is restricted 
by  \eqref{eq:grpoidcond}. If one chooses the former,  one obtains 
\begin{align}\label{eq:firstexp}
&\mathrm{Hol}(e\circ e^\inv)\circ\iota_e(k)=\low k 2 S^\inv(\low k 1)g=\epsilon(k)\, g, &
&\mathrm{Hol}(e^\inv\circ e)\circ\iota_e(k)=g S(\low k 2) \low k 1,\end{align}
and if one chooses for $\Delta$   the  comultiplication dual to the opposite multiplication  on ${K^*}\exoo{E}$
\begin{align*}
\mathrm{Hol}(e\circ e^\inv)\circ\iota_e(k)&=(\id\oo g\cdot S)( R_{21}(\low k 2\oo\low k 1))=\low R 2 \low k 2 g S(\low k 1)S(\low R 1)
=\epsilon(k)ug=\epsilon(k)\nu\\
 \mathrm{Hol}(e^\inv\circ e)\circ\iota_e(k)&=( g\cdot S\oo\id )( R_{21}(\low k 2\oo\low k 1))=( g\cdot S\oo\id )( (\low k 1\oo\low k 2)R_{21})
\epsilon(k)gu=\epsilon(k)\nu,
\end{align*}
 where $\nu\in K$ is the ribbon element. If $K$ is semisimple, then the  expressions in \eqref{eq:firstexp} reduce to $g S(\low k 2)\low k 1=\epsilon(k)\, g=\epsilon(k)\, 1$. Hence if one takes for  $\Delta$ the comultiplication of the Hopf algebra $K\exoo{E}$, one obtains 
  a functor $\mathrm{Hol}:\mathcal G(\Gamma)\to \mathrm{Hom}_{\FF}(K\exoo{E}, K)$, while the choice of comultiplication dual to the multiplication of $\mathcal A^*$ yields  $\mathrm{Hol}(e\circ e^\inv)\circ\iota_e(k)=\mathrm{Hol}(e^\inv\circ e)\circ\iota_e(k)=\epsilon(k)\, u$. 
  
If $K$ is not semisimple, none of these relations is compatible with \eqref{eq:grpoidcond},
 unless one postulates additional structure. This is the approach chosen in  \cite{AGSII}, where the authors take for $\Delta^{op}_\oo$ the comultiplication dual to $\mathcal A^*$. To satisfy the defining relations of the path groupoid, they  postulate  a square root for the ribbon element in each irreducible representation  of $K$ and rescale the functors accordingly; see (2.7) and (3.1) in \cite{AGSII}.  
However,  as the resulting notion of holonomy is simpler and has nice geometrical properties, we  restrict attention to {\em semisimple} Hopf algebras  $K$  and define 
the functor $\mathrm{Hol}:\mathcal G(\Gamma)\to\mathrm{Hom}_{\FF}(K\exoo{E},K)$ by \eqref{eq:holdef} and \eqref{eq:convol} with the comultiplication of $K\exoo{E}$ for $\Delta$.

Note  that  a functor $\mathrm{Hol}:\mathcal G(\Gamma)\to\mathrm{Hom}_{\FF}(K\exoo{E}, K)$ induces  a contravariant functor
 $\mathrm{Hol}^*:\mathcal G(\Gamma)\to\text{Hom}_{\FF}(K^*, {K^*}\exoo{E})$, where  $\text{Hom}_{\FF}(K^*, {K^*}\exoo{E})$ is 
 viewed as a category with a single object and equipped with the algebra structure dual to the one in Lemma \ref{lem:catlem}. This algebra structure is given by  
 $(\phi\bullet \psi)^*=\psi^*\bullet \phi^*=m_*\circ (\psi^*\oo\phi^*)\circ\Delta_*^{op}$,
  where $m_*: {K^*}\exoo{E}\oo {K^*}\exoo{E} \to {K^*}\exoo{E}$ is the multiplication of the Hopf algebra ${K^*}\exoo{E}$ and $\Delta_*: K^*\to K^*\oo K^*$ the comultiplication of $K^*$. The contravariant functor $\mathrm{Hol}^*$ is then given  by
 $ \langle \mathrm{Hol}^*(p)(\alpha), k\rangle=\langle \alpha, \mathrm{Hol}(p)(k)\rangle$ for all $\alpha\in K^*$, $k\in K\exoo{E}$. 
Combining these results, we obtain the following definition of holonomy.

\begin{definition}  \label{def:hols} Let $K$ be a finite-dimensional semisimple quasitriangular Hopf algebra and $\Gamma$ a ribbon graph. Equip 
$\mathrm{Hom}_{\FF}(K\exoo{E}, K)$ with the  algebra structure \eqref{eq:convol} where $m$  
is the multiplication of $K$ and $\Delta_\oo$ 
the comultiplication of  $K\exoo{E}$. 
Then the  {\bf holonomy functors} of a $K$-valued Hopf algebra gauge theory on $\Gamma$ are the functor $\mathrm{Hol}:\mathcal G(\Gamma)\to \mathrm{Hom}_{\FF}(K\exoo{E}, K)$ defined by  \eqref{eq:holdef} and the associated contravariant functor  $\mathrm{Hol}^*:\mathcal G(\Gamma)\to\text{Hom}_{\FF}(K^*, {K^*}\exoo{E})$. 
For a path $p\in\mathcal G(\Gamma)$  we use the notation $\hol_p:=\mathrm{Hol}(p)$ and $\hol_p^*:=\mathrm{Hol}^*(p)$.
\end{definition}

In particular, we can consider the holonomies around faces of $\Gamma$. In a group-valued lattice gauge theory, the holonomies around faces are interpreted as (group-valued) curvatures of  the underlying connection  and a connection is called flat at a given face if the holonomy along the face is the unit element. This  has a direct analogue in a Hopf algebra gauge theory.

\begin{definition}\label{def:curvature} The holonomy map $\hol_f: K\exoo{E}\to K$  of a face path  $f\in \mathcal G(\Gamma)$ is called the {\bf curvature} of $f$. A connection $k\in K\exoo{E}$ is called {\bf flat at $f$} if $\hol_f(k)=\epsilon\exoo{E}(k)\, 1$ 
and {\bf flat} if it is flat for all  face paths  $f\in \mathcal G(\Gamma)$. We denote by $\mathcal A_f\subset K\exoo{E}$ the linear subspace of connections that are flat at $f$ and by   $\mathcal A_{flat}$ the linear subspace of flat connections.
\end{definition}

It is worth commenting on the notion of holonomy  used in \cite{AGSII} and \cite{BR}. 
As explained above,  the authors in  \cite{AGSII} to choose for $\Delta^{op}_\oo$ in \eqref{eq:convol} the  comultiplication dual to $\mathcal A^*$, postulate the existence of square roots of the ribbon element in each irreducible representation of $K$ and rescale the functor $\mathrm{Hol}:\mathcal C(\Gamma)\to \mathrm{Hom}_{\FF}(K\exoo{E}, K)$ to satisfy the defining relations of the path groupoid. 
In \cite{BR}, a different notion of holonomy is used. It is defined in terms of the  comultiplication dual to $\mathcal A^*$ but the resulting expression for the holonomy is modified by $R$-matrices inserted at each vertex, depending on the relative ordering of the edge ends. The relevant expressions  in Definition 5,  formulas (40) to (46),  (60) and (61) in \cite{BR} indicate that the resulting holonomies coincide with the ones from Definition \ref{def:hols} if the choices of conventions are taken into account. However,  these holonomies are not defined as  functors and the $R$-matrices in these formulas are introduced by hand.

Note also that choosing the comultiplication dual to  $\mathcal A^*$ for  $\Delta^{op}_\oo$ in Lemma \ref{lem:catlem} yields holonomies that take a different form depending on whether a path turns left or right at a vertex. This follows from formulas  \eqref{eq:flipalg} for the multiplication of $\mathcal A^*_v$.
However, for face paths that are compatible with the ciliation (see Definition \ref{def:cilpath}), the two notions of holonomy coincide.

\begin{lemma}\label{lem:specialpath} Let $K$ be a finite-dimensional ribbon Hopf algebra. Let  $f$ be a face path of $\Gamma$ that is compatible with the ciliation. Then the holonomies of $f$ from Definition \ref{def:hols} agree with the holonomies  obtained by taking for $\Delta_\oo^{op}:K\exoo{E}\to K\exoo{E}\oo K\exoo{E}$ the comultiplication dual to $\mathcal A^*$. 
\end{lemma}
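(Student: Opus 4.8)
The claim is that for a face path compatible with the ciliation, the holonomy built from the comultiplication of the tensor-product Hopf algebra $K\exoo{E}$ coincides with the holonomy built from the comultiplication dual to the module algebra $\mathcal A^*$. The two notions differ only in how the variable $(k)_e$ for a repeated edge is distributed across its occurrences in the path, and in the factors picked up when the path turns at a vertex. My strategy is to show that, under the compatibility conditions $s(e_{i+1}^{\epsilon_{i+1}})<t(e_i^{\epsilon_i})$ of Definition \ref{def:cilpath}, all the correction factors involving the $R$-matrix that distinguish the dual comultiplication from the tensor-product comultiplication collapse to trivial contributions.

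First I would make explicit the two candidate comultiplications on $K\exoo{E}$. The tensor-product comultiplication $\Delta_\oo$ is simply the componentwise $\Delta$ on each factor, so the holonomy of Definition \ref{def:hols} distributes each edge variable via the ordinary coproduct of $K$. The comultiplication dual to $\mathcal A^*$ is obtained by dualising the multiplication relations in \eqref{eq:flipalg}; by the explicit products in Proposition \ref{lem:algexplicit} and the fact that the algebra of $\mathcal A^*$ is assembled from the vertex neighbourhoods via $G^*$ in \eqref{eq:dualemb}, the dual comultiplication differs from $\Delta_\oo$ precisely by insertions of $R$-matrices at each vertex where two edge ends meet. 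The key point is that these $R$-matrix insertions are governed by the relative ordering of the starting and target ends at the shared vertex.

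Next I would carry out the comparison vertex by vertex along the face path $f=e_n^{\epsilon_n}\circ\cdots\circ e_1^{\epsilon_1}$. At each intermediate vertex $\st(e_{i+1}^{\epsilon_{i+1}})=\ta(e_i^{\epsilon_i})$, the dual comultiplication produces an $R$-matrix factor pairing the ends $s(e_{i+1}^{\epsilon_{i+1}})$ and $t(e_i^{\epsilon_i})$ exactly when the relative ordering of these two ends forces the ``$i>j$'' case of \eqref{eq:flipalg}. The compatibility condition $s(e_{i+1}^{\epsilon_{i+1}})<t(e_i^{\epsilon_i})$ ensures we are always in the ``$i<j$'' case $(\alpha\oo\beta)_{ij}$, where no $R$-matrix appears; dually this means no correction factor is inserted and the distribution of edge variables agrees with that coming from $\Delta_\oo$. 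The condition $s(e_1^{\epsilon_1})<t(e_n^{\epsilon_n})$ at the starting vertex handles the closure of the face path in the same way. When an edge is traversed twice (in both directions), I would check that the two resulting occurrences, distributed by the two coproducts, are reconciled using the orientation-reversal involution $T$ of \eqref{eq:invol} together with the ribbon identities $(S\oo S)(R)=R$ and the relations among $g$, $u$, and the ribbon element.

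The main obstacle I expect is the bookkeeping at vertices where the face path both arrives and departs through ends that straddle other ends incident at the same vertex, and the careful tracking of which Sweedler components of a repeated edge's variable get paired by the $R$-matrices in the dual picture versus the tensor-product picture. Concretely, the hard part will be verifying that, after accumulating the (now trivial) correction factors around the entire closed path, the two holonomy maps agree as elements of $\mathrm{Hom}_\FF(K\exoo{E},K)$ rather than merely differing by an inner factor. I would reduce this to a local statement at each vertex using the locality of the algebra of functions (axiom (i)--(ii) of Definition \ref{def:gtheory}) and the functoriality of $\mathrm{Hol}$, so that the global agreement follows from agreement of the elementary maps $\mathrm{Hol}(e^{\pm1})$ together with the vanishing of all vertex corrections along $f$.
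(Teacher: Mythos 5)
Your proposal is correct and follows essentially the same route as the paper: dualise to $\mathrm{Hol}^*$, observe that the compatibility conditions $s(e_{i+1}^{\epsilon_{i+1}})<t(e_i^{\epsilon_i})$ and $s(e_1^{\epsilon_1})<t(e_n^{\epsilon_n})$ place every product of consecutive edge-end variables in the $R$-matrix-free case $(\alpha\oo\beta)_{ij}$ of \eqref{eq:flipalg}, and then assemble the global agreement from the local one using commutativity of the resulting factors. The paper's proof is just a terser version of the same argument.
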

\begin{proof} This is most easily seen for the functors $\mathrm{Hol}^*:\mathcal G(\Gamma)\to\mathrm{Hom}_{\FF}(K^*,{K^*}\exoo{E})$ by applying the formulas for the multiplication in $\mathcal A^*$.
Suppose  that  $f=e_n^{\epsilon_n}\circ ...\circ e_1^{\epsilon_1}$  satisfies the assumptions. Then we have
$s(e_{i+1}^{\epsilon_{i+1}})<t(e_i^{\epsilon_i})$ for all $i\in\{1,...,n\}$ and $s(e_1^{\epsilon_1})<t(e_n^{\epsilon_n})$.  By Corollary \ref{rem:flipalg}  for each path
$s(e_{i+1}^{\epsilon_{i+1}})\circ t(e_i^{\epsilon_i})$ in $\Gamma_v$,  the holonomies from Definition \ref{def:hols} agree with the ones obtained by taking for $\Delta_\oo^{op}:K\exoo{E}\to K\exoo{E}\oo K\exoo{E}$ the comultiplication dual to $\mathcal A^*$. Moreover, these holonomies commute with each other
and with the holonomies of $s(e_1^{\epsilon_1})$ and $t(e_n^{\epsilon_n})$ with respect to the multiplication of $\oo_{v\in V}\mathcal A^*_v$. This implies
that the two notions of holonomy coincide.
\end{proof}

Although the two notions of holonomy coincide for faces that are compatible with the ciliation, they 
 differ substantially for general paths.
In the following, we will not consider the notion of holonomy  obtained from the comultiplication dual to $\mathcal A^*$ further
 since the one in Definition \ref{def:hols} is more natural in the semisimple case,  and does not require any modifications or rescalings  to define a functor.

\subsection{Algebraic properties of the holonomies}
\label{subsec:algprop}

In this section, we investigate the algebraic properties of the holonomy functor, focussing on the contravariant functor $\mathrm{Hol}^*: \mathcal C(\Gamma)\to\mathrm{Hom}_{\FF}(K^*,{K^*}\exoo{E})$ and the associated linear maps $\hol_p^*=\mathrm{Hol}^*(p):K^*\to \mathcal A^*_\Gamma$ for paths $p\in \mathcal G(\Gamma)$.  As semisimplicity is required to obtain holonomies that satisfy the defining relations of the path  groupoid, we restrict attention to finite-dimensional semisimple quasitriangular Hopf algebras $K$ in the following. Recall that this implies that $K$ is ribbon with ribbon element $\nu=u$ and grouplike element $g=1$.
We start by analysing the behaviour of the holonomies of face paths that are compatible with the ciliation (cf. Definition \ref{def:cilpath}) with respect to  the graph operations.

\begin{theorem}\label{th:holtheorem} Let $\Gamma'$ be obtained from $\Gamma$ by one of the graph operations in Definition \ref{def:graphtrafos} and  denote by
 $F:\mathcal C(\Gamma')\to\mathcal C(\Gamma)$,  $F_\circ:\mathcal C(\Gamma'_\circ)\to\mathcal C(\Gamma_\circ)$, $G_\Gamma:\mathcal C(\Gamma)\to \mathcal C(\Gamma_\circ)$ and $G_{\Gamma'}:\mathcal C(\Gamma')\to \mathcal C(\Gamma'_\circ)$  the associated functors from Definition \ref{def:graph_functor} and   Proposition  \ref{lem:graphtrafo_vertnb}. Then the algebra  homomorphism  $f^*:\bigotimes_{v\in V'}\mathcal A'^*_v\to \bigotimes_{v\in V}\mathcal A^*_v$ from Definition \ref{def:edgeopmaps} and Corollary \ref{cor:inducedmaps},
  the algebra homomorphism $F^*: K^{*\oo E'}\to K^{*\oo E}$ from Theorem \ref{th:alghomfinal} and the subdivision maps $G^*_\Gamma: K^{*\oo E}\to \bigotimes_{v\in V} \mathcal A^*_v$ and $G^*_{\Gamma'}: K^{*\oo E'}\to \bigotimes_{v\in V'} \mathcal A'^*_v$  from \eqref{eq:dualemb} relate the holonomies of $\Gamma$, $\Gamma'$, $\Gamma_\circ$ and $\Gamma'_\circ$ as follows
\begin{align}\label{eq:pathidentity}
&F^*\circ \mathrm{Hol}^*_{p'}=\mathrm{Hol}^*_{F(p')} & 
&f^*\circ \mathrm{Hol}^*_{q'}=\mathrm{Hol}^*_{F_\circ(q')}\\
&G^*_\Gamma\circ \mathrm{Hol}^*_{p}=\mathrm{Hol}^*_{G_\Gamma(p)} & 
&G^*_{\Gamma'}\circ \mathrm{Hol}^*_{p'}=\mathrm{Hol}^*_{G_{\Gamma'}(p)}\nonumber
\end{align}
for all  face paths $p\in \mathcal C(\Gamma)$, $p'\in \mathcal C(\Gamma')$ and $q\in \mathcal C(\Gamma'_\circ)$  that are compatible with the ciliation. The same holds if the path categories are replaced by  path groupoids, and we obtain the commuting diagram 
\begin{align}\label{eq:diagg}
\xymatrix{ \mathcal A^*_{\Gamma'} \ar[dd]_{G^*_{\Gamma'}}\ar[rr]^{F^*} & & \mathcal A^*_\Gamma \ar[dd]^{G^*_\Gamma}\\
 & \ar[ld]_{\qquad\hol^*_{G'(p')}} K^* \ar[lu]_{\hol^*_{p'}} \ar[ru]^{\hol^*_{F(p')}} \ar[rd]^{\!\!\!\!\hol^*_{G(F(p'))}}&\\
  \oo_{v\in V'}\mathcal A'^*_v \ar[rr]_{f^*}&  & \oo_{v\in V}\mathcal A^*_v.
}
\end{align}
\end{theorem}

\begin{proof}  To prove the identities in  \eqref{eq:pathidentity}, recall  from Lemma \ref{lem:specialpath} that the holonomy functors agree with those defined by the multiplication of $\mathcal A^*_\Gamma$,  $\mathcal A^*_{\Gamma'}$,  $\oo_{v\in V}\mathcal A^*_v$ and $ \oo_{v\in V'}\mathcal A'^*_v$  for all  face paths compatible with the ciliation. We may thus assume that the holonomy functors are the ones defined by the algebra structures of $\mathcal A^*_\Gamma$,  $\mathcal A^*_{\Gamma'}$,  $\oo_{v\in V}\mathcal A^*_v$ and $ \oo_{v\in V'}\mathcal A'^*_v$. Recall also that the maps $F^*$, $f^*$, $G^*_\Gamma$ and $G^*_{\Gamma'}$ are algebra homomorphisms with respect to these algebra structures.

Suppose that $C$ is a coalgebra, $B_i$ are algebras,  $\mathcal D_i$ categories, and $\hol_i: \mathcal D_i\to \mathrm{Hom}_\FF(C,B_i)$ for $i=1,2$ are functors, where $\hol^i: D_i\to \mathrm{Hom}_\FF(C,B_i)$ is equipped with the convolution product  $\bullet_i$ defined by the comultiplication of $C$ and the multiplication of $B_i$. Then for all functors $F: \mathcal D_2\to \mathcal D_1$, algebra morphisms $f: B_2\to B_1$ and composable morphisms $p,q$ in $\mathcal D_2$ with $\hol^1_{F(p)}=f\circ \hol^2_p$ and $\hol^1_{F(q)}=f\circ \hol^2_q$ one has
$$
\hol^1_{F(p\circ q)}=\hol^1_{F(p)}\bullet_1 \hol^1_{F(q)}=(f\circ \hol^2_{p})\bullet_1 (f\circ \hol^2_q)=f\circ (\hol^2_p\bullet_2\hol^2_q)=f\circ \hol^2_{p\circ q}.
$$ 
By applying this statement to the categories and algebra homomorphisms under consideration, one sees that the identities in  \eqref{eq:pathidentity} follow by induction over the lengths of the paths. It is thus sufficient to prove them for paths involving only a single edge.

For edges $e\in E(\Gamma)$ and $e'\in E(\Gamma')$  the associated morphisms in $\mathcal C(\Gamma_\circ)$ and $\mathcal C(\Gamma'_\circ)$ are given by $t(e)\circ s(e)$ and $t(e')\circ s(e')$, respectively. The identities  in the second line of  \eqref{eq:pathidentity} then follow directly from \eqref{eq:dualemb}, which implies
\begin{align*}
&G^*_{\Gamma}((\alpha)_{e})=(\low\alpha 2\oo\low\alpha 1)_{s(e)t(e)}=\hol^*_{t(e)\circ s(e)}(\alpha)=\hol^*_{G_\Gamma(e)}(\alpha) & &\forall e\in E.
\end{align*}
and similarly for $e'\in E'$. 

 The second identity in the first line of \eqref{eq:pathidentity} for a single  edge end $g'\in E(\Gamma'_\circ)$ follows  directly from the expressions for the maps $f_*$  in \eqref{eq:deleting} to \eqref{eq:doubling} with $g=1$ and the expressions for the functors $F_\circ$ in Proposition \ref{lem:graphtrafo_vertnb}.
By Proposition \ref{lem:graphtrafo_vertnb} we have $F_\circ\ G_{\Gamma'}=G_{\Gamma} F$, and by Remark \ref{rem:subdivgrtrafo} we have 
$G^*_\Gamma\circ F^*=f^*\circ G^*_{\Gamma'}$. Together with the last three identities in \eqref{eq:pathidentity} this implies
\begin{align*}
G^*_\Gamma\circ F^*\circ \mathrm{Hol}^*_{p'}=f^*\circ G^*_{\Gamma'}\circ \mathrm{Hol}^*_{p'}=f^*\circ \mathrm{Hol}^*_{G_{\Gamma'}(p)}=\mathrm{Hol}^*_{F_\circ G_{\Gamma'}(p')}=\mathrm{Hol}^*_{G_\Gamma F(p')}=G^*_{\Gamma}\circ \mathrm{Hol}_{F(p')}
\end{align*}
for all edges $p'\in\mathcal C(\Gamma')$. As $G^*_\Gamma$ is injective, the first identity in \eqref{eq:pathidentity} follows.  

The identities in \eqref{eq:pathidentity} induce analogous identities on the associated path groupoids, and by  combining the four identities in \eqref{eq:pathidentity}, we obtain the commuting diagram \eqref{eq:diagg}.
\end{proof}

We will now consider the transformation of the holonomies  face paths compatible with the ciliation under gauge transformations and show that they  form a subalgebra of $\mathcal A^*$.

\begin{theorem} \label{lem:holtransform} Let $K$ be a finite-dimensional semisimple Hopf algebra and $p$ a  face path compatible with the ciliation.
Then the holonomy of $p$ satisfies 
\begin{align}\label{eq:holtrafo}
\hol^*_p(\beta)\cdot \hol^*_p(\alpha)=
\langle\low \alpha 3\oo  \low\beta 1, R^\inv \rangle\langle \low\alpha 1\oo\low\beta 2, R\rangle\; \hol_p^*(\low\alpha 2\low\beta 3) 
\end{align}
for all $\alpha,\beta\in K^*$. For any 
 gauge transformation $h=h^1\oo\ldots\oo h^{V}\in K\exoo{V}$ and $\alpha\in K^*$, one has 
\begin{align}\label{eq:holeq}
&\hol^*_p(\alpha\lhd^* h)=\Pi_{i\neq  u,w}\epsilon(h^i)
\langle S(\low\alpha 3)\low \alpha 1, h^u\rangle\, \hol^*_p(\low\alpha 2). 
\end{align} 
\end{theorem}
\begin{proof} 
We apply the graph operations from Definition \ref{def:graphtrafos} to simplify the path $p$. 
Let $\Gamma'$ be obtained from $\Gamma$ by a one of the graph operations in Definition \ref{def:graphtrafos},  denote by $F:\mathcal G(\Gamma')\to\mathcal G(\Gamma)$  the associated functor  from Definition \ref{def:graph_functor} 
and by  $F^*:\mathcal A^*_{\Gamma'}\to\mathcal A^*_{\Gamma}$ the associated algebra homomorphism from Theorem \ref{th:alghomfinal}.
Then by Theorem \ref{th:holtheorem}  one has $\hol^*_{F(p')}=F^*\circ \hol^*_{p'}$  for all  face paths $p'\in \mathcal G(\Gamma')$ compatible with the ciliation.  As $F^*:\mathcal A^*_{\Gamma'}\to\mathcal A^*_{\Gamma}$  is an algebra homomorphism and a module homomorphism with respect to the $K\exoo{V}$-module structure of $\mathcal A^*_\Gamma$ and $\mathcal A^*_{\Gamma'}$, this implies
\begin{align*}
&\hol^*_{F(p')}(\alpha)\cdot \hol^*_{F(q')}(\beta)=F^*(\hol^*_{p'}(\alpha)\cdot \hol^*_{q'}(\beta)) & &\forall \alpha,\beta\in K^*\\
&\hol^*_{F(p')}(\alpha)\lhd^* h=F^*(\hol^*_{p'}(\alpha))\lhd^* h=F^*(\hol^*_{p'}(\alpha)\lhd'^* h) & &\forall h\in K\exoo{V}.
\end{align*}
 for face paths $p', q'$ compatible with the ciliation.
 It is therefore sufficient to show that by applying a finite sequence graph operations from Definition \ref{def:graphtrafos}, one can transform $\Gamma$ into a ciliated ribbon graph $\Gamma'$ with a single edge $e'$ such that  $p=F(e')$, where $F: \mathcal G(\Gamma')\to \mathcal G(\Gamma)$ is the functor associated with the finite sequence  of graph operations.
 As   $p$  is regular and does not traverse any cilia, we can construct such a ciliated ribbon graph $\Gamma'$  by (i) deleting all edges that do not occur in $p$, (ii) doubling the edges that are traversed twice,  (iii) detaching adjacent edge ends from vertices in $p$ and (iv) contracting  the resulting ribbon graph  to a single edge. 
This yields a ribbon graph $\Gamma'$ with $E(\Gamma')=\{e'\}$ and $F(e')=p$. The claim then follows 
 from Proposition \ref{lem:algexplicit} (a), (b) and Proposition \ref{lem:gtrafolem}. 
\end{proof}

Theorem \ref{lem:holtransform}  shows that the holonomies of  a  face path compatible with the ciliation 
form a subalgebra of $\mathcal A^*$  and transform under gauge transformations according to formula \eqref{eq:holeq}. Hence, they behave in the same way as the holonomies of a single  loop.  The holonomy map $\hol^*_p$  is a module homomorphism with respect to the action of gauge transformations at $v$ and the  left coadjoint action of $K^{op}$ on $K^*$ from 
Example \ref{ex:regacts}: 
\begin{align*}
\hol^*_p(\alpha)\lhd^*(h)_v=\hol^*_p( S(h)\rhd^{*}_{ad}\alpha).
\end{align*}
A  direct computation shows that for semisimple $K$ one has $S\circ \lhd^*_{ad}\circ (S\oo\id)=\rhd^{*}_{ad}\circ(S\oo\id)$
and the submodules of invariants of the two module structures coincide. In both cases, the submodule of invariants is the character algebra 
$C(K)=\{\alpha\in K^*:\Delta(\alpha)=\Delta^{op}(\alpha)\}$
from Example \ref{ex:chars}.
This relates the projection of the holonomies on the gauge invariant subalgebra $\mathcal A^*_{inv}$ to the character algebra $C(K)$. In particular, it shows that it is invariant under cyclic permutations of the  path.

\begin{lemma} \label{lem:reghols} Let $K$ be finite-dimensional  semisimple quasitriangular  and $p$ a  face path in $\Gamma$ that is compatible with the ciliation. Denote by 
  $\Pi:\mathcal A^*\to\mathcal A^*$ the projector on $\mathcal A^*_{inv}$ and by $\pi: K^*\to K^*$ the projector on $C(K)$. Then:
\begin{compactenum}
\item  $\Pi\circ\hol^*_p=\hol^*_p\circ \pi$ is invariant under cyclic permutations of $p$.
\item 
 $\hol^*_p: C(K)\to \mathcal A^*_{inv}$ is an algebra anti-homomorphism.
\end{compactenum}
\end{lemma}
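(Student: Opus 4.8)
The plan is to treat the three assertions in turn, throughout exploiting that for semisimple $K$ one has $S^2=\id$, $g=1$, and a Haar integral $\ell\in K$ with $\epsilon(\ell)=1$ and $S(\ell)=\ell$, and that $C(K)$ is precisely the submodule of invariants for both coadjoint actions $\lhd^*_{ad}$, $\rhd^*_{ad}$ recalled in the discussion following Theorem \ref{lem:holtransform}. The whole argument rests on the transformation rules \eqref{eq:holtrafo} and \eqref{eq:holeq} for the holonomy of a regular path.

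First I would prove $\Pi\circ\hol^*_p=\hol^*_p\circ\pi$. Since $p$ is closed, based at one vertex $v$, formula \eqref{eq:holeq} shows the gauge action on $\hol^*_p(\alpha)$ is trivial at every vertex $w\neq v$; hence on the image of $\hol^*_p$ the Haar projector $\Pi$ of Corollary \ref{lem:ginvproj} reduces to $\alpha\mapsto\hol^*_p(\alpha)\lhd^*(\ell)_v$, the remaining tensor factors of $\ell\exoo{V}$ contributing $\epsilon(\ell)=1$. The module-homomorphism relations after Theorem \ref{lem:holtransform} then give $\hol^*_p(\alpha)\lhd^*(\ell)_v=\hol^*_p(\alpha\lhd^*_{ad}\ell)$ when $t(p)<s(p)$, and $=\hol^*_p(S(\ell)\rhd^*_{ad}\alpha)=\hol^*_p(\ell\rhd^*_{ad}\alpha)$ when $t(p)>s(p)$, using $S(\ell)=\ell$. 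Applying Corollary \ref{lem:ginvproj} to the coadjoint module $K^*$ identifies $\alpha\mapsto\alpha\lhd^*_{ad}\ell$ (resp. $\alpha\mapsto\ell\rhd^*_{ad}\alpha$) with the projector $\pi$ onto the invariant submodule $C(K)$, which yields the claimed equality.

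For the cyclic invariance I would use that a cyclic permutation $\tilde p$ of the cyclically reduced $p$ is conjugate to it in the path groupoid, $\tilde p=q\circ p\circ q^\inv$, where $q$ is the subword rotated past the basepoint. By functoriality of $\mathrm{Hol}^*$ and diagram \eqref{eq:diagg}, $\hol^*_{\tilde p}$ is a convolution product of $\hol^*_q$, $\hol^*_p$ and $\hol^*_{q^\inv}$ evaluated on the coproduct of the argument; restricting to $\gamma\in C(K)$, cocommutativity lets one reorder its Sweedler legs freely. I would then reduce $p$ to a single loop $e'$ on a graph $\Gamma'$ via the graph operations, so that $\hol^*_p=F^*\circ\hol^*_{e'}$ with $F^*$ an algebra and module homomorphism inducing an isomorphism of invariant subalgebras (Theorems \ref{lem:algextend} and \ref{th:graphops}), and argue that after applying $\Pi$ the conjugating factors $\hol^*_q,\hol^*_{q^\inv}$ collapse by gauge averaging, giving $\Pi\circ\hol^*_{\tilde p}\circ\pi=\Pi\circ\hol^*_p\circ\pi$. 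I expect this cancellation of the conjugating holonomies after projection to $\mathcal A^*_{inv}$—turning the trace-like character of $C(K)$ into an honest identity of invariant functions—to be the main obstacle.

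For parts 2 and 3 the inclusion $\hol^*_p(C(K))\subset\mathcal A^*_{inv}$ is immediate from the relations after Theorem \ref{lem:holtransform}, since $\gamma\in C(K)$ satisfies $\gamma\lhd^*_{ad}h=\epsilon(h)\gamma=S(h)\rhd^*_{ad}\gamma$. The multiplicativity is then a direct computation from \eqref{eq:holtrafo}: for $t(p)>s(p)$ and $\alpha,\beta\in C(K)$ I would pair $\hol^*_p(\beta)\cdot\hol^*_p(\alpha)$ with an arbitrary $x\in K$, expand the $R^\inv$ and $R$ factors, and collect the Sweedler legs of $\alpha$ and of $\beta$ into single arguments; the trace property $\langle\gamma,yz\rangle=\langle\gamma,zy\rangle$ on $C(K)$ cyclically aligns these so that the two factors combine as $R^\inv R=1\oo1$, leaving $\langle\alpha\oo\beta,\Delta x\rangle=\langle\alpha\beta,x\rangle$, whence $\hol^*_p(\beta)\cdot\hol^*_p(\alpha)=\hol^*_p(\alpha\beta)$, i.e. an anti-homomorphism. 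For $t(p)<s(p)$ the extra antipode in the first pairing of \eqref{eq:holtrafo} is absorbed through the identity $(\id\oo S)(R)=R^\inv$, valid since $S^2=\id$; since $S$ restricts to an order-reversing automorphism of $C(K)$ and absorbs the orientation reversal distinguishing the loop algebra of Proposition \ref{lem:algexplicit}~(b) from its opposite (cf.\ Proposition \ref{lem:looplem}), precomposition with $S$ produces the anti-homomorphism $\hol^*_p\circ S$ claimed in part 3. The delicate bookkeeping here is keeping the antipode and the opposite multiplication consistent, which is exactly what selects the correction by $S$ in the $t(p)<s(p)$ case.
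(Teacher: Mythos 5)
Your handling of the identity $\Pi\circ\hol^*_p=\hol^*_p\circ\pi$ and of parts 2 and 3 follows essentially the paper's route: the Haar projector restricted to the image of $\hol^*_p$ reduces to $\lhd^*(\ell)_{\st(p)}$, is transported through the module relations following Theorem \ref{lem:holtransform} to the coadjoint projector $\pi$ onto $C(K)$, and the $(R^\inv,R)$ cancellation after a cyclic realignment of the Sweedler legs of $\alpha,\beta\in C(K)$ is exactly the computation \eqref{eq:reghols}, with the $t(p)<s(p)$ case reduced to the other one by the antipode. However, your argument for the cyclic invariance in part 1 has a genuine gap. The route via $\tilde p=q\circ p\circ q^{-1}$ followed by a ``collapse by gauge averaging'' of $\hol^*_q$ and $\hol^*_{q^{-1}}$ is not the right mechanism, and you leave it unproved (you flag it yourself as the main obstacle). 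Gauge averaging acts at vertices and does not cancel the conjugating factors: after pairing with $\alpha\in C(K)$, the Sweedler legs of the connection remain attached to $\hol_{q}$, $\hol_p$, $\hol_{q^{-1}}$ in an order that a single cyclic rotation of $\Delta^{(2)}(\alpha)$ does not reduce to $\hol_p$ alone. Your related claim that for $\gamma\in C(K)$ one may ``reorder its Sweedler legs freely'' is also false: $\Delta^{(n)}(\gamma)$ is invariant only under \emph{cyclic} permutations. The paper's argument is more direct and avoids conjugation entirely: by Definition \ref{def:hols}, $\hol^*_{p}(\pi(\alpha))$ is obtained by distributing the legs of $\Delta^{(n-1)}(\pi(\alpha))$ (with antipodes on reversed edges) over the edge slots of ${K^*}\exoo{E}$, a cyclic permutation of $p$ cyclically permutes this assignment, and cyclic invariance of $\Delta^{(n-1)}$ on $C(K)$ gives the equality on the nose.

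A second, smaller omission: Theorem \ref{lem:holtransform}, on which all of \eqref{eq:holtrafo} and \eqref{eq:holeq} rest, applies only to regular paths that do not traverse any cilia, whereas the lemma assumes only that $p$ is closed and regular. The paper first invokes Proposition \ref{lem:invspace} --- in the semisimple case $\mathcal A^*_{inv}$, hence $\Pi\circ\hol^*_p$ and the products of elements of $\hol^*_p(C(K))\subset\mathcal A^*_{inv}$, are independent of the choice of cilia --- to move the cilia so that $p$ and all its cyclic permutations satisfy that hypothesis. Without this preliminary step your applications of \eqref{eq:holtrafo} and \eqref{eq:holeq} to an arbitrary closed regular path are not justified.
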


\begin{proof} By Proposition \ref{lem:invspace} the map  $\Pi\circ \hol^*_p:K^*\to \mathcal A^*_{inv}$ is independent of the choice of the cilia, and one can assume without loss of generality that $p$ and its  cyclic permutations  satisfy the assumptions of Theorem \ref{lem:holtransform}. That   $\Pi\circ\hol^*_{p}=\hol^*_p\circ\pi$ is invariant under cyclic permutations of $p$ follows from Definition \ref{def:hols} and the fact that
 $\Delta^{(n)}(\alpha)$ is invariant under cyclic permutations for any  $\alpha\in C(K)$ and $n\in\NN$.
 From this and equation \eqref{eq:holeq}, we obtain for  $\alpha,\beta\in C(K)$
\begin{align}\label{eq:reghols}
\hol^*_p(\beta)\cdot \hol^*_p(\alpha)&=\langle \low\alpha 3\oo\low\beta 1, R^\inv\rangle\langle \low\alpha 1\oo\low\beta 2,R\rangle\, \hol^*_p(\low\alpha 2\low\beta 3)\\
&=\langle \low\alpha 1\oo\low\beta 1, R^\inv\rangle\langle \low\alpha 2\oo\low\beta 2,R\rangle\, \hol^*_p(\low\alpha 3\low\beta 3)=\hol^*(\alpha\beta).\nonumber
\end{align}
\end{proof}

Another important implication of 
Theorem \ref{lem:holtransform} is that  the linear subspace of connections that are flat at a given face  is invariant under gauge transformations.
 By dualising  equations \eqref{eq:holtrafo}, one obtains for any  face path $p$ based at $v$  that  is compatible with the ciliation
$\hol_p((h)_v\rhd k)=\epsilon(h) \hol_p(k)$  whenever $\hol_p(k)$ is central in $K$. In particular, this holds for connections that are flat at $f$.

\begin{corollary}\label{cor:gtrafoflat} Let $K$ be finite-dimensional semisimple and quasitriangular and $f\in \mathcal G(\Gamma)$ a  face path compatible with the ciliation. 
Then the linear subspace $\mathcal A_f$ of connections that are flat at $f$ is  invariant under  gauge transformations: $\mathcal G\rhd \mathcal A_f\subset \mathcal A_f$.
\end{corollary}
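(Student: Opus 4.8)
The plan is to reduce the statement entirely to the transformation law for $\hol_f$ under gauge transformations, which is already available in Theorem~\ref{lem:holtransform}, and then to exploit that a flat connection has \emph{central} curvature. First I would observe that $f$ is a closed face path, so it is based at a single vertex $v:=\st(f)=\ta(f)$, and that, not traversing any cilia, it is regular in the sense of Definition~\ref{def:regular} (delete the edges not occurring in $f$, double any edge traversed twice, and detach the consecutive, hence adjacent, edge ends; each step is legitimate precisely because $f$ avoids all cilia). Consequently Theorem~\ref{lem:holtransform} applies to $f$ with $u=w=v$, and formula \eqref{eq:holeq} records how $\hol^*_f$ transforms, with all intermediate vertices acting through their counit and only the basepoint acting nontrivially.

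The next step is to dualise \eqref{eq:holeq}. For $h=h^1\oo\cdots\oo h^{|V|}\in K\exoo{V}$ and $k\in K\exoo{E}$, pairing the identity for $\hol^*_f(\alpha)\lhd^* h$ against $k$ and using $\langle \hol^*_f(\alpha)\lhd^* h,k\rangle=\langle \hol^*_f(\alpha),h\rhd k\rangle=\langle \alpha,\hol_f(h\rhd k)\rangle$ together with the elementary rewriting $\langle \low\alpha1 S(\low\alpha3),h^v\rangle\,\langle\low\alpha2,x\rangle=\langle\alpha,\low{h^v}1\,x\,S(\low{h^v}2)\rangle$, I obtain in the case $t(f)<s(f)$
\[
\hol_f(h\rhd k)=\Big(\textstyle\prod_{i\neq v}\epsilon(h^i)\Big)\;\low{h^v}1\,\hol_f(k)\,S(\low{h^v}2),
\]
and the analogous formula with $\low{h^v}2\,\hol_f(k)\,S(\low{h^v}1)$ when $t(f)>s(f)$. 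In words, the curvature $\hol_f(k)$ transforms by the adjoint action of the gauge transformation at the basepoint, intermediate vertices contributing only a counit factor.

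Now I would impose flatness. If $k\in\mathcal A_f$ then $\hol_f(k)=\epsilon\exoo{E}(k)\,1$, which is a scalar multiple of $1\in K$ and therefore central; hence $\low{h^v}1\,\hol_f(k)\,S(\low{h^v}2)=\hol_f(k)\,\low{h^v}1 S(\low{h^v}2)=\epsilon(h^v)\,\hol_f(k)$, and in the case $t(f)>s(f)$ one uses $\low{h^v}2 S(\low{h^v}1)=\epsilon(h^v)$, which is valid since $S^2=\id$ for semisimple $K$. Either way $\hol_f(h\rhd k)=\epsilon\exoo{V}(h)\,\epsilon\exoo{E}(k)\,1$. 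To identify this scalar with $\epsilon\exoo{E}(h\rhd k)$, I would invoke that $\mathcal A^*=K^{*}\exoo{E}$ is a $K\exoo{V}$-module algebra whose unit $1\exoo{E}$ is the counit $\epsilon\exoo{E}$ on $K\exoo{E}$ and satisfies $1\exoo{E}\lhd^* h=\epsilon\exoo{V}(h)\,1\exoo{E}$; pairing with $k$ gives $\epsilon\exoo{E}(h\rhd k)=\epsilon\exoo{V}(h)\,\epsilon\exoo{E}(k)$. Thus $\hol_f(h\rhd k)=\epsilon\exoo{E}(h\rhd k)\,1$, i.e.\ $h\rhd k\in\mathcal A_f$, and since $h$ is arbitrary, $\mathcal G\rhd\mathcal A_f\subset\mathcal A_f$.

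The only genuinely delicate point is the very first step: justifying that the cilia-avoiding face path $f$ is regular, so that Theorem~\ref{lem:holtransform} is available, and then carrying out the dualisation with care about which vertex is the basepoint and about the $t(f)\lessgtr s(f)$ dichotomy. Once the adjoint-action formula is established, the centrality of the flat curvature makes the collapse to a pure counit factor immediate, and the remaining verification is purely formal.
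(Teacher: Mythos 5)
Your proof is correct and follows essentially the same route as the paper: the paper also obtains the corollary by dualising the gauge-transformation law of Theorem~\ref{lem:holtransform} to get $\hol_f(h\rhd k)=\prod_{i\neq v}\epsilon(h^i)\,\low{(h^v)}{1}\hol_f(k)S(\low{(h^v)}{2})$ (and its $t(f)>s(f)$ variant), and then uses that a flat curvature $\hol_f(k)=\epsilon\exoo{E}(k)\,1$ is central so the adjoint action collapses to $\epsilon\exoo{V}(h)$. Your write-up merely makes explicit the details the paper leaves implicit (regularity of a cilia-avoiding face path, the $S^2=\id$ point in the $t(f)>s(f)$ case, and the identification $\epsilon\exoo{E}(h\rhd k)=\epsilon\exoo{V}(h)\epsilon\exoo{E}(k)$).
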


\subsection{Curvature and flatness}
\label{subsec:curve}

In this section, we  focus on the curvatures of a Hopf algebra gauge theory for a finite-dimensional semisimple quasitriangular Hopf algebra $K$. 
We will show that for a ribbon graph $\Gamma$  that satisfies certain mild regularity conditions,
 the projection of a curvature on the gauge invariant subalgebra $\mathcal A^*_{inv}$ is central in $\mathcal A^*_{inv}$. We then construct a subalgebra $\mathcal A^{*flat}_{inv}$ which can be viewed as the Hopf algebra analogue of the  algebra of gauge invariant functions on the set of  flat connections.   This requires  some  technical results that describe the commutation relations of the holonomies with elements in $\mathcal A^*$.
The first step is to notice that the commutation relations of the holonomies  $\hol^*_p(\alpha)$ and $\hol^*_q(\beta)$ for paths $p,q\in\mathcal G(\Gamma)$ take a particularly 
simple form if $p$ and $q$ have no vertices in common or intersect only in their endpoints.

\begin{lemma}\label{lem:hols} Let $K$ be a finite-dimensional, semisimple quasitriangular Hopf algebra and $\Gamma$ a ciliated ribbon graph. Then the holonomy functions of the local Hopf algebra gauge theory satisfy: 
\begin{compactenum}
\item $\hol^*_p(\alpha)\cdot \hol^*_q(\beta)=\hol^*_q(\beta)\cdot \hol^*_p(\alpha)$ for paths $p,q\in \mathcal G(\Gamma)$  with no common vertex.\\[-1ex]
\item  If $p,q\in \mathcal G(\Gamma)$  intersect only in  their endpoints, the multiplication relations of the holonomies $\hol_p^*$ and $\hol_q^*$ are  given  by relations (c) to (l) in Proposition \ref{lem:algexplicit}.
\end{compactenum}
\end{lemma}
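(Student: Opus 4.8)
The plan is to reduce Lemma~\ref{lem:hols} to the already-proven results on graph operations and holonomy transformations, namely Theorem~\ref{th:holtheorem}, Theorem~\ref{lem:holtransform}, and the explicit multiplication relations of Proposition~\ref{lem:algexplicit}. The key conceptual point is that the holonomy functor $\hol^*$ intertwines graph operations with algebra homomorphisms, so that any configuration of two disjoint or endpoint-sharing paths can be transformed, via a finite sequence of edge deletions, doublings, detachings, and contractions, into the corresponding configuration of single edges, for which the multiplication relations are already known.

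First I would treat part~1. Let $p,q\in\mathcal G(\Gamma)$ be paths with no common vertex. Since $K$ is semisimple, the grouplike element satisfies $g=1$, and the commutativity claim for disjoint paths should hold without any regularity assumption on $p$ or $q$ individually---the point is that the supports of $\hol^*_p$ and $\hol^*_q$ involve disjoint sets of edges, and by condition~(ii) in Definition~\ref{def:cilpath}, or more directly by the locality axiom~2(ii) in Definition~\ref{def:gtheory}, variables associated with edges having no vertex in common commute. Concretely, $\hol^*_p(\alpha)$ is, by Definition~\ref{def:hols} and the convolution structure~\eqref{eq:convol}, a sum of products of variables $(\gamma)_e$ with $e$ an edge traversed by $p$, and similarly for $\hol^*_q(\beta)$. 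Since no edge of $p$ shares a vertex with any edge of $q$ (the paths have no common vertex), every such variable from $p$ commutes with every such variable from $q$ by Proposition~\ref{lem:algexplicit}~(c). Composing these pairwise commutations gives the desired identity $\hol^*_p(\alpha)\cdot\hol^*_q(\beta)=\hol^*_q(\beta)\cdot\hol^*_p(\alpha)$.

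For part~2, I would use the graph-transformation machinery exactly as in the proof of Theorem~\ref{lem:holtransform}. Suppose $p$ and $q$ intersect only in their endpoints. Using the functoriality expressed by diagram~\eqref{eq:diagg}, $\hol^*_{F(p')}=F^*\circ\hol^*_{p'}$ for the functor $F$ and algebra homomorphism $F^*$ associated with any graph operation. Since $F^*$ is an algebra homomorphism by Theorem~\ref{lem:algextend}, it suffices to realise $p$ and $q$ as the images $F(e')$, $F(f')$ of single edges $e',f'$ in a transformed ciliated ribbon graph $\Gamma'$. By deleting all edges not occurring in $p$ or $q$, doubling repeated edges, detaching adjacent edge ends, and contracting, one transforms $\Gamma$ into a graph $\Gamma'$ in which $p$ and $q$ are represented by single edges $e',f'$ that intersect only at their common endpoint vertices. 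The relative ordering and orientation of $e'$ and $f'$ at the shared endpoint(s) reproduces exactly one of the edge constellations catalogued in Proposition~\ref{lem:algexplicit}~(c)--(l). Pushing the relations forward through the algebra homomorphism $F^*$ then yields the stated multiplication relations for $\hol^*_p$ and $\hol^*_q$.

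The main obstacle I anticipate is the bookkeeping of the ciliation and edge ordering at shared endpoints: when $p$ and $q$ meet at a vertex $v$, the relative linear order of their incident edge ends determines which of the constellations~(c)--(l) applies, and one must verify that the detaching and doubling operations can always be arranged to produce a representative single-edge configuration with the matching ordering. This is precisely the role of the regularity and no-cilia-traversal conditions invoked in Theorem~\ref{lem:holtransform}; I would need to check that for paths meeting only at endpoints these conditions can be met, possibly after a harmless cyclic adjustment of cilia as justified by Proposition~\ref{lem:invspace}. Once the reduction to single edges is secured, the result is immediate from Proposition~\ref{lem:algexplicit}, so the substance of the argument lies entirely in this combinatorial reduction rather than in any new algebraic computation.
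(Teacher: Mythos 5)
Your argument for part~1 is correct and is essentially the paper's own: $\hol^*_p(K^*)$ is supported on the tensor factors of the edges traversed by $p$, and since no edge of $p$ shares a vertex with any edge of $q$, the claim follows from the commutation relation of Proposition~\ref{lem:algexplicit}~(c).

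For part~2, however, your route through the graph-operation machinery has a genuine gap. The reduction of a path to a single edge by deleting, doubling, detaching and contracting is exactly the content of Definition~\ref{def:regular} and is only available for \emph{regular} paths that do not traverse any cilia --- these are the hypotheses of Theorem~\ref{lem:holtransform}, and they are \emph{not} hypotheses of Lemma~\ref{lem:hols}. Two paths can intersect only in their endpoints while each individually has essential self-intersections (cf.\ the non-regular examples in Figure~\ref{fig:nonreg_paths}); for such a path the detaching/doubling procedure cannot terminate in a single edge, so your reduction simply does not apply. Your proposed remedy of adjusting cilia via Proposition~\ref{lem:invspace} does not close the gap either: that proposition only yields an isomorphism of the \emph{invariant} subalgebras $\mathcal A^*_{inv}$, whereas the relations (c)--(l) you need to transport are relations in the full algebra $\mathcal A^*$, which genuinely depends on the ciliation. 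There is also a secondary issue that you would need to reduce \emph{two} paths simultaneously to two single edges while controlling the edge-end orderings at the shared endpoints, which Theorem~\ref{lem:holtransform} does not address. The paper avoids all of this by proving part~2 by induction on the lengths of $p$ and $q$: the base case is the single-edge relations of Proposition~\ref{lem:algexplicit}, and the inductive step decomposes $p=p_1\circ p_2$, $q=q_1\circ q_2$, uses part~1 to commute the subholonomies that share no vertex, and applies the induction hypothesis to the pair of subpaths meeting at the common endpoint, together with the convolution formula $\hol^*_p=m_*\circ(\hol^*_{p_2}\oo\hol^*_{p_1})\circ\Delta$. You should replace your reduction by an argument of this inductive type.
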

\begin{proof}
  The  identities in 1.~follow from the fact that for a path $p=e_n^{\epsilon_n}\circ...\circ e_1^{\epsilon_1}$ one has $\hol^*_p(K^*)\subset \iota_{e_1...e_n}(K^{*\oo n})$  together with  the identities $(\alpha)_e\cdot (\beta)_f=(\beta)_f\cdot(\alpha)_e$ for all edges $e,f$ without a common vertex. 
  The claim 2.~follows by
 induction over the length of the path. It holds by definition for all paths $p=e^{\pm}$ and $q=f^{\pm 1}$ with $e,f\in E$. Suppose it holds for all paths of length $\leq n$ and let $p,q$ be paths of length $\leq n+1$. Then we can decompose
$p=p_1\circ p_2$ and $q=q_1\circ q_2$ with paths $p_i$, $q_i$ of length $\leq n$. 
Suppose at first that $\ta(p)\notin\{ \st(p), \st(q)\}$, $ \st(q)\not\in\{\ta(q), \st(p)\}$ and $t(p)<t(q)$. Then only $p_1$ and $q_1$ have a vertices in common, namely their target vertices. Hence $\hol^*_{p_2}(K^*)$ commutes with $\hol^*_{q_1}(K^*)$, $\hol^*_{q_2}(K^*)$ and $\hol^*_{q_2}(K^*)$ commutes with $\hol^*_{p_1}(K^*)$ in  $\mathcal A^*$  by 1. As  $t(p_1)<t(q_1)$, the induction hypothesis implies  $\hol^*_{q_1}(\beta)\cdot \hol^*_{p_1}(\alpha)
=\langle \low\alpha 1\oo\low\beta 1, R\rangle\,  \hol^*_{p_1}(\low\alpha 2) \cdot \hol^*_{q_1}(\low\beta 2)$ and 
\begin{align*}
& \hol^*_q({\beta})\cdot \hol^*_p(\alpha)= m_*(\hol^*_{q_2}(\low\beta 2)\oo\hol^*_{q_1}(\low\beta 1))\cdot m_* (\hol^*_{p_2}(\low\alpha 2)\oo\hol^*_{p_1}(\low\alpha 1))\\
&=\langle \low\alpha 1\oo\low\beta 1, R\rangle\,  m_*(\hol^*_{p_2}(\low\alpha 3)\oo\hol^*_{p_1}(\low\alpha 2))\cdot m_* (\hol^*_{q_2}(\low\beta 3)\oo\hol^*_{q_1}(\low\beta 2))\\
&=\langle \low\alpha 1\oo\low\beta 1, R\rangle\, \hol^*_p(\low\alpha 2)\cdot \hol^*_q(\low\beta 2),
\end{align*}
where $m_*: {K^*}\exoo{E}\oo {K^*}\exoo{E}\to {K^*}\exoo{E}$ is the multiplication of the algebra ${K^*}\exoo{E}$.
The  claims for paths $p$, $q$  that share starting vertices, that  are loops or have  two common  endpoints  follow by analogous computations. 
\end{proof}

We can now use Lemma \ref{lem:hols} to explicitly determine the commutation relations of the holonomy variables of a  face path $f$ with elements of the algebra $\mathcal A^*$.  In  this, we restrict attention to face paths that are compatible  with the ciliation.

\begin{lemma}\label{lem:holcomp}  Let  $K$ be finite-dimensional semisimple quasitriangular Hopf algebra and $\Gamma$ a ciliated a ribbon graph without univalent vertices. If   $f$ is  a face path of $\Gamma$ that  is compatible with the ciliation, then $\hol^*_f(\alpha)\cdot (\beta)_e=(\beta)_e\cdot \hol^*_f(\beta)$   for all edges $e$ with $\st(f)\notin \{\st(e), \ta(e)\}$ and $\alpha,\beta\in K^*$.
For  edges $e$ with $\st(f)\in \{\st(e), \ta(e)\}$ 
 the commutation relations between the variables $\hol^*_f(\alpha)$ and $(\beta)_e$ are given by the expressions in Proposition \ref{lem:algexplicit} (b) and (e)-(j).
\end{lemma}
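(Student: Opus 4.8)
The plan is to reduce the statement to the commutation relations of Proposition~\ref{lem:algexplicit} by treating the face holonomy as a product of edge variables and transporting $(\beta)_e$ through it leg by leg. First I would note that $(\beta)_e=\hol^*_e(\beta)$ is the holonomy of the single-edge path $e$, so the assertion concerns only the two holonomy functions $\hol^*_f$ and $\hol^*_e$. Since $f$ is a face path compatible with the ciliation, Lemma~\ref{lem:specialpath} lets me compute $\hol^*_f$ using the comultiplication dual to $\mathcal{A}^*$; consequently $\hol^*_f(\alpha)$ is literally an iterated product in $\mathcal{A}^*$ of the edge-end variables traversed by $f$, with $\alpha$ distributed by the iterated comultiplication of $K^*$. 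Using the involution $T^*$ from \eqref{eq:invol} to reverse orientations, I may assume that all edge ends under consideration are incoming, as is done throughout the paper.

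Consider first the case $\st(f)\notin\{\st(e),\ta(e)\}$. I would cut the closed path $f$ at each occurrence where it passes through $\st(e)$ or $\ta(e)$, writing $f=f_k\circ\cdots\circ f_1$ so that each segment $f_j$ meets $e$ at most in its two endpoints. By functoriality of $\hol^*$, the function $\hol^*_f(\alpha)$ is then the corresponding product of the $\hol^*_{f_j}$. Segments $f_j$ not touching $e$ commute with $(\beta)_e$ by Lemma~\ref{lem:hols}.1, while for a segment meeting $e$ only at an endpoint the relation between $\hol^*_{f_j}$ and $(\beta)_e$ is one of the cases (c)--(l) of Proposition~\ref{lem:algexplicit} by Lemma~\ref{lem:hols}.2. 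The crucial point is the behaviour at a shared vertex $v\in\{\st(e),\ta(e)\}$ with $v\neq\st(f)$: there $f$ enters along a target end $t(e_i^{\epsilon_i})$ and leaves along the adjacent starting end $s(e_{i+1}^{\epsilon_{i+1}})$, and compatibility with the ciliation, $s(e_{i+1}^{\epsilon_{i+1}})<t(e_i^{\epsilon_i})$, together with maximal turning forces these two ends to lie on the same side of the edge end of $e$ at $v$. Transporting $(\beta)_e$ past the incoming and the outgoing leg then produces mutually inverse braiding factors, which cancel by the identity $R\cdot\Delta\cdot R^\inv=\Delta^{op}$. After all such cancellations the net commutator of $\hol^*_f(\alpha)$ with $(\beta)_e$ is trivial, which is the claim.

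This cancellation is the main obstacle: one must verify that at \emph{every} interior pass-through the $R$-matrix contributions from the two adjacent legs combine to the identity, and this is exactly where the hypotheses that $f$ turns maximally left, is compatible with the ciliation, and that $\Gamma$ has no univalent vertices (so that at each such $v$ the two legs are genuinely distinct and bracket no end of $e$) enter; the quasitriangular identities $R\cdot\Delta\cdot R^\inv=\Delta^{op}$ and the QYBE of Lemma~\ref{lem:quasprop} do the algebraic work. For the remaining case $\st(f)\in\{\st(e),\ta(e)\}$ the same cancellation annihilates every contribution except the one at the basepoint $\st(f)$, so that $\hol^*_f$ behaves there exactly like the holonomy of a loop based at $\st(f)$. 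The surviving relation between this loop variable and $(\beta)_e$ is then dictated by the linear ordering at $\st(f)$ of the ends of $e$ relative to the first end $s(e_1^{\epsilon_1})$ and the last end $t(e_n^{\epsilon_n})$ of $f$, and reading off this ordering reproduces precisely the relations of Proposition~\ref{lem:algexplicit}~(b) and (e)--(j). Finally, the orientation-reversed configurations are obtained by applying the involution $T^*$, just as in the proof of Proposition~\ref{lem:algexplicit}.
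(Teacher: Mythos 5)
Your overall strategy---expanding $\hol^*_f(\alpha)$ as an iterated product of edge-end variables via Lemma~\ref{lem:specialpath} and transporting $(\beta)_e$ through it leg by leg, with the braiding factors at each interior pass-through cancelling---is sound for the configurations where $e$ is \emph{not} an edge traversed by $f$, and in that regime it is a direct-computation version of what the paper achieves by applying the graph operations of Definition~\ref{def:graphtrafos} (the cancellation you describe is exactly what is packaged into the statement that the detaching map $w^*_{e_1e_2}$ is an algebra homomorphism in Theorem~\ref{lem:algextend}). Two smaller imprecisions: the cancellation at an interior vertex rests on $(S\oo\id)(R)=R^\inv$, i.e.\ on the identity $\langle\low\alpha 1\oo S(\low\beta 2),R\rangle\langle\low\alpha 2\oo\low\beta 1,R\rangle=\epsilon(\alpha)\epsilon(\beta)$, rather than on $R\cdot\Delta\cdot R^\inv=\Delta^{op}$ alone; and cutting $f$ into open segments $f_j$ and then multiplying the $\hol^*_{f_j}$ \emph{inside} $\mathcal A^*$ needs justification, since Lemma~\ref{lem:specialpath} is stated only for the full face path and the holonomy functor composes segments with the tensor-product multiplication $m_*$, not with the product of $\mathcal A^*$.

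The genuine gap is the case where $e$ is itself an edge traversed by $f$ (once or even twice) while $\st(f)\notin\{\st(e),\ta(e)\}$. The lemma asserts that $(\beta)_e$ then still commutes with $\hol^*_f(\alpha)$, but your case analysis does not cover it: the segment of $f$ obtained by cutting at $\st(e)$ and $\ta(e)$ is $e^{\pm1}$ itself, which neither ``has no vertex in common with $e$'' (Lemma~\ref{lem:hols}, part 1) nor ``intersects $e$ only in its endpoints'' (Lemma~\ref{lem:hols}, part 2); the relevant relation is the self-commutation (a) or (b) of Proposition~\ref{lem:algexplicit}, which is not among (c)--(l) and is not trivial. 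Showing that this nontrivial self-braiding combines with the factors produced by the two neighbouring legs of $f$ at $\st(e)$ and at $\ta(e)$ to yield exact commutation is the computational heart of the paper's proof (its case (iv), and the analogous analysis in case (v) when $\st(f)\in\{\st(e),\ta(e)\}$ and $e$ is the first or last edge of $f$, which your second paragraph also glosses over). Without an explicit verification of this configuration your argument does not establish the first assertion of the lemma.
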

\begin{proof}
The idea of the proof is to apply the graph operations from Definition \ref{def:graphtrafos} to simplify the face path $f$. 
Let $\Gamma'$ be obtained from $\Gamma$ by one of the graph operations in Definition \ref{def:graphtrafos},  denote by $F:\mathcal G(\Gamma')\to\mathcal G(\Gamma)$  the associated functor  from Definition \ref{def:graph_functor} 
and by  $F^*:\mathcal A^*_{\Gamma'}\to\mathcal A^*_{\Gamma}$ the associated algebra homomorphism from Theorem \ref{th:alghomfinal}.
Then by  Theorem \ref{th:holtheorem} one has  $\hol^*_{F(f)}=F^*\circ \hol^*_{f}$.  As $F^*$  is an algebra homomorphism, this implies
$$
\hol^*_{F(f)}(\alpha)\cdot \hol^*_{F(e)}(\beta)=F^*(\hol^*_{f}(\alpha)\cdot \hol^*_{f}(\beta))\qquad\forall \alpha,\beta\in K^*
$$
for all edges $e$ in $\Gamma$. 
 It is therefore sufficient to show that by applying graph operations from Definition \ref{def:graphtrafos} one can transform $\Gamma$ into a ribbon graph $\Gamma'$ with face path $f'\in \mathcal G(\Gamma')$ and an edge $e'\in E(\Gamma')$  satisfying $f=F(f')$ and $e=F(e')$ such that  $\hol^*_{f'}(\alpha)$ and $(\beta)_{e'}$ satisfy the  commutation relations in the lemma. 
To construct such a graph $\Gamma'$, a face path $f'\in\mathcal G(\Gamma')$ and an edge $e'\in E(\Gamma')$, note 
that each
 edge 
 $e\in E(\Gamma)$ satisfies exactly one of the
following
\begin{compactenum}[(i)]
\item $e$ and $f$ have no vertex in common.
\item  $e$ is not contained in $f$,  shares at least one  vertex with $f$, but  not the  vertex  $\st(f)=\ta(f)$.
\item  $e$ is not contained in $f$ and shares the  vertex $\st(f)=\ta(f)$ with $f$.
\item $e$ is contained in $f$ but does not coincide with the first or last edge  of $f$.
\item $e$ is the first or last edge in $f$.
 \end{compactenum}
In case (i) the claim follows  from Lemma \ref{lem:hols}, 1. So we suppose that $e$ satisfies one of the assumptions (ii)-(v). 
We first delete all edges in $E(\Gamma)\setminus\{e\}$ that do not occur in $f$ and double all edges that are traversed twice by $f$.  Denote by $\Gamma'_1$ the resulting ciliated ribbon graph and by  $F_1:\mathcal G(\Gamma'_1)\to\mathcal G(\Gamma)$ the associated  functor  from Definition \ref{def:graph_functor}. As $f$ is a face path that is compatible with the ciliation, there is a face path $f'_1\in \mathcal G(\Gamma'_1)$  that is compatible with the ciliation  and traverses each edge of $\Gamma'_1$ at most once with $F_1(f'_1)=f$. If $e$ does not occur in $f$ or is traversed only once by $f$, there is a unique edge  $e'_1\in E(\Gamma'_1)$ with $F_1(e'_1)=e$. If $e$ is traversed twice by $f$, there are two distinct edges $e'_1,e''_1\in F(\Gamma'_1)$ with $F_1(e'_1)=F_1(e''_1)=e$. 
Suppose that  $f'_1\in \mathcal G(\Gamma'_1)$ is given by  $f'_1=e_n^{\epsilon_n}\circ\ldots\circ e_1^{\epsilon_1}$ with $e_1,..,e_n\in E(\Gamma'_1)$. 
As each edge of $\Gamma'_1$ is traversed at most once by $f'_1$ we can ensure that $\epsilon_1=...=\epsilon_n=1$  by reversing the edge orientation.
As $f'_1$ is a face path that is compatible with the ciliation, it follows that that any  two consecutive edge ends $s(e_{i+1})$ and $t(e_i)$ in
the associated path $G_{\Gamma'_1}(f'_1)=t(e_n)\circ s(e_n)\circ \ldots \circ t(e_1)\circ s(e_1)$  
  are adjacent with $s(e_{i+1})<t(e_i)$.   
This allows us to apply the operation of detaching adjacent edge ends to $s(e_{i+1})$ and $t(e_i)$ whenever 
 their shared vertex is of valence $\geq 3$.  In case (ii) and (iii) we apply this detaching operation to all vertices $\st(e_{i})$ with $i\in\{2,...,n\}$ that are of valence $\geq 3$. In cases (iv) and (v)  we apply them to all such vertices except $\st(e'_1)$ and $\ta(e'_1)$. This yields a ciliated ribbon graph $\Gamma'_2$ in which every vertex is at most bivalent. Denoting by $F_2:\mathcal G(\Gamma'_2)\to\mathcal G(\Gamma'_1)$ the associated  functor  from Definition \ref{def:graph_functor}, we find that there is a face path $f'_2\in\mathcal G(\Gamma'_2)$ that is compatible with the ciliation and an edge $e'_2\in E(\Gamma'_2)$ with $f'_1=F_2(f'_2)$, $e'_1=F_2(e'_2)$ and such that every edge in $f'_2$ is traversed exactly once by $f'_2$.

In case (ii)  $f'_2$ and $e'_2$  have no vertices in common and hence  $\hol^*_{f'_2}(\alpha)\cdot \hol^*_{e'_2}(\beta)=\hol^*_{e'_2}(\beta)\cdot \hol^*_{f'_2}(\alpha)$ for all $\alpha,\beta\in K^*$ by  Lemma \ref{lem:hols}, 1. This  proves the claim in case (ii).
 In case (iii) the paths $f'_2$ and $e'_2$ intersect only in their starting or target vertices. By Lemma \ref{lem:hols}, 2.
the commutation relations of the elements $\hol^*_{f'_2}(\alpha)$ and $(\beta)_{e'_2}$ in $\mathcal A^*_{\Gamma'}$ are then given  by the expressions in Proposition  \ref{lem:algexplicit} (e) to (j), which proves the claim in case (iii).   
In case (iv) $f'_2$ traverses each edge of $\Gamma'_2$  exactly once and $e'_2$ is not the first or last edge in $f'_2$. 
To compute the commutation relations of $\hol^*_{f'_2}(\alpha)$ and $(\beta)_{e'_2}$
suppose without restriction of generality  that
$f'_2=e_n\circ ...\circ e_1$ and $e'_2=e_i$ with $e_j\in E(\Gamma'_2)$ for $j\in\{1,...,n\}$
and $i\in\{2,...,n-1\}$. 
As $f'_2$ is a face path that is compatible with the ciliation, any  two consecutive edge ends $s(e_{i+1})$ and $t(e_i)$ in the associated path
 $G_{\Gamma'_1}(f'_2)=t(e_n)\circ s(e_n)\circ \ldots \circ t(e_1)\circ s(e_1)$  
  are adjacent with $s(e_{i+1})<t(e_i)$. 
As the vertices $\ta(e_i)$ and $\st(e_i)$ are  bivalent and $f'_2$ traverses the edge $e_i$ only once, it is then sufficient to consider the paths $q_\circ=s(e_{i+1})\circ t(e_i)\circ s(e_i)\circ t(e_{i-1})$ and  $t(e_i)\circ s(e_i)$ in $\Gamma'_{2\,\circ}$ and to show that
$\hol^*_{q_\circ}(\alpha)\cdot \hol^*_{t(e_i)\circ s(e_i)}(\beta)=\hol^*_{t(e_i)\circ s(e_i)}(\beta)\cdot \hol^*_{q_\circ}(\alpha)$.  
As $f'_2$ is  a face path  that is compatible with the ciliation, one has $t(e_{i-1})> s(e_i)$ and $t(e_i)>s(e_{i+1})$. If the vertices $s(e_i)$ and $\ta(e_i)$ are distinct,  we obtain  
 with the convention  $\sigma(t(e_i))=0$, $\sigma(s(e_i))=1$ 
\begin{align*}
&\hol^*_{q_\circ}(\alpha)\cdot \hol^*_{t(e_i)\circ s(e_i)}(\beta)=(\low\alpha 4\oo\low\alpha 3\oo\low\alpha 2\oo\low\alpha 1)_{t(e_{i-1})s(e_i)t(e_i)s(e_{i+1})}\cdot (\low\beta 2\oo\low\beta 1)_{s(e_i) t(e_i)}\\
&= (\low\alpha 4\oo\low\alpha 3)_{t(e_{i-1})s(e_i)}\cdot (\low\beta 2)_{s(e_i)}\cdot (\low\alpha 2\oo\low\alpha 1)_{t(e_i)s(e_{i+1})}\cdot (\low\beta 1)_{t(e_i)}\\
&=(\low\alpha 3)_{s(e_i)}\cdot (\low\alpha 4)_{t(e_{i-1})}\cdot  (\low\beta 2)_{s(e_i)}\cdot (\low\alpha 1)_{s(e_{i+1})}\cdot (\low\alpha 2)_{t(e_i)} \cdot (\low\beta 1)_{t(e_i)} \\
&=\langle S(\beta_{(4)})\oo\alpha_{(5)} ,R\rangle\langle \beta_{(1)}\oo\alpha_{(2)} ,R\rangle\; (\beta_{(3)}\low\alpha 4)_{s(e_i)}\cdot (\low\alpha 6)_{t(e_{i-1})}\cdot (\low\alpha 1)_{s(e_{i+1})}\cdot (\beta_{(2)}\alpha_{(3)})_{t(e_i)}\\
&=\langle S(\beta_{(3)})\oo\alpha_{(4)} ,R\rangle\langle \beta_{(2)}\oo\alpha_{(3)} ,R\rangle\; (\low\alpha 5\beta_{(4)})_{s(e_i)}\cdot (\low\alpha 6)_{t(e_{i-1})}\cdot (\low\alpha 1)_{s(e_{i+1})}\cdot (\alpha_{(2)}\beta_{(1)})_{t(e_i)}\\
&=(\low\alpha 3\beta_{(2)})_{s(e_i)}\cdot (\low\alpha 4)_{t(e_{i-1})}\cdot (\low\alpha 1)_{s(e_{i+1})}\cdot (\alpha_{(2)}\beta_{(1)})_{t(e_i)},\\
\intertext{}
& \hol^*_{t(e_i)\circ s(e_i)}(\beta)\cdot \hol^*_{q_\circ}(\alpha)=(\low\beta 2\oo\low\beta 1)_{s(e_i) t(e_i)}\cdot (\low\alpha 4\oo\low\alpha 3\oo\low\alpha 2\oo\low\alpha 1)_{t(e_{i-1})s(e_i)t(e_i)s(e_{i+1})}\\
&= (\low\beta 2)_{s(e_i)}\cdot  (\low\alpha 4\oo\low\alpha 3)_{t(e_{i-1})s(e_i)}\cdot (\low\beta 1)_{t(e_i)}\cdot  (\low\alpha 2\oo\low\alpha 1)_{t(e_i)s(e_{i+1})}\\
&= (\low\beta 2)_{s(e_i)}\cdot  (\low\alpha 3)_{s(e_i)}\cdot (\low\alpha 4)_{t(e_{i-1})}  \cdot (\low\beta 1)_{t(e_i)} 
 \cdot (\low\alpha 1)_{s(e_{i+1})}\cdot (\low\alpha 2)_{t(e_i)} \\
 &=\langle S(\alpha_{(2)})\oo\beta_{(1)}, R\rangle\langle \alpha_{(3)}\oo\beta_{(2)}, R\rangle\;    (\low\alpha 5\low\beta 4)_{s(e_i)}\cdot 
  (\low\alpha 6)_{t(e_{i-1})}  \cdot (\alpha_{(4)}\beta_{(3)})_{t(e_i)} 
 \cdot (\low\alpha 1)_{s(e_{i+1})} \\
 &= (\low\alpha 3\low\beta 2)_{s(e_i)}\cdot 
  (\low\alpha 4)_{t(e_{i-1})}  \cdot (\alpha_{(2)}\beta_{(1)})_{t(e_i)} 
 \cdot (\low\alpha 1)_{s(e_{i+1})} =\hol^*_{q_\circ}(\alpha)\cdot \hol^*_{t(e_i)\circ s(e_i)}(\beta).
\end{align*}
If $e_i$ is a loop, then the fact that $f'_2$ is a face path implies that $\hol^*_{s(e_{i+1})\circ t(e_i)}(\alpha)$ commutes with $(\beta)_{s(e_i)}$ and $(\beta)_{t(e_{i-1})}$ and 
$\hol^*_{s(e_{i})\circ t(e_{i-1})}(\alpha)$ commutes with $(\beta)_{s(e_{i+1})}$ and $(\beta)_{t(e_i)}$.  An analogous computation then yields the same result, and this  proves the claim in case (iv).

In case (v),  the fact that $f$  is compatible with the ciliation implies that  $f$ is cyclically reduced if $\Gamma$ does not have any univalent vertices. 
Hence  $f'_2$ is either of the form (a) $f'_2={e'_2}^{\pm 1}$, (b)  $f'_2={e_2'}^{\pm 1}\circ q$ or (c) $f'_2=q\circ {e'_2}^{\pm 1}$ such that $e'_2$ is not the first or last edge in $q$.  In case (a) the claim follows directly from  Proposition \ref{lem:algexplicit} (b). In cases (b) and (c), 
we can assume for simplicity that $f'_2$ is of the form (b) $f'_2=e'_2\circ q$ or (c) $f'_2=q\circ e'_2$ since the other cases are obtained by applying antipode to $e'_2$. As $f'_2$ is  compatible with the ciliation,  the ordering of the edge ends is given by  $s(e'_2)<t(q)$, $s(q)<t(e'_2)$ in case (b) and by $s(q)<t(e'_2)$, $s(e'_2)<t(q)$ in case (c). 
To compute the commutation relations, consider the elements $\hol^*_{f'_\circ}(\alpha)$ and $\hol^*_{t(e_2')\circ s(e_2')}(\beta)$ in $\mathcal A^*_{\Gamma'_{2\circ}}$ with $f'_\circ= t(e_2')\circ s(e_2')\circ t(q')\circ s(q')$ in  (b) and $f'_\circ=t(q')\circ s(q')\circ t(e_2')\circ s(e_2')$ in  (c).  In case (b), we obtain with   $\sigma(t(e'_2))=0$ and $\sigma(s(e'_2))=1$ 
 \begin{align*}
 &\hol^*_{f'_\circ}(\alpha)\cdot \hol^*_{t(e'_2)\circ s(e'_2)}(\beta)=(\low\alpha 1\oo\low\alpha 2\oo\low\alpha 3\oo\low\alpha 4)_{ t(e'_2)\circ s(e'_2)\circ t(q')\circ s(q')}\cdot  (\low\beta 1\oo\low\beta 2)_{ t(e'_2)\circ s(e'_2)}\\
 &=(\low\alpha 2)_{s(e'_2)}\cdot (\low\alpha 3)_{t(q')}\cdot (\low\beta 2)_{s(e'_2)}\cdot (\low\alpha 4)_{s(q')}\cdot (\low\alpha 1)_{t(e'_2)}\cdot (\low\beta 1)_{t(e'_2)}\\
 &=\langle \low\beta 1\oo \low\alpha 1, R\rangle \langle S(\low\beta 4)\oo\low\alpha 4, R\rangle \; (\low\beta 3\low\alpha 3)_{s(e'_2)}\cdot (\low\alpha 5)_{t(q')}\cdot (\low\alpha 6)_{s(q')}\cdot (\low\beta 2\low\alpha 2)_{t(e'_2)}\\
  &=\langle \low\beta 2\oo \low\alpha 2, R\rangle \langle S(\low\beta 3)\oo\low\alpha 3, R\rangle \; (\low\alpha 4\low\beta 4)_{s(e'_2)}\cdot (\low\alpha 5)_{t(q')}\cdot (\low\alpha 6)_{s(q')}\cdot (\low\alpha 1\low\beta 1)_{t(e'_2)}\\
  &=(\low\alpha 2\low\beta 2)_{s(e'_2)}\cdot (\low\alpha 3)_{t(q')}\cdot (\low\alpha 4)_{s(q')}\cdot (\low\alpha 1\low\beta 1)_{t(e'_2)}\\
 & \hol^*_{t(e'_2)\circ s(e'_2)}(\beta)\cdot \hol^*_{f'_\circ}(\alpha)=(\low\beta 1\oo\low\beta 2)_{ t(e'_2)\circ s(e'_2)}\cdot (\low\alpha 1\oo\low\alpha 2\oo\low\alpha 3\oo\low\alpha 4)_{ t(e'_2)\circ s(e'_2)\circ t(q')\circ s(q')}\\
 &= (\low\beta 2)_{s(e'_2)}\cdot (\low\alpha 2)_{s(e'_2)}\cdot (\low\alpha 3)_{t(q')}\cdot (\low\beta 1)_{t(e'_2)} \cdot (\low\alpha 4)_{s(q')}\cdot (\low\alpha 1)_{t(e'_2)}\\
 &=\langle S(\low\alpha 6)\oo\low\beta 1, R\rangle \langle \low\alpha 1\oo \low\beta 2, R\rangle \;   (\low\alpha 3\low\beta 4)_{s(e'_2)}\cdot (\low\alpha 4)_{t(q')}\cdot (\low\alpha 5)_{s(q')}\cdot (\low\alpha 2\low\beta 3)_{t(e'_2)}\\
  &=\langle S(\low\alpha 3)\oo\low\beta 1, R\rangle \langle \low\alpha 1\oo \low\beta 2, R\rangle \; \hol^*_{f'_\circ}(\low \alpha 2)\cdot \hol^*_{t(e'_2)\circ s(e'_2)}(\low \beta 3)  \end{align*}
This agrees with the formula in Proposition \ref{lem:algexplicit} (g) if we apply the antipode  to the element $\alpha$ in the formula in Proposition \ref{lem:algexplicit} (g)
to take into account   that the loop $f$ there has the opposite orientation. In case (c) we obtain with   $\sigma(t(e'_2))=0$, $\sigma(s(e'_2))=1$ 
 \begin{align*}
 & \hol^*_{t(e'_2)\circ s(e'_2)}(\beta)\cdot \hol^*_{f'_\circ}(\alpha)=(\low\beta 1\oo\low\beta 2)_{ t(e'_2)\circ s(e'_2)}\cdot(\low\alpha 1\oo\low\alpha 2\oo\low\alpha 3\oo\low\alpha 4)_{t(q')\circ s(q')\circ t(e'_2)\circ s(e'_2)}\\
 &=(\low\beta 2)_{s(e'_2)}\cdot(\low\alpha 4)_{s(e'_2)}\cdot (\low\alpha 1)_{t(q')}\cdot (\low\beta 1)_{t(e'_2)}\cdot  (\low\alpha 2)_{s(q')}\cdot (\low\alpha 3)_{t(e'_2)}\\
 &=\langle \low\beta 1\oo S(\low \alpha 3), R\rangle \langle  \low\beta 2\oo \low\alpha 4, R\rangle \; (\low\alpha 6\low\beta 4)_{s(e'_2)}\cdot (\low\alpha 1)_{t(q')}\cdot (\low\alpha 2)_{s(q')}\cdot (\low\alpha 5\low\beta 3)_{t(e'_2)}\\
 &= (\low\alpha 4\low\beta 2)_{s(e'_2)}\cdot (\low\alpha 1)_{t(q')}\cdot (\low\alpha 2)_{s(q')}\cdot (\low\alpha 3\low\beta 1)_{t(e'_2)}\\
 &\hol^*_{f'_\circ}(\alpha)\cdot \hol^*_{t(e'_2)\circ s(e'_2)}(\beta)=(\low\alpha 1\oo\low\alpha 2\oo\low\alpha 3\oo\low\alpha 4)_{t(q')\circ s(q')\circ t(e'_2)\circ s(e'_2)}\cdot  (\low\beta 1\oo\low\beta 2)_{ t(e'_2)\circ s(e'_2)}\\
 &=(\low\alpha 4)_{s(e'_2)}\cdot (\low\alpha 1)_{t(q')}\cdot (\low\beta 2)_{s(e'_2)}\cdot (\low\alpha 2)_{s(q')}\cdot (\low\alpha 3)_{t(e'_2)}\cdot (\low\beta 1)_{t(e'_2)}\\
 &=\langle \low\beta 1\oo \low \alpha 4, R\rangle \langle  S(\low\beta 4)\oo \low\alpha 1, R\rangle \; (\low\beta 3\low\alpha 6)_{s(e'_2)}\cdot (\low\alpha 2)_{t(q')}\cdot (\low\alpha 3)_{s(q')}\cdot (\low\beta 2\low\alpha 5)_{t(e'_2)}\\
  &=\langle \low\beta 3\oo \low \alpha 6, R\rangle \langle  S(\low\beta 4)\oo \low\alpha 1, R\rangle \; (\low\alpha 5\low\beta 2)_{s(e'_2)}\cdot (\low\alpha 2)_{t(q')}\cdot (\low\alpha 3)_{s(q')}\cdot (\low\alpha 4\low\beta 1)_{t(e'_2)}\\
  &=\langle\low\beta 2\oo\low \alpha 3, R\rangle\langle S(\low\beta 3)\oo\low\alpha 1, R\rangle\;  \hol^*_{t(e'_2)\circ s(e'_2)}(\low \beta 1)\cdot \hol^*_{f'_\circ}(\low \alpha 2)
 \end{align*}
This agrees with  Proposition \ref{lem:algexplicit} (e) if we apply the antipode  to the elements $\alpha$, $\beta$ in formula in Proposition  \ref{lem:algexplicit} (e) to take into account   that the loop $f$ and the edge $e$ in  Proposition \ref{lem:algexplicit} (e) have the opposite orientation.
This proves the claim in case (v) and concludes the proof.
\end{proof}

Lemma \ref{lem:holcomp} states that  the only  variables $(\beta)_e$  which  do not commute with the holonomy of the face path $f$ 
 are those of edges $e\in E$ incident at the  vertex $\st(f)=\ta(f)$. This suggests that  the commutation relations simplify
further if one imposes  gauge invariance at this vertex.  Indeed, one finds that the projection of the holonomies of such face paths are central in  $\mathcal A^*$.

\begin{proposition}\label{lem:facecentral} 
Let 
$f$  be a face path that is compatible with the ciliation.
Then  $\Pi\circ\hol^*_f(\alpha)$  is central in $\mathcal A^*$  for all $\alpha\in K^*$ and  invariant under cyclic permutations of $f$.
The map $\hol^*_f: K^*\to\mathcal A^*$  restricts to an  algebra homomorphism $\hol^*_f: C(K)\to Z(\mathcal A^*_{inv})$. 
\end{proposition}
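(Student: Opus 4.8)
The plan is to deduce all three assertions from the commutation relations for holonomies in Lemma~\ref{lem:holcomp} together with the behaviour of $\hol^*_f$ on the character algebra recorded in Lemma~\ref{lem:reghols}. Since $f$ is compatible with the ciliation, the ordering convention gives $s(f)<t(f)$, so $f$ is a cyclically reduced regular path based at a single vertex $v=\st(f)$ and both Lemmas apply. The first step is the reduction $\Pi\circ\hol^*_f=\hol^*_f\circ\pi$ of Lemma~\ref{lem:reghols}, where $\pi\colon K^*\to C(K)$ is the projector onto the character algebra: it immediately yields the cyclic invariance of $\Pi\circ\hol^*_f(\alpha)$, and it reduces the centrality claim to showing that $\hol^*_f(\gamma)$ is central in $\mathcal{A}^*$ for every $\gamma\in C(K)$. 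Because $\mathcal{A}^*$ is generated as an algebra by the edge variables $(\beta)_e$, it suffices to check that $\hol^*_f(\gamma)$ commutes with each such generator.

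For an edge $e$ not incident at $v$, Lemma~\ref{lem:holcomp} gives $\hol^*_f(\gamma)\cdot(\beta)_e=(\beta)_e\cdot\hol^*_f(\gamma)$ directly. For an edge $e$ incident at $v$, Lemma~\ref{lem:holcomp} identifies the commutation relation of $\hol^*_f(\gamma)$ and $(\beta)_e$ with one of the loop--edge relations of Proposition~\ref{lem:algexplicit}(b),(e)--(j), in which $\gamma$ plays the role of the loop variable, so the task is to show that each of these relations collapses to commutativity once $\gamma\in C(K)$. I would do this by pairing the relation against an arbitrary $x\oo y\in K\oo K$ and simplifying: the independent copies of the universal $R$-matrix appearing in the relation can be recombined into a single coproduct leg using $(\Delta\oo\id)(R)=R_{13}R_{23}$ and $(\id\oo\Delta)(R)=R_{13}R_{12}$, after which the surviving factor of $\gamma$ appears conjugated as $\gamma(\low h1\, x\, S(\low h2))$. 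Here the character property of $\gamma$ enters decisively: since $K$ is semisimple we have $S^2=\id$, so $\gamma\in C(K)$ is $\ad$-invariant, $\gamma(\low h1\, x\, S(\low h2))=\epsilon(h)\,\gamma(x)$, and the residual $R$-matrix factor is killed by $(\epsilon\oo\id)(R)=1$. This is the technical heart of the argument, and carrying out the recombination of $R$-matrices uniformly across the constellations (b),(e)--(j) is the step I expect to be most laborious; the case $t(f)<s(f)<t(e)$ of Proposition~\ref{lem:algexplicit}(g) already exhibits the mechanism in full, and the remaining cases follow the same pattern. This establishes that $\hol^*_f(\gamma)$ is central in $\mathcal{A}^*$, hence in the subalgebra $\mathcal{A}^*_{inv}$.

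It remains to treat the restriction $\hol^*_f\colon C(K)\to Z(\mathcal{A}^*_{inv})$. First, $\hol^*_f$ maps $C(K)$ into $\mathcal{A}^*_{inv}$: for $\gamma\in C(K)$ we have $\pi\gamma=\gamma$, so $\Pi\,\hol^*_f(\gamma)=\hol^*_f(\pi\gamma)=\hol^*_f(\gamma)$, placing $\hol^*_f(\gamma)$ in $\mathrm{Im}\,\Pi=\mathcal{A}^*_{inv}$ by Corollary~\ref{lem:ginvproj}; combined with the centrality just proved, this gives $\hol^*_f(C(K))\subseteq Z(\mathcal{A}^*_{inv})$. Finally, to upgrade the anti-homomorphism property to a homomorphism I will use centrality rather than any separate commutativity statement for $C(K)$. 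Since $s(f)<t(f)$, Lemma~\ref{lem:reghols} gives $\hol^*_f(\gamma)\cdot\hol^*_f(\delta)=\hol^*_f(\delta\gamma)$ for all $\gamma,\delta\in C(K)$. Applying this with the roles of $\gamma$ and $\delta$ interchanged and using that $\hol^*_f(\gamma)$ and $\hol^*_f(\delta)$ commute (centrality) yields $\hol^*_f(\delta\gamma)=\hol^*_f(\gamma\delta)$, whence $\hol^*_f(\gamma)\cdot\hol^*_f(\delta)=\hol^*_f(\gamma\delta)$. Thus $\hol^*_f$ restricts to an algebra homomorphism $C(K)\to Z(\mathcal{A}^*_{inv})$, completing the proof.
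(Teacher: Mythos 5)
Your proposal is correct and follows essentially the same route as the paper: reduce to generators via Lemma~\ref{lem:holcomp}, recombine the $R$-matrix factors in the loop--edge relations of Proposition~\ref{lem:algexplicit} so that an invariance property kills them, and then obtain the homomorphism statement from Lemma~\ref{lem:reghols}. The only (cosmetic) difference is that you phrase the key cancellation as coadjoint invariance of characters after first passing through $\Pi\circ\hol^*_f=\hol^*_f\circ\pi$, whereas the paper applies the gauge invariance of $\Pi\circ\hol^*_f(\alpha)$ at $\st(f)$ directly via Theorem~\ref{lem:holtransform}; by Example~\ref{ex:chars} these are the same computation, and your explicit observation that an anti-homomorphism with central image is a homomorphism correctly fills in a step the paper leaves implicit.
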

\begin{proof} 
 It  
  follows with Lemma \ref{lem:holcomp} that
$\Pi\circ\hol^*_f$ commutes with all edges $e\in E$ with $\st(f)\notin\{\st(e),\ta(e)\}$.  
If $f$ is given by $f=e_n^{\epsilon_n}\circ ...\circ e_1^{\epsilon_1}$, then 
 $s(\epsilon_1^{\epsilon_1})<t(e_n^{\epsilon_n})$ 
 and the assumptions  imply that
   either $t(e^{\pm 1})<s(e_1^{\epsilon_1})$ or $t(e^{\pm 1})>t(e_n^{\epsilon_n})$ for all other edges $e$ incident at $\st(f)$. As $f$ is compatible with the ciliation, the assumptions of Lemma \ref{lem:holcomp} and Theorem \ref{lem:holtransform}   are satisfied. 
From  Theorem \ref{lem:holtransform}
one has  $\Pi\circ\hol^*_f(\alpha)\lhd (h)_{\st(f)}=\langle S(\low\alpha 3)\low\alpha 1, h\rangle \Pi\circ \hol^*_f(\low\alpha 2)=\epsilon(h)\,\Pi\circ \hol^*_f(\alpha)$ for all $\alpha\in K^*$, $h\in K$.
For incoming edges  at $\st(f)$  that are not loops one then obtains from
 Lemma \ref{lem:holcomp}
\begin{align*}
(\beta)_e\cdot \Pi\circ \hol^*_f(\alpha)&=
\begin{cases}
\langle \low\beta 1\oo \alpha_{(3)}, R\rangle\langle \low\beta 2\oo \alpha_{(1)}, R^\inv\rangle\,  \Pi\circ \hol^*_f(\low\alpha 2) \cdot (\low\beta 3)_e& t(e)\leq s(e_1^{\epsilon_1})<t(e_n^{\epsilon_n})\\
\langle \alpha_{(3)}\oo\low\beta 1, R^\inv\rangle\langle \alpha_{(1)}\oo\low\beta 2, R\rangle\, \Pi\circ \hol^*_f(\low\alpha 2)\cdot (\low\beta 3)_e &
s(e_1^{\epsilon_1})<t(e_n^{\epsilon_n})\leq t(e)
\end{cases}\\
&=\begin{cases}
\langle D_R(S(\low \beta 1)),  S(\low\alpha 3)\low\alpha 1 \rangle\, \Pi\circ \hol^*_f(\low\alpha 2) \cdot (\low\beta 2)_e & t(e)\leq s(e_1^{\epsilon_1})<t(e_n^{\epsilon_n})\\
 \langle D_{R_{21}^\inv}(S(\low \beta 1)),  S(\low\alpha 3)\low\alpha 1 \rangle\, \Pi\circ \hol^*_f(\low\alpha 2) \cdot (\low\beta 2)_e &
s(e_1^{\epsilon_1})<t(e_n^{\epsilon_n})\leq t(e).
\end{cases}\\
&=\epsilon(\low\beta 1)\, \Pi\circ \hol^*_f(\alpha) \cdot (\low\beta 2)_e=\Pi\circ\hol^*_f(\alpha) \cdot (\beta)_e,
\end{align*}
where $D_R:  K^*\to K$, $\alpha\mapsto \langle\alpha,\low R 1\rangle\, \low R 2$ is the map from Lemma \ref{lem:quasprop}. The proofs for outgoing edges $e$ and loops  are analogous. As the elements $(\beta)_e$ with $e\in E$, $\beta\in K^*$ generate $\mathcal A^*$, this proves that $\Pi(\hol^*_f(\alpha))$ is central in $\mathcal A^*$.
 It then follows from Lemma \ref{lem:reghols} that  $\Pi\circ\hol^*_f$ is invariant under cyclic permutations of $f$ and restricts to an algebra homomorphism $C(K)\to Z(\mathcal A^*_{inv})$.
 \end{proof}

Proposition \ref{lem:facecentral} still relies on the assumption that 
 the face path is compatible with the ciliation. If every vertex of $\Gamma$ is at least 3-valent and $\Gamma$ has at least two faces, this can be achieved by adjusting the cilia at the vertices in $f$.  As this does not affect the algebra structure  of the subalgebra  $\mathcal A^*_{inv}\subset \mathcal A^*$ by  Proposition \ref{lem:invspace}, we can then apply Lemma \ref{lem:reghols} and Proposition \ref{lem:facecentral} to obtain the following theorem.

\begin{theorem}\label{cor:face} 
Let $\Gamma$ be a ribbon graph that satisfies the conditions above and let 
  $f$ be a face path of $\Gamma$. Then 
$\Pi\circ\hol^*_f(\alpha)\in \mathcal A^*_{inv}$ is central in $\mathcal A^*_{inv}$ for all $\alpha\in K^*$ and depends only on the associated face. The map $\hol^*_f: K^*\to \mathcal A^*$ induces an 
algebra homomorphism $\hol^*_f: C(K)\to Z(A^*_{inv})$.
\end{theorem}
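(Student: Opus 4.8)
The plan is to reduce Theorem \ref{cor:face} to the already-established Proposition \ref{lem:facecentral}, whose conclusion holds for face paths that are \emph{compatible with the ciliation}. The key observation is that the hypotheses stated before the theorem---every vertex of $\Gamma$ is at least $3$-valent and $\Gamma$ has at least two faces---guarantee that, for any given face path $f$, one can adjust the cilia at the vertices traversed by $f$ so that $f$ becomes compatible with the ciliation in the sense of Definition \ref{def:cilpath}. First I would verify this reduction: given $f = e_n^{\epsilon_n}\circ\cdots\circ e_1^{\epsilon_1}$, I need the conditions $s(e_{i+1}^{\epsilon_{i+1}}) < t(e_i^{\epsilon_i})$ at each vertex and $s(e_1^{\epsilon_1}) < t(e_n^{\epsilon_n})$ at the base vertex. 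Since $f$ is already a face path it turns maximally left at each vertex, so the relevant edge ends are adjacent; with valence $\geq 3$ the adjacency condition is equivalent to the strict inequality (as noted after Definition \ref{def:cilpath}), and the cilium at each vertex can be chosen to place the relevant ``cut'' so that the inequality points the correct way. The requirement of at least two faces ensures there is room to position each cilium consistently without forcing a contradiction at a vertex shared by $f$ with itself.

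Second, I would invoke Proposition \ref{lem:invspace}, which is the crucial structural input: moving a cilium over an edge end induces an algebra isomorphism $\mathcal A^*_{inv} \to \mathcal A'^*_{inv}$ of the subalgebras of observables, and in the semisimple case (which is in force here, since $K$ is semisimple and quasitriangular, hence ribbon with $g=1$) this isomorphism is simply a cyclic permutation of tensor factors and $\mathcal A^*_{inv}$ is in fact \emph{independent} of the choice of cilia. Thus the object $\mathcal A^*_{inv}$, together with its algebra structure, does not change when we re-ciliate to make $f$ compatible. Consequently any statement about centrality in $\mathcal A^*_{inv}$, or about $\hol^*_f$ restricting to an algebra homomorphism into $Z(\mathcal A^*_{inv})$, is insensitive to the re-ciliation and may be proved in whatever ciliation makes $f$ compatible.

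Third, with $f$ compatible I would apply Proposition \ref{lem:facecentral} directly: it yields that $\Pi\circ\hol^*_f(\alpha)$ is central in all of $\mathcal A^*$---hence a fortiori central in $\mathcal A^*_{inv}$---that it lies in $\mathcal A^*_{inv}$ (being in the image of the projector $\Pi$), that it is invariant under cyclic permutations of $f$, and that $\hol^*_f$ restricts to an algebra homomorphism $C(K) \to Z(\mathcal A^*_{inv})$. The cyclic-permutation invariance from Proposition \ref{lem:facecentral}, combined with the definition of a face as an equivalence class of face paths under cyclic permutation, gives precisely the assertion that $\Pi\circ\hol^*_f(\alpha)$ depends only on the associated face and not on the chosen representative face path. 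Here I also use Lemma \ref{lem:reghols}(1), which identifies $\Pi\circ\hol^*_p$ with $\hol^*_p\circ\pi$ and establishes its cyclic invariance for closed regular paths---and every face path compatible with the ciliation is regular, as remarked after Definition \ref{def:regular}.

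The main obstacle I anticipate is the re-ciliation step: making precise that for \emph{every} face path one can simultaneously satisfy all the strict inequalities $s(e_{i+1}^{\epsilon_{i+1}}) < t(e_i^{\epsilon_i})$ and $s(e_1^{\epsilon_1}) < t(e_n^{\epsilon_n})$ by a single choice of cilia. The subtlety is that a vertex may be traversed by $f$ several times, imposing several ordering constraints at that vertex, and one must check that the valence-$\geq 3$ hypothesis plus the at-least-two-faces hypothesis make these constraints jointly satisfiable; the two-faces condition rules out the degenerate case of a single face path wrapping all the way around a vertex, where the cyclic ordering would force the final inequality to fail. Once this combinatorial point is settled, the remainder is a clean invocation of the three cited results, with no further computation required, since Proposition \ref{lem:invspace} transports the whole statement between ciliations verbatim.
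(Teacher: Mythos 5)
Your proposal is correct and follows essentially the same route as the paper's own proof: use Proposition \ref{lem:invspace} to re-ciliate without changing $\mathcal A^*_{inv}$, then apply Lemma \ref{lem:reghols} for cyclic invariance and Proposition \ref{lem:facecentral} for centrality. The extra care you devote to verifying that the re-ciliation is always achievable under the valence-$\geq 3$ and two-faces hypotheses is a point the paper only asserts in the discussion preceding the theorem, but it does not change the argument.
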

\begin{proof} 
As the algebra structure of  $\mathcal A^*_{inv}$ does not depend on the choice of the cilia of the vertices of $\Gamma$ by Proposition \ref{lem:invspace}, we can assume 
without restriction of generality  that the cilia  are chosen in such a way that 
$f$  satisfies the assumptions of Lemma \ref{lem:reghols} and Proposition \ref{lem:facecentral}.  It then follows from Lemma \ref{lem:reghols} that  $\Pi\circ\hol^*_f$ is invariant under cyclic permutations of $f$ and   induces an algebra anti-homomorphism $C(K)\to\mathcal A^*_{inv}$. Proposition \ref{lem:facecentral} implies that it takes values in  $Z(\mathcal A^*_{inv})$. 
\end{proof}

As $K$ is finite-dimensional semisimple and $\text{char}(\FF)=0$, the Hopf algebra  $K^*$ is equipped with a Haar integral $\eta\in K^*$.  This allows one to associate a projector to each face $f\in F$ that acts on $\mathcal A^*$ by multiplication with the curvature $\hol^*_f(\eta)$. For this, note that 
$\hol^*_f(\eta)\in Z(\mathcal A^*)$ for any face $f$ that is compatible with the ciliation and that  $\hol^*_f(\eta)\cdot \hol^*_f(\eta)=\hol^*_f(\eta)$.
Hence,  we obtain an algebra homomorphism and projector
\begin{align}\label{eq:project}
P^*_f: \mathcal A^*\to\mathcal A^*,\quad\alpha\mapsto \hol^*_f(\eta)\cdot \alpha
\end{align}
that restricts to an algebra homomorphism and projector $P^*_f:\mathcal A^*_{inv}\to\mathcal A^*_{inv}$.  If $f$ 
is not compatible with the ciliation, 
 it is not guaranteed that $P^*_f:\mathcal A^*\to\mathcal A^*$ is an algebra homomorphism, but this still holds for the restriction $P^*_f:\mathcal A^*_{inv}\to\mathcal A^*_{inv}$ if  $\Gamma$  satisfies the assumptions before Theorem \ref{cor:face}.

\begin{lemma} \label{lem:haarproj} If  $f$ is a face path  that is compatible with the ciliation,  then  $P^*_f: \mathcal A^*\to\mathcal A^*$
from \eqref{eq:project} is a projector with $P^*_f(\hol^*_f(\alpha))=\epsilon(\alpha)P^*_f(1)$ for all $\alpha\in K^*$. 
If $\Gamma$ satisfies the assumptions before Theorem \ref{cor:face}, then for any face $f$, the restriction $P^*_f:\mathcal A^*_{inv}\to\mathcal A^*_{inv}$  is a projector with $P^*_f(\hol^*_f(\alpha))=\epsilon(\alpha)P^*_f(1)$ for all $\alpha\in C(K)$.
\end{lemma}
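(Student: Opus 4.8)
The plan is to reduce both statements to the single \emph{absorption identity}
\[
\hol^*_f(\eta)\cdot\hol^*_f(\alpha)=\epsilon(\alpha)\,\hol^*_f(\eta),
\]
since $P^*_f(1)=\hol^*_f(\eta)\cdot 1\exoo{E}=\hol^*_f(\eta)$ and $P^*_f(\hol^*_f(\alpha))=\hol^*_f(\eta)\cdot\hol^*_f(\alpha)$. First I record the inputs that make $P^*_f$ a projector. Since $K$ is semisimple (hence involutory), the Haar integral $\eta\in K^*$ is cocommutative, i.e.\ $\eta\in C(K)$; equivalently, as a functional on $K$ it is a trace, $\langle\eta,xy\rangle=\langle\eta,yx\rangle$. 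For $f$ compatible with the ciliation, Lemma~\ref{lem:reghols} gives $\Pi\circ\hol^*_f=\hol^*_f\circ\pi$ with $\pi(\eta)=\eta$, so $\hol^*_f(\eta)=\Pi\circ\hol^*_f(\eta)$ is central in $\mathcal A^*$ by Proposition~\ref{lem:facecentral}; moreover $\eta\cdot\eta=\epsilon(\eta)\eta=\eta$ in $K^*$ and $\hol^*_f|_{C(K)}$ is an algebra homomorphism, so $\hol^*_f(\eta)$ is idempotent. Associativity of $\mathcal A^*$ then yields $P^*_f\circ P^*_f=P^*_f$.

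For the absorption identity I would use that products of holonomies are holonomies of the loop product: by Theorem~\ref{lem:holtransform}, for the regular cilia-avoiding face path $f$ one has $\hol^*_f(\eta)\cdot\hol^*_f(\alpha)=\hol^*_f(\eta\cdot'\alpha)$, where $\cdot'$ is the loop multiplication of Proposition~\ref{lem:algexplicit}(b). Hence it suffices to prove the purely Hopf-algebraic identity $\eta\cdot'\alpha=\epsilon(\alpha)\,\eta$ in $K^*$; applying $\hol^*_f$ then gives the claim regardless of injectivity. Reversing orientation if necessary via $T^*=S$ (under which $\eta$ and $\epsilon$ are invariant), I may assume $t(f)<s(f)$, so that $\eta\cdot'\alpha=\langle\low\alpha 1\oo S(\low\eta 3),R\rangle\,\langle\low\alpha 2\oo\low\eta 1,R\rangle\,\low\alpha 3\low\eta 2$. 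I would evaluate this against an arbitrary $k\in K$: pairing the three $\alpha$-legs with the two $R$-first-legs and $\low k 1$ collapses them to $\langle\alpha,\low R{1}\low{R'}{1}\low k 1\rangle$ (two independent copies $R,R'$ of the $R$-matrix), while the three $\eta$-legs collapse to $\langle\eta,\low{R'}{2}\,\low k 2\,S(\low R{2})\rangle$. The crux is the following step: using the trace property of $\eta$ to move $S(\low R{2})$ cyclically to the front, and then recognising that $\sum(\low R{1}\oo S(\low R{2}))=(\id\oo S)(R)=R^\inv$ (here $S^2=\id$) together with $\sum(\low{R'}{1}\oo\low{R'}{2})=R$ multiply to $R^\inv\cdot R=1\oo 1$ in $K\oo K$, one obtains
\[
\langle\eta\cdot'\alpha,k\rangle=\langle\alpha\oo\eta,\,(1\oo 1)\cdot\Delta(k)\rangle=\langle\alpha,\low k 1\rangle\langle\eta,\low k 2\rangle=\langle\alpha\,\eta,k\rangle=\epsilon(\alpha)\,\langle\eta,k\rangle.
\]
As $k$ is arbitrary this gives $\eta\cdot'\alpha=\epsilon(\alpha)\eta$, hence $P^*_f(\hol^*_f(\alpha))=\epsilon(\alpha)P^*_f(1)$ for all $\alpha\in K^*$, completing the first statement. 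Note the only place beyond cocommutativity of $\eta$ where semisimplicity enters is $(\id\oo S)(R)=R^\inv$, i.e.\ $S^2=\id$; this is precisely why the argument breaks for non-semisimple $K$, matching the discussion in \S\ref{subsec:genhols}.

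For the second statement I no longer assume $f$ compatible with the ciliation, but $\Gamma$ satisfies the hypotheses preceding Theorem~\ref{cor:face} and $\alpha$ is restricted to $C(K)$. By Proposition~\ref{lem:invspace} the algebra $\mathcal A^*_{inv}$ is independent of the cilia, so I may adjust the cilia to make $f$ compatible with the ciliation; then Theorem~\ref{cor:face} gives that $\hol^*_f\colon C(K)\to Z(\mathcal A^*_{inv})$ is an algebra homomorphism and $\hol^*_f(\eta)\in Z(\mathcal A^*_{inv})$ is idempotent. For $\alpha\in C(K)$ the computation is then immediate: $P^*_f(\hol^*_f(\alpha))=\hol^*_f(\eta)\cdot\hol^*_f(\alpha)=\hol^*_f(\eta\,\alpha)=\hol^*_f(\epsilon(\alpha)\eta)=\epsilon(\alpha)P^*_f(1)$, using the integral property $\eta\alpha=\epsilon(\alpha)\eta$ together with $\eta,\alpha\in C(K)$; the projector property of the restriction again follows from idempotency of $\hol^*_f(\eta)$ and associativity. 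I expect the genuine obstacle to be the first statement for general $\alpha\notin C(K)$, where $\hol^*_f$ is not multiplicative and one must pass through the explicit loop product $\cdot'$ and the quasitriangular bookkeeping; the decisive simplification is the collapse $R^\inv R=1\oo 1$, which is unlocked by the trace property of the Haar integral $\eta$.
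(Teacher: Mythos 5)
Your proof is correct and follows essentially the same route as the paper's: establish the projector property from idempotency and centrality of $\hol^*_f(\eta)$ via Lemma \ref{lem:reghols} and Proposition \ref{lem:facecentral}, prove absorption by combining the face-holonomy multiplication law with the integral and trace properties of $\eta$ (your explicit computation of $\eta\cdot'\alpha=\epsilon(\alpha)\eta$ fills in the step the paper only asserts, namely that \eqref{eq:reghols} already holds when just one argument lies in $C(K)$), and handle the second statement by adjusting cilia and invoking Theorem \ref{cor:face}. The only bookkeeping to make explicit is that a face path compatible with the ciliation has $t(f)>s(f)$ by Definition \ref{def:cilpath}, so your reduction to the $t(f)<s(f)$ loop formula must pass through $\hol^*_{f^\inv}=\hol^*_f\circ S$ — harmless since $S(\eta)=\eta$ and $\epsilon\circ S=\epsilon$, but worth a line.
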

\begin{proof} Suppose that  $f$  is a face path that is compatible with the ciliation.
Then it follows from Proposition \ref{lem:facecentral} that  $P^*_f$ is an algebra homomorphism and  $\hol^*_f(\eta)$ is central in $\mathcal A^*$  since $\eta\in C(K)$. That it is a projector follows from  the properties of the Haar integral and equation \eqref{eq:reghols} in the proof of Lemma \ref{lem:reghols}, which implies
$
P^*_f\circ P^*_f(\beta)=\hol^*_f(\eta)\cdot \hol^*_f(\eta)\cdot\beta
=\hol^*_f(\eta^2)\,\beta=\hol^*_f(\eta)\cdot\beta=P^*_f(\beta)
$ for all $\beta\in\mathcal A^*$. 
To compute  $P^*_f(\hol^*_f(\alpha))$ for $\alpha\in K^*$, note that \eqref{eq:reghols} holds already  if one of the two arguments $\alpha,\beta$ in \eqref{eq:reghols} is contained in $C(K)$.  This yields
 $P^*_f(\hol^*_f(\alpha))=\hol^*_f(\eta)\cdot \hol^*_f(\alpha)=\hol^*_f(\eta\cdot\alpha)=\epsilon(\alpha) \hol^*_f(\eta)=\epsilon(\alpha) P^*_f(1)$.
If $f$ is a general face and $\Gamma$ satisfies the assumptions in Theorem \ref{cor:face}, then by Theorem \ref{cor:face} one has $\hol^*_f(\eta)\in Z(\mathcal A^*_{inv})$ and hence the restriction of $P^*_f$ to $\mathcal A^*_{inv}$ is an algebra homomorphism  and a projector by the argument above.   The identity $P^*_f(\hol^*_f(\alpha))=\epsilon(\alpha)P^*_f(1)$ for $\alpha\in C(K)$  follows because $\hol^*_f(\alpha)\in Z(\mathcal A^*_{inv})$ for $\alpha\in C(K)$.
\end{proof}

For a face  path $f$ that is compatible with the ciliation, by duality the 
 projector in \eqref{eq:project}   induces a projector 
 $P_f: K\exoo{E}\to K\exoo{E}$  with  $\langle \alpha, P_f(k)\rangle=\langle P_f^*(\alpha), k\rangle$ for all $\alpha\in \mathcal A^*$, $k\in K\exoo{E}$. 
The properties of  $P^*_f$ then  imply
$\hol_f(P_f(k))=\langle \eta, \hol_f(k)\rangle\, 1=\epsilon\exoo{E}(P_f(k))\, 1$ and  hence
  $P_f(K\exoo{E})\subset \mathcal A_f$.  
Hence, $P_f:K\exoo{E}\to K\exoo{E}$ projects on a linear subspace of the space of connections that are flat at $f$.
This allows us to interpret $P^*_f: \mathcal A^*\to\mathcal A^*$ from \eqref{eq:project}  as a projector on its dual  $\mathcal A^*_f$, i.e.~as a projector onto a certain quotient of the space of functions on $\mathcal A_f$.

If  $\Gamma$ satisfies the conditions before Theorem \ref{cor:face}, 
then the projectors $P_f^*, P^*_{f'}:\mathcal A^*_{inv}\to\mathcal A^*_{inv}$ for different faces $f,f'$ commute since $\hol^*_f(\eta)\in Z(\mathcal A^*_{inv})$ for all $f\in F$.   Hence, one obtains a projector $P^*_{flat}=\Pi_{f\in F}P^*_f:\mathcal A^*_{inv}\to\mathcal A^*_{inv}$. 
As all projectors $P^*_f$ are algebra homomorphisms, this also holds for $P^*_{flat}$. Consequently,  the image of $P^*_{flat}$  is a subalgebra of $\mathcal A^*_{inv}$ and can be interpreted  as a quotient of the algebra of  functions on the subspace  $\mathcal A_{flat}=\cap_{f\in F}\mathcal A_f$ of {\em flat connections}.

\begin{definition} Let $\Gamma$ be a ribbon graph that satisfies the assumptions before Theorem \ref{cor:face}. Then the  subalgebra $\mathcal M_{\Gamma}=\mathrm{Im}(P^*_{flat})\subset \mathcal A^*_{inv}$ is called the {\bf quantum moduli algebra}.
\end{definition}

As this holds already for the algebra $\mathcal A^*_{inv}$ and the projectors $P^*_f$,  the quantum moduli  algebra  is independent of the choice of cilia at the vertices of $\Gamma$.
We will now show that  it is also largely independent of the choice of the ribbon graph $\Gamma$ and depends only on the homeomorphism class of the closed surface $\Sigma_\Gamma$ obtained by gluing discs to the faces of $\Gamma$.  
To prove this, recall 
 from the discussion following Definition \ref{def:graphtrafos} that 
 any two ribbon graphs $\Gamma$ and $\Gamma'$ for which the surfaces $\Sigma_\Gamma$ and $\Sigma_{\Gamma'}$  are homeomorphic
  can be transformed into each other by  contracting and expanding  finitely many edges  and adding and removing finitely many   loops.

To describe the effect of these graph transformations on faces, note that if $\Gamma'$ is obtained from $\Gamma$ by contracting an edge, then
the associated functor $F: \mathcal G(\Gamma')\to\mathcal G(\Gamma)$  from Definition \ref{def:graph_functor} induces a 
 bijection between the faces of $\Gamma'$ and of $\Gamma$. If  $\Gamma'$ is obtained from $\Gamma$ by adding a loop $l$ at $v\in V(\Gamma)$, 
 then  $\Gamma'$ has one additional face, namely the added loop.  In this case one has
 $F(l)=\emptyset_v$ and $F$ induces a bijection between $F(\Gamma')\setminus\{l\}$ and  $F(\Gamma)$.
Taking into account this relation between the faces of $\Gamma$ and $\Gamma'$, we can prove that the moduli algebra $\mathcal M_\Gamma$ is a topological invariant.

\begin{theorem} Let $\Gamma$, $\Gamma'$ be ribbon graphs that satisfy the assumptions before Theorem \ref{cor:face}. Let  $\Sigma_\Gamma$, $\Sigma_{\Gamma'}$ be the surfaces obtained by gluing discs to the faces of $\Gamma$ and $\Gamma'$. If $\Sigma_\Gamma$ and $\Sigma_{\Gamma'}$ are homeomorphic, then  the moduli algebras $\mathcal M_\Gamma$ and $\mathcal M_{\Gamma'}$ are isomorphic.
\end{theorem}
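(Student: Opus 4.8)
The plan is to reduce the statement to the elementary graph moves of Section~\ref{sec:graphops} and to track the face--curvature projectors through the induced maps. By the discussion following Definition~\ref{def:graphtrafos}, two ribbon graphs whose closed surfaces $\Sigma_\Gamma$ and $\Sigma_{\Gamma'}$ are homeomorphic are related by a finite sequence of edge contractions, their inverses, and additions or removals of loops. Since the inverse of an isomorphism is again an isomorphism, it suffices to show that each elementary move induces an algebra isomorphism $\mathcal M_{\Gamma'}\xrightarrow{\ \sim\ }\mathcal M_\Gamma$. Throughout I use that, by Theorem~\ref{cor:face}, the elements $z_f:=\hol^*_f(\eta)$ are pairwise commuting central idempotents of $\mathcal A^*_{\Gamma\,inv}$ with $P^*_f=z_f\cdot(-)$, so that $\mathcal M_\Gamma=z_\Gamma\,\mathcal A^*_{\Gamma\,inv}$ is the ideal cut out by the single central idempotent $z_\Gamma=\prod_{f\in F}z_f$, a unital algebra with unit $z_\Gamma$. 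The key structural input is that the maps of Definition~\ref{def:edgeopmaps} intertwine holonomies with the graph functors, via diagram~\eqref{eq:diagg}.

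For an edge contraction the argument is immediate. If $\Gamma'$ is obtained from $\Gamma$ by contracting $e$, then $C^*_{\ta(e)}\colon\mathcal A^*_{\Gamma'}\to\mathcal A^*_\Gamma$ restricts to an algebra isomorphism $\mathcal A^*_{\Gamma'\,inv}\xrightarrow{\ \sim\ }\mathcal A^*_{\Gamma\,inv}$ by Theorem~\ref{th:graphops}, and the contraction functor $F$ induces a bijection of faces. Diagram~\eqref{eq:diagg} gives $C^*_{\ta(e)}(\hol^*_{f'}(\eta))=\hol^*_{F(f')}(\eta)$, so multiplying over all faces yields $C^*_{\ta(e)}(z_{\Gamma'})=z_\Gamma$. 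An algebra isomorphism carrying one central idempotent to another restricts to an isomorphism of the associated ideals, hence $C^*_{\ta(e)}$ restricts to $\mathcal M_{\Gamma'}\xrightarrow{\ \sim\ }\mathcal M_\Gamma$; the same holds for $C^*_{\st(e)}$.

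The loop move is the substantive case. Suppose $\Gamma'$ arises from $\Gamma$ by adding a loop $l=e''$ at $v$, so that $A_v$ induces a bijection $F(\Gamma')\setminus\{l\}\to F(\Gamma)$ and $A_v(l)=\emptyset_v$. Here $A^*_v$ is a surjective algebra and module homomorphism with injective section $D^*_{e''}$, where $A^*_v\circ D^*_{e''}=\mathrm{id}$ (Remark~\ref{rem:graphopprops} and the right inverse $A_v\circ D_{e''}=\mathrm{id}$). From diagram~\eqref{eq:diagg} I get $A^*_v(\hol^*_{f'}(\eta))=\hol^*_{A_v(f')}(\eta)$ for $f'\neq l$ and $A^*_v(\hol^*_l(\eta))=\hol^*_{\emptyset_v}(\eta)=\epsilon(\eta)\,1=1$, whence $A^*_v(z_{\Gamma'})=z_\Gamma$ and $A^*_v\circ P^*_l=A^*_v$; thus $A^*_v$ restricts to a surjective algebra homomorphism $\phi\colon\mathcal M_{\Gamma'}\to\mathcal M_\Gamma$. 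Dually, $D^*_{e''}(\hol^*_f(\eta))=\hol^*_{f'}(\eta)$ for the face $f'$ with $A_v(f')=f$, so $\Psi:=P^*_l\circ D^*_{e''}$ maps $\mathcal M_\Gamma$ into $\mathcal M_{\Gamma'}$ and is a unital algebra homomorphism, and $\phi\circ\Psi=\mathrm{id}_{\mathcal M_\Gamma}$ follows from $A^*_v\circ D^*_{e''}=\mathrm{id}$ together with $A^*_v(\hol^*_l(\eta))=1$.

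The hard part will be the reverse identity $\Psi\circ\phi=\mathrm{id}_{\mathcal M_{\Gamma'}}$, i.e.\ injectivity of $\phi$: one must show that imposing flatness at the new loop--face leaves no residual modulus from the extra edge. I would establish the operator identity $P^*_l=P^*_l\circ D^*_{e''}\circ A^*_v$ on $\mathcal A^*_{\Gamma'\,inv}$. Since $D^*_{e''}\circ A^*_v$ replaces the loop variable $(\alpha)_l$ by $\epsilon(\alpha)\,1$ and fixes the other generators, this reduces to the relation $\hol^*_l(\eta)\cdot(\alpha)_l=\epsilon(\alpha)\,\hol^*_l(\eta)$ together with the possibility of commuting $\hol^*_l(\eta)$ past all remaining variables. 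The first is exactly the content of Lemma~\ref{lem:haarproj} ($P^*_l(\hol^*_l(\alpha))=\epsilon(\alpha)P^*_l(1)$), and the second is centrality of $\hol^*_l(\eta)$ in $\mathcal A^*_{\Gamma'\,inv}$ from Theorem~\ref{cor:face}. Granting $P^*_l=P^*_l\circ D^*_{e''}\circ A^*_v$, any $x\in\mathcal M_{\Gamma'}$ satisfies $x=P^*_l(x)=P^*_l D^*_{e''}A^*_v(x)=\Psi(\phi(x))$, so $\phi$ and $\Psi$ are mutually inverse and $\mathcal M_{\Gamma'}\cong\mathcal M_\Gamma$. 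A last bookkeeping point is to keep all intermediate graphs within the hypotheses of Theorem~\ref{cor:face}; as $\mathcal M_\Gamma$ is built from $\mathcal A^*_{\Gamma\,inv}$, which is independent of the ciliation by Proposition~\ref{lem:invspace}, this can be arranged, subdividing edges where necessary.
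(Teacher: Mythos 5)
Your proof is correct and follows the same overall strategy as the paper's: reduce to the elementary moves of edge contraction/expansion and loop addition/removal, and track the face projectors $P^*_f$ through the induced algebra homomorphisms using the intertwining of holonomies from diagram \eqref{eq:diagg}. The contraction case is handled identically to the paper, via Theorem \ref{th:graphops} and $C^*_{\ta(e)}(\hol^*_{f'}(\eta))=\hol^*_{F(f')}(\eta)$. Where you genuinely add something is the loop move. The paper's argument there establishes only the intertwining relation $A^*_v\circ P^{\Gamma'}_{flat}=P^{\Gamma}_{flat}\circ A^*_v$ and asserts that $A^*_v:\mathcal A^*_{\Gamma'\,inv}\to\mathcal A^*_{\Gamma\,inv}$ is injective; that assertion does not hold as stated, since $A^*_v$ annihilates the (in general nonzero) elements $\Pi(\hol^*_{e''}(\alpha))-\epsilon(\alpha)\,1$, and what Remark \ref{rem:graphopprops} actually provides is surjectivity. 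The intertwining relation alone therefore yields only a surjection $\mathcal M_{\Gamma'}\to\mathcal M_\Gamma$, and injectivity on the moduli algebra requires exactly the extra step you supply: the section $\Psi=P^*_l\circ D^*_{e''}$ together with the operator identity $P^*_l=P^*_l\circ D^*_{e''}\circ A^*_v$, which reduces via Lemma \ref{lem:haarproj} to $\hol^*_l(\eta)\cdot(\alpha)_{e''}=\epsilon(\alpha)\,\hol^*_l(\eta)$. One small precision: to run the generator-by-generator verification of that identity you need $\hol^*_l(\eta)$ to be central in all of $\mathcal A^*_{\Gamma'}$, not merely in $\mathcal A^*_{\Gamma'\,inv}$, because elements of $\mathcal A^*_{\Gamma'\,inv}$ are sums of products of non-invariant generators; this is supplied by Proposition \ref{lem:facecentral}, since the face of the inserted loop is compatible with the ciliation by construction (Definition \ref{def:graphtrafos}~(d)). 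With that adjustment your argument is complete and in fact repairs the published proof at its weakest point.
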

\begin{proof}
If $\Sigma_\Gamma$ and $\Sigma_{\Gamma'}$ are homeomorphic, then $\Gamma$ and $\Gamma'$ can be transformed into each other by the contracting and expanding a finite number of edges and adding or removing a finite number of  loops. It is therefore sufficient to show that algebra homomorphisms $C^*_{\ta(e)}, C^*_{\st(e)}$ and $A^*_{v}: \mathcal A^*_{\Gamma'}\to\mathcal A^*_\Gamma$ from 
Definition \ref{def:edgeopmaps}  and 
Theorem \ref{th:alghomfinal} induce isomorphisms between the associated quantum moduli  algebras. 
By Theorem \ref{th:alghomfinal} and Remark \ref{rem:graphopprops}
 the contraction maps $C^*_{\ta(e)}$ and $C^*_{\st(e)}$ induce algebra isomorphisms between $\mathcal A^*_{\Gamma}$ and $\mathcal A^*_{\Gamma'}$ and the maps $A^*_v: \mathcal A^*_{\Gamma'\,inv}\to\mathcal A^*_{\Gamma\,inv}$
are injective algebra homomorphisms. Hence, it is sufficient to show that
 $P^\Gamma_{flat}\circ F^*=F^*\circ P^{\Gamma'}_{flat}$  for $F^*\in\{C^*_{\ta(e)}, C^*_{\st(e)}, A^*_{v}\}$.
 Theorems \ref{th:alghomfinal} and \ref{th:holtheorem}  imply
$$F^*\circ P'_f (\alpha)=F^*(\hol^*_{f'}(\eta)\cdot \alpha) =F^*(\hol^*_{f'}(\eta))\cdot F^*(\alpha)=\hol^*_{f}(\eta)\cdot F^*(\alpha)=P_f\circ F^*(\alpha)$$
 for each face $f\in F$ and $\alpha\in A^*_{\Gamma'\,inv}$. 
This proves the claim for $F^*=C^*_{\ta(e)}$ and $F^*=C^*_{\st(e)}$.
If $\Gamma'$ is obtained from $\Gamma$ by adding a loop $l$, then   \eqref{eq:adding}  implies $ A^*_{v}\circ P'_l=\epsilon(\eta)\id=\id$. As $P_{flat}^{\Gamma}=\Pi_{f\in F} P_f$ and $P_{flat}^{\Gamma'}=P'_l\cdot \Pi_{f\in F} P'_f$, it follows that  $A_{v}^*\circ P^{\Gamma'}_{flat}=P^{\Gamma}_{flat}\circ A_{v}^*$.
\end{proof}

To conclude the discussion, we comment on the generalisation of the results on holonomies to the case of a non-semisimple ribbon Hopf algebra:
\begin{compactitem}
\item 
Theorem \ref{lem:holtransform} 
 holds for finite-dimensional ribbon Hopf algebras $K$ that are not  semisimple.  This follows because the proof requires semisimplicity only insofar as it relies on results about the transformations of holonomies under graph operations, while the graph operations themselves are defined also in the non-semisimple case. As the two notions of holonomy agree for face paths that are compatible with the ciliation, the results  of
Theorem \ref{lem:holtransform} extend to the non-semisimple case.
 It is proven in \cite{AGSII}, Propositions 2 and 3,  for the holonomies based on the comultiplication dual to $\mathcal A^*$ and with different methods,  that the holonomies of such face paths satisfy  and \eqref{eq:holtrafo}.\\[-1ex]
 
\item Similarly,  Lemma \ref{lem:reghols} holds in the non-semisimple   case.  Corollary \ref{cor:gtrafoflat} and Lemma \ref{lem:hols} also hold in the non-semisimple case, see also Proposition 2 and 3 in \cite{AGSII}.\\[-1ex]

\item  Lemmas \ref{lem:hols} and \ref{lem:holcomp},  Proposition \ref{lem:facecentral}  and Theorem \ref{cor:face}  hold in the case of a non-semisimple ribbon Hopf algebra, although more care is required in the proof since $T^*(C(K))\neq C(K)$, $S^2\neq \id$ and the weaker result in Proposition \ref{lem:invspace}. 
Results analogous to  Lemma \ref{lem:holcomp} and Proposition \ref{lem:facecentral} and Theorem \ref{cor:face}  were also derived  in Propositions 2, 3, 4 in \cite{AGSII}, for the holonomies based on the comultiplication dual to the multiplication of $\mathcal A^*$, but by very different methods. That 
these results hold for both notions of holonomies is unsurprising, since 
 the  holonomies from Definition \ref{def:hols} coincide with the ones  based on the comultiplication dual to  the multiplication of $\mathcal A^*$  for face paths  that are compatible with the ciliation. Note also that  the  proof of Lemma \ref{lem:holcomp} is mainly based on graph transformations, which are also defined in the non-semisimple case. \\[-1ex]

\item 
 The projectors $P_f^*:\mathcal A^*\to\mathcal A^*$ from \eqref{eq:project} and Lemma \ref{lem:haarproj} cannot be generalised directly to the non-semisimple case since they require a Haar integral. However, it is still possible to define the moduli algebra by invariance requirements (see \cite{AGSII,BR}).
 \end{compactitem}

\section*{Acknowledgements}
This work was supported by the Emmy Noether research grant ME 3425/1-3 of the
German Research Foundation (DFG). 
C.~M.~thanks Simon Lentner, Christoph Schweigert and Ingo Runkel, Hamburg University, for helpful discussions and Andreas Knauf, University of Erlangen-N\"urnberg, for comments on a draft on this paper. Both authors thank  John Baez, UC Riverside, for helpful  discussions  and St\'ephane Baseilhac, Universit\'e de Montpellier, for pointing out the work \cite{BFK}. 
The work was presented at the workshops {\em Quantum Spacetime '16}, Feb 12-16 2016, in Zakopane and {\em Workshop on Noncommutative Field Theory and Gravity}, Sept 21-25, 2015 in Corfu, both supported by the COST network MP1405 QSPACE. The authors thank the referees for their careful reading of the article and many suggestions that lead to improvements in the presentation.


\begin{appendix}

\section{Some facts about Hopf algebras}
\label{sec:hopfalg}

In this appendix, we collect some definitions and results on Hopf algebras that are needed in the article. Unless  specific citations are given,  these definitions and results are standard and can be found in any textbook on Hopf algebras, for instance the books by Kassel \cite{K}, Majid \cite{majidbook}, Montgomery \cite{Mon} or Radford \cite{rbook}. 

\subsection{Semisimplicity and Haar integrals}

We start with the notion of  (semi)simplicity. Recall that a Hopf algebra $H$ is  (semi)simple if it is semisimple as an algebra and it is co(semi)simple if $H^*$ is (semi)simple.

\begin{theorem}[ \cite{LR}] \label{th:lr}Let $H$ be a finite-dimensional Hopf algebra over a field $\FF$ of characteristic zero.   Then  $H$ is semisimple if and only if $H^*$ is semisimple if and only if $S^2=\id_H$.
\end{theorem}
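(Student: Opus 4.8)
The plan is to prove the theorem through the theory of integrals, with the trace of $S^2$ as the bridge between the algebraic and coalgebraic conditions. Recall that a finite-dimensional Hopf algebra $H$ has a nonzero left integral $\Lambda\in H$, characterised by $h\Lambda=\epsilon(h)\Lambda$ for all $h\in H$ and unique up to scalar; dually $H^*$ has a nonzero left integral $\lambda\in H^*$, and one always has $\lambda(\Lambda)\neq 0$, so I may normalise $\lambda(\Lambda)=1$. The two inputs I would invoke at the outset are the Hopf-algebraic Maschke theorem of Larson and Sweedler — $H$ is semisimple as an algebra iff $\epsilon(\Lambda)\neq 0$, and dually $H^*$ is semisimple iff $\lambda(1)\neq 0$ — together with Radford's formula for $S^4$ in terms of the distinguished grouplike elements $a\in H$ and $\alpha\in H^*$.

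The crux is the \textbf{trace formula}
$$\tr(S^2)=\epsilon(\Lambda)\,\lambda(1),$$
valid for integrals normalised by $\lambda(\Lambda)=1$. I would prove it by exploiting that $H$ is a Frobenius algebra with Frobenius functional $\lambda$: the trace of any $f\in\mathrm{End}(H)$ is expressed through the Frobenius Casimir element $\low\Lambda 1\oo S(\low\Lambda 2)$ as $\tr(f)=\lambda\big(\low\Lambda 1\,f(S(\low\Lambda 2))\big)$, so that $\tr(S^2)=\lambda\big(\low\Lambda 1\,S^3(\low\Lambda 2)\big)$, and this collapses — via the defining identities of the integrals and the antipode axioms — to $\epsilon(\Lambda)\lambda(1)$. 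One checks the right-hand side is scaling-independent, consistent with $\tr(S^2)$ being intrinsic; a sanity check on $H=\FF[G]$ gives $\tr(S^2)=|G|=\epsilon(\Lambda)\lambda(1)$.

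Granting the trace formula, the implication $S^2=\id\Rightarrow$ semisimplicity of both $H$ and $H^*$ is immediate: then $\tr(S^2)=\tr(\id)=\dim H$, nonzero because $\operatorname{char}\FF=0$, so $\epsilon(\Lambda)\lambda(1)\neq 0$, forcing $\epsilon(\Lambda)\neq0$ and $\lambda(1)\neq0$, and Maschke finishes. The reverse is the hard direction. Here I would first note $H$ semisimple $\Rightarrow H$ unimodular, reduce to $\FF$ algebraically closed by a base change preserving both semisimplicity and the value $\tr(S^2)$, and aim to prove $\tr(S^2)=\dim H$. Once this is known, semisimplicity of $H^*$ follows from the trace formula as above; then the modular data are trivial, $a=1$ and $\alpha=\epsilon$, so Radford's formula gives $S^4=\id$, whence $S^2$ is a diagonalisable involution with eigenvalues $\pm1$ and $\tr(S^2)=n_+-n_-$ while $n_++n_-=\dim H$; combining with $\tr(S^2)=\dim H$ forces $n_-=0$, i.e. $S^2=\id$.

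The \textbf{main obstacle} is precisely the equality $\tr(S^2)=\dim H$ for semisimple $H$ in characteristic zero — equivalently, that $\epsilon(\Lambda)\lambda(1)$ attains its maximal value rather than merely being nonzero, since $\tr(S^2)\neq 0$ alone does not rule out a $-1$ eigenspace. This is the arithmetic heart of Larson–Radford: one shows the relevant integral values are algebraic integers, controls their Galois conjugates under field automorphisms (each conjugate again arising as the analogous trace for a twisted form of $H$, hence bounded in absolute value by $\dim H$), and uses rationality together with a positivity argument to pin the value to $\dim H$. The characteristic hypothesis enters only at this step and is essential — in positive characteristic $S^2=\id$ no longer forces semisimplicity.
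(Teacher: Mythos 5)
The paper offers no proof of Theorem \ref{th:lr}: it is quoted from Larson--Radford \cite{LR} and used as a black box, so there is no in-paper argument to measure yours against. On its own terms, your outline reproduces the standard architecture of the Larson--Radford proof (integrals, the Hopf-algebraic Maschke criterion, the trace formula $\tr(S^2)=\epsilon(\Lambda)\lambda(1)$, Radford's formula for $S^4$), and the implication $S^2=\id_H\Rightarrow H$ and $H^*$ semisimple is essentially complete, modulo the routine verification of the trace formula.

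The converse, however, has a genuine gap, and it sits exactly where the whole content of the theorem is concentrated. Everything you do reduces the hard direction to the single claim that $\tr(S^2)=\dim H$ for semisimple $H$ in characteristic zero, and for that claim you give a description rather than a proof: ``algebraic integers, Galois conjugates, rationality, a positivity argument''. Concretely: (i) the bound $|\tr(S^2)|\leq \dim H$ on the conjugates presupposes that the eigenvalues of $S^2$ are roots of unity, which requires Radford's theorem on the finiteness of the order of the antipode and is never invoked; (ii) an algebraic integer all of whose conjugates have absolute value at most $\dim H$ need not equal $\dim H$, so the unnamed ``positivity argument'' is carrying the entire weight of the theorem; (iii) the order of deductions you propose --- establish $\tr(S^2)=\dim H$ from semisimplicity of $H$ alone, and only afterwards obtain cosemisimplicity and $S^4=\id$ --- inverts the actual Larson--Radford argument, in which cosemisimplicity (hence unimodularity of $H^*$, hence $S^4=\id$ and eigenvalues $\pm 1$ for $S^2$) is established first by a separate argument, and only then is $\tr(S^2)$ pinned to $\dim H$; nothing in your sketch supports extracting $\tr(S^2)=\dim H$ before that eigenvalue analysis is available. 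As a roadmap to \cite{LR} the proposal is accurate; as a proof it is not, because the step it defers is the theorem.
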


\begin{definition} Let $H$ be a Hopf algebra. A (normalised) {\bf Haar integral} in $H$ is an element $\ell\in H$ with $h\cdot\ell=\ell\cdot h=\epsilon(h)\,\ell$ for all $h\in H$ and $\epsilon(\ell)=1$.
\end{definition}

\begin{lemma} If $H$ is finite-dimensional and semisimple, then $H$ has a Haar integral.
\end{lemma}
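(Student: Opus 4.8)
The statement to prove is: \emph{If $H$ is finite-dimensional and semisimple, then $H$ has a Haar integral.}

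The plan is to produce a two-sided integral and then normalize it. First I would invoke the standard existence-and-uniqueness theorem for integrals in a finite-dimensional Hopf algebra: the space of left integrals $\int_H^{\ell}=\{x\in H: hx=\epsilon(h)x\ \forall h\in H\}$ is one-dimensional, and likewise for right integrals. Since $H$ is finite-dimensional, such nonzero integrals exist. The key point that makes a \emph{two-sided} integral available is semisimplicity: by Maschke's theorem for Hopf algebras, $H$ is semisimple as an algebra if and only if there is a left integral $\ell$ with $\epsilon(\ell)\neq 0$ (equivalently, if and only if $\epsilon$ does not vanish on the space of integrals). So from semisimplicity I obtain a left integral $\ell$ with $\epsilon(\ell)\neq 0$, and after rescaling by $\epsilon(\ell)^{-1}$ I may assume $\epsilon(\ell)=1$.

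Next I would show that this $\ell$ is automatically a two-sided integral. One route is to note that semisimplicity of $H$ forces $H$ to be \emph{unimodular}, i.e.\ the left and right integral spaces coincide; combined with one-dimensionality this gives that any left integral is, up to scalar, also a right integral. Concretely, for a left integral $\ell$ the element $\ell$ satisfies $\ell h = \alpha(h)\,\ell$ for the distinguished grouplike (modular) character $\alpha\in H^*$, and unimodularity means $\alpha=\epsilon$, giving $\ell h=\epsilon(h)\ell$. Thus $h\cdot\ell=\ell\cdot h=\epsilon(h)\,\ell$ for all $h\in H$, and with the normalization $\epsilon(\ell)=1$ this is exactly the defining property of a Haar integral in the sense of the Definition above.

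The main obstacle is establishing the implication ``semisimple $\Rightarrow$ there is an integral on which $\epsilon$ is nonzero'' together with unimodularity, since everything else is bookkeeping. However, both of these are standard consequences of the Larson--Sweedler theory of integrals (uniqueness of integrals) and of Maschke's theorem for Hopf algebras, and in the present setting I may also appeal directly to Theorem~\ref{th:lr}: over a field of characteristic zero, finite-dimensionality and semisimplicity give $S^2=\id$, and the Larson--Radford results guarantee the normalized two-sided integral. Since these structural facts about integrals are standard (and cited to \cite{LR}, \cite{K}, \cite{Mon}, \cite{rbook} in the surrounding text), I would simply cite them and assemble the normalized two-sided integral as the required Haar integral, rather than reproving Maschke's theorem from scratch.
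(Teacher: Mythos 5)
Your proof is correct, and it assembles exactly the standard facts (Larson--Sweedler uniqueness of integrals, Maschke's theorem for Hopf algebras, and unimodularity) that the paper itself implicitly relies on: the lemma is stated in Appendix A without proof, as one of the results declared to be standard and referenced to \cite{K}, \cite{Mon}, \cite{rbook}. One small streamlining: once you have a left integral $\ell$ with $\epsilon(\ell)=1$, two-sidedness is immediate by applying $\epsilon$ to the relation $\ell h=\alpha(h)\,\ell$ defining the distinguished grouplike $\alpha\in H^*$, which forces $\alpha=\epsilon$, so you need not invoke unimodularity as a separate input.
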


\begin{lemma} Let $H$ be a Hopf algebra.$\quad$\\[-4ex]
\begin{compactenum}
\item If $\ell,\ell'\in H$ are Haar integrals, then $\ell=\ell'$.
\item If $\ell\in H$ is a Haar integral, then $\Delta^{(n)}(\ell)$ is invariant under cyclic permutations and $S(\ell)=\ell$.
\item If $\ell\in H$ is a Haar integral  then  the element $e=(\id\oo S)(\Delta(\ell))$ is a separability idempotent in $H$, i.e.~one has $m(e)=\low \ell 1S(\low \ell 2)=1$, $e\cdot e=e$ and
for all $h\in H$
$$  (h\oo 1)\cdot \Delta(\ell)=(1\oo S(h))\cdot \Delta(\ell)\qquad \Delta(\ell)(h\oo 1)=\Delta(\ell)(1\oo S(h)).$$

\item If $\ell\in H$ is a Haar integral, then $\kappa: H^*\oo H^*\to\FF$, $\kappa(\alpha\oo\beta)=\langle \alpha\cdot\beta,\ell\rangle$ is a Frobenius form.

\item If $\ell\in H$ is a Haar integral, then $\langle \low\alpha 1,\ell\rangle\, \low\alpha 2=\langle \low\alpha 2,\ell\rangle\, \low\alpha 1=\langle \ell,\alpha\rangle\, 1$ for all $\alpha\in H^*$.
\end{compactenum}
\end{lemma}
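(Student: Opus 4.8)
The plan is to prove the five assertions in the order (1), (5), (3), (2), (4), since the later items reuse the earlier ones. I first record that the mere existence of a normalised two-sided integral forces $H$ to be semisimple (Maschke's theorem for Hopf algebras, using $\epsilon(\ell)=1\neq 0$); as $\text{char}(\FF)=0$ this makes $H$ cosemisimple with $S^2=\id$ by \cite{LR}, so in particular $S$ is bijective. These facts are available for free under the hypothesis that a Haar integral exists.

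Item (1) is purely formal: for two Haar integrals $\ell,\ell'$ I evaluate $\ell\ell'$ in two ways. From $\ell k=\epsilon(k)\ell$ with $k=\ell'$ I get $\ell\ell'=\epsilon(\ell')\ell=\ell$, and from $k\ell'=\epsilon(k)\ell'$ with $k=\ell$ I get $\ell\ell'=\epsilon(\ell)\ell'=\ell'$, whence $\ell=\ell'$; only the normalisation is used. Item (5) is the dualisation of the defining equations: pairing $\ell h=\epsilon(h)\ell$ against an arbitrary $\alpha\in H^*$ and expanding $\langle\alpha,\ell h\rangle=\langle\low\alpha 1,\ell\rangle\langle\low\alpha 2,h\rangle$ gives, as $h$ is arbitrary, $\langle\low\alpha 1,\ell\rangle\,\low\alpha 2=\langle\ell,\alpha\rangle\,1$, and similarly $h\ell=\epsilon(h)\ell$ yields $\langle\low\alpha 2,\ell\rangle\,\low\alpha 1=\langle\ell,\alpha\rangle\,1$.

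For item (3), $m(e)=\low\ell 1 S(\low\ell 2)=\epsilon(\ell)\,1=1$ is immediate from the antipode axiom and $\epsilon(\ell)=1$. The substantive content is the two balanced identities. Applying the algebra map $\Delta$ to $h\ell=\epsilon(h)\ell$ gives $\low h 1\low\ell 1\oo\low h 2\low\ell 2=\epsilon(h)\Delta(\ell)$, and then applying $\id\oo S$ produces $\low h 1\low\ell 1\oo S(\low\ell 2)S(\low h 2)=\epsilon(h)\,e$; multiplying the second leg by a further comultiplication component of $h$ cancels the trailing $S(\low h 2)$ through the antipode axiom and yields $(h\oo1)\Delta(\ell)=(1\oo S(h))\Delta(\ell)$, while the same manipulation starting from $\ell h=\epsilon(h)\ell$ gives $\Delta(\ell)(h\oo1)=\Delta(\ell)(1\oo S(h))$. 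Idempotency $e\cdot e=e$ then follows by the standard separability-idempotent computation: expanding $e\cdot e=\low\ell 1\low{\ell'}1\oo S(\low\ell 2)S(\low{\ell'}2)$ and using the balanced identities (with the second integral in the role of $h$) together with $m(e)=1$ collapses the double comultiplication back to $e$. The delicate point here — the one I expect to be the main obstacle — is keeping the Sweedler bookkeeping correct while threading $S$ through in the right order, precisely because $H$ is not cocommutative.

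Item (2): $S(\ell)=\ell$ drops out of (1), since applying the algebra anti-homomorphism $S$ (with $\epsilon\circ S=\epsilon$) to the integral equations and using bijectivity of $S$ shows $S(\ell)$ is again a normalised two-sided integral, hence equal to $\ell$. For the cyclic invariance of $\Delta^{(n)}(\ell)$ I would pass to $H^*$: since $(\beta^1\oo\cdots\oo\beta^{n+1})\big(\Delta^{(n)}(\ell)\big)=\langle\beta^1\cdots\beta^{n+1},\ell\rangle$, cyclic invariance of $\Delta^{(n)}(\ell)$ is equivalent to the trace property $\langle\beta^1\cdots\beta^{n+1},\ell\rangle=\langle\beta^{n+1}\beta^1\cdots\beta^n,\ell\rangle$ of the functional $\langle\,\cdot\,,\ell\rangle$ on $H^*$, which by grouping $\beta^1\cdots\beta^n$ into a single element reduces to the case $n=1$, namely $\Delta(\ell)=\Delta^{op}(\ell)$. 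Cocommutativity of the integral of a semisimple Hopf algebra in characteristic zero is the one genuinely non-formal input, and I would invoke it from \cite{sgel} (it rests on $S^2=\id$); iterating the $n=1$ symmetry then gives every $n$. Finally, item (4) asks that $\kappa(\alpha\oo\beta)=\langle\alpha\beta,\ell\rangle=\langle\alpha,\beta\rightharpoonup\ell\rangle$ be a Frobenius form, i.e.\ nondegenerate; this is exactly the bijectivity of the Fourier map $H^*\to H$, $\beta\mapsto\beta\rightharpoonup\ell$, which is the Larson--Sweedler theorem for the finite-dimensional $H$ with nonzero integral $\ell$ \cite{Mon,rbook}, and nondegeneracy of $\kappa$ in each argument follows at once.
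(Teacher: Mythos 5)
The paper does not prove this lemma: it sits in Appendix~\ref{sec:hopfalg}, which is explicitly a collection of standard facts with a blanket citation to the textbooks \cite{K,majidbook,Mon,rbook}. So there is no argument of the authors to compare against; judged on its own, your proof is essentially correct and follows the standard route (uniqueness by evaluating $\ell\ell'$ two ways, dualising the integral equations for item (5), the separability computation for item (3), reduction of cyclic invariance to $\Delta(\ell)=\Delta^{op}(\ell)$, and Larson--Sweedler for nondegeneracy of $\kappa$). Your upfront use of Maschke and \cite{LR} to secure $S^2=\id$ is legitimate, since a normalised integral forces finite dimensionality and semisimplicity, though it is heavier machinery than some items require.

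The one step I would tighten is in item (3). The manipulation you describe --- apply $\Delta$ to $h\ell=\epsilon(h)\ell$, then $\id\oo S$, then cancel the trailing $S(\low h 2)$ against a further leg $\low h 3$ --- lands on the separability identity $(h\oo 1)\,e=e\,(1\oo S(h))\cdot$\ldots more precisely on $(h\oo 1)e=e(1\oo h)$, i.e.\ $h\low\ell 1\oo S(\low\ell 2)=\low\ell 1\oo S(\low\ell 2)h$, and \emph{not} directly on the stated identity $(h\oo 1)\Delta(\ell)=(1\oo S(h))\Delta(\ell)$. Converting the former into the latter needs injectivity of $S$ together with $S^2=\id$ to rewrite $S(\low\ell 2)h=S(S(h)\low\ell 2)$; you have both available, but the step is not free and should be made explicit, since without $S^2=\id$ one only gets $(h\oo1)\Delta(\ell)=(1\oo S^\inv(h))\Delta(\ell)$. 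By contrast, the companion identity $\Delta(\ell)(h\oo 1)=\Delta(\ell)(1\oo S(h))$ needs only the plain antipode axiom and the right-integral property:
\begin{align*}
\low\ell 1 h\oo\low\ell 2=\low\ell 1\low h 1\oo\low\ell 2\,\low h 2 S(\low h 3)=\Delta(\ell\low h 1)\,(1\oo S(\low h 2))=\epsilon(\low h 1)\,\Delta(\ell)\,(1\oo S(\low h 2))=\Delta(\ell)(1\oo S(h)),
\end{align*}
and the first identity can then also be obtained from this one by applying $\tau\circ(S\oo S)$ and $S(\ell)=\ell$. With that clarification, the idempotency argument and the remaining items go through as you sketch them.
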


\begin{example}  \label{ex:finite_group}The Hopf algebra structure of the group algebra $\FF[G]$ of a finite group $G$ is given by
\begin{align*}
&g\cdot h=gh, & &1=e, & &\Delta(g)=g\oo g, & &\epsilon(g)=1, & &S(g)=g^\inv
\end{align*}
for all $g\in G$, where $e\in G$ denotes the unit element. The dual Hopf algebra is the set $\mathrm{Fun}(G)$ of functions $f:G\to\FF$ with the Hopf algebra structure 
\begin{align*}
\delta_g\cdot\delta_h=\delta_g(h)\,\delta_{h}, & &1=\Sigma_{g\in G}\delta_g, & &\Delta(\delta_g)=\Sigma_{uv=g}\delta_u\oo\delta_v, & &\epsilon(\delta_g)=\delta_g(e), & &S(\delta_g)=\delta_{g^\inv},
\end{align*}
where $\delta_g: G\to\FF$ is given by $\delta_g(g)=1$, $\delta_g(h)=0$ if $g\neq h$.
The Hopf algebra $\FF[G]$ is cocommutative and semisimple with Haar integral 
$\ell=\Sigma_{g\in G}\; g$. The Hopf algebra $\mathrm{Fun}(G)$ is commutative and semisimple with Haar integral $\eta=\delta_e$.
\end{example}

\subsection{Twisting}

\begin{definition}\label{def:twist}
Let $H$ be a bialgebra. A {\bf twist} for $H$ is an invertible element $F\in H\oo H$ that satisfies the conditions
\begin{align*}
F_{12}(\Delta\oo\id)(F)=F_{23}(\id\oo\Delta(F)) \qquad(\epsilon\oo\id)(F)=(\id\oo\epsilon)(F)=1.
\end{align*}
\end{definition}

\begin{lemma} \label{lem:twist}Let $(H,m, 1,\Delta,\epsilon)$ be a bialgebra  and $F ,G$  twists for $H$. Then:
\begin{compactenum}
\item  The comultiplication  $\Delta_{F,G}=F\cdot\Delta\cdot G^\inv$ and counit  $\epsilon$ define a coalgebra structure on $H$.
\item The comultiplication  $\Delta_{F,F}=F\cdot \Delta\cdot F^\inv$  and $\epsilon$ equip $(H, m,1)$ with the structure of a bialgebra.
\item  If $S:H\to H$ is an antipode for $H$, then an antipode for the bialgebra from 2.  is given by
 $S_F=\nu_F\cdot S\cdot \nu_F^\inv$   with $\nu_F=m\circ (\id\oo S)(F)$, $\nu_F^\inv=(S\oo\id)(F^\inv)$.
 \end{compactenum}
\end{lemma}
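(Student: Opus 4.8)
The plan is to establish Lemma~\ref{lem:twist} in three stages, deriving everything from the defining properties of a twist in Definition~\ref{def:twist}: the cocycle identity $F_{12}(\Delta\oo\id)(F)=F_{23}(\id\oo\Delta)(F)$, the normalisation $(\epsilon\oo\id)(F)=(\id\oo\epsilon)(F)=1$, and the analogous facts for $G$. For part 1 the two counit axioms are immediate: since $\epsilon\oo\id$ and $\id\oo\epsilon$ are algebra homomorphisms $H\oo H\to H$, and since $(\epsilon\oo\id)(F)=1$ holds for $F$ and, by applying $\epsilon\oo\id$ to $GG^\inv=1\oo1$, also for $G^\inv$, evaluating $(\epsilon\oo\id)$ on $\Delta_{F,G}(h)=F\Delta(h)G^\inv$ collapses to the counit axiom for $\Delta$, and similarly for $\id\oo\epsilon$. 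The substance is coassociativity. Here I would use that, for any $Z\in H\oo H$, one has $(\Delta_{F,G}\oo\id)(Z)=F_{12}\,(\Delta\oo\id)(Z)\,G_{12}^\inv$ and $(\id\oo\Delta_{F,G})(Z)=F_{23}\,(\id\oo\Delta)(Z)\,G_{23}^\inv$, which holds because multiplication in $H^{\oo3}$ factorises over the tensor legs. Applying these to $\Delta_{F,G}(h)$ and invoking coassociativity of $\Delta$ to identify the common middle factor $(\Delta\oo\id)\Delta(h)=(\id\oo\Delta)\Delta(h)$, both iterated comultiplications become this map conjugated on the left by $F_{12}(\Delta\oo\id)(F)$ resp.\ $F_{23}(\id\oo\Delta)(F)$, and on the right by $(\Delta\oo\id)(G^\inv)\,G_{12}^\inv$ resp.\ $(\id\oo\Delta)(G^\inv)\,G_{23}^\inv$. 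The cocycle identity for $F$ equates the left factors; taking the inverse of the cocycle identity for $G$ (using that $\Delta\oo\id$ and $\id\oo\Delta$, being algebra maps, commute with inversion) equates the right factors, so the two sides agree.

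For part 2 I would set $G=F$, so that part 1 already supplies a coalgebra, and check only that $\Delta_{F,F}$ and $\epsilon$ are algebra homomorphisms. Multiplicativity is direct from $\Delta$ being multiplicative: $\Delta_{F,F}(hk)=F\Delta(h)\Delta(k)F^\inv=\big(F\Delta(h)F^\inv\big)\big(F\Delta(k)F^\inv\big)=\Delta_{F,F}(h)\,\Delta_{F,F}(k)$, while $\Delta_{F,F}(1)=F(1\oo1)F^\inv=1\oo1$; and $\epsilon$ is unchanged, hence still an algebra map. This gives the bialgebra.

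Part 3 is where the real work lies; it requires that $\nu_F$ be invertible with inverse $\nu_F^\inv$ (here I read $\nu_F^\inv=(S\oo\id)(F^\inv)$ as $\sum S(G^1)G^2$, with $F^\inv=\sum G^1\oo G^2$), and that the resulting $S_F=\nu_F\,S\,\nu_F^\inv$ satisfy $m\circ(S_F\oo\id)\circ\Delta_{F,F}=\eta\circ\epsilon=m\circ(\id\oo S_F)\circ\Delta_{F,F}$. For invertibility I would contract the cocycle identity through the antipode via the map $X\oo Y\oo Z\mapsto X\,S(Y)\,Z$: on $F_{12}(\Delta\oo\id)(F)$ the antipode axiom $\sum\low a1 S(\low a2)=\epsilon(a)1$ together with $(\epsilon\oo\id)(F)=1$ collapses the expression to $\nu_F$, and matching this against the analogous contraction of $F_{23}(\id\oo\Delta)(F)$, after inserting a factor of $F^\inv$ and using its inverse cocycle identity, yields $\nu_F\nu_F^\inv=\nu_F^\inv\nu_F=1$. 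The two antipode equations I would then verify by expanding $\Delta_{F,F}(h)=\sum F^1\low h1 G^1\oo F^2\low h2 G^2$ (with $F=\sum F^1\oo F^2$), substituting $S_F=\nu_F S\nu_F^\inv$, and repeatedly using the antihomomorphism property of $S$ and the cocycle identity to telescope the $F$- and $G$-factors against $\nu_F^{\pm1}$ down to $\epsilon(h)1$.

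The main obstacle is exactly this last stage: the invertibility of $\nu_F$ and the antipode identities are the only points where more than a one-line manipulation is needed, and the delicate part is organising the bookkeeping so that the cocycle identity is applied in the right order, making the twist factors telescope rather than proliferate. Everything in parts 1 and 2 is essentially formal once the conjugation rewriting of $\Delta_{F,G}\oo\id$ and $\id\oo\Delta_{F,G}$ is in hand.
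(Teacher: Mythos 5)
The paper never actually proves Lemma~\ref{lem:twist}: it appears in Appendix~\ref{sec:hopfalg} among facts declared standard and deferred to the textbook literature, so there is no in-paper argument to measure yours against. Judged on its own terms, your parts 1 and 2 are complete and correct. The key observation that $(\Delta_{F,G}\oo\id)(Z)=F_{12}(\Delta\oo\id)(Z)\,G_{12}^{-1}$ is a purely linear identity (it does not require $\Delta_{F,G}$ to be multiplicative, which it generally is not for $F\neq G$), and combining the cocycle identity for $F$, the inverted cocycle identity for $G$, and coassociativity of $\Delta$ matches the three factors on the two sides exactly as you say; the counit axioms and the multiplicativity of $\Delta_{F,F}$ are the one-line computations you give.

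Part 3 is the standard Drinfel'd-twist argument and your strategy is sound, but two points deserve attention. First, the antipode axioms are easier than your sketch suggests: writing $F^{-1}=G^1\oo G^2$, the identities
\begin{align*}
\textstyle\sum S(F^1)\,\nu_F^{-1}\,F^2=\sum S(G^1F^1)\,G^2F^2=1,
\qquad
\textstyle\sum G^1\,\nu_F\,S(G^2)=\sum G^1F^1\,S(G^2F^2)=1
\end{align*}
follow from $F^{-1}F=1\oo1$ alone, and inserting them into $m(S_F\oo\id)\Delta_{F,F}(h)$ and $m(\id\oo S_F)\Delta_{F,F}(h)$ reduces both axioms to the single identity $\nu_F\nu_F^{-1}=1$; no telescoping of cocycle factors is needed there. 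Second, that single identity is the one place the cocycle condition genuinely enters, and your contraction does close for it: applying $x\oo y\oo z\mapsto xS(y)z$ to $F_{23}^{-1}F_{12}(\Delta\oo\id)(F)=(\id\oo\Delta)(F)$ yields $\nu_F\nu_F^{-1}$ on the left and $F^1S(F^2_{(1)})F^2_{(2)}=1$ on the right. However, the opposite identity $\nu_F^{-1}\nu_F=1$ does \emph{not} come from the same contraction; your proposal asserts both sides at once without distinguishing them. One needs a mirror manipulation, e.g.\ contracting $(\Delta\oo\id)(F^{-1})F_{12}^{-1}F_{23}=(\id\oo\Delta)(F^{-1})$ with the different map $x\oo y\oo z\mapsto S(x)\,y\,S(z)$, to obtain $\nu_F^{-1}\nu_F=1$. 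That is the only genuine soft spot in the proposal; once it is supplied, the argument is a correct and complete proof of the lemma.
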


\subsection{Quasitriangular Hopf algebras and ribbon algebras}

\begin{definition}  
A  Hopf algebra $H$  is called {\bf quasitriangular} if there is  an invertible element $R=\low R 1\oo\low R 2\in H\oo H$,  the  {\bf $R$-matrix},  that satisfies $R\cdot \Delta(h)=\Delta^{op}(h)\cdot R$ for all $h\in H$, $(\Delta\oo\id)(R)= R_{13} \cdot R_{23}$ and $(\id\oo\Delta)(R)=R_{13} \cdot R_{12}$.
The element $Q=R_{21}\cdot R\in H\oo H$ is called the  {\bf monodromy element}. $H$ is  {\bf triangular} if it is quasitriangular and $R_{21}^\inv=\low R 2\oo S(\low R 1)=R$.
\end{definition}

\begin{lemma} \label{lem:quasprop}  Let $H$ be a finite-dimensional quasitriangular Hopf algebra over a field $\FF$ and $R=\low R 1\oo\low R 2$ an $R$-matrix. Then:
\begin{compactenum}
\item  $(\id\oo\epsilon)(R)=(\epsilon\oo\id)(R)=1$, $(S\oo\id)(R)=(\id\oo S^\inv)(R)=R^\inv$, $(S\oo S)(R)=R$.
\item   $R_{21}^\inv=\low R 2\oo S(\low R 1)$ is another $R$-matrix for $H$.
\item $R$ satisfies the Quantum Yang Baxter equation (QYBE) $R_{12}R_{13}R_{23}=R_{23}R_{13}R_{12}$.
\item  The linear map $D_R: K^*\to K$, $\alpha\mapsto \langle \alpha,\low R 1\rangle\, \low R 2$ is an algebra isomorphism, an anti-coalgebra isomorphism and satisfies $D_R\circ S=S^\inv\circ D_R$. 
\item The {\bf Drinfel'd element}   
 $u=m\circ (S\oo\id)(R_{21})$ is invertible  with inverse $u^\inv=m\circ (\id\oo S^2)(R_{21})$  and satisfies  $\epsilon(u)=1$, $\Delta(u)=(u\oo u)R^\inv R_{21}^\inv$ and   $S^2(h)=u \cdot h \cdot u^\inv$ for all $h\in H$ \cite{Dr}. 

\item If $\mathrm{char}(\FF)=0$, this implies by Theorem \ref{th:lr} that  $H$ is semisimple if and only if $u$ is central. 
\item The element  $uS(u)$ is called the {\bf quantum Casimir}. It is central  and satisfies  $S(uS(u))=uS(u)$, $\epsilon(uS(u))=1$ and $\Delta(uS(u))=R^\inv R_{21}^\inv(uS(u)\oo uS(u))$ \cite{Dr}. 
\end{compactenum}
\end{lemma}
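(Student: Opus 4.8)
The plan is to treat Lemma~\ref{lem:quasprop} as a catalogue of standard consequences of the three defining relations of a quasitriangular Hopf algebra --- quasi-cocommutativity $R\,\Delta(h)=\Delta^{op}(h)\,R$ and the two hexagon identities $(\Delta\oo\id)(R)=R_{13}R_{23}$ and $(\id\oo\Delta)(R)=R_{13}R_{12}$ --- deriving each item by feeding these relations through the counit and antipode axioms, and invoking \cite{Dr} for the deepest facts about the Drinfel'd element. Writing $R=\low R1\oo\low R2$, I would first dispatch part~1. Applying $\epsilon\oo\id\oo\id$ to the first hexagon and using $(\epsilon\oo\id)\Delta=\id$ collapses the left side to $R$ and the right to $\big(1\oo(\epsilon\oo\id)(R)\big)\,R$; cancelling $R$ gives $(\epsilon\oo\id)(R)=1$, and symmetrically $(\id\oo\epsilon)(R)=1$. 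For the antipode identities I would apply the map $x\oo y\oo z\mapsto S(x)y\oo z$ to $(\Delta\oo\id)(R)=R_{13}R_{23}$: the counit axiom $m(S\oo\id)\Delta=\eta\epsilon$ turns the left side into $1\oo1$, while the right side factors as $(S\oo\id)(R)\cdot R$, yielding $(S\oo\id)(R)=R^\inv$. Finally $(\id\oo S^\inv)(R)=R^\inv$ follows, and then $(S\oo S)(R)=(\id\oo S)\big((S\oo\id)(R)\big)=(\id\oo S)(R^\inv)=R$, the last equality being $(\id\oo S^\inv)(R)=R^\inv$ read with $S$ applied.

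Parts~2 and~3 are then short. Since the flip $\tau$ is an algebra \emph{automorphism} of $K\oo K$, I would obtain $R_{21}^\inv=\tau(R^\inv)=\tau\big((S\oo\id)(R)\big)=\low R2\oo S(\low R1)$, and verifying the three axioms for $R_{21}^\inv$ reduces to flipping and inverting the corresponding axioms for $R$. The QYBE follows by rewriting $R_{12}R_{13}R_{23}=R_{12}(\Delta\oo\id)(R)=(\Delta^{op}\oo\id)(R)\,R_{12}$ using quasi-cocommutativity in the first two legs, and then identifying $(\Delta^{op}\oo\id)(R)=\tau_{12}(R_{13}R_{23})=R_{23}R_{13}$.

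For part~4 I would compute directly. Expanding $\Delta(\low R1)$ through $(\Delta\oo\id)(R)=R_{13}R_{23}$ gives $D_R(\alpha\beta)=D_R(\alpha)D_R(\beta)$, so $D_R$ is an algebra homomorphism; expanding $\Delta(\low R2)$ through $(\id\oo\Delta)(R)=R_{13}R_{12}$ gives $\Delta\circ D_R=(D_R\oo D_R)\circ\Delta^{op}$, the anti-coalgebra property; and $D_R\circ S=S^\inv\circ D_R$ is immediate from part~1, since both sides equal $(\alpha\oo\id)(R^\inv)$. (Bijectivity of $D_R$ is not automatic --- it is equivalent to factorisability, as used in Proposition~\ref{lem:looplem} --- so at this level one establishes only the morphism properties.) Parts~5--7 concerning the Drinfel'd element $u$ I would quote from \cite{Dr}: its invertibility, $\epsilon(u)=1$, $\Delta(u)=(u\oo u)R^\inv R_{21}^\inv$, and above all the conjugation formula $S^2(h)=u\,h\,u^\inv$. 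Part~6 is then a one-liner: by Theorem~\ref{th:lr} semisimplicity in characteristic zero is equivalent to $S^2=\id$, which by the conjugation formula holds if and only if $u$ is central; and the quantum Casimir assertions of part~7 follow from \cite{Dr} together with the same formula.

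The one genuinely delicate point I would flag is the identity $(\id\oo S^\inv)(R)=R^\inv$. Running the analogous argument for the second hexagon --- applying $x\oo y\oo z\mapsto x\oo yS(z)$ to $(\id\oo\Delta)(R)=R_{13}R_{12}$ --- produces an identity of the shape $\sum a_ka_l\oo b_lS(b_k)=1\oo1$ whose slots are ordered so that it matches \emph{neither} $(\id\oo S)(R)\cdot R$ nor $R\cdot(\id\oo S)(R)$; the non-commutativity blocks a naive factorisation. The resolution is to apply $\id\oo S^\inv$ (legitimate since the antipode is bijective in finite dimensions) to the whole identity, after which it rearranges precisely into $R\cdot(\id\oo S^\inv)(R)=1\oo1$, giving the claim. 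Once this step is in hand, the remaining content is either routine bookkeeping with the axioms or a citation to \cite{Dr}.
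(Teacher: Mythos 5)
The paper never proves Lemma \ref{lem:quasprop}: it lives in the appendix of background material, whose preamble declares such results standard and refers the reader to textbook accounts, with parts 5 and 7 explicitly attributed to Drinfel'd \cite{Dr}. Your write-up is the standard textbook derivation and is correct, including the one step that genuinely requires care and that you flag: the identity coming from the second hexagon, $\sum \low R 1\low {R'}1\oo \low{R'}2 S(\low R 2)=1\oo 1$, indeed factors as neither $(\id\oo S)(R)\cdot R$ nor $R\cdot(\id\oo S)(R)$, and applying $\id\oo S^\inv$ to rearrange it into $R\cdot(\id\oo S^\inv)(R)=1\oo 1$ is exactly the right fix; the derivations of parts 1--4 and 6 from the hexagons, quasi-cocommutativity and Theorem \ref{th:lr} all check out, and quoting \cite{Dr} for parts 5 and 7 matches what the paper itself does. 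Two small remarks. First, you are right to note that bijectivity of $D_R$ fails in general (for $R=1\oo 1$ on a cocommutative Hopf algebra one gets $D_R(\alpha)=\alpha(1)\,1$), so the word ``isomorphism'' in part 4 overstates the lemma and your proof correctly delivers only the homomorphism and anti-cohomomorphism properties; however, your parenthetical equating bijectivity of $D_R$ with factorisability is not quite right --- factorisability, as used in Proposition \ref{lem:looplem}, concerns the map built from the monodromy element $Q=R_{21}R$, and e.g.\ the Drinfel'd double $D(H)$ is factorisable while the image of its $D_R$ is only $H^*\oo 1$. Second, in part 2 it is worth saying explicitly that $\tau$ being an algebra automorphism of $K\oo K$ turns $R\Delta=\Delta^{op}R$ into the corresponding relation for $R_{21}$ and hence for $R_{21}^\inv$, and swaps the two hexagons up to inversion; but this is bookkeeping, not a gap.
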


\begin{definition} \label{def:ribbon}Let $H$ be a finite-dimensional quasitriangular Hopf algebra with $R$-matrix $R$.
Then  $H$ is called {\bf ribbon}, if there is a central  invertible element $\nu\in H$, the {\bf ribbon element},  with $uS(u)=\nu^2$, $\epsilon(\nu)=1$,  $S(\nu)=\nu$,  $\Delta(\nu)=R^\inv R_{21}^\inv(\nu\oo\nu)$. 
\end{definition}

\begin{remark}\label{rem:grouplike} If $H$ is ribbon with Drinfel'd element $u$ and ribbon element $\nu$, then the element $g=u^\inv\cdot\nu$ satisfies $g^\inv=S(g)$, $\Delta(g)=g\oo g$ and $g S(h)=S^{-1}(h)g$ for all $h\in H$. It is called the {\bf grouplike element}.
\end{remark}

\begin{lemma}[\cite{sgel}] \label{lem:ssimplerib}Let $H$ be a finite-dimensional semisimple quasitriangular Hopf algebra over a field of characteristic zero. Then  $H$ is ribbon with ribbon element  $\nu=u$.
\end{lemma}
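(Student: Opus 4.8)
The plan is to verify directly that $\nu = u^{\inv}$ satisfies the four defining conditions of a ribbon element in Definition \ref{def:ribbon}, after first collecting three structural facts about the Drinfel'd element $u$: that it is central, that $S(u) = u$, and that $u^2 = 1$ (equivalently, that the quantum Casimir $uS(u)$ is trivial). Centrality is immediate: since $\mathrm{char}(\FF)=0$ and $H$ is semisimple, Theorem \ref{th:lr} gives $S^2 = \id$, and Lemma \ref{lem:quasprop}(6) then yields that $u$ is central and invertible. Consequently $\nu = u^{\inv}$ is central and invertible, and $\epsilon(\nu) = \epsilon(u)^{\inv} = 1$ using $\epsilon(u)=1$ from Lemma \ref{lem:quasprop}(5); this disposes of two of the conditions at once.

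The remaining conditions reduce cleanly to the two facts $S(u)=u$ and $u^2=1$. Given $S(u)=u$ one has $S(\nu) = S(u)^{\inv} = u^{\inv} = \nu$, and given in addition $u^2 = 1$ one gets $uS(u) = u^2 = 1 = u^{-2} = \nu^2$. For the comultiplication axiom I would start from $\Delta(u) = (u\otimes u)R^{\inv}R_{21}^{\inv}$ (Lemma \ref{lem:quasprop}(5)) and invert, obtaining $\Delta(\nu) = \Delta(u)^{\inv} = R_{21}R\,(\nu\otimes\nu)$ by centrality of $u$. The required identity $\Delta(\nu) = R^{\inv}R_{21}^{\inv}(\nu\otimes\nu)$ is therefore equivalent to $(R_{21}R)^2 = 1$ for the monodromy element, and this last equation follows from $u^2=1$: applying $\Delta$ to $u^2$ and using centrality gives $\Delta(u^2) = (u^2\otimes u^2)(R_{21}R)^{-2}$, so $u^2=1$ forces $(R_{21}R)^2 = 1$.

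The substantive input, and the main obstacle, is thus the pair of facts $S(u)=u$ and $u^2=1$, which is the content of the Etingof--Gelaki analysis \cite{sgel}. Here I would argue as follows. Since $H$ is semisimple and $\mathrm{char}(\FF)=0$, $H$ is also cosemisimple, so both $H$ and $H^*$ are unimodular; the ratio $uS(u)^{\inv}$ is a (central) grouplike element, and unimodularity of $H$ and $H^*$ combined with $S^2=\id$ forces it to be trivial, so $u = S(u)$ and the quantum Casimir reduces to $u^2$. To see $u^2=1$, write the central invertible element $u = \sum_i \omega_i e_i$ over the central idempotents $e_i$ attached to the simple modules $V_i$, so that $u$ acts on $V_i$ by the scalar $\omega_i$; the twist $u^{\inv}$ is the balancing of the ribbon category $\mathrm{Rep}(H)$, and a trace argument using the nonvanishing of the ordinary dimensions $\dim V_i$ (valid since $g=1$ makes quantum and ordinary dimensions coincide) shows each $\omega_i^2 = 1$, whence $u^2 = 1$. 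I expect the formal verification of the ribbon axioms in the first two paragraphs to be entirely routine, with essentially all the difficulty concentrated in establishing $u^2=1$, for which I would either reproduce the Etingof--Gelaki argument or invoke \cite{sgel} directly.
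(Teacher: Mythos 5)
The paper gives no proof of this lemma at all---it is simply quoted from \cite{sgel}---so the only question is whether your argument is sound, and it is not: the pivotal claim $u^2=1$ is false for a general finite-dimensional semisimple quasitriangular Hopf algebra in characteristic zero. Take $K=D(\FF[\ZZ/3\ZZ])$ as in Example \ref{ex:ddouble_grp}: there $u=\Sigma_{g}\,\delta_{g^\inv}\oo g$ is central and satisfies $S(u)=u$, but $u^3=1$ with $u\neq 1$, so $u^2\neq 1$ and $u^4\neq 1$, and correspondingly the monodromy $Q=R_{21}R$ does not square to the identity. More conceptually, the eigenvalue of $u$ on a simple module is (up to inversion) the twist of the corresponding object of $\mathrm{Rep}(K)$; these are roots of unity, but by no means $\pm 1$ in general. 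So your trace argument for $\omega_i^2=1$ cannot work, and your own reduction in the second paragraph---that $\nu=u^\inv$ is a ribbon element essentially iff $Q^2=1$ and $u^4=1$---should have been read as a warning sign rather than as a step toward the conclusion. (Your sketch of $S(u)=u$ via unimodularity of $H$ and $H^*$ and Radford's formula for $uS(u)^\inv$ is, by contrast, essentially the right idea and is the actual content of \cite{sgel}.)

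The underlying problem is that the correct ribbon element is $\nu=u$, not $u^\inv$; the statement as printed appears to carry a typo, and the rest of the paper is consistent only with $\nu=u$ (for instance, the assertion that the grouplike element $g=u^\inv\nu$ of Remark \ref{rem:grouplike} equals $1$ in the semisimple case would force $u^2=1$ if $\nu$ were $u^\inv$, but holds trivially for $\nu=u$). With $\nu=u$ your first paragraph already does almost all the work: centrality and invertibility of $u$ and $\epsilon(u)=1$ come from Lemma \ref{lem:quasprop}, the coproduct axiom $\Delta(\nu)=R^\inv R_{21}^\inv(\nu\oo\nu)$ of Definition \ref{def:ribbon} is Drinfel'd's formula $\Delta(u)=(u\oo u)R^\inv R_{21}^\inv$ verbatim (no condition on $Q^2$ enters), and the only substantive input is $S(u)=u$, which immediately gives $S(\nu)=\nu$ and $uS(u)=u^2=\nu^2$. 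No claim about $u^2$ or $(R_{21}R)^2$ is needed, and none is available.
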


\begin{theorem}[\cite{Dr}] \label{lem:ddouble2} Let $H$ be a finite-dimensional  Hopf algebra. Define for $h,h'\in H$, $\alpha,\alpha'\in H^*$ 
\begin{align}
&(\alpha\oo h)\cdot (\alpha'\oo h')=\langle \low {\alpha'} 3, \low h 1\rangle \langle \low{\alpha'} 1, S^\inv(\low {h} 3)\rangle\;  \alpha\low{\alpha'} 2\oo \low h 2h' & &1=1_{H^*}\oo 1_H\nonumber \\
&\Delta(\alpha\oo h)=\alpha_{(2)}\oo h_{(1)}\oo \alpha_{(1)}\oo h_{(2)} & &\epsilon(\alpha\oo h)=\epsilon_{H^*}(\alpha)\epsilon_H(h)\nonumber\\
&S(\alpha\oo h)=(1_{H^*}\oo S_H(h))\cdot (S_{H^*}(\alpha)\oo 1_H). \label{eq:ddouble}
\end{align}
 Then  $(H^*\oo H, \cdot,1,\Delta,\epsilon, 1, S)$  is a   quasitriangular Hopf algebra, the  {\bf Drinfel'd double} $D(H)$ of $H$.
For any  basis $\{x_i\}$ of $H$ with associated dual basis $\{\alpha^i\}$  of $H^*$,  the standard  $R$-matrix of $D(H)$ is given by
$R=\Sigma_i 1\oo x_i\oo\alpha^i\oo 1$.
\end{theorem}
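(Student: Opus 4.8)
The plan is to display $D(H)=H^*\oo H$ as a \emph{double cross product} of the two subalgebras $H^*\oo 1_H$ and $1_{H^*}\oo H$, to read off its coalgebra structure as a tensor product, and then to verify the bialgebra, antipode and quasitriangularity axioms in turn. First I would record the structural facts that follow immediately from \eqref{eq:ddouble}. Setting $h=h'=1_H$ in the multiplication gives $(\alpha\oo 1_H)\cdot(\alpha'\oo 1_H)=\alpha\alpha'\oo 1_H$, and setting $\alpha=\alpha'=1_{H^*}$ gives $(1_{H^*}\oo h)\cdot(1_{H^*}\oo h')=1_{H^*}\oo hh'$; hence $H^*\oo 1_H$ and $1_{H^*}\oo H$ are subalgebras isomorphic to $H^*$ and $H$, with $(\alpha\oo 1_H)\cdot(1_{H^*}\oo h)=\alpha\oo h$, so every element is a normally ordered product. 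On the coalgebra side, the formula for $\Delta$ is exactly the tensor-product comultiplication of $H^*$ (taken co-opposite) and $H$, so coassociativity and the counit axiom are inherited and require no computation.

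The heart of the algebra structure is the single \emph{straightening relation}
\[
(1_{H^*}\oo h)\cdot(\alpha\oo 1_H)=\langle\low\alpha 3,\low h 1\rangle\,\langle\low\alpha 1,S^\inv(\low h 3)\rangle\;\low\alpha 2\oo\low h 2,
\]
obtained by specialising \eqref{eq:ddouble}, which rewrites a product of an $H$-generator and an $H^*$-generator in the wrong order back into normal form. Since $D(H)$ is the product of the two subalgebras, associativity of the whole multiplication reduces to associativity of the mixed triples of generators, and each of these follows from the straightening relation together with the Hopf axioms for $H$ and $H^*$, the duality relations $\langle\alpha,hk\rangle=\langle\low\alpha 1,h\rangle\langle\low\alpha 2,k\rangle$ and $\langle\alpha\beta,h\rangle=\langle\alpha,\low h 1\rangle\langle\beta,\low h 2\rangle$, and the anti-homomorphism property of $S^\inv$.

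Next I would verify the remaining bialgebra axiom, that $\Delta$ is an algebra homomorphism, and the antipode axiom. Because $\Delta$ is the tensor-product comultiplication and is manifestly multiplicative on each subalgebra, multiplicativity for all of $D(H)$ follows once it is checked on the straightening relation, i.e.\ that $\Delta$ of the left-hand side equals $\Delta(1_{H^*}\oo h)\cdot\Delta(\alpha\oo 1_H)$. This is the main computation: both sides are expanded in Sweedler notation and matched using the two duality relations above to move the comultiplications through the pairing factors. Granting this, the antipode identities $m\circ(S\oo\id)\circ\Delta=m\circ(\id\oo S)\circ\Delta=1\,\epsilon$ are verified directly from the normal-ordered form of $S$ in \eqref{eq:ddouble}, built from $S_H$ and $S_{H^*}$ in reverse order, together with the counit properties of $S_H$ and $S_{H^*}$ applied factor by factor.

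Finally, for quasitriangularity I would write $R=\sum_i(1_{H^*}\oo x_i)\oo(\alpha^i\oo 1_H)$ and use that $\sum_i x_i\oo\alpha^i$ is the canonical element of the pairing, characterised by $\sum_i\langle\alpha^i,h\rangle\,x_i=h$ and $\sum_i\langle\alpha,x_i\rangle\,\alpha^i=\alpha$. The conditions $(\Delta\oo\id)(R)=R_{13}R_{23}$ and $(\id\oo\Delta)(R)=R_{13}R_{12}$ then translate into the behaviour of the canonical element under the two comultiplications, while the intertwining condition $R\,\Delta(a)=\Delta^{op}(a)\,R$ reduces, via the normal-ordering $a=(\alpha\oo 1_H)(1_{H^*}\oo h)$, precisely to the straightening relation already established; invertibility is immediate from $R^\inv=\sum_i(1_{H^*}\oo S(x_i))\oo(\alpha^i\oo 1_H)$. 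The main obstacle throughout is the multiplicativity of $\Delta$ on the straightening relation: there the two $S^\inv$-twisted pairing factors must be propagated through several Sweedler indices simultaneously, and this is the step where the precise placement of $S$ versus $S^\inv$ in \eqref{eq:ddouble} is essential. Associativity is of the same character but shorter, and once the canonical-element identities are in hand the quasitriangularity checks are essentially formal.
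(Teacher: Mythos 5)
The paper does not actually prove this statement: Theorem \ref{lem:ddouble2} appears in Appendix \ref{sec:hopfalg} as background material and is attributed to Drinfel'd \cite{Dr}, with the appendix explicitly deferring to standard references such as \cite{K,majidbook}. So there is no internal proof to compare against. Your proposal is the standard textbook argument: exhibit $H^*\oo 1_H$ and $1_{H^*}\oo H$ as subalgebras with $(\alpha\oo 1_H)\cdot(1_{H^*}\oo h)=\alpha\oo h$, reduce associativity and the bialgebra axiom to the single straightening relation, observe that the coalgebra is the tensor product of $(H^*)^{cop}$ and $H$, and obtain quasitriangularity from the canonical element of the pairing. As a strategy this is sound, your identification of the multiplicativity of $\Delta$ on the straightening relation as the main computation is accurate, and the canonical-element identities you invoke for $(\Delta\oo\id)(R)=R_{13}R_{23}$, $(\id\oo\Delta)(R)=R_{13}R_{12}$ and for $R^\inv$ do check out.

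One step is treated too lightly, and it is the one most sensitive to the $S$-versus-$S^\inv$ bookkeeping you yourself flag as essential. The antipode axiom is not a factor-by-factor application of the counit properties of $S_H$ and $S_{H^*}$: computing $m\circ(\id\oo S)\circ\Delta(\alpha\oo h)$ from the normal-ordered form of $S$ in \eqref{eq:ddouble}, the $H$-contribution does collapse via $\low h 1 S(\low h 2)=\epsilon(h)1$, but the $H^*$-contribution collapses to $\low\alpha 2\, S_{H^*}(\low\alpha 1)$. Because the $H^*$ tensor factor carries the \emph{co-opposite} coalgebra structure, this is the antipode identity for $(H^*)^{cop}$, whose antipode is $S_{H^*}^\inv$ rather than $S_{H^*}$; with the usual transpose convention $\langle S_{H^*}\alpha,k\rangle=\langle\alpha,S_H k\rangle$ one gets $\langle\alpha, S(\low k 2)\low k 1\rangle$, which is not $\epsilon(\alpha)\epsilon(k)$ unless $S^2=\id$, whereas the inverse antipode gives $S^\inv(\low k 2)\low k 1=\epsilon(k)1$ as required. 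So the antipode verification genuinely forces you to decide which dual antipode appears in \eqref{eq:ddouble} and to run the $(H^*)^{cop}$ identity, not merely the counit axioms. Apart from this, the deferred Sweedler computations (associativity of mixed triples, multiplicativity of $\Delta$ on the straightening relation, and the intertwining relation $R\,\Delta(a)=\Delta^{op}(a)\,R$) are routine and your outline of them is correct.
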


\begin{example}[Finite group]\label{ex:ddouble_grp} If $G$ is a finite group,  it follows from Example \ref{ex:finite_group} that the quasitriangular Hopf algebra structure of the Drinfel'd double $\mathcal D(\FF[G])=\mathrm{Fun}(G)\oo \FF[G]$ is given by 
\begin{align*}
&(\delta_h\oo g)\cdot (\delta_{h'}\oo g')=\delta_{h}\delta_{gh'g^\inv}\oo gg'=\delta_{g^\inv h g}(h')\; \delta_h\oo gg' & &1=1\oo e\\
&\Delta(\delta_h\oo g)=\Sigma_{u,v\in G, uv=h}\,\delta_v\oo g\oo \delta_u\oo g & &\epsilon(\delta_h\oo g)=\delta_h(e)\\
&S(\delta_h\oo g)=(1\oo g^\inv)(\delta_{h^\inv})=\delta_{g^\inv h^\inv g}\oo g^\inv & &R=\Sigma_{g\in G}\, 1\oo g\oo \delta_g\oo e.
\end{align*}
\end{example}

\begin{lemma} Let $H$ be a finite-dimensional Hopf algebra  over a field of characteristic zero. Then $H$ is semisimple if and only if $H^*$ is semisimple if and only if $D(H)$ is semisimple \cite{R}. If $\ell\in H$ and $\eta\in H^*$ are Haar integrals then $\eta\oo \ell$ is a Haar integral for $D(H)$.
\end{lemma}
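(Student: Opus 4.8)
The plan is to reduce everything to two standard inputs: the Larson--Radford criterion in Theorem \ref{th:lr} and Maschke's theorem for Hopf algebras (a finite-dimensional Hopf algebra over a field of characteristic zero is semisimple if and only if it admits a normalised Haar integral). The equivalence $H$ semisimple $\iff H^*$ semisimple is exactly Theorem \ref{th:lr}, so the only new content is tying $D(H)$ into the chain, and the key device is the explicit integral $\eta\oo\ell$. First I would record that $\ell\in H$ and $\eta\in H^*$ exist \emph{simultaneously} precisely when $H$ is semisimple (by Theorem \ref{th:lr} together with Maschke), in which case $S^2=\id$ on both $H$ and $H^*$, so that $S^\inv=S$ throughout and, by the properties of Haar integrals collected above, both integrals are \emph{traces}: $\langle\alpha\beta,\ell\rangle=\langle\beta\alpha,\ell\rangle$ and $\eta(xy)=\eta(yx)$.

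The computational heart is verifying that $\eta\oo\ell$ is a two-sided integral in $D(H)$ with $\epsilon(\eta\oo\ell)=\epsilon_{H^*}(\eta)\,\epsilon_H(\ell)=1$. Using the multiplication of Theorem \ref{lem:ddouble2} I would compute
\begin{align*}
(\alpha\oo h)\cdot(\eta\oo\ell)=\langle\low\eta 3,\low h 1\rangle\,\langle\low\eta 1,S^\inv(\low h 3)\rangle\;\alpha\low\eta 2\oo \low h 2\ell
\end{align*}
and collapse $\low h 2\ell=\epsilon(\low h 2)\ell$ by the absorbing property of $\ell$. Pairing the surviving $H^*$-factor against an arbitrary $k\in H$ turns the two remaining contractions into $\eta\bigl(S(\low h 2)\,k\,\low h 1\bigr)$; the trace property of $\eta$ and $\low h 1 S(\low h 2)=\epsilon(h)1$ reduce this to $\epsilon(h)\eta(k)$, whence $(\alpha\oo h)(\eta\oo\ell)=\epsilon(h)(\alpha\eta)\oo\ell=\epsilon_{H^*}(\alpha)\epsilon_H(h)\,\eta\oo\ell$ after $\alpha\eta=\epsilon_{H^*}(\alpha)\eta$. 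The computation for $(\eta\oo\ell)(\alpha\oo h)$ is dual: the three-fold coproduct now falls on $\ell$, one collapses $\eta\low\alpha 2=\epsilon_{H^*}(\low\alpha 2)\eta$, and pairing the $H$-factor against an arbitrary $\beta\in H^*$ produces $\langle\low\alpha 2\,\beta\,S_{H^*}(\low\alpha 1),\ell\rangle$, which the trace property of $\ell$ on $H^*$ collapses to $\epsilon_{H^*}(\alpha)\langle\beta,\ell\rangle$; together with $\ell h=\epsilon(h)\ell$ this again gives $\epsilon_{H^*}(\alpha)\epsilon_H(h)\,\eta\oo\ell$. Hence $\eta\oo\ell$ is a Haar integral and, by Maschke, $D(H)$ is semisimple whenever $H$ is.

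For the converse $D(H)$ semisimple $\Rightarrow H$ semisimple I would avoid integrals and argue through antipodes. The maps $h\mapsto 1_{H^*}\oo h$ and $\alpha\mapsto\alpha\oo 1_H$ are Hopf-algebra embeddings of $H$ and $H^*$ into $D(H)$: a short check from the structure maps of Theorem \ref{lem:ddouble2} gives $(1\oo h)(1\oo h')=1\oo hh'$ and $(\alpha\oo 1)(\alpha'\oo 1)=\alpha\alpha'\oo 1$, shows both are coalgebra maps, and yields $S_{D(H)}(1\oo h)=1\oo S_H(h)$ and $S_{D(H)}(\alpha\oo 1)=S_{H^*}(\alpha)\oo 1$. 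Applying Theorem \ref{th:lr} to $D(H)$ gives $S_{D(H)}^2=\id$, and restricting along these embeddings forces $S_H^2=\id$ and $S_{H^*}^2=\id$; by Theorem \ref{th:lr} once more, $H$ and $H^*$ are semisimple. (Alternatively, the entire chain of equivalences can be quoted directly from \cite{R}.)

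The main obstacle I anticipate lies entirely in the integral verification: forcing the two cross-contractions in each product to collapse cleanly. This is where semisimplicity is genuinely used — the reductions to $\eta(S(\low h 2)\,k\,\low h 1)=\epsilon(h)\eta(k)$ and its dual rely on $S^2=\id$ (so that $S^\inv=S$) and on the resulting trace property of the integrals, and without these the pairings simply do not simplify. The remaining difficulty is bookkeeping: tracking which legs of the three-fold coproducts pair with $S^\inv(\,\cdot\,)$ versus the identity, and correctly transporting the antipode across the pairing via $\langle S_{H^*}^\inv(\alpha),h\rangle=\langle\alpha,S_H^\inv(h)\rangle$.
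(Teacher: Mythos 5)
Your proof is correct. Note that the paper gives no argument for this lemma at all: it sits in the appendix of standard facts and is disposed of with the citation \cite{R}, so there is no in-paper proof to compare against and what you supply is a genuinely self-contained verification. The two absorption computations at the heart of your argument check out against the multiplication formula of Theorem \ref{lem:ddouble2}: in each case the middle Sweedler leg is killed by the integral sitting in the second tensor factor, and the two outer contractions assemble into $\eta\bigl(S(\low h 2)\,k\,\low h 1\bigr)$, respectively $\langle \low\alpha 2\,\beta\,S_{H^*}(\low\alpha 1),\ell\rangle$, which the trace property of the integral collapses to $\epsilon(h)\eta(k)$, respectively $\epsilon(\alpha)\langle\beta,\ell\rangle$. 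Both ingredients you lean on are legitimately available: the trace property is exactly the cyclic invariance of $\Delta^{(n)}$ of a Haar integral recorded in the paper's preceding lemma (applied to $\eta$ as an integral of $H^*$), and the substitution $S^\inv=S$ is justified because the simultaneous existence of $\ell$ and $\eta$ already forces $S^2=\id$ by Theorem \ref{th:lr}. Two minor points worth flagging. First, with the paper's conventions the coproduct of $D(H)$ reverses the $H^*$-legs, so the map $\alpha\mapsto\alpha\oo 1_H$ embeds $H^{*cop}$ rather than $H^*$ as a Hopf subalgebra; this is harmless for your converse, since only the restriction of $S^2_{D(H)}$ to the factor $1_{H^*}\oo H$ is needed to conclude $S_H^2=\id$ and hence semisimplicity of $H$ (and then of $H^*$) from Theorem \ref{th:lr}. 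Second, the converse also invokes the direction of Maschke that the paper does not state explicitly (a normalised two-sided integral implies semisimplicity); this is the standard Larson--Sweedler theorem and is fine to quote, but it is an input beyond what the appendix records.
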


\begin{definition} Let $H$ be a finite-dimensional quasitriangular Hopf algebra with $R$-matrix $R$.
 $H$ is called {\bf factorisable}, if  the {\bf Drinfel'd map} 
$
D_Q: H^*\to H,\quad \alpha\mapsto (\id\oo\langle\alpha,\cdot\rangle)(Q)=\low Q 1\langle \alpha, \low Q 2 \rangle$ with $Q=R_{21}R$
 is an isomorphism of vector spaces.
\end{definition}

\begin{lemma}[\cite{Dr}] Let $H$ be a finite-dimensional quasitriangular Hopf algebra. Then the Drinfel'd double $D(H)$ is factorisable.
\end{lemma}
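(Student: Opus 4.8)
The plan is to compute the monodromy element $Q=R_{21}R$ of $D(H)$ explicitly, read off its two legs, and reduce the bijectivity of the Drinfel'd map $D_Q\colon D(H)^*\to D(H)$ to the single statement that both multiplication maps
$$m_1\colon H^*\oo H\to D(H),\ \ \alpha\oo h\mapsto (\alpha\oo 1)(1\oo h),\qquad m_2\colon H\oo H^*\to D(H),\ \ h\oo\alpha\mapsto (1\oo h)(\alpha\oo 1)$$
are linear isomorphisms. Using the $R$-matrix $R=\sum_i 1\oo x_i\oo\alpha^i\oo 1$ from Theorem \ref{lem:ddouble2}, which in $D(H)\oo D(H)$ reads $R=\sum_i(\epsilon\oo x_i)\oo(\alpha^i\oo 1)$, I would compute
$$Q=R_{21}R=\sum_{i,j}(\alpha^i\oo 1)(1\oo x_j)\;\oo\;(1\oo x_i)(\alpha^j\oo 1)=\sum_{i,j}m_1(\alpha^i\oo x_j)\oo m_2(x_i\oo\alpha^j),$$
so that the first legs of $Q$ are the $m_1(\alpha^i\oo x_j)$ and the second legs are the $m_2(x_i\oo\alpha^j)$. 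By the multiplication rule \eqref{eq:ddouble} one has $(\alpha\oo 1)(1\oo h)=\alpha\oo h$, so $m_1$ is nothing but the defining identification $H^*\oo H=D(H)$; in particular the first legs already form a basis of $D(H)$.

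Since the Drinfel'd map pairs $\phi$ against the second leg of $Q$ and retains the first, $D_Q(\phi)=\sum_{i,j}m_1(\alpha^i\oo x_j)\,\langle\phi,m_2(x_i\oo\alpha^j)\rangle$ factors as $D_Q=M\circ L$, where $L\colon D(H)^*\to\FF^{(\dim H)^2}$, $\phi\mapsto(\langle\phi,m_2(x_i\oo\alpha^j)\rangle)_{i,j}$, and $M\colon\FF^{(\dim H)^2}\to D(H)$, $(c_{ij})\mapsto\sum_{i,j}c_{ij}\,m_1(\alpha^i\oo x_j)$. The map $M$ is an isomorphism because the $m_1(\alpha^i\oo x_j)$ are a basis, while $L$ is an isomorphism precisely when the $m_2(x_i\oo\alpha^j)$ are a basis, i.e.\ when $m_2$ is bijective. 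Thus everything reduces to proving that $m_2$ is a linear isomorphism.

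For this the key observation is that the antipode $S$ of $D(H)$ interchanges the two factorisations of $D(H)$. From \eqref{eq:ddouble} one checks directly that $S(1\oo h)=1\oo S_H(h)$ and $S(\alpha\oo 1)=S_{H^*}(\alpha)\oo 1$, so $S$ preserves each of the subalgebras $1\oo H$ and $H^*\oo 1$; since $S$ is an algebra anti-homomorphism,
$$S\bigl(m_2(h\oo\alpha)\bigr)=S(\alpha\oo 1)\,S(1\oo h)=(S_{H^*}(\alpha)\oo 1)(1\oo S_H(h))=m_1\bigl(S_{H^*}(\alpha)\oo S_H(h)\bigr).$$
Hence $S\circ m_2=m_1\circ(S_{H^*}\oo S_H)\circ\tau$, with $\tau$ the flip of the tensor factors. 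As $S$, $m_1$, $S_{H^*}$, $S_H$ and $\tau$ are all bijective (the antipodes because $H$, and hence $D(H)$, is finite-dimensional), $m_2$ is bijective as well. This shows $L$ is an isomorphism, so $D_Q=M\circ L$ is an isomorphism and $D(H)$ is factorisable.

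The only genuinely computational point, and the place where care is needed, is the evaluation of $Q$ from the multiplication rule \eqref{eq:ddouble} — in particular keeping track of the $S^{-1}$ occurring there and checking the identifications $(\alpha^i\oo 1)(1\oo x_j)=m_1(\alpha^i\oo x_j)$ and $(1\oo x_i)(\alpha^j\oo 1)=m_2(x_i\oo\alpha^j)$; the remainder of the argument is formal. I would also remark that quasitriangularity of $H$ is never used: the proof relies only on $H$ being finite-dimensional, so in fact the Drinfel'd double of an arbitrary finite-dimensional Hopf algebra is factorisable.
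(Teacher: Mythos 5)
Your proof is correct. Note that the paper itself offers no proof of this lemma---it is stated with a citation to Drinfel'd---so there is nothing internal to compare against; what you have written is the standard argument, carried out cleanly. The two computational facts you rely on both check out against the paper's conventions: from \eqref{eq:ddouble} one gets $(\alpha\oo 1)(1\oo h)=\alpha\oo h$ (the $S^{-1}$ in the multiplication rule is killed by the counits), so $m_1$ is the identity identification and the first legs of $Q=R_{21}R$ form a basis; and the antipode formula $S(\alpha\oo h)=(1\oo S_H(h))(S_{H^*}(\alpha)\oo 1)$ gives $S\circ m_2=m_1\circ(S_{H^*}\oo S_H)\circ\tau$, whence $m_2$ is bijective because the antipode of a finite-dimensional Hopf algebra is. The reduction of bijectivity of $D_Q$ to "$\{m_2(x_i\oo\alpha^j)\}$ is a basis" via the factorisation $D_Q=M\circ L$ is also sound (a family of $(\dim H)^2$ vectors in $D(H)$ spans iff it is a basis, and then the evaluation map $L$ is bijective by dimension count). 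Your closing remark is also correct and worth making: quasitriangularity of $H$ is never used, and indeed the paper itself invokes the stronger statement---factorisability of $D(H)$ for an arbitrary finite-dimensional $H$---in the discussion preceding Proposition \ref{lem:edgelem}, so the hypothesis in the lemma as stated is superfluous.
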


\section{Module algebras over Hopf algebras}

\label{sec:modulealg}

In this section we summarise  basic facts about  module (co)algebras over Hopf algebras that are needed in this article. A good reference on this topic is the textbook  \cite{majidbook} by Majid.

\begin{definition}\label{def:modalg} Let $H,K$ be Hopf algebras over $\FF$.\\[-4ex]

$\bullet$ An {\bf $H$-left  module algebra}  is an  algebra object  in the category $H$-Mod of  left $H$-modules,  i.e.~an associative, unital algebra $(A,\cdot, 1)$ together with an $H$-left module structure  $\rhd: H\oo A\to A$, $h\oo a\mapsto h\rhd a$  such that  for all $h,h'\in H$, $a,a'\in A$
\begin{align*}
&h\rhd (a\cdot a')=(h_{(1)}\rhd a)\cdot (h_{(2)}\rhd a') & &h\rhd 1_A=\epsilon(h)\, 1.
\end{align*}
An {\bf $H$-right module  algebra}   is a $H^{op}$-left module algebra, i.e.~an algebra object  in the category Mod-$H$=$H^{op}$-Mod of right $H$-modules.

$\bullet$ An {\bf $H$-left module  coalgebra}  is a coalgebra object  in  $H$-Mod, i.e.~a coassociative, counital coalgebra $(A,\Delta, \epsilon)$ together with an $H$ left-module structure  $\rhd: H\oo A\to A$, $h\oo a\mapsto h\rhd a$ such that  for all $h,h'\in H$, $a\in A$
\begin{align*}
& \Delta(h\rhd a)=(h_{(1)}\rhd a_{(1)})\oo (h_{(2)}\rhd a_{(2)}) & &\epsilon(h\rhd a)=\epsilon(h)\, \epsilon(a).
\end{align*}
An {\bf $H$-right module  coalgebra}  is a $H^{op}$-left module  coalgebra, i.e.~a coalgebra object  in  Mod-$H=H^{op}$-Mod.

$\bullet$ A {\bf $(H,K)$-bimodule (co)algebra} is a $(H\oo K^{op})$-left module (co)algebra. This is equivalent to a $H$-left module algebra  structure $\rhd: H\oo A\to A$ and an $K$-right module algebra structure $\lhd: A\oo K\to A$ such that $h\rhd(a\lhd k)=(h\rhd a)\lhd k$ for all $a\in A$, $h\in H$, $k\in K$.
\end{definition}

\begin{remark}\label{rem:bimodule_switch} Let $H,K$ be Hopf algebras and $A$ an $(H,K)$-bimodule (co)algebra. Then $A^{(c)op}$ becomes a $(K,H)$-bimodule (co)algebra  with module structure
$
k\rrhd a\llhd h:=S(h)\rhd a\lhd S(k)
$ for all $a\in A$, $k\in K$, $h\in H$. If $\phi: A\to A$ is an invertible (co)algebra anti-homomorphism, then  $A$ becomes  a $(K,H)$-bimodule algebra with module structure  $k\rrhd a\llhd h:=\phi^\inv(S(h)\rhd \phi(a)\lhd S(k))$.
\end{remark}

\begin{remark} \label{rem:dualstruct} Let $K$ be a finite-dimensional Hopf algebra with dual $K^*$ and $H$ a Hopf algebra. Then a $H$-left module structure  $\rhd: H\oo K\to K$ on $K$ induces a $H$-right module structure $\lhd^*: H\oo K^*\to K^*$  defined by $\langle \alpha\lhd^* h, k\rangle=\langle \alpha, h\rhd k\rangle$ for all $k\in K$, $\alpha\in K^*$ and $h\in H$. We call the $H$-right module structure $\lhd^*$  the $H$-module  structure dual to $\rhd$.
\end{remark}

\begin{example}\label{ex:regacts}

Let $H$ be a  Hopf algebra and $H^*$ its dual. Then:\\[-5ex]
\begin{enumerate}
\item The {\bf left regular action} of $H$ on itself $\rhd: H\oo H\to H$, $h\oo k\mapsto h\cdot k$ gives $H$ the structure of an $H$-left module coalgebra.
\item The {\bf right regular action} of $H$ on itself $\lhd: H\oo H\to H$, $k\oo h\mapsto k\cdot h$ gives $H$ the structure of an $H$-right module coalgebra.
\item The {\bf left regular action} of $H$ on $H^*$ $\rhd^*: H\oo H^*\to H^*$, $h\oo\alpha\mapsto \langle\alpha_{(2)}, h\rangle \,\alpha_{(1)}$ is dual to the right regular action of $H$ on itself and gives $H^*$ the structure of an left $H$-module algebra.
\item The {\bf right regular action} of $H$ on $H^*$ $\lhd^*: H^*\oo H\to H^*$, $\alpha\oo h\mapsto \langle \alpha_{(1)}, h\rangle\, \alpha_2$ is dual to the left regular action of $H$ on itself and gives $H^*$ the structure of an $H$-right module algebra.
\item The {\bf left adjoint action} of $H$ on itself $\rhd_{ad}: H\oo H\to H$, $h\oo k\mapsto h_{(1)}\cdot k\cdot S(h_{(2)})$ gives $H$ the structure of an $H$-left module algebra.
\item The {\bf right adjoint action} of $H$ on itself $\lhd_{ad}: H\oo H\to H$, $ k\oo h \mapsto S(h_{(1)})\cdot k\cdot h_{(2)}$ gives $H$ the structure of an $H$-right module algebra.

\item The {\bf left coadjoint action} $\rhd^*_{ad}: H\oo H^*\to H^*$, $h\oo\alpha\mapsto \langle S(\alpha_{(1)})\low\alpha 3, h\rangle \,\alpha_{(2)}$ 
is dual to the right adjoint action of $H$ on itself and gives $H^*$ the structure of an $H$-left comodule algebra.

\item The {\bf right coadjoint action} $\lhd^*_{ad}: H^*\oo H\to H^*$, $\alpha\oo h\mapsto \langle \alpha_{(1)}S(\low\alpha 3), h\rangle \,\alpha_{(2)}$ is dual to the left adjoint action of $H$ on itself  and gives $H^*$ the structure of an $H$-right comodule algebra.

\item The left and right regular action of $H$ on itself (on $H^*$) equip $H$ ($H^*$) with the structure of a $(H,H)$-bimodule algebra. 
\end{enumerate}
\end{example}

\begin{example}[Finite group]\label{ex:haction_grp} 
For a finite group $G$, the Hopf algebra structure of the group algebra $\FF[G]$ and its dual $\mathrm{Fun}(G)$ are given in Example \ref{ex:finite_group}. In this case,  the left and right regular
action of $\FF [G]$ on itself are given by the left and right multiplication of $G$.  The left and right adjoint action correspond to the action of $G$ on itself by conjugation.  The left and right regular action and the left coadjoint action of $\FF[G]$ on its dual  $\mathrm{Fun}(G)$ are given by
\begin{align*}
g\rhd \delta_h=\delta_{hg^\inv}\qquad \delta_h\lhd g=\delta_{g^\inv h}\qquad g\rhd_{ad^*}\delta_h=\delta_{ghg^\inv}\qquad\forall g,h\in G.
\end{align*}
\end{example}

\begin{definition}\label{def:smash} Let $H$ be a 
Hopf algebra, $A$ an $H$-left module algebra and $B$ an $H$-right module algebra.
The  {\bf left cross product}  or {\bf left smash product} $A\#_L H$   is the algebra   $ (A\oo H,\cdot)$ with  
\begin{align}
\label{eq:cross_lefthd}
&(a\oo h)\cdot (a'\oo h')=a(h_{(1)}\rhd a')\cdot h_{(2)}h'.
\end{align}
The {\bf right cross product} or {\bf right smash product}  $H\#_R B$  
is the algebra  $(H\oo B,\cdot)$ 
with
\begin{align}\label{eq:cross_rhd}
&(h\oo b)\cdot (h'\oo b')=h\low {h'} 1\oo (b\lhd \low {h'}2)b'.
\end{align}
\end{definition}

\begin{definition} \label{def:hdouble} Let $H$ be a finite-dimensional Hopf algebra with  dual
 $H^*$. The left and right {\bf Heisenberg double}  of $H$  are the cross products $\mathcal H_L(H)=H^*\#_LH$, $\mathcal H_R(H)=H \#_R H^*$  for  the left and right regular action of $H$ on $H^*$. Explicitly, their multiplication laws are given by:
\begin{align*}
&\mathcal H_L(H): \qquad(\alpha\oo h)\cdot (\alpha'\oo h')=\langle\alpha'_{(2)}, h_{(1)}\rangle\; \alpha \alpha'_{(1)}\oo h_{(2)} h'\\
&\mathcal H_R(H): \qquad( h\oo \alpha)\cdot (h'\oo \alpha')=\langle \alpha_{(1)}, h'_{(2)}\rangle\; hh'_{(1)}\oo \alpha_{(2)}\alpha'.
\end{align*}
\end{definition}

\begin{example}[Finite group]\label{ex:hdouble_grp} For a finite group $G$, the Hopf algebra structure of the group algebra $\FF[G]$ and its dual $\mathrm{Fun}(G)$ are given in Example \ref{ex:finite_group}. The
 left and right Heisenberg double of the group algebra $\FF[G]$ are given by
\begin{align*}
&\mathcal H_L(\FF[G])=\mathrm{Fun}(G)\oo \FF[G]: \qquad(\delta_h\oo g)\cdot (\delta_{h'}\oo g')=\delta_h\delta_{h'g^\inv}\oo gg'=\delta_{hg}(h')\; \delta_{h}\oo gg' \\
&\mathcal H_R(\FF[G])= \FF[G]\oo \mathrm{Fun}(G): \qquad(g\oo  \delta_h)\cdot (g'\oo \delta_{h'})=gg'\oo \delta_{h}\delta_{g h'}=\delta_{g^\inv h}(h')\; gg'\oo \delta_h.
\end{align*}
\end{example}

An essential feature of module algebras over a Hopf algebra is that the submodule of invariants is  a subalgebra. 
This is  well-known,  but we include the proof  for  convenience.

\begin{lemma}\label{lem:project} Let $H$ be a  Hopf algebra,  $M$ a $H$-left module  with respect to  $\rhd: H\oo M\to M$ and
$$M_{inv}=\{m\in M: h\rhd m=\epsilon(h)\, m\;\forall h\in H\}.$$
\begin{compactenum}
\item If $M$ is an $H$-module algebra, then  $M_{inv}$ is a subalgebra of $M$.
\item If $\ell\in H$ is a Haar integral,  then the projector on $M_{inv}$ is given by
 $\Pi: M\to M$, $m\mapsto \ell \rhd m$.
\end{compactenum}
\end{lemma}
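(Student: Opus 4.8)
The plan is to verify both claims directly from the defining axioms of an $H$-module algebra (Definition \ref{def:modalg}) and the properties of a Haar integral, since both are essentially formal consequences requiring no deep input.

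For part 1, I would first observe that the unit lies in $M_{inv}$: the module-algebra axiom $h\rhd 1 = \epsilon(h)\,1$ is precisely the invariance condition at the unit. Then, for $a,a'\in M_{inv}$, I would apply the multiplicativity axiom $h\rhd(a\cdot a') = (\low h 1\rhd a)\cdot(\low h 2\rhd a')$, substitute the invariance hypotheses $\low h 1\rhd a=\epsilon(\low h 1)\,a$ and $\low h 2\rhd a'=\epsilon(\low h 2)\,a'$, and collapse the resulting scalar $\epsilon(\low h 1)\epsilon(\low h 2)$ to $\epsilon(h)$. This shows $h\rhd(a\cdot a')=\epsilon(h)\,(a\cdot a')$ for all $h$, so $a\cdot a'\in M_{inv}$; together with $1\in M_{inv}$ this proves $M_{inv}$ is a subalgebra.

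For part 2, the key is that $\Pi$ takes values in $M_{inv}$ and restricts to the identity there. For any $h\in H$, associativity of the module action together with the defining property $h\cdot\ell=\epsilon(h)\,\ell$ of the Haar integral gives $h\rhd\Pi(m)=h\rhd(\ell\rhd m)=(h\cdot\ell)\rhd m=\epsilon(h)\,(\ell\rhd m)=\epsilon(h)\,\Pi(m)$, so $\Pi(m)\in M_{inv}$. Conversely, if $m\in M_{inv}$ then $\Pi(m)=\ell\rhd m=\epsilon(\ell)\,m=m$, using the normalisation $\epsilon(\ell)=1$. Combining these two facts yields $\mathrm{Im}(\Pi)=M_{inv}$ and $\Pi|_{M_{inv}}=\id$, whence $\Pi^2=\Pi$, so $\Pi$ is a projector onto $M_{inv}$.

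There is no genuine obstacle: both statements are formal manipulations. The only point deserving a moment's care is the scalar identity $\epsilon(\low h 1)\epsilon(\low h 2)=\epsilon(h)$ used in part 1, which one must not conflate with multiplicativity of $\epsilon$; it instead follows from applying the counit $\epsilon$ to the coalgebra counit axiom $\epsilon(\low h 1)\low h 2=h$ and using linearity. Everything else is a direct substitution into the module and Haar-integral axioms.
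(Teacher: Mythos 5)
Your proof is correct and follows essentially the same route as the paper: part 1 is the same computation with the multiplicativity axiom and the counit identity, and part 2 uses the same three properties of the Haar integral ($h\ell=\epsilon(h)\ell$, $\epsilon(\ell)=1$, and idempotency—which the paper invokes directly as $\ell\cdot\ell=\ell$ whereas you deduce $\Pi^2=\Pi$ from $\mathrm{Im}(\Pi)\subset M_{inv}$ and $\Pi|_{M_{inv}}=\id$, an immaterial difference). Your explicit check that $1\in M_{inv}$ is a small point the paper leaves implicit.
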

\begin{proof} If $M$ is a $H$-module algebra, the properties of the counit imply  for  $m,m'\in M_{inv}$  and $h\in H$
$h\rhd(m\cdot m')=(\low h 1\rhd m)\cdot (\low h 2\rhd m')=\epsilon(\low h 1)\epsilon(\low h 2)\, m\cdot m'=\epsilon(h)\, m\cdot m'$ and hence $m\cdot m'\in M_{inv}$.
If $\ell\in H$ is a Haar integral, then
the identity $\ell\cdot\ell=\ell$ and the fact that $M$ is an $H$-module  ensure that  $\Pi$ is a projector:
$(\Pi\circ\Pi)(m)=\ell\rhd(\ell\rhd m)=(\ell\cdot \ell)\rhd m=\ell\rhd m =\Pi(m)$ for all $m\in M$.
The identity $h\cdot\ell=\epsilon(h)\,\ell$ for all $h\in H$ implies $\Pi(M)\subset M_{inv}$ since for all $h\in H$, $m\in M$
$h\rhd\Pi(m)=h\rhd(\ell\rhd m)=(h\cdot\ell)\rhd m=\epsilon(h) \, \ell\rhd m=\epsilon(h)\;\Pi(m)$.
The identity $\epsilon(\ell)= 1$  implies $m=\epsilon(\ell)\, m=\ell\rhd m=\Pi(m)$ for  $m\in M_{inv}$ and hence  $M_{inv}=\Pi(M)$. 
\end{proof}

\begin{example} \label{ex:moduledim} For the 
$H$-left module structure  on $H^*\oo H^*$ induced by the left regular action
 $$\rhd: H\oo H^*\oo H^*\to H^*\oo H^*, \quad h\rhd(\alpha\oo\beta)=\langle \low\alpha 2\low\beta 2, h\rangle\, \low\alpha 1\oo\low\beta 1$$ 
 one has 
 $
 (H^*\oo H^*)_{inv}=(\id\oo S)\circ\Delta(H^*). 
 $

 That $(\id\oo S)\circ\Delta(H^*)\subset  (H^*\oo H^*)_{inv}$ follows by a direct computation. 
For the converse, note that this $H$-module structure   is dual 
to the $H^*$-right-comodule structure on $H^*\oo H^*$ with the comultiplication $\Delta: H^*\to H^*\oo H^*$ as a comodule map and hence $(H^*\oo H^*)_{inv}=(H^*\oo H^*)_{coinv}$; see for instance \cite[Lemma 1.7.1]{Mon}.   
As the module map $\lhd: H^*\oo H^*\oo H^*\to H^*\oo H^*$,
$(\alpha\oo \beta)\lhd \gamma=\alpha\oo\beta\cdot\gamma$ gives $H^*$ the structure of a $H^*$-right Hopf module, it follows  from the fundamental theorem of Hopf modules---see for instance \cite[Theorem 1.4.9]{Mon}---that 
$H^*\oo H^*\cong (H^*\oo H^* )_{coinv}\oo H^*$ and hence $\mathrm{dim}(H^*\oo H^*)_{inv}=\mathrm{dim}(H^*\oo H^*)_{coinv}=\dim H^*$ As the map
 $(\id\oo S)\circ\Delta: H^*\to H^*\oo H^*$ is injective, this proves the claim.
\end{example}

The left  coadjoint action of $H$ on $H^*$ do not equip $H^*$ with the structure of a left or right module algebra. Nevertheless,
 the submodule of invariants  is a subalgebra, namely  the character algebra of $H$.  If $H$ is finite-dimensional semisimple, then $S^2=\id$, and the submodules of invariants for the left and right coadjoint action coincide and are both given by the character algebra.

\begin{example} \label{ex:chars} Let $H$ be finite-dimensional semisimple. Then for the left and right coadjoint action of $H$ on $H^*$ from Example \ref{ex:regacts} one has
\begin{align}
H^*_{inv}=C(H)=\{\alpha\in H^*: \Delta(\alpha)=\Delta^{op}(\alpha)\}=\{\alpha\in H^*:\, \langle \alpha, hk\rangle=\langle\alpha, kh\rangle\,\forall h,k\in H\}
\end{align}
If $\alpha\in C(H)$ then $\Delta^{(n)}(\alpha)$ is invariant under cyclic permutations for all $n\in\mathbb N$, and one obtains
\begin{align*}
&\alpha\lhd^*_{ad} h=\langle\low\alpha 1 S(\low\alpha 3), h\rangle\,\low\alpha 2=\langle\low\alpha 2 S(\low\alpha 1), h\rangle\,\low\alpha 3=\epsilon(\low\alpha 2)\epsilon(h)\low\alpha 3=\epsilon(h)\alpha\\
&h\rhd^*_{ad}\alpha=\langle S(\low\alpha 1)\low\alpha 3, h\rangle\,\low\alpha 2=\langle S(\low\alpha 2)\low\alpha 1, h\rangle\, \low\alpha 3=\epsilon(\low\alpha 1)\epsilon(h)\low\alpha 2=\epsilon(h)\alpha
\end{align*}
Conversely, if $\alpha\in H^*$ is invariant under the left or right coadjoint action of $H$ on $H^*$ one has
\begin{align*}
&\langle \alpha, hk\rangle =\langle \alpha,\low h 1 k \low h 3S(\low h 2)\rangle=\langle \low\alpha 1 S(\low\alpha 3), \low h 1\rangle \langle \low\alpha 2, k\low h 2\rangle=\epsilon(\low h 1)\langle\low\alpha, k\low h 2\rangle=\langle \alpha, kh\rangle\\
&\langle \alpha, hk\rangle =\langle \alpha, S(\low k 2)\low k 1 h\low k 3\rangle=\langle S(\low\alpha 1) \low\alpha 3, \low k 2\rangle \langle \low\alpha 2, \low k 1 h \rangle=\epsilon(\low k 2)\langle\alpha, \low k 1 h\rangle=\langle \alpha, kh\rangle.
\end{align*}
\end{example}

\end{appendix}

\end{document}